\renewcommand{\PrintDOI}[1]{%
  \href{http://dx.doi.org/#1}{{\tt DOI:#1}}%
%  \IfEmptyBibField{volume}{, (to appear in print)}{}%
}
\renewcommand{\eprint}[1]{#1}
\theoremstyle{plain}
\newtheorem{theo}{Theorem}[section]%
\newtheorem{prop}[theo]{Proposition}%
\newtheorem{coro}[theo]{Corollary}%
\newtheorem{lemm}[theo]{Lemma}%
\newtheorem{theoremA}{Theorem}
\theoremstyle{definition}%
\newtheorem{defi}[theo]{Definition}%
\theoremstyle{remark}%
\newtheorem{rema}[theo]{Remark}%
\newtheorem{exem}[theo]{Example}%
\DeclareRobustCommand{\SkipTocEntry}[5]{}
\mathchardef\mhyph="2D 				% myth hyphen
\newcommand{\numberset}{\mathbb} 
\newcommand{\N}{\numberset{N}} 
\newcommand{\Z}{\numberset{Z}} 
\newcommand{\Q}{\numberset{Q}} 
\newcommand{\R}{\numberset{R}}
\newcommand{\bC}{\numberset{C}}
\newcommand{\bH}{\mathbb{H}}
\newcommand{\ep}{\epsilon}
\newcommand{\cE}{\mathcal{E}}
\newcommand{\cG}{\mathcal{G}}
\newcommand{\cI}{\mathcal{I}}
\newcommand{\cK}{\mathcal{K}}
\newcommand{\cL}{\mathcal{L}}
\newcommand{\cM}{\mathcal{M}}
\newcommand{\cN}{\mathcal{N}}
\newcommand{\cO}{\mathcal{O}}
\newcommand{\cP}{\mathcal{P}}
\newcommand{\cS}{\mathcal{S}}
\newcommand{\cT}{\mathcal{T}}
\newcommand{\I}{\mathcal{I}}
\newcommand{\PI}{\langle \cP_\I\rangle}
\newcommand{\Ab}{\mathrm{Ab}}
\newcommand{\id}{\mathrm{id}}
\newcommand{\absv}[1]{\left| #1 \right|}
\newcommand{\GCalg}{\mathrm{C}^*_G}
\newcommand{\HCalg}{\mathrm{C}^*_H}
\newcommand{\medwedge}{{\textstyle\bigwedge}}
\DeclareMathOperator{\Tor}{Tor}
\DeclareMathOperator{\KKK}{KK}
\DeclareMathOperator{\BF}{BF}
\DeclareMathOperator{\End}{End}
\DeclareMathOperator{\cok}{cok}
\DeclareMathOperator{\Tot}{Tot}
\DeclareMathOperator{\GL}{GL}
\DeclareMathOperator{\Hom}{Hom}
\let\Res\relax
\DeclareMathOperator{\Ind}{Ind}
\DeclareMathOperator{\Res}{Res}
\newcommand{\tens}[2]{%
  \mathbin{\tensor*[^#1]{\otimes}{_{#2}}}%
}
\newcommand{\tensb}[3]{%
  \mathbin{\tensor*[^#1]{\otimes}{_{#2}^#3}}%
}
\author{Valerio Proietti}
\address{Research Center for Operator Algebras, Department of Mathematics, and Shanghai Key Laboratory of Pure Mathematics and Mathematical Practice, East China Normal University, Shanghai 200241, China}
\email{proiettivalerio@math.ecnu.edu.cn}
\author{Makoto Yamashita}
\address{Department of Mathematics, University of Oslo, P.O. box 1053, Blindern, 0316 Oslo, Norway}
\email{makotoy@math.uio.no}
\date{November 17, 2021: minor changes; May 16, 2021}
\title[Homology and $K$-theory of dynamical systems, II]{Homology and $K$-theory of dynamical systems\\
II. Smale spaces with totally disconnected transversal}
\begin{document}
%******************************************************************
% Beginning
%******************************************************************

\begin{abstract}
We apply our previous work on the relation between groupoid homology and $K$-theory to Smale spaces.
More precisely, we consider the unstable equivalence relation of a Smale space with totally disconnected stable sets, and prove that the associated spectral sequence shows Putnam's stable homology groups on the second sheet.
Moreover, this homology is in fact isomorphic to groupoid homology of the unstable equivalence relation.
\end{abstract}

\subjclass[2010]{46L85; 19K35, 37D99}
\keywords{groupoid, C$^*$-algebra, $K$-theory, homology, Baum--Connes conjecture, Smale space.}

\maketitle
\setcounter{tocdepth}{1}
\tableofcontents

%******************************************************************
% Main
%******************************************************************

\section*{Introduction}

Continuing our previous work \cite{valmako:part1} on groupoid homology and $K$-theory for ample groupoids, in this paper we look at the groupoids arising from Smale spaces, and Putnam's homology \cite{put:HoSmale}.

The framework of Smale spaces was introduced by Ruelle \cite{ruelle:thermo}, who designed them to model the basic sets of Axiom A diffeomorphisms \cite{smale:A}. This turned out to be a particularly nice class of hyperbolic topological dynamical systems, where Markov partitions provide a symbolic approximation of the dynamics.

Groupoids with totally disconnected base (ample groupoids) arise from Smale spaces with totally disconnected stable sets. This is especially useful in the study of dynamical systems whose topological dimension is not zero, but whose dynamics is completely captured by restricting to a totally disconnected transversal.
Such spaces include generalized solenoids \citelist{\cite{thom:sol}\cite{will:exp}}, substitution tiling spaces \cite{putand:til}*{Theorem 3.3}, dynamical systems of self-similar group actions \cite{nekra:crelle}, and can be characterized as certain inverse limits \cite{wie:inv}.

Beyond the theory of dynamical systems, these groupoids also play an important role in the theory of operator algebras, where they provide an invaluable source of examples of C$^*$-algebras.
These are obtained by considering the (reduced) groupoid C$^*$-algebras $C^*_r(G)$ \cite{ren:group}, generalizing the crossed product algebras for group actions on the Cantor set.
The resulting C$^*$-algebras capture interesting aspects of the homoclinic and heteroclinic structure of expansive dynamics \citelist{\cite{put:algSmale}\cite{thomsen:smale}\cite{mats:ruellemarkov}}, extending the correspondence between shifts of finite type and the Cuntz--Krieger algebras.

In \cite{valmako:part1}, based on the Meyer--Nest theory of triangulated categorical structure on equivariant$\KKK$-theory \citelist{\cite{nestmeyer:loc}\cite{meyernest:tri}\cite{meyer:tri}}, we constructed a spectral sequence of the form
\begin{equation}\label{eq:part-I-main-res-formula}
E^2_{p q} = H_p(G, K_q(\bC)) \Rightarrow K_{p+q}(C^*_r(G))
\end{equation}
for étale groupoids $G$ that have torsion-free stabilizers and satisfy a strong form of the Baum--Connes conjecture as in Tu's work \cite{tu:moy}.
While this directly applies to the reduction of the unstable equivalence relation to transversals of Smale spaces as above, there is another homology theory proposed by Putnam \cite{put:HoSmale}. We show that one of the variants, $H^s_\bullet$, fits into this scheme for the groupoid $R^u(Y, \psi)$ of the unstable equivalence relation on the underlying space, as follows.

\begin{theoremA}[Theorem \ref{thm:putnam-homology-convergence}]\label{thm:b}
Let $(Y,\psi)$ be an irreducible Smale space with totally disconnected stable sets, and $R^u(Y,\psi)$ be the groupoid of the unstable equivalence relation.
Then there is a convergent spectral sequence
\begin{equation*}
E^2_{pq} = E^3_{pq} = H_p^s(Y,\psi)\otimes K_q(\bC) \Rightarrow K_{p+q}(C^*(R^u(Y,\psi))).
\end{equation*}
\end{theoremA}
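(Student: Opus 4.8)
The plan is to feed the unstable equivalence relation into the spectral sequence \eqref{eq:part-I-main-res-formula} of \cite{valmako:part1} and then identify the resulting $E^2$-page with Putnam's complex. \emph{Reduction to an ample groupoid.} Since $(Y,\psi)$ has totally disconnected stable sets, a finite union $T$ of local stable sets --- for instance the one associated with the periodic points of a Markov partition --- is a totally disconnected clopen subset of $Y$. By irreducibility every unstable set is dense, so $T$ meets every unstable equivalence class, and $\cG:=R^u(Y,\psi)|_T$ is a full ample subgroupoid Morita equivalent to $R^u(Y,\psi)$. As an equivalence relation $\cG$ is principal, hence has trivial (in particular torsion-free) isotropy, and it is amenable, as is standard for the groupoids attached to Smale spaces \cite{put:algSmale}; therefore $C^*_r(\cG)=C^*(\cG)$ and $\cG$ satisfies the strong form of the Baum--Connes conjecture required in \cite{valmako:part1}, valid for every amenable ample groupoid by \cite{tu:moy}. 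The main theorem of \cite{valmako:part1} then yields a convergent spectral sequence $E^2_{pq}=H_p(\cG,K_q(\bC))\Rightarrow K_{p+q}(C^*(\cG))$. As $K_q(\bC)$ equals $\Z$ for $q$ even and $0$ for $q$ odd --- in particular a flat $\Z$-module --- the universal coefficient theorem reduces $H_p(\cG,K_q(\bC))$ to $H_p(\cG)\otimes K_q(\bC)$; Morita invariance of groupoid homology and of the $K$-theory of groupoid $C^*$-algebras shows that both the $E^2$-page and the abutment depend only on $R^u(Y,\psi)$, so writing $H_p(R^u(Y,\psi)):=H_p(\cG)$ we obtain $E^2_{pq}=H_p(R^u(Y,\psi))\otimes K_q(\bC)\Rightarrow K_{p+q}(C^*(R^u(Y,\psi)))$.

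\emph{Identification with $H^s_\bullet$.} It remains to prove $H_p(R^u(Y,\psi))\cong H^s_p(Y,\psi)$. Fix a $u$-bijective factor map $\pi\colon(\Sigma,\sigma)\to(Y,\psi)$ from a shift of finite type and form the iterated fibre products $\Sigma_N(\pi)=\Sigma\times_Y\cdots\times_Y\Sigma$ with $N+1$ factors, which are again shifts of finite type. Because $\pi$ is $u$-bijective and onto, the induced morphisms of unstable equivalence relations assemble, after reduction to compatible clopen transversals, into an augmented simplicial ample groupoid $[N]\mapsto R^u(\Sigma_N(\pi))|_{T_N}$ over $\cG$. The key point is that this augmentation is a resolution, in the sense that there is a descent-type spectral sequence $E^1_{pq}=H_q\bigl(R^u(\Sigma_p(\pi))\bigr)\Rightarrow H_{p+q}\bigl(R^u(Y,\psi)\bigr)$; this is a \v{C}ech/Mayer--Vietoris statement for groupoid homology and uses surjectivity of $\pi$ together with the fibre-product structure in an essential way. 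Now each $\Sigma_p(\pi)$ is a shift of finite type, so $R^u(\Sigma_p(\pi))$ reduces to an AF equivalence relation on a clopen transversal; hence $H_q(R^u(\Sigma_p(\pi)))=0$ for $q\neq0$ while $H_0(R^u(\Sigma_p(\pi)))$ is the dimension group $D^s(\Sigma_p(\pi))$ entering Putnam's definition --- this is the homology of an AF groupoid, matching $K_0$ of the corresponding AF algebra, cf.\ \cite{put:algSmale}. The spectral sequence therefore collapses onto the $q=0$ row and identifies $H_p(R^u(Y,\psi))$ with the $p$-th homology of the complex $(D^s(\Sigma_\bullet(\pi)),\partial)$ with alternating face maps, which is exactly the complex computing $H^s_p(Y,\psi)$ in \cite{put:HoSmale}; one must still check that the boundary maps match and that the answer is independent of the chosen cover, the latter via Putnam's invariance results or directly from the functoriality of the resolution.

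\emph{The pages $E^2$, $E^3$, and convergence.} After the identification the spectral sequence reads $E^2_{pq}=H^s_p(Y,\psi)\otimes K_q(\bC)$. The differential $d^2\colon E^2_{pq}\to E^2_{p-2,q+1}$ joins terms whose $q$-indices have opposite parity, and since one of $K_q(\bC)$ and $K_{q+1}(\bC)$ always vanishes, $d^2=0$ and $E^2=E^3$. Putnam moreover shows that $H^s_p(Y,\psi)$ is finitely generated and vanishes outside a bounded range of $p$, so the $E^2$-page has only finitely many nonzero columns and the spectral sequence converges strongly to $K_{p+q}(C^*(R^u(Y,\psi)))$, which completes the proof.

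\emph{Main obstacle.} The crux is the resolution statement in the second paragraph: one needs a clean \v{C}ech/descent spectral sequence for the homology of an ample groupoid along the surjection induced by a $u$-bijective factor map, and one must identify its $E^1$-page, boundary maps included, with the bicomplex of \cite{put:HoSmale}. The remaining ingredients --- amenability of Smale space groupoids, the homology of AF groupoids, Morita invariance, and the parity bookkeeping of the last step --- are routine by comparison.
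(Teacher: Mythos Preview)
Your overall strategy is sound and is in fact one of the two routes the paper offers, but it is \emph{not} the paper's primary proof of this theorem. The paper's direct argument bypasses groupoid homology entirely: having fixed an $s$-bijective map $f\colon(\Sigma,\sigma)\to(Y,\psi)$ and a common transversal, it regards $H=(f\times f)R^u(\Sigma,P)$ as an open subgroupoid of $G=R^u(Y,f(P))$ and runs the Meyer--Nest machine with the adjunction $\Ind^G_H\dashv\Res^G_H$ (rather than $\Ind^G_X\dashv\Res^G_X$). This produces a spectral sequence whose $E^1$-terms are the $K$-groups of $G\ltimes(\Ind^G_H\Res^G_H)^{n+1}C_0(X)$; the paper then shows these crossed products are Morita equivalent to $C^*(R^u(\Sigma_n,\sigma_n))$ and that the simplicial maps match Putnam's, so the $E^2$-page \emph{is} $H^s_p(Y,\psi)\otimes K_q(\bC)$ by construction. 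No comparison with $H_\bullet(G,\Z)$ is needed. What this buys is that the identification with Putnam's complex happens at the level of C$^*$-algebras and $\KKK$-morphisms, where the face maps are visible as inclusions of open subgroupoids composed with explicit Morita equivalences.

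Your route --- apply the Part~I spectral sequence and then prove $H_\bullet(G,\Z)\cong H^s_\bullet(Y,\psi)$ --- is exactly the alternative the paper mentions in the introduction, and your ``main obstacle'' is precisely its Theorem~\ref{thm:putnam-homology-is-groupoid-homology}. The paper's proof of that isomorphism is close in spirit to the descent spectral sequence you sketch, but it is phrased as a hyperhomology computation: one builds a complex of $G$-sheaves $F_\bullet$ with $F_n$ the sheaf of $H^{\times_G(n+1)}$-coinvariants of $C_c(G^{(n+1)},\Z)$, checks via an explicit contracting homotopy on stalks that $F_\bullet\to\underline{\Z}$ is a resolution, and uses that each $F_n$ is $H_0(G,\mhyph)$-acyclic because $H^{\times_G(n+1)}$ is AF. The nontrivial input hiding underneath both approaches is a transversality lemma (Proposition~\ref{prop:transversality}): given points $a_0,\dots,a_n\in\Sigma$ whose images in $Y$ are mutually unstably equivalent, one can move each $a_k$ within its unstable class so that all images coincide. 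This is what makes the groupoid fibre product $H^{\times_G(n+1)}$ Morita equivalent to $R^u(\Sigma_n,\sigma_n)$ rather than something larger, and without it neither your $E^1$-identification nor the matching of boundary maps goes through.

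Two corrections. First, the factor map must be $s$-bijective, not $u$-bijective: you are resolving a space with totally disconnected \emph{stable} sets and studying the \emph{unstable} relation, and it is $s$-bijectivity that makes $f\times f$ an open embedding $R^u(\Sigma,P)\hookrightarrow R^u(Y,f(P))$. Second, a union of stable sets is not a clopen subset of $Y$; it is a generalized transversal carrying a finer (inductive-limit) topology, and only after restriction does one obtain an ample groupoid. This does not affect your argument but the phrasing should be adjusted.
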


This result gives a partial answer to a question raised by Putnam, who asked to relate the $K$-theory of $C^*(R^u(Y,\psi))$ to his homology theory $H_\bullet^s(Y,\psi)$ \cite{put:HoSmale}*{Question 8.4.1}.
An immediate consequence is that the $K$-groups of $C^*(R^u(Y,\psi))$ are of finite rank. Our proof of Theorem \ref{thm:b} above is based on a ``relativized'' analogue of the method we developed in \cite{valmako:part1}.

Although we give an independent proof of Theorem \ref{thm:b}, it can also be obtained from the spectral sequence \eqref{eq:part-I-main-res-formula} and the result below.
\begin{theoremA}[Theorem \ref{thm:putnam-homology-is-groupoid-homology}] 
For any étale groupoid $G$ that is Morita equivalent to $R^u(Y,\psi)$, we have an isomorphism $H^s_p(Y, \psi)\simeq H_p(G, \Z)$.
\end{theoremA}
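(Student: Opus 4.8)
The plan is to prove the isomorphism for one conveniently chosen $G$ and transport it along Morita equivalence, identifying $H_\bullet(G,\Z)$ with Putnam's complex by resolving $G$ through the shift of finite type cover underlying an $s/u$-bijective pair. The first step is Morita invariance of groupoid homology: if two étale groupoids are Morita equivalent then their homologies with $\Z$-coefficients agree, so it suffices to treat $G=R^u(Y,\psi)|_T$ for a single totally disconnected complete transversal $T\subset Y$ to the unstable equivalence relation. Such a $T$ exists because the local stable sets are totally disconnected (take $T$ a union of local stable sets), and then $R^u(Y,\psi)|_T$ is ample.

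Second, following Putnam, I would use an $s/u$-bijective pair for $(Y,\psi)$ of the special form in which the $s$-bijective part is $\id\colon Y\to Y$ (legitimate because $Y$ has totally disconnected stable sets) and the $u$-bijective part is a factor map $\pi\colon Y_u\to Y$ with $Y_u$ having totally disconnected unstable sets. Since $\pi$ is finite-to-one and restricts to local homeomorphisms on local stable sets, $Y_u$ inherits totally disconnected stable sets from $Y$, so $Y_u$ is a shift of finite type $\Sigma$. In Putnam's bicomplex for this pair, the direction indexed by copies of $Y$ becomes homotopically trivial (the $Y$-coordinates are forced to agree with $\pi$ of the $\Sigma$-coordinates), so the bicomplex degenerates and, up to Putnam's grading conventions, $H^s_\bullet(Y,\psi)$ is the homology of the single complex $p\mapsto D^s(\Sigma^{[p]})$, where $\Sigma^{[p]}=\Sigma\times_Y\cdots\times_Y\Sigma$ ($p+1$ factors) is the Čech nerve of $\pi$, $D^s$ is Krieger's stable dimension group, and the differential is the alternating sum of face maps. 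Each $\Sigma^{[p]}$ is again a totally disconnected Smale space, hence a shift of finite type.

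Third, on the groupoid side, applying $R^u(-)$ and reducing to compatible transversals yields an augmented simplicial ample groupoid $G^{[\bullet]}\to G$ with $G^{[p]}=R^u(\Sigma^{[p]})|_{T^{[p]}}$. Because $\pi$ is $u$-bijective — hence transverse to $R^u$ and étale on local stable sets — the transversal maps $T^{[p]}\to T$ are surjective local homeomorphisms and $G^{[\bullet]}\to G$ behaves as a hypercover for groupoid homology; this is the relativized form of the resolution used in \cite{valmako:part1}, carried out in homology rather than $K$-theory. One then gets a first-quadrant spectral sequence
\begin{equation*}
E^1_{pq}=H_q(G^{[p]},\Z)\ \Longrightarrow\ H_{p+q}(G,\Z),
\end{equation*}
with $d^1$ the alternating sum of face maps. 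Each $G^{[p]}$, being $R^u$ of a shift of finite type reduced to a transversal, is an AF groupoid, so $H_q(G^{[p]},\Z)=0$ for $q>0$ while $H_0(G^{[p]},\Z)$ is the corresponding Krieger dimension group, which is $D^s(\Sigma^{[p]})$. The spectral sequence therefore collapses onto the row $q=0$, yielding $H_n(G,\Z)\cong H_n\!\bigl(p\mapsto D^s(\Sigma^{[p]})\bigr)\cong H^s_n(Y,\psi)$ after checking that the face maps agree, which is routine.

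The main obstacle is the third step's input: showing that the simplicial ample groupoid built from an $s/u$-bijective pair genuinely resolves $G$ for the purposes of groupoid homology — i.e. that the augmentation, coming from a factor map of Smale spaces that is far from étale, becomes a hypercover once one passes to unstable equivalence relations and totally disconnected transversals. This rests on the regularity of $u$-bijective maps and on choosing the transversals so that the fibered products of groupoids compute the matching objects of the simplicial diagram; the companion fact that the fibered powers $\Sigma^{[p]}$ are shifts of finite type (so the $E^1$-rows above degree $0$ vanish) must also be established. Everything after the spectral sequence is set up amounts to bookkeeping with dimension groups and simplicial face maps.
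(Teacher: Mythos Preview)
Your outline has the right architecture---resolve $G=R^u(Y,\psi)$ through a shift of finite type, use AF-vanishing to collapse a spectral sequence, and identify the surviving row with Putnam's complex---but you pick the wrong leg of the $s/u$-bijective pair, and this is exactly where your acknowledged obstacle lives. The paper never invokes a $u$-bijective map; it uses the $s$-bijective factor $f\colon(\Sigma,\sigma)\to(Y,\psi)$ from Theorem~\ref{thm:shiftfactor} directly. The reason is that for $s$-bijective $f$ the induced map $f\times f$ embeds $R^u(\Sigma,P)$ as an \emph{open subgroupoid} $H\subset G=R^u(Y,f(P))$ with the \emph{same base} (Section~\ref{subsec:smalemaps}), so no hypercover of the unit space is needed at all. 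The paper then builds $G$-sheaves $F_n$ out of the iterated groupoid fibred products $H^{\times_G(n+1)}$---which by the transversality result Proposition~\ref{prop:transversality} and Theorem~\ref{thm:groupoid-fib-prod-compar-unstb-groupoid-Smale-sp-fib-prod} are Morita equivalent to $R^u(\Sigma_n,\sigma_n)$---checks that $F_\bullet\to\underline{\Z}$ is exact on stalks via an explicit contracting homotopy, proves $H_k(G,F_n)=0$ for $k>0$ from the AF property, and finishes by hyperhomology.

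By contrast, a $u$-bijective map $\pi$ yields homeomorphisms on \emph{unstable} sets; on local \emph{stable} sets---which are the transversals for $R^u$---one gets only a continuous, proper, finite-to-one map. Indeed, for a $u$-resolving map the fibres near a point lie \emph{inside} a single local stable set, so $\pi$ restricted to a local stable set is typically not injective, contradicting your assertion that it is ``étale on local stable sets''. Thus the obstacle you flag is not just deferred bookkeeping: with the $u$-bijective map the Čech-type resolution of the base is genuinely unavailable, whereas with the $s$-bijective map the resolution lives entirely inside the groupoid (as an open subgroupoid and its fibred powers) and the difficulty evaporates. Swapping to the $s$-bijective cover brings your plan in line with the paper's proof and also makes your second-step degeneration of Putnam's bicomplex unnecessary, since the single-complex model of $H^s_\bullet$ used here is built from the $s$-bijective cover to begin with.
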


In order to prove the result above, we turn the definition of Putnam's homology into a resolution of modules which computes groupoid homology.
As a corollary we obtain a Künneth formula for $H^s_\bullet$, generalizing a result in \cite{MR3576278}.
In the framework of substitution tiling spaces \cite{putand:til}, this result, combined with those of \cite{valmako:part1}, implies that $H_\bullet^s(\Omega, \omega)$ for the associated Smale space $(\Omega, \omega)$ is isomorphic to the degree shift of the Čech cohomology of $\Omega$ (this partially solves \cite{put:HoSmale}*{Question 8.3.2}).

\medskip
This paper is organized as follows.
In Section \ref{sec:prelim} we lay out the basic notation and definitions for all the background objects of the paper.

In Section \ref{sec:pullback-res-groupoids}, we discuss the multiple fibered product of groupoid homomorphisms, generalizing a construction in \cite{cramo:hom}.
This provides the spatial implementation of the groupoid bar complex in the case of the inclusion map $G^{(0)} \to G$ regarded as a groupoid homomorphism.
Turning to Smale spaces, a key technical transversality result in Proposition~ \ref{prop:transversality} allows us to relate multiple fiber products of $s$-bijective maps $f\colon (\Sigma,\sigma) \to (Y,\psi)$ from shifts of finite type to the multiple groupoid fibered products.
This is a crucial ingredient for Putnam's homology.

In Section \ref{sec:approx-equivar-KK}, we consider a simplicial object relating homology and $K$-groups of the groupoid following the scheme of \cite{valmako:part1}.
We recall that the spectral sequence \eqref{eq:part-I-main-res-formula} appeared from the Moore complex of the simplicial object $(F(L^{n+1} A))_{n=0}^\infty$ with $L = \Ind^G_X \Res^G_X$, converging to $F(A)$, with the functor $F = K_\bullet(G \ltimes \mhyph)$.
For the groupoid of the unstable equivalence relation on a Smale space $(Y, \psi)$ with totally disconnected stable sets, we follow the same scheme, but replace $X$ by the subgroupoid coming from an $s$-bijective factor map from a shift of finite type.
The resulting complex is isomorphic to the one defining Putnam's homology~$H^s_\bullet(Y, \psi)$.

In Section \ref{sec:compar-homology} we combine the previous sections and prove our main results.

Next in Section \ref{sec:k-th-ruelle-alg}, we explain how a similar method applies to the K-groups of Ruelle algebras.
Here we obtain a homology built out of Bowen--Franks groups closely following Putnam's homology theory.
In Section \ref{sec:examples} we discuss some examples, including solenoids and self-similar group actions.

\addtocontents{toc}{\SkipTocEntry}\subsection*{Acknowledgements}

We are indebted to R.~Nest for proposing the topic of this paper as a research project, and for numerous stimulating conversations.
We are also grateful to R.~Meyer for valuable advice concerning equivariant $K$-theory and for his careful reading of our draft.
Thanks also to M.~Dadarlat, R.~Deeley, M.~Goffeng, V.~Nekrashevych, and I.~F.~Putnam for stimulating conversations and encouragement at various stages, which led to numerous improvements.
We would also like to thank the anonymous reviewer for careful reading and useful suggestions.

This research was partly supported through V.P.'s ``Oberwolfach Leibniz Fellowship'' by the \emph{Mathematisches Forschungsinstitut Oberwolfach} in 2020. In addition, V.P. was supported by the Science and Technology Commission of Shanghai Municipality (grant No.~18dz2271000). M.Y. acknowledges support by Grant for Basic Science Research Projects from The Sumitomo Foundation at an early stage of collaboration.

\section{Preliminaries}\label{sec:prelim}

In this section we fix conventions and go over some preliminaries in order to clarify the basic notions that will be used in the rest of the paper. We generally follow our exposition in \cite{valmako:part1}.

\subsection{Locally compact groupoids}

Throughout the paper $G$ denotes a topological groupoid with unit space $X = G^{(0)}$.
We let $s,r\colon G\to X$ denote respectively the source and range maps. 
In addition, we let $G_x=s^{-1}(x)$, $G^x=r^{-1}(x)$, and for a subset $A \subset X$, we write $G_A = \bigcup_{x \in A} G_x$, $G^A = \bigcup_{x \in A} G^x$, and $G|_A = G^A \cap G_A$.

\begin{defi}
A topological groupoid $G$ is \emph{étale} if $s$ and $r$ are local homeomorphisms, and \emph{ample} if it is étale and $G^{(0)}$ is totally disconnected.
\end{defi}

If $G$ is étale and $g \in G$, then by definition, for small enough neighborhoods $U$ of $s(g)$ there is a neighborhood $U'$ of $g$ such that $s(U') = U$, and the restriction of $s$ and $r$ to $U'$ are homeomorphisms onto the images. When this is the case, we write $g(U) = r(U')$ and use $g$ as a label for the map $U \to g(U)$ induced by the identification of $U \sim U' \sim g(U)$.

As our blanket assumption, we further assume that a topological groupoid is second countable, locally compact Hausdorff, and admits a continuous Haar system $\lambda = (\lambda^x)_{x \in X}$ (i.e., an invariant continuous distribution of Radon measures on the spaces $(G^x)_{x \in X}$), so that its full and reduced groupoid C$^*$-algebras $C^*(G, \lambda)$, $C^*_r(G, \lambda)$ make sense. In general these algebras may depend on $\lambda$, though different Haar systems lead to Morita equivalent C$^*$-algebras (see also \cite{valmako:part1}*{Remark 1.7}).
In particular, $G$ and $X$ are $\sigma$-compact and paracompact. When dealing with étale groupoids, we take as usual the counting measure on $G^x$, and in this case we suppress the notation $\lambda$, and simply write $C^*_r(G)$ instead of $C^*_r(G, \lambda)$. For a C$^*$-algebra $A$ equipped with an action of $G$, the (reduced) crossed product of $A$ by $G$ in this paper will be equivalently denoted by $A\rtimes G$, $G\ltimes A$, or $C^*_r(G,A)$ depending on what is easier to read in context.

A locally compact groupoid $G$ is \emph{amenable} if there exists a net of probability measures on $G^x$ for $x \in G^{(0)}$ which is approximately invariant, see \cite{renroch:amgrp}.
In this case we have $C^*(G, \lambda) = C^*_r(G, \lambda)$ for any Haar system $\lambda$.
This covers all of our concrete examples.

\smallskip
The notion of Morita equivalence of groupoids in the sense of \cite{murewi:morita} plays an important role in this paper. We review it here below for convenience. First, recall a topological groupoid $G$ is \emph{proper} if the map $(r\times s)\colon G \to X\times X$ is proper. Furthermore, if $Z$ is a locally compact, Hausdorff $G$-space, we say that $G$ \emph{acts properly} on $Z$ if the transformation groupoid $G \ltimes Z$ is proper. The map $Z\to G^{(0)}$ underlying the $G$-action is called the \emph{anchor map}.

\begin{defi}
The groupoids $G$ and $H$ are \emph{Morita equivalent} if there is a locally compact Hausdorff space $Z$ such that
\begin{itemize}
\item $Z$ is a free and proper left $G$-space with anchor map $\rho\colon Z \to G^{(0)}$;
\item $Z$ is a free and proper right $H$-space with anchor map $\sigma\colon Z \to H^{(0)}$;
\item the actions of $G$ and $H$ on $Z$ commute;
\item $\rho\colon Z \to G^{(0)}$ induces a homeomorphism $Z/ H \to G^{(0)}$;
\item $\sigma\colon Z \to H^{(0)}$ induces a homeomorphism $G\backslash Z \to H^{(0)}$.
\end{itemize}
\end{defi}

This can be conveniently packaged by a \emph{bibundle} over $G$ and $H$: that is, a topological space $Z$ with $G$ and $H$ acting continuously from both sides with surjective and open anchor maps, such that that the maps
\begin{align*}
G \times_{G^{(0)}} Z &\to Z \times_{H^{(0)}} Z, \quad (g, z) \mapsto (g z, z), &
Z \times_{H^{(0)}} H &\to Z \times_{G^{(0)}} Z, \quad (z, h) \mapsto (z, z h)
\end{align*}
are homeomorphisms.

An important class of Morita equivalences comes from generalized transversals. For a topological space $X$ and $x \in X$, let us denote the family of the open neighborhoods of $x$ by $\cO(x)$.

\begin{defi}[\cite{put:spiel}]
\label{def:gen-transv}
Let $G$ be a topological groupoid.
A \emph{generalized transversal} for $G$ is given by a topological space $T$ and an injective continuous map $f\colon T \to G^{(0)}$ such that:
\begin{itemize}
\item $f(T)$ meets every orbit of $G$; and
\item the \emph{condition (Ar)} for neighborhoods of $x$ and $f^{-1}(r x)$ holds for all $x \in G$, i.e.,
\begin{gather*}
\forall x \in G^{f(T)}, U_0 \in \cO(x), V_0 \in \cO(f^{-1}(r x))\quad \exists  U \in \cO(x), V \in \cO(f^{-1}(r x))\colon\\
U \subset U_0, V \subset V_0, \forall y \in U \quad \exists !\, z \in U, s( y) = s( z), r( z) \in f(V).
\end{gather*}
\end{itemize}
\end{defi}

Under the above setting, there is a finer topology on the subgroupoid $H = G|_{f(T)}$ such that $H$ is étale and Morita equivalent to $G$ \cite{put:spiel}*{Theorem 3.6}.
The equivalence is implemented by the principal bibundle $G^{f(T)}$ with a natural finer topology from that of $G$ and $T$.

\subsection{Induction and restriction for groupoid \texorpdfstring{$\KKK$}{KK}-theory}
\label{sec:ind-res-adj}

Suppose $G$ is an étale groupoid as in the previous subsection.
We denote by $\KKK^G$ the category of separable $G$-C$^*$-algebras with the equivariant KK-groups $\KKK^G(A, B)$ \cite{gall:kk} as morphisms sets.

Let $H\subseteq G$ be an open subgroupoid with the same base space $X=G^{(0)}=H^{(0)}$.
In particular $H$ is an étale groupoid over $X$, and the restriction of action gives a functor $\Res^G_H\colon \KKK^G\to \KKK^H$.
It admits a left adjoint, which is an analogue of induction, as follows. Full details will appear elsewhere in a joint work of the first named author with C.~Bönicke.

Let $B$ be an $H$-C$^*$-algebra, with structure map $\rho\colon C_0(X)\to Z(\cM(B))$.
As before, take the $C_0(G)$-algebra
\[
B' = C_0(G) \tens{s}{C_0(X)} B,
\]
where the superscript $s$ indicates that we regard $C_0(G)$ as a $C_0(X)$-algebra with respect to the source map.
This has a right action of $H$, by combination of the right translation on $C_0(G)$ and the action on $B$ twisted by the inverse map of $H$.
We then set
\[
\Ind_H^G(B) = B'\rtimes H = (C_0(G) \tens{s}{C_0(X)} B)\rtimes_{\text{diag}} H.
\]
This can be regarded as the crossed product of $B'$ by the transformation groupoid $G \rtimes H$ for the right translation action of $H$ on $G$.
Moreover, notice that $G$ also acts on $B'$ by left translation on $C_0(G)$.
This induces a continuous action of $G$ on $\Ind_H^G(B)$.

Let $A$ be a $G$-C$^*$-algebra.
Then the Haar system on $G$ induces an $A$-valued inner product on $C_c(G) \otimes_{C_0(X)} A$, and by completion we obtain a right Hilbert $A$-module $E^G_A = L^2(G, A)$.
We then have the following, see Appendix \ref{sec:app-ind-ftr} for details.

\begin{prop}
\label{prop:Mor-equiv-A-IndGG-A}
Under the above setting, $E^G_A$ implements an equivariant strong Morita equivalence between $A$ and $\Ind^G_G A$.
\end{prop}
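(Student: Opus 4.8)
The plan is to recognize $\Ind^G_G A$ as the $C^*$-algebra of a transformation groupoid coming from a free and proper action, and then read off the Morita equivalence from the groupoid equivalence theorem for crossed products. Since $H = G$, we have $\Ind^G_G A = (C_0(G) \tens{s}{C_0(X)} A)\rtimes_{\mathrm{diag}} G$, and, as noted just after the definition, this is the crossed product of the $C_0(G)$-algebra $B' = C_0(G)\tens{s}{C_0(X)}A$ by the transformation groupoid $G\rtimes G$ of the right translation action of $G$ on itself.

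First I would record that the right translation action of $G$ on itself is free and proper. Freeness is immediate, since $gg' = g$ forces $g'$ to be a unit; properness holds because the map $G\rtimes G \to G\times G$, $(g,g')\mapsto(g,gg')$, is a homeomorphism onto the closed subset $\{(g_1,g_2) : r(g_1) = r(g_2)\}$ (the standing Hausdorff assumption is used here). The orbit space is $X$, with quotient map the range map $r\colon G\to X$, and each orbit $G^x$ contains the unit $x$. Hence $G$, with $G\rtimes G$ acting on the left and the trivial groupoid $X$ acting on the right through $r$, is a groupoid equivalence.

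I would then invoke the equivalence theorem for groupoid crossed products: the $(G\rtimes G)$-algebra $B'$ descends along $r$ to a $C_0(X)$-algebra $\bar B = B'/G$, and $\Ind^G_G A = C^*(G\rtimes G, B')$ is Morita equivalent to $C^*(X,\bar B) = \bar B$, the equivalence bimodule being the completion of $C_c(G)\otimes_{C_0(X)}\bar B$. The proposition then reduces to two identifications. First, $\bar B\cong A$ as $C_0(X)$-algebras: a section of $B'$ over an orbit $G^x$ is determined, modulo the action, by its value at the unit $x$, where the fibre of $B'$ is $A_{s(x)} = A_x$. Second, under this identification the bimodule $\overline{C_c(G)\otimes_{C_0(X)}A}$, with the $A$-valued inner product induced by the Haar system, the right $A$-action, and the convolution action of $\Ind^G_G A$ on the left, is precisely $E^G_A = L^2(G,A)$; both fullness conditions follow from the Haar system and paracompactness of $G$.

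For $G$-equivariance I would carry the residual left translation action of $G$ through each step: it acts on $C_0(G)$, hence on $B'$ commuting with the right $G$-action and compatibly with the $(G\rtimes G)$-structure; it acts on the equivalence $G$ and descends on $\bar B\cong A$ to the original $G$-action; and it acts on $E^G_A$ by $(g\cdot\xi)(g_1) = g\cdot\xi(g^{-1}g_1)$, intertwining the module structures and inner products, so that $E^G_A$ becomes a $G$-equivariant imprimitivity bimodule. I expect the main obstacle to be the second identification above: relating the intrinsically defined module $L^2(G,A)$, with its left and right actions, to the bimodule produced by the abstract equivalence theorem requires pinning down all source/range conventions, the inverse twist in the diagonal action, and the Haar-system normalizations, and then verifying the imprimitivity identity $\langle\xi,\eta\rangle_{\Ind^G_G A}\,\zeta = \xi\,\langle\eta,\zeta\rangle_A$ on the dense subspace $C_c(G)\otimes_{C_0(X)}A$ together with the norm bounds needed to pass to the completion. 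Equivalently one can construct the bimodule by hand from the start — presumably the route taken in the appendix — but the same bookkeeping is the crux either way.
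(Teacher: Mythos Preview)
Your approach is correct and closely parallel to the paper's, but the paper takes a slightly different and somewhat slicker route. Rather than invoking the abstract equivalence theorem for groupoid crossed products and then computing the descent $\bar B = B'/G$, the paper observes that the structure morphism $\alpha\colon s^*A \to r^*A$ of the $G$-C$^*$-algebra $A$ intertwines the diagonal right action of $G$ on $s^*A$ with the ``untwisted'' action on $r^*A$ (right translation on $C_0(G)$, trivial on $A$). This immediately gives an \emph{isomorphism} $\Ind^G_G A \cong (r^*A)\rtimes G \cong \cK(L^2_r(G))\otimes_{C_0(X)} A$, from which the Morita equivalence with $A$ via $E^G_A = L^2_r(G)\otimes_{C_0(X)} A$ is transparent, and the residual left $G$-action visibly becomes the diagonal action on $\cK(L^2_r(G))\otimes_{C_0(X)} A$. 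Your identification $\bar B \cong A$ is exactly this $\alpha$-trick in disguise (the $G$-action on $A$ is what lets you coherently trivialize the fibers $A_{s(g)}$ along each orbit $G^x$), so the two arguments have the same content; the paper's formulation simply avoids the bookkeeping you flag as the main obstacle by producing an explicit isomorphism with compacts rather than a Morita equivalence coming from an abstract theorem.
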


Let $\kappa$ denote the inclusion homomorphism
\[
\Ind_H^G\Res^G_H(A)=(C_0(G) \tens{s}{C_0(X)} A)\rtimes H \to (C_0(G) \tens{s}{C_0(X)} A)\rtimes G = \Ind^G_G A,
\] 
induced by $H \subseteq G$ because $H$ is open, and let $\iota$ denote the map
\[
\Ind^H_H B = (C_0(H)\tens{s}{C_0(X)} B)\rtimes H \to (C_0(G)\tens{s}{C_0(X)} B)\rtimes H=\Res^G_H\Ind_H^G(B)
\]
induced by the ideal inclusion $C_0(H)\subseteq C_0(G)$.

\begin{theo}\label{thm:iradj}
The functor $\Ind^G_H$ induces a functor $\KKK^H \to \KKK^G$, and there is a natural isomorphism
\[
\KKK^G(\Ind^G_H B, A) \simeq \KKK^H(B, \Res^G_H A)
\]
defining an adjunction $(\epsilon,\eta)\colon \Ind_H^G \dashv \Res^G_H$ with counit and unit natural morphisms
\begin{align*}
\epsilon_A &= [\kappa] \otimes_{\Ind^G_G A} [E^G_A] \in \KKK^G(\Ind_H^G\Res^G_H A, A),&
\eta_B &= [\bar E^{H}_B] \otimes_{\Ind^H_H B} [\iota] \in \KKK^H(B, \Res^G_H\Ind_H^G B).
\end{align*}
\end{theo}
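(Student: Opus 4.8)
\emph{Proof strategy.} The plan has three parts: (a) show that $\Ind^G_H$ descends to an additive functor $\KKK^H\to\KKK^G$; (b) reduce the adjunction to the two triangle identities for the explicit unit $\eta$ and counit $\epsilon$ of the statement; and (c) verify those identities by Kasparov-product computations with the bimodules of Proposition~\ref{prop:Mor-equiv-A-IndGG-A} and the inclusions $\kappa$, $\iota$. For part (a), recall that $B\mapsto\Ind^G_H(B)$ is the composite of the assignment $B\mapsto B'=C_0(G)\tens{s}{C_0(X)}B$, regarded as an algebra over the transformation groupoid $G\rtimes H$ (right translation on $C_0(G)$, the twisted action on $B$), with the full crossed product $B'\mapsto B'\rtimes(G\rtimes H)$ and the resulting residual $G$-action. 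Pullback along a continuous map and full groupoid-crossed-product descent are each realized by functorial, additive, multiplicative maps on the relevant equivariant $\KKK$-groups and preserve homotopies, so their composite assembles into an additive functor $\KKK^H\to\KKK^G$, automatically compatible with the Kasparov product; one checks separately that the left-translation action of $G$ on $C_0(G)$ makes $\Ind^G_H(B)$ a genuine $G$-$C^*$-algebra functorially in $B$.

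For part (b), given natural transformations $\eta$ and $\epsilon$ set, for $x\in\KKK^G(\Ind^G_H B,A)$ and $y\in\KKK^H(B,\Res^G_H A)$,
\[
\Phi(x)=\eta_B\otimes_{\Res^G_H\Ind^G_H B}\Res^G_H(x),\qquad
\Psi(y)=\Ind^G_H(y)\otimes_{\Ind^G_H\Res^G_H A}\epsilon_A.
\]
Naturality of $\eta$ and $\epsilon$, inherited from naturality of $\kappa$, $\iota$ and of the bimodules $E^G_\bullet$, $E^H_\bullet$, makes $\Phi$ and $\Psi$ natural in both arguments, and the standard categorical argument shows they are mutually inverse precisely when the triangle identities
\[
\eta_{\Res^G_H A}\otimes_{\Res^G_H\Ind^G_H\Res^G_H A}\Res^G_H(\epsilon_A)=\id_{\Res^G_H A},\qquad
\Ind^G_H(\eta_B)\otimes_{\Ind^G_H\Res^G_H\Ind^G_H B}\epsilon_{\Ind^G_H B}=\id_{\Ind^G_H B}
\]
hold. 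This reduces the theorem to these two equalities in $\KKK$.

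For part (c), substitute $\epsilon_A=[\kappa]\otimes_{\Ind^G_G A}[E^G_A]$ and $\eta_B=[\bar E^H_B]\otimes_{\Ind^H_H B}[\iota]$, where $\bar E^H_B$ is the conjugate of the Morita bimodule of Proposition~\ref{prop:Mor-equiv-A-IndGG-A} applied to $H$, $\kappa$ is induced at the level of crossed products by $C_c(H)\subseteq C_c(G)$, and $\iota$ by the ideal inclusion $C_0(H)\subseteq C_0(G)$. Since $\Ind^G_H$ is a $\KKK$-functor it distributes over Kasparov products, so the second triangle identity amounts to identifying $\Ind^G_H[\iota]$ and $\Ind^G_H[\bar E^H_B]$ with concrete sub-bimodules of $\Ind^G_H\Res^G_H\Ind^G_H B$ and then using associativity of the product to collapse the composite to the standard $\Ind^G_H B$-module; the first identity is symmetric, relying on the fact that restricting $E^G_{\Res^G_H A}$ along $H\subseteq G$ and composing with $\iota$ recovers $E^H_{\Res^G_H A}$ up to $\bar E^H$. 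Both computations can be carried out on the dense $\ast$-subalgebras by writing out the integrated forms of the actions and of the module inner products. The step I expect to be the main obstacle is exactly this last bookkeeping: keeping track of how the two ``change of groupoid'' inclusions $\kappa$ and $\iota$ interact with the Morita bimodules $E^G$, $E^H$ and with $\Ind^G_H$, $\Res^G_H$, and exhibiting explicitly the Hilbert-module isomorphisms that make the composites telescope. The only spatial input is the fibration of $G$ over $X$ through $s$ together with the free and proper right translation action of $H$ on $G$, so the difficulty is organizational rather than conceptual. (Alternatively one could first establish the corresponding imprimitivity statement at the $C^*$-algebra level and then apply equivariant Kasparov descent, but the route through the triangle identities is the most economical given Proposition~\ref{prop:Mor-equiv-A-IndGG-A}.)
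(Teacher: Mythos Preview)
The paper does not actually prove this theorem in full: the text preceding the statement says ``Full details will appear elsewhere in a joint work of the first named author with C.~B\"onicke,'' and the appendix only sketches part (a), the functoriality. So there is no complete paper proof to compare your parts (b)--(c) against.

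For part (a), the paper takes a different and shorter route than your decomposition into pullback plus groupoid descent. It observes directly that, at the level of $C^*$-algebras, $\Ind^G_H$ preserves split extensions, respects equivariant Morita equivalence, and is compatible with homotopy; then it invokes the universal property of equivariant $\KKK$-theory to obtain a functor $\KKK^H\to\KKK^G$, and afterwards describes the effect on Kasparov cycles explicitly (form $s^*\cE$, extend $T$ to $s^*T$, apply $j_H$). Your route works too, but it requires you to track carefully how the residual left $G$-action on $C_0(G)$ survives through descent along the transformation groupoid $G\rtimes H$; the universal-property argument avoids this bookkeeping entirely.

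Your reduction in (b) to the triangle identities is the standard and correct way to set up the adjunction, and your identification of the unit and counit matches the paper's formulas. However, your part (c) is only an outline: you name which bimodule identifications are needed and correctly flag the interaction of $\kappa$, $\iota$ with $E^G$, $E^H$ under $\Ind^G_H$ and $\Res^G_H$ as the crux, but you do not carry out any of those identifications. Since this is precisely where the content of the proof lies (everything else is formal), what you have is an honest strategy rather than a proof. The paper is in the same position --- it defers this verification --- so neither source closes the argument.
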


\begin{exem}
If $G$ is the transformation groupoid $\Gamma \ltimes X$ and $H = X$, the previous theorem amounts to
\[
\KKK^{\Gamma \ltimes X}(C_0(\Gamma) \otimes B, A) \simeq \KKK^X(B, A)
\]
for any $C_0(X)$-algebra $B$ and $G$-algebra $A$, where the $\Gamma$-action on $C_0(\Gamma) \otimes B$ is given by translation on the factor $C_0(\Gamma)$.
\end{exem}

\subsection{Triangulated categories and spectral sequences}

Let us quickly recall the formalism of \cite{valmako:part1} behind the spectral sequence in \eqref{eq:part-I-main-res-formula}.
The main ingredients are triangulated categories $\cS$ and $\cT$ with countable direct sums, and exact functors $E \colon \cS \to \cT$ and $F \colon \cT \to \cS$ compatible with countable direct sums, with natural isomorphisms
\begin{equation*}
%\label{eq:adj-functors}
\cS(A, F B) \simeq \cT(E A, B) \quad (A \in \cS, B \in \cT).
\end{equation*}

Let $\cI$ denote the kernel of $F$, that is, the collection of morphisms $f$ in $\cS$ such that $F f = 0$.
An object $A \in \cT$ is said to be $\cI$-projective if any $f \in \cI(A', A'')$ induces the zero map $\cT(A, A') \to \cT(A, A'')$.
The category $\cT$ has two triangulated subcategories: the one $\langle E\cS \rangle$ generated by the image of $E$, and another $\cN_{\cI}$ consisting of the objects $N$ satisfying $F N = 0$.

Now, consider the endofunctor $L = E F$ on $\cT$.

\begin{prop}[\cite{valmako:part1}*{Proposition 3.1}]\label{prop:simplicial-res-from-adj-fs}
In the above setting, any object $A \in \cT$ admits an $\cI$-projective resolution $P_\bullet$ consisting of $P_n = L^{n+1} A$. The pair of subcategories $(\langle E\cS \rangle, \cN_\cI)$ is complementary.
\end{prop}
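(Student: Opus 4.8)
The plan is to recognise the data $E\dashv F$ as generating a comonad on $\cT$ and to run the associated bar construction, checking that it meets the Meyer--Nest axioms for an $\cI$-projective resolution. Write $\eta\colon\id_\cS\Rightarrow FE$ and $\varepsilon\colon EF\Rightarrow\id_\cT$ for the unit and counit of the adjunction underlying the isomorphism $\cS(A,FB)\cong\cT(EA,B)$. Then $L=EF$ is a comonad with counit $\varepsilon$ and comultiplication $\delta=E\eta_{F(-)}\colon L\Rightarrow L^2$, and the triangle identities give $\varepsilon_L\circ\delta=\id_L=L\varepsilon\circ\delta$ and $F\varepsilon\circ\eta_F=\id_F$, $\varepsilon_E\circ E\eta=\id_E$. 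For $A\in\cT$ this produces the augmented simplicial object $L^{\bullet+1}A\to A$ with faces $L^i\varepsilon_{L^{n-i}A}$ and degeneracies $L^i\delta_{L^{n-i}A}$; I would take for $P_\bullet$ its alternating-sum (Moore) complex, so $P_n=L^{n+1}A$ with canonical augmentation $P_0=LA\xrightarrow{\varepsilon_A}A$.

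Next I would verify the two defining properties. For $\cI$-projectivity: the adjunction gives, for every $B\in\cS$, a natural isomorphism of functors $\cT(EB,-)\cong\cS(B,F-)$ on $\cT$, so any $f\in\cI$ (i.e.\ $Ff=0$) induces the zero map $\cT(EB,A')\to\cT(EB,A'')$; hence every object of the form $EB$ is $\cI$-projective, and in particular so is $P_n=L^{n+1}A=E(FL^nA)$. The same computation shows conversely that an object is $\cI$-projective exactly when it is a retract of some $EB$, so $\cT$ has enough $\cI$-projectives. For $\cI$-exactness of $P_\bullet\to A$: since $\cI=\ker F$, it is enough to see that applying $F$ makes the complex contractible in $\cS$, and this is the standard fact that the bar resolution of a comonad arising from an adjunction is split after applying the right adjoint --- the maps $\eta_{FL^nA}\colon FL^nA\to FEFL^nA=FL^{n+1}A$, together with $F\varepsilon\circ\eta_F=\id_F$ and naturality, furnish the extra degeneracies exhibiting $F(L^{\bullet+1}A)\to FA$ as a split augmented simplicial object of $\cS$, whence $F(P_\bullet\to A)$ is contractible. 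Combining the two, $P_\bullet$ is an $\cI$-projective resolution of $A$ with $P_n=L^{n+1}A$.

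For the complementary pair $(\langle E\cS\rangle,\cN_\cI)$ I would argue as follows. Orthogonality: if $N\in\cN_\cI$, i.e.\ $FN=0$, then $\cT(EB,N)\cong\cS(B,FN)=0$ for every $B$, and since $\cN_\cI$ is localising (as $F$ is exact and compatible with countable direct sums) a dévissage over the generators $EB$ yields $\cT(X,N)=0$ for all $X\in\langle E\cS\rangle$, $N\in\cN_\cI$. Approximation triangle: let $\tilde A=\hocolim_n\Tot(P_{\le n})$ be the homotopy colimit of the finite brutal truncations of the resolution, with its natural map $\tilde A\to A$ coming from the augmentation. Each $\Tot(P_{\le n})$ is an iterated mapping cone of the objects $P_i=E(FL^iA)$, hence lies in $\langle E\cS\rangle$, and therefore so does $\tilde A$, the mapping telescope being the cone of a map between countable coproducts. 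Completing $\tilde A\to A$ to a triangle $\tilde A\to A\to\hat A\to\Sigma\tilde A$, compatibility of $F$ with countable homotopy colimits together with the contractibility of $F(P_\bullet\to A)$ gives $F(\tilde A)\xrightarrow{\ \sim\ }F(A)$, hence $F(\hat A)=0$, i.e.\ $\hat A\in\cN_\cI$. This is exactly the Meyer--Nest $\cI$-cellular approximation and establishes the second claim.

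I expect the two genuinely delicate points to be: (i) making precise, in the triangulated (rather than abelian) target $\cS$, the passage between the simplicial object $L^{\bullet+1}A$, its Moore complex, and the diagram of exact triangles encoding $\cI$-exactness --- in particular pinning down the equivalence ``$P_\bullet\to A$ is $\cI$-exact $\iff$ $F(P_\bullet\to A)$ is contractible''; and (ii) the convergence of the tower $\Tot(P_{\le n})$ and the identification of the complement of its homotopy colimit with $\cN_\cI$. Both rely essentially on the standing hypothesis that $\cT$, $E$ and $F$ are compatible with countable direct sums, which is what makes $\langle E\cS\rangle$ and $\cN_\cI$ localising and the homotopy-colimit manipulations legitimate.
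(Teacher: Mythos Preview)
The paper does not give its own proof of this proposition; it is quoted from the authors' earlier work (Proposition~3.1 of the cited paper) and stated without argument here. Your reconstruction is correct and is precisely the standard comonad/bar-resolution argument that underlies that reference and the Meyer--Nest framework: the adjunction $E\dashv F$ produces the comonad $L=EF$; each $P_n=L^{n+1}A=E(FL^nA)$ is $\cI$-projective via $\cT(EB,-)\cong\cS(B,F-)$; the extra degeneracies $\eta_{FL^\bullet A}$ split $F$ applied to the augmented bar complex, giving $\cI$-exactness; and the complementary pair is obtained from orthogonality plus the cellular-approximation triangle built as the homotopy colimit of the brutal truncations. The two points you single out as delicate---the triangulated-category meaning of ``$\cI$-exact iff $F$ makes it contractible'' and the convergence/identification of the cofibre of $\tilde A\to A$---are exactly where the work lies, and your handling of them (using compatibility of $F$ with countable direct sums to pass to localising subcategories and through the telescope) is the correct one.
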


In particular, for any $A \in \cT$, there is an exact triangle
\[
P \to A \to N \to \Sigma P
\]
satisfying $P \in \langle E\cS \rangle$ and $N \in \cN_{\cI}$.
By \cite{meyer:tri}*{Theorems 4.3 and 5.1}, we then get the following.

\begin{theo}\label{thm:spseqtri}
Let $K\colon \cT \to \Ab$ be a homological functor to the category of commutative groups, and write $K_n(A) = K(\Sigma^{-n} A)$.
With notation as above, there is a convergent spectral sequence
\[
E^r_{p q} \Rightarrow K_{p+q}(P),
\]
with the $E^2$-sheet $E^2_{p q} = H_p(K_q(P_\bullet))$.
\end{theo}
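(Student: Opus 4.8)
The plan is to recognize the asserted spectral sequence as the ABC-type spectral sequence attached to the $\cI$-projective resolution $P_\bullet$ of $A$, i.e.\ to run the general homological-algebra-in-triangulated-categories machinery of \cite{meyer:tri}*{\S4--\S5}. The only input that machinery needs is exactly what Proposition~\ref{prop:simplicial-res-from-adj-fs} provides: an $\cI$-projective resolution of every object, together with the complementarity of the pair $(\langle E\cS\rangle,\cN_\cI)$. I would then proceed in four steps: (i) reassemble $P_\bullet$ into a cellular tower, (ii) apply the homological functor $K$ to obtain an exact couple, (iii) read off the $E^1$- and $E^2$-pages, and (iv) address convergence.

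For step (i): starting from the augmented complex $\cdots\to P_1\xrightarrow{d_1}P_0\xrightarrow{d_0}A$ attached to the comonad $L=EF$ (so $P_n=L^{n+1}A$ and the $d_n$ are the usual alternating sums of co-face maps), whose structure maps are $\cI$-epimorphisms with $\cI$-projective sources, iterated mapping cones produce a tower $0=\tilde P_{-1}\to\tilde P_0\to\tilde P_1\to\cdots$ with exact triangles $\tilde P_{p-1}\to\tilde P_p\to\Sigma^p P_p\to\Sigma\tilde P_{p-1}$, built so that the composite $\Sigma^p P_p\to\Sigma\tilde P_{p-1}\to\Sigma^p P_{p-1}$ equals $\Sigma^p d_p$. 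Set $P'=\hocolim_p\tilde P_p$; the augmentations induce a map $P'\to A$ whose cone $N'$ one checks to lie in $\cN_\cI$, so that by the complementarity in Proposition~\ref{prop:simplicial-res-from-adj-fs} the pair $(P',N')$ is isomorphic to the pair $(P,N)$ appearing in the triangle $P\to A\to N\to\Sigma P$.

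For steps (ii)--(iii): applying $K$ to the triangles above and setting $D^1_{pq}=K_{p+q}(\tilde P_p)$ and $E^1_{pq}=K_{p+q}(\Sigma^p P_p)\cong K_q(P_p)$, the resulting long exact sequences assemble into an exact couple, hence a spectral sequence with these $E^1$- and $D^1$-terms. Since the connecting map of the $p$-th triangle was arranged to realize $\Sigma^p d_p$, the induced differential $d^1\colon E^1_{pq}\to E^1_{p-1,q}$ is $K_q(d_p)$, so $E^2_{pq}=H_p(K_q(P_\bullet))$, the $p$-th homology of the chain complex $\cdots\to K_q(P_1)\xrightarrow{K_q(d_1)}K_q(P_0)$, as required.

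For step (iv): since $K$ commutes with countable direct sums, applying it to the telescope triangle $\bigoplus_p\tilde P_p\xrightarrow{\id-\mathrm{shift}}\bigoplus_p\tilde P_p\to P'\to\Sigma\bigoplus_p\tilde P_p$ yields $K_n(P)\cong\varinjlim_p K_n(\tilde P_p)$, so the filtration $F_pK_n(P)=\operatorname{im}\bigl(K_n(\tilde P_p)\to K_n(P)\bigr)$ is exhaustive with $F_{-1}=0$; conditional convergence to $K_{p+q}(P)$ is then immediate. Upgrading to strong convergence amounts to killing the derived-$E^\infty$ obstruction produced by the (a priori infinitely many) differentials entering a fixed $(p,q)$-spot, which is precisely \cite{meyer:tri}*{Theorem~5.1}, again using compatibility with countable direct sums. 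I expect this convergence step to be the main obstacle; the remaining work is the bookkeeping of step (i), namely checking that the iterated cones genuinely present the differential of $P_\bullet$ as a map of towers. Equivalently, one may simply match notation and quote \cite{meyer:tri}*{Theorems~4.3 and~5.1} directly, as the statement already indicates.
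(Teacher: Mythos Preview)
Your proposal is correct and takes essentially the same approach as the paper: the paper gives no separate proof but simply invokes \cite{meyer:tri}*{Theorems~4.3 and~5.1}, and your four steps are precisely an unpacking of what those theorems say and how the hypotheses of Proposition~\ref{prop:simplicial-res-from-adj-fs} feed into them. Your final sentence already acknowledges this, so the write-up is just a more explicit version of the paper's one-line citation.
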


The Baum--Connes conjecture for groupoids allows us to compare $P$ and $A$. We are going to use the following fundamental result proved by J.-L. Tu.

\begin{theo}[{\cite{tu:moy}}]\label{thm:tu}
Suppose that $G$ has the Haagerup property.
Then there exists a proper $G$-space $Z$ with an open surjective structure morphism $Z \to X$, and a $G \ltimes Z$-C$^*$-algebra $P$ which is a continuous field of nuclear C$^*$-algebras over $Z$, and such that $P\simeq C_0(X)$ in $\KKK^G$. 
\end{theo}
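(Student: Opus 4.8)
The statement is the strong (Dirac--dual-Dirac) form of the Baum--Connes conjecture for $G$, and the plan is to follow the Higson--Kasparov method, transplanted to the groupoid setting. The first step is to convert the Haagerup property into geometry: it says exactly that $G$ admits a continuous, proper, conditionally negative-definite function, equivalently --- via a fibrewise GNS construction --- that $G$ acts continuously, properly and affinely isometrically on a continuous field $\cE=(\cE_x)_{x\in X}$ of affine Euclidean spaces over $X$ with all fibres nonempty. Properness of the defining cocycle is precisely the assertion that the transformation groupoid $G\ltimes\cE$ is proper, so one takes $Z:=\cE$; its structure map $\cE\to X$ is open and surjective. Using that $X$ is second countable and paracompact one also fixes an exhaustion of $\cE$ by $G$-invariant subfields $\cE^{(0)}\subset\cE^{(1)}\subset\cdots$ with finite-dimensional fibres, so that the infinite-dimensional objects below appear as limits of finite-dimensional ones.

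Next one builds the coefficient algebra together with two $\KKK^G$-classes. To $\cE$ one associates the $\Z/2$-graded algebra $\cC(\cE)$ of $C_0$-sections of the bundle over $\cE$ whose fibre at $e\in\cE_x$ is the complexified Clifford algebra of $\cE_x$ --- realized as the inductive limit $\varinjlim_n \cC(\cE^{(n)})$ of its finite-dimensional pieces, with the connecting maps the Higson--Kasparov ``Bott'' inclusions --- and sets $\mathcal{A}=\cS\hot\cC(\cE)$ with $\cS=C_0(\R)$ graded by parity. The affine isometric action of $G$ on $\cE$ induces a $G$-action on $\mathcal{A}$; since that action is proper, $\mathcal{A}$ is a $G\ltimes\cE$-algebra, and by construction it is a continuous field over $\cE$ whose fibres are tensor products of $\cS$ with Clifford algebras of Euclidean spaces, hence nuclear. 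Thus $P:=\mathcal{A}$ is of the type demanded by the statement, and what remains is to exhibit a $\KKK^G$-equivalence $\mathcal{A}\simeq C_0(X)$. This is done with the Bott element $\beta\in\KKK^G(C_0(X),\mathcal{A})$, assembled from the Bott generators of the finite-dimensional pieces $\cS\hot\Cliff(\cE^{(n)}_x)$ along the exhaustion and equivariant by virtue of the affine structure, and the Dirac element $\alpha\in\KKK^G(\mathcal{A},C_0(X))$, represented by the family of Dirac operators on the fibres $\cE_x$. One of the two composites, $\beta\otimes_{\mathcal{A}}\alpha=1_{C_0(X)}$, is Kasparov's finite-dimensional Bott periodicity applied fibrewise and pushed to the inductive limit, and is essentially routine.

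The hard part is the opposite identity $\alpha\otimes_{C_0(X)}\beta=1_{\mathcal{A}}$, which genuinely requires infinite dimensions: it has no finite-dimensional model, and this is where the Higson--Kasparov ``rotation'' homotopy is needed --- an explicit operator homotopy over $\cE\times_X\cE$ that uses the linear structure of the fibres to rotate one copy of $\cE$ onto the other, connecting the Kasparov product $\alpha\otimes\beta$ to the identity, together with the attendant analytic estimates. The main obstacle, compared to the case of a discrete group, is to carry all of this out within the groupoid category: everything must be made strictly continuous over $X$ (and over $\cE$), the estimates controlling decay ``at infinity'' must be uniform enough to keep the homotopy inside $C_0$-section algebras on the non-cocompact space $\cE$, and equivariance must be arranged with respect to the Haar system rather than a counting measure. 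Granting these estimates, $\alpha$ and $\beta$ are mutually inverse, so $\mathcal{A}\simeq C_0(X)$ in $\KKK^G$; taking $Z=\cE$ and $P=\mathcal{A}$ then yields the theorem. This is the technical core of \cite{tu:moy}, and I would simply cite it rather than reproduce the estimates.
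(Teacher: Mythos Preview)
Your sketch is a reasonable outline of Tu's argument, but the paper itself does not prove this statement at all: it is stated with the attribution \cite{tu:moy} and used as a black box. No proof is given or required in the paper; the theorem functions purely as a cited input to Proposition~\ref{prop:base-space-generation} and its consequences. So while your exposition of the Higson--Kasparov strategy is broadly accurate as background, it goes well beyond what the paper does, and your concluding remark---that you would simply cite \cite{tu:moy}---is in fact exactly the paper's approach.
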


For the case of $\cT = \KKK^G$, $\cS = \KKK^X$, $E = \Ind^G_X$, and $F = \Res^G_X$, we have the following.

\begin{prop}[\cite{valmako:part1}]\label{prop:base-space-generation}
Let $G$ be an étale groupoid with torsion free stabilizers and satisfying the conclusions of Theorem \ref{thm:tu}.
Any separable $G$-C$^*$-algebra $A$ belongs to the localizing subcategory generated by the objects $\Ind^G_X B$ for $C_0(X)$-algebras $B$.
\end{prop}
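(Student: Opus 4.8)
The plan is to show that $\cP=\KKK^G$, where $\cP$ denotes the localizing subcategory of $\KKK^G$ generated by the algebras $\Ind^G_X(B)$ with $B$ a separable $C_0(X)$-algebra. I would argue in two stages: first use Theorem~\ref{thm:tu} to replace an arbitrary coefficient algebra by one that is \emph{proper}, i.e.\ carries an action of $G\ltimes Z$ for a proper $G$-space $Z$; then show that every proper $G$-C$^*$-algebra lies in $\cP$.

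For the first stage, take $Z$ and $P$ as in Theorem~\ref{thm:tu}, so $P$ is a $G\ltimes Z$-algebra, a continuous field of nuclear C$^*$-algebras over $Z$, with $P\simeq C_0(X)$ in $\KKK^G$. Regarding $P$ and $C_0(X)$ as $C_0(X)$-algebras via the anchor $p\colon Z\to X$, both are $C_0(X)$-nuclear, so one may tensor the $\KKK^G$-isomorphism $C_0(X)\simeq P$ with $\id_A$ over $C_0(X)$ to obtain $A\simeq A\hatotimes_{C_0(X)}P$ in $\KKK^G$ for every separable $G$-C$^*$-algebra $A$. Since $A\hatotimes_{C_0(X)}P\cong (C_0(Z)\hatotimes_{C_0(X)}A)\hatotimes_{C_0(Z)}P$ carries an action of $G\ltimes Z$ and $Z$ is proper, it then suffices to prove that every $G\ltimes Z$-algebra $D$, with $Z$ a proper $G$-space, belongs to $\cP$.

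For that, I would first reduce to $G$-compact $Z$: as $Z$ is $\sigma$-compact and $G$ acts properly, one can pull back a proper map $G\backslash Z\to[0,\infty)$ to a $G$-invariant proper function on $Z$ and take sublevel sets $Z_n$; the ideals $D_n=\overline{C_0(Z_n)D}$, which are $G\ltimes Z_n$-algebras, have inductive limit $D$, so $D\simeq\hlim{n}D_n$ and, $\cP$ being closed under countable homotopy colimits, one may assume $G\backslash Z$ compact. Next, torsion-freeness of the stabilizers forces freeness of the action: for $z\in Z$ the group $\{g\in G:gz=z\}$ is a subgroup of $G^{p(z)}_{p(z)}$ that is compact (properness) and discrete (étaleness), hence finite, hence trivial. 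A free proper action admits local slices, so each point of $Z$ has a $G$-invariant open neighbourhood of the form $G\cdot V$ with $V$ open satisfying $gV\cap V\neq\emptyset\Rightarrow g\in G^{(0)}$; such $G\cdot V$ is $G$-equivariantly homeomorphic to $\{(g,v):v\in V,\ s(g)=p(v)\}$ with $G$ acting by left translation, and by $G$-compactness finitely many such sets $U_1,\dots,U_m$ cover $Z$. The imprimitivity theorem identifies every $G\ltimes U_i$-algebra --- and, more generally, every algebra over a $G$-invariant locally closed subset of $U_i$ --- with $\Ind^G_X$ of its restriction along the slice, so $D|_{U_i}$ and $D|_{U_k\setminus(U_1\cup\dots\cup U_{k-1})}$ are generators of $\cP$; an induction on $k$ along the extensions
\[
0\to D|_{U_1\cup\dots\cup U_{k-1}}\to D|_{U_1\cup\dots\cup U_k}\to D|_{U_k\setminus(U_1\cup\dots\cup U_{k-1})}\to 0
\]
then gives $D\in\cP$, hence $A\in\cP$.

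The step I expect to be the main obstacle is the last one: for the displayed extensions to be exact triangles in $\KKK^G$ they must be $G$-equivariantly semisplit, which is not automatic since $A$ need not be nuclear. The point is that $P$, being a continuous field of nuclear C$^*$-algebras, is nuclear, and a $C_0(X)$-equivariant form of the Choi--Effros lifting theorem provides a completely positive $C_0(X)$-linear section of the corresponding restriction map $P|_{W}\twoheadrightarrow P|_{W'}$; averaging it against the cutoff function of the proper groupoid $G\ltimes Z$ makes it $G$-equivariant, and tensoring with $\id_A$ over $C_0(X)$ transports it to the required section for $D=A\hatotimes_{C_0(X)}P$. Beyond this, verifying the local slice structure of free proper groupoid actions and the imprimitivity identification of the pieces is where most of the remaining work lies.
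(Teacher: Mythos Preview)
This proposition is cited from \cite{valmako:part1} and carries no proof in the present paper, so there is nothing here to compare your argument against directly. That said, your outline is the standard Meyer--Nest/Tu strategy and is correct: tensor with the proper Dirac element $P$ from Theorem~\ref{thm:tu} to reduce to proper coefficients; use torsion-freeness of stabilizers together with étaleness to upgrade properness of the $G$-action on $Z$ to freeness; after passing to a $G$-compact piece, cover $Z$ by finitely many slice tubes $U_i=G\cdot V_i\cong G\times_X V_i$; identify each $D|_{U_i}$ and each successive subquotient $D|_{U_k\setminus(U_1\cup\dots\cup U_{k-1})}$ with an algebra of the form $\Ind^G_X(\,\cdot\,)$; and climb the filtration via exact triangles. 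You have also correctly isolated the one genuine technical obstacle---equivariant semisplitting of those extensions so that they yield triangles in $\KKK^G$---and sketched the right remedy (Choi--Effros for the nuclear $P$, averaging against a cutoff for the proper groupoid $G\ltimes Z$, then tensoring with $\id_A$ over $C_0(X)$). This is precisely the shape of argument one expects \cite{valmako:part1} to contain.
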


\subsection{Smale spaces}

Next let us recall basic definitions on Smale spaces, mostly following \cite{put:HoSmale}.

\begin{defi}
A \emph{Smale space} $(X,\phi)$ is given by a compact metric space $(X, d)$ and a homeomorphism $\phi\colon X \to X$ such that:
\begin{itemize}
\item there exist constant $0 < \ep_{X}$ and a continuous map
\[
\{(x,y) \in X \times X \mid d(x,y) \leq \ep_{X}\} \to X, \qquad (x, y) \mapsto [x, y]
\]
satisfying the \emph{bracket axioms}:
\begin{align*}
[ x, x ] &= x,&
[ x, [ y, z ] ] &= [ x, z],\\
[ [ x, y ], z ] &= [ x,z ],&
\phi([x, y]) &= [ \phi(x), \phi(y)],
\end{align*}
for any $x, y, z$ in $X$ when both sides are defined.
\item there exists $0<\lambda < 1$ satisfying the \emph{contraction axioms}:
\begin{align*}
[x,y] &=y \Rightarrow d(\phi(x),\phi(y)) \leq \lambda d(x,y),\\
[x,y] &=x \Rightarrow d(\phi^{-1}(x),\phi^{-1}(y)) \leq \lambda d(x,y),
\end{align*}
whenever the brackets are defined.
\end{itemize}
\end{defi}

Suppose $x \in X$ and $ 0 < \ep \leq \ep_{X}$.
We define the \emph{local stable sets} and the \emph{local unstable sets} around $x$ as
\begin{align*}
 X^{s}(x, \ep) & = \{ y \in X \mid d(x,y) < \ep, [y,x] =x \}, \\ 
X^{u}(x, \ep) & = \{ y \in X \mid d(x,y) < \ep, [x,y] =x \}.
\end{align*}
The bracket $[x,y]$ can be characterized as the unique element of $X^s(x,\epsilon) \cap X^u(y, \epsilon)$ when $2 d(x, y) < \epsilon < \epsilon_X$.
This means that, locally, we can choose coordinates so that (see \cite{put:notes}*{Theorem 4.1.4})
\[
[\mhyph,\mhyph]\colon X^u(x,\epsilon)\times X^s(x,\epsilon)\to X
\]
is a homeomorphism onto an open neighborhood of $x\in X$ for all $\epsilon$ smaller than some $\epsilon^\prime_X\leq \epsilon_X/2$.

A point $x\in X$ is called \emph{non-wandering} if for all open sets $U\subseteq X$ containing $x$ there exists $N\in \N$ with $U\cap \phi^N(U)\neq \emptyset$.
Periodic points are dense among the non-wandering points \cite{put:notes}*{Theorem 4.4.1}.
We say that $X$ is non-wandering if any point of $X$ is non-wandering.
\emph{We will set a blanket assumption that Smale spaces are non-wandering}.
This holds in virtually all interesting examples. 

It can be shown that any non-wandering Smale space $(X,\phi)$ can be partitioned in a finite number of $\phi$-invariant clopen sets $X_1,\dots,X_n$, in a unique way, such that $(X_k,\phi|_{X_k})$ is \emph{irreducible} for $k=1,\dots,n$ \cite{put:funct}. Irreducibility means that for every (ordered) pair $U,V$ of nonempty open sets in $X$, there exists $N\in \N$ such that $U\cap \phi^N(V)\neq \emptyset$.

\begin{exem}
\label{ex:SFT}
A fundamental example of Smale space is given by \emph{shift of finite type} (or \emph{topological Markov shift}).
This can be modeled by finite directed graphs, as follows.
A \emph{directed graph} $\cG=(\cG^0,\cG^1,i,t)$ consists of finite sets $\cG^0$ and $\cG^1$, called vertices and edges, and maps $i, t\colon \cG^1 \to \cG^0$.
Thus, each edge $e \in \cG^1$ represents a directed arrow from $i(e) \in \cG^0$ to $t(e) \in \cG^0$.
Then a shift of finite type $(\Sigma_\cG,\sigma)$ is defined as the space of bi-infinite sequences of paths
\[
\Sigma_\cG=\{e=(e_k)_{k\in\mathbb{Z}}\in (\cG^1)^\Z \mid t(e_k)=i(e_{k+1})\},
\]
together with the left shift map $\sigma(e)_k=e_{k+1}$.
The metric is defined by $d(e,f) = 2^{-n-1}$ for $e \neq f$, where $n$ is the largest integer such that $e_k = f_k$ for $\absv{k} \le n$.
In particular, $d(e,f) \le 2^{-1}$ means that $e,f$ share the central edge, i.e., $e_0=f_0$. Then we can define
\[
[e,f]=(\dots,f_{-2},f_{-1},e_0,e_1,e_2,\dots).
\]
The pair $(\Sigma_\cG,\sigma)$ is a Smale space with constant $\epsilon=1/2$.
\end{exem}

We are particularly interested in groupoids encoding the \emph{unstable equivalence relation} of Smale spaces.
Given $x,y\in X$, we say they are 
\begin{itemize}
\item \emph{stably equivalent}, denoted by $x \sim_s y$, if
\[
\lim_{n\to \infty} d(\phi^{n}(x),\phi^{n}(y)) = 0;
\]
\item \emph{unstably equivalent}, $x \sim_u y$, if
\[
\lim_{n\to \infty} d(\phi^{-n}(x),\phi^{-n}(y)) = 0.
\]
\end{itemize}
We denote the graphs of these relations as
\begin{align}\label{eq:grpsmale}
R^s(X,\phi)&= \{(x,y) \in X\times X \mid x \sim_s y\},\\\notag
R^u(X,\phi)&= \{(x,y) \in X\times X \mid y \sim_u y\},
\end{align}
and treat them as groupoids, with source, range, and composition maps given by
\begin{align*}
s(x,y) &=y,&
r(x,y) &=x,&
(x,y) \circ (w,z) &= (x,z)\quad \text{if $y=w$.}
\end{align*}
The class of $x\in X$ under the stable (resp.~unstable) equivalence relation is called the \emph{global stable} (resp.~\emph{unstable}) \emph{set}, and is denoted by $X^s(x)$ (resp.~$X^u(x)$).
They satisfy the following identities:
\begin{align}\label{eq:incrun}
X^s(x)=&\bigcup_{n\geq 0} \phi^{-n}(X^s(\phi^n(x),\epsilon)),\\
\label{eq:incrun2}
X^u(x)=&\bigcup_{n\geq 0} \phi^{n}(X^s(\phi^{-n}(x),\epsilon)),
\end{align}
for any fixed $\epsilon < \epsilon_X$.

This leads to locally compact Hausdorff topologies on the above groupoids \cite{put:algSmale}: consider the induced topology on
\[
G_s^n = \{ (x, y) \mid y \in \phi^{-n}(X^s(\phi^n(x),\epsilon))\},\quad G_u^n = \{ (x, y) \mid y \in \phi^{n}(X^u(\phi^{-n}(x),\epsilon))\}
\]
as subsets of $X \times X$.
Then $R^u(X, \phi)$ is the union of the increasing sequence $(G_u^n)_n$, with \emph{open} inclusion maps $G_u^n \to G_u^{n+1}$.
This makes $G = R^u(X, \phi)$ a locally compact Hausdorff groupoid.
Moreover, the \emph{Bowen measure} defines a Haar system on $G$.
Of course, analogous considerations make $R^s(X, \phi)$ a locally compact Hausdorff groupoid with a Haar system.

To get an étale groupoid, we can take a transversal $T \subset X$ and restrict the base space to $T$, putting $G|_T = G^T_T$.
A convenient choice is to take $T = X^s(x)$ for some $x \in X$, with the inductive limit topology from \eqref{eq:incrun}, which is an example of generalized transversal. Slightly generalizing this, for a subset $P\subseteq X$, we write $X^s(P)$ meaning the union of all $X^s(x)$'s for $x\in P$, with the disjoint union topology. Analogously we define $X^u(P)=\bigcup_{x\in P}X^u(x)$.
Let us put
\begin{align}\label{eq:R-s-and-R-u-def}
R^s(X,P)&= R^s(X,\phi)|_{X^u(P)},&
R^u(X,P)&= R^u(X,\phi)|_{X^s(P)}.
\end{align}

\begin{theo}[\cite{put:spiel}*{Theorem 1.1}]
\label{rem:ame}
The groupoids in \eqref{eq:R-s-and-R-u-def} are amenable.
\end{theo}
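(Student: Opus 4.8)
The plan is to reduce the amenability assertion to the case of shifts of finite type, where the relevant groupoids are manifestly AF, and then to transfer amenability along a resolving factor map. By the symmetry $\phi\mapsto\phi^{-1}$, which interchanges the stable and unstable pictures, it suffices to treat $R^u(X,P)=R^u(X,\phi)|_{X^s(P)}$. First I would note that $X^s(P)$, equipped with the inductive limit topology of \eqref{eq:incrun}, is a transversal for $R^u(X,\phi)$, so that $R^u(X,P)$ is Morita equivalent to the locally compact Hausdorff groupoid $R^u(X,\phi)$ with its Bowen-measure Haar system, or, more precisely, to the reduction of the latter to the $R^u$-saturation of $X^s(P)$. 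Since amenability of a locally compact groupoid with Haar system is a Morita invariant and is inherited by reductions to invariant subsets of the unit space \cite{renroch:amgrp}, it is enough to show that $R^u(X,\phi)$ is amenable; and by the decomposition of $(X,\phi)$ into finitely many irreducible clopen components, I may assume $(X,\phi)$ is irreducible.

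Next, for a shift of finite type $(\Sigma,\sigma)$ and $e\in\Sigma$, the transversal $\Sigma^s(e)$ is totally disconnected, and a direct computation with the metric of Example \ref{ex:SFT} identifies $R^u(\Sigma,\sigma)|_{\Sigma^s(e)}$ with the equivalence relation on $\Sigma^s(e)$ in which two sequences are related precisely when they differ in finitely many coordinates. This groupoid is the increasing union of the open subgroupoids $E_n=\{(f,g):f_k=g_k\text{ for }|k|>n\}$, each of which is elementary: since the graph defining $\Sigma$ is finite, every $E_n$-class is finite, and $E_n$ splits into a finite disjoint union of transitive pieces over a totally disconnected base. Thus $R^u(\Sigma,\sigma)|_{\Sigma^s(e)}$ is an AF groupoid, hence amenable, and so is $R^u(\Sigma,\sigma)$ by the previous paragraph.

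For the transfer step, I would use that an irreducible Smale space admits a $u$-bijective factor map $\pi\colon(\Sigma,\sigma)\to(X,\phi)$ from a shift of finite type (by the classical constructions of Bowen, refined by Putnam); thus $\pi$ is a finite-to-one open surjection restricting to a homeomorphism $\Sigma^u(y)\to X^u(\pi(y))$ for every $y$. Form the pullback groupoid $\mathcal R=\{(y,y')\in\Sigma\times\Sigma:\pi(y)\sim_u\pi(y')\}$ with the Haar system pulled back from $R^u(X,\phi)$; then $\Sigma$ is a Morita equivalence bibundle between $\mathcal R$ and $R^u(X,\phi)$, so it remains to see that $\mathcal R$ is amenable. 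Now $R^u(\Sigma,\sigma)$ is an open subgroupoid of $\mathcal R$ with the same unit space, and $u$-bijectivity gives $\pi^{-1}(X^u(x))=\bigsqcup_{y\in\pi^{-1}(x)}\Sigma^u(y)$, so that each $\mathcal R$-class consists of finitely many $R^u(\Sigma,\sigma)$-classes; equivalently, $\mathcal R=R^u(\Sigma,\sigma)\cdot F$ with $F=\{(y,y'):\pi(y)=\pi(y')\}$ a proper groupoid. Amenability of $\mathcal R$ should then follow from the permanence properties of amenable groupoids in \cite{renroch:amgrp} — morally, by averaging an approximately invariant net for the AF groupoid $R^u(\Sigma,\sigma)$ over the finite fibres of $\pi$ — and therefore $R^u(X,\phi)$, hence $R^u(X,P)$, is amenable. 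This recovers \cite{put:spiel}*{Theorem 1.1}.

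I expect the last step to be the real obstacle. The map $\pi$ is a homeomorphism only \emph{leafwise}, and it is genuinely many-to-one; at the points where it fails to be locally injective the fibre cardinalities jump, so the naive fibrewise averaging of invariant means is not visibly continuous. Making the transfer rigorous requires either choosing an approximately invariant net on $R^u(\Sigma,\sigma)$ that is compatible with the $\pi$-fibres, or a hands-on verification that $\mathcal R$ — which differs from the amenable groupoid $R^u(\Sigma,\sigma)$ only through the proper, fibrewise-finite relation $F$ — inherits amenability; this is exactly the technical core of the cited result.
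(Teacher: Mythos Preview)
The paper does not prove this theorem; it is quoted from \cite{put:spiel}. Your proposal attempts an independent argument, and the strategy of pulling back to a shift of finite type is natural, but there is a genuine gap earlier than the one you flag.

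You assert that an irreducible Smale space admits a $u$-bijective factor map from a shift of finite type. This is false in general. Bowen's Markov-partition construction yields a finite-to-one factor $\pi\colon(\Sigma,\sigma)\to(X,\phi)$, but it is neither $s$- nor $u$-bijective unless $X$ is itself zero-dimensional. Putnam's lifting theorem \cite{put:lift} produces a $u$-bijective factor from an intermediate Smale space with totally disconnected \emph{unstable} sets, but that intermediate space is a shift of finite type only when $X$ already has totally disconnected unstable sets --- compare Theorem~\ref{thm:shiftfactor}. For a hyperbolic toral automorphism the global unstable leaves are copies of $\R$, whereas the global unstable sets in any shift of finite type are locally compact and totally disconnected; since a $u$-bijective factor map restricts to a homeomorphism on each global unstable leaf, no such map from a shift of finite type can exist here.

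Without $u$-bijectivity your decomposition $\mathcal R=R^u(\Sigma,\sigma)\cdot F$ collapses: given $(y,y')\in\mathcal R$, producing $y''\sim_u y$ with $\pi(y'')=\pi(y')$ is precisely the surjectivity of $\pi$ on unstable leaves. So the argument breaks down before the transfer step you worry about. In the special setting of this paper (totally disconnected stable sets) one does have an $s$-bijective SFT cover, but then the inclusion runs the other way --- $R^u(\Sigma,P)$ sits as an open subgroupoid of $R^u(X,f(P))$ with the same unit space --- and one must push amenability upward across that inclusion rather than transport it along a Morita equivalence; this is a different problem from the one you set up. The proof in \cite{put:spiel} does not pass through symbolic covers at all; it works directly with the hyperbolic structure.
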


\subsection{Maps of Smale spaces}\label{subsec:smalemaps}

Given two Smale spaces $(X,\phi)$ and $(Y,\psi)$, a continuous and surjective map $f\colon X \to Y$ is called a \emph{factor map} if it intertwines the respective self-maps, i.e.,
\begin{equation}\label{eq:intertw}
f\circ \phi=\psi\circ f.
\end{equation}

Equation \eqref{eq:intertw} is enough to guarantee that $f$ preserves the local product structure.
In particular, there is $\epsilon_f>0$ such that both $[x_1,x_2]$ and $[f(x_1),f(x_2)]$ are defined and $f([x_1,x_2])=[f(x_1),f(x_2)]$ for all $x_1,x_2$ with $d(x_1,x_2)< \epsilon_f$.

\begin{prop}[\cite{put:notes}*{Lemma 5.1.11}]\label{prop:finun}
Let $f \colon X \to Y$ be a factor map of Smale spaces, and $y_0\in Y$ be a point whose preimage is finite.
Then, given $\epsilon>0$, there exists $\delta = \delta(z, \epsilon) > 0$ such that
\[
f^{-1}(Y^u(y_0,\delta))\subseteq\bigcup_{i=1}^N X^u(x_i,\epsilon)
\]
where we write $f^{-1}(y_0) = \{x_1,\dots, x_N\}$.
\end{prop}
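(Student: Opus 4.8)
The plan is to argue by contradiction, using compactness of $X$ together with continuity of $f$ and of the bracket map. Suppose the conclusion fails for some fixed $\epsilon>0$. Then for every $n\in\N$ we may pick $w_n\in X$ with $f(w_n)\in Y^u(y_0,1/n)$ but $w_n\notin\bigcup_{i=1}^N X^u(x_i,\epsilon)$. Since $X$ is compact, after passing to a subsequence we have $w_n\to w$ for some $w\in X$; because $d(y_0,f(w_n))<1/n\to 0$ and $f$ is continuous, $f(w)=y_0$, so $w=x_i$ for some index $i$, which we fix from now on.

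The key step is to upgrade the convergence $w_n\to x_i$ to the \emph{local unstable relation} $[x_i,w_n]=x_i$ for all large $n$; once this is done, together with $d(x_i,w_n)<\epsilon$ for large $n$ it yields $w_n\in X^u(x_i,\epsilon)$, the desired contradiction. To establish it, note first that for $n$ large enough $d(x_i,w_n)<\min(\epsilon,\epsilon_f,\epsilon_X)$, so $[x_i,w_n]$ is defined and, using \eqref{eq:intertw} and the compatibility of $f$ with brackets, $f([x_i,w_n])=[f(x_i),f(w_n)]=[y_0,f(w_n)]=y_0$, where the last equality is the defining property of $Y^u(y_0,1/n)$. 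Hence $[x_i,w_n]\in f^{-1}(y_0)=\{x_1,\dots,x_N\}$. On the other hand, by continuity of the bracket $[x_i,w_n]\to[x_i,x_i]=x_i$, so choosing $r>0$ strictly smaller than $\min_{j\neq i}d(x_i,x_j)$ — which is positive precisely because the fibre is finite — we conclude $[x_i,w_n]=x_i$ for all large $n$. This contradicts $w_n\notin X^u(x_i,\epsilon)$. Finally, the $\delta$ promised in the statement can be taken to be any value below $1/n_0$, where $n_0$ is a stage beyond which the above forces every such $w_n$ into $\bigcup_i X^u(x_i,\epsilon)$.

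The only genuinely delicate point is the one just highlighted: converting ``$w_n$ is close to $x_i$'' into ``$w_n$ is locally unstably equivalent to $x_i$''. This is exactly where finiteness of $f^{-1}(y_0)$ enters — it supplies a uniform separation radius $r$ about $x_i$ inside the fibre, so that the point $[x_i,w_n]$, which is forced to lie in the fibre and to converge to $x_i$, must eventually equal $x_i$. Everything else (existence of $\epsilon_f$, continuity of $[\mhyph,\mhyph]$, the characterizations of $X^u(\cdot,\cdot)$ and $Y^u(\cdot,\cdot)$) has been recorded in the preliminaries, so no further input is needed.
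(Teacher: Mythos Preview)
Your argument is correct and is essentially the standard proof of this fact; the paper itself does not supply a proof but cites Putnam's lecture notes \cite{put:notes}*{Lemma 5.1.11}, where the same compactness-and-contradiction argument appears. The only remark is that your final sentence about choosing $\delta<1/n_0$ is superfluous: once the contradiction is reached, the negation of ``for all $\delta>0$ the inclusion fails'' already yields the required $\delta$, so there is no need to extract it from the sequence.
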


The assumption is automatically satisfied when $(Y, \psi)$ is non-wandering.

\begin{defi}
A factor map $f\colon (X,\phi)\to (Y,\psi)$ is called \emph{$s$-resolving} if it induces an injective map from $X^s(x)$ to $Y^s(f(x))$ for each $x \in X$.
It is called \emph{$s$-bijective}, if moreover these induced maps are bijective.
\end{defi}

\begin{theo}[\cite{put:lift}*{Corollary 3}]\label{thm:shiftfactor}
Let $(X,\phi)$ be an irreducible Smale space such that $X^s(x,\epsilon)$ is totally disconnected for every $x\in X$ and $0<\epsilon<\epsilon_X$. Then there is an irreducible shift of finite type $(\Sigma,\sigma)$ and an $s$-bijective factor map $f\colon (\Sigma,\sigma) \to (X,\phi)$.
\end{theo}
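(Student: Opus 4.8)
The plan is to build the cover in two stages: first produce \emph{some} finite-to-one factor map from an irreducible shift of finite type by classical symbolic dynamics, and then repair its behaviour along stable sets using the hypothesis that the local stable sets are totally disconnected. For the first stage, recall that every irreducible Smale space admits a Markov partition, i.e.\ a finite cover of $X$ by ``rectangles'' $R_i=[R_i^u,R_i^s]$ with pairwise disjoint interiors whose boundaries are carried correctly by $\phi$ and $\phi^{-1}$ (this is classical; see \cite{put:notes}). Coding points of $X$ by their itineraries through the $\phi$-translates of this partition yields an irreducible shift of finite type $(\Sigma_0,\sigma_0)$ and a continuous surjective finite-to-one factor map $\pi_0\colon(\Sigma_0,\sigma_0)\to(X,\phi)$; in general $\pi_0$ is neither $s$- nor $u$-resolving, since it identifies points lying over the stable boundaries of the rectangles.

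For the second stage I would use the hypothesis that $X^s(x,\epsilon)$ is totally disconnected for every $x$. Concretely, one can arrange the stable slices $R_i^s$ to be relatively clopen in the ambient local stable sets, so that the partition acquires no stable boundary and the resulting cover becomes $s$-resolving; more structurally, one applies the lifting theorem of \cite{put:lift} (in the appropriate $s/u$ form) to $\pi_0$, producing a Smale space $(\hat Y,\hat\psi)$, a $u$-resolving factor map $\rho\colon\hat Y\to\Sigma_0$, and an $s$-resolving factor map $p\colon\hat Y\to X$ with $\pi_0\circ\rho=p$. I would then note that $\hat Y$ is totally disconnected: its local unstable sets embed, via $\rho$, into the totally disconnected local unstable sets of the SFT $\Sigma_0$, while its local stable sets embed, via $p$, into the sets $X^s(x,\epsilon)$, which are totally disconnected by hypothesis; since a Smale space is locally the bracket product of a local unstable set and a local stable set, $\hat Y$ is zero-dimensional, and a totally disconnected Smale space is topologically conjugate to a shift of finite type.

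It remains to pass to an irreducible piece and to upgrade ``$s$-resolving'' to ``$s$-bijective''. Writing $\hat Y=\bigcup_j\hat Y_j$ for the (clopen) decomposition into irreducible components, $X$ is the finite union of the closed $\phi$-invariant sets $p(\hat Y_j)$, so by the Baire category theorem some $p(\hat Y_j)$ has non-empty interior; irreducibility of $X$ then forces $p(\hat Y_j)=X$, and we set $\Sigma:=\hat Y_j$, $f:=p|_\Sigma$. Now $\Sigma$ is a clopen sub-Smale-space of an SFT, hence an irreducible SFT, and $f$ is $s$-resolving onto $X$. For surjectivity of $f$ on global stable sets one uses $X^s(f(e))=\bigcup_{n\ge 0}\phi^{-n}\bigl(X^s(\phi^n f(e),\epsilon)\bigr)$ together with $f\circ\sigma=\phi\circ f$ and the fact that $d(\phi^n z,\phi^n f(e))\to 0$ for $z\sim_s f(e)$, reducing to the local statement that $f\bigl(\Sigma^s(w,\epsilon')\bigr)$ is a clopen neighbourhood of $f(w)$ in $X^s(f(w),\epsilon'')$ at small scales; this in turn holds because $f$ restricted to a compact local stable set is a continuous injection onto a closed subset of a totally disconnected local stable set.

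The main obstacle is the whole second stage, and within it two points need real care. First, showing that the corrected cover is genuinely a shift of finite type: the zero-dimensionality argument is short, but the conjugacy ``zero-dimensional Smale space $\Rightarrow$ SFT'', together with the irreducibility of the extracted component, must be handled carefully with respect to the metric. Second, and more serious, obtaining actual \emph{surjectivity} on global stable sets rather than mere injectivity --- this is exactly where total disconnectedness of the stable sets is indispensable, since for a system with positive-dimensional local stable sets (e.g.\ a hyperbolic toral automorphism) no $s$-bijective SFT cover can exist at all. One therefore either extracts this surjectivity from the explicit fibered-product form of $\hat Y$ in \cite{put:lift}, where the stable sets of $\hat Y$ are engineered to match those of $X$, or carries out the clopen-Markov-partition construction by hand and checks directly that every point of each local stable set of $X$ is attained.
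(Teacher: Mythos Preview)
The paper does not give a proof of this theorem: it is quoted verbatim from \cite{put:lift}*{Corollary~3} and used as input, so there is nothing in the present paper to compare against. Your outline is a faithful reconstruction of the argument in that reference --- Markov partition, lifting to an $s$-resolving cover, zero-dimensionality of the lift via the bracket decomposition, and extraction of an irreducible component.

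One simplification: your final paragraph on upgrading ``$s$-resolving'' to ``$s$-bijective'' is more elaborate than necessary. As the paper itself records just after the definition of $s$-bijective (citing \cite{put:notes}), any $s$-resolving factor map whose target is non-wandering is automatically $s$-bijective; since $(X,\phi)$ is irreducible it is non-wandering, and the surjectivity on global stable sets follows immediately without the clopen-image argument you sketch. Your instinct that total disconnectedness is essential is correct, but its role is already fully discharged in the zero-dimensionality step (making $\hat Y$ an SFT), not in the $s$-bijectivity step.
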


\begin{theo}[\cite{put:notes}*{Theorem 5.2.4}]\label{thm:oi}
Let $f \colon X \to Y$ is an $s$-resolving map between Smale spaces.
There is a constant $N\geq 1$ such that for any $y\in Y$ there exist $x_1,\dots, x_n$ in $X$, with $n\leq N$, satisfying
\[
f^{-1}(Y^u(y))=\bigcup_{k=1}^n X^u(x_k).
\]
For any $y\in Y$ the cardinality of the fiber $f^{-1}(y)$ is less than or equal to $N$.
\end{theo}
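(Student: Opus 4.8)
The plan is to reduce the statement to two ingredients: a uniform bound on the cardinality of the fibres of $f$, and the fact that $f$ carries each global unstable set onto a global unstable set. I would organize it as follows.

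First, some preliminary observations. Since $f$ is continuous on the compact space $X$, hence uniformly continuous, and $f\circ\phi=\psi\circ f$, the relation $x\sim_u x'$ implies $f(x)\sim_u f(x')$; consequently $f^{-1}(Y^u(y))$ is saturated for $\sim_u$, so it is a disjoint union $\bigsqcup_\alpha X^u(x_\alpha)$ of global unstable sets, one per $\sim_u$-class it meets. The basic \emph{bracket trick} is this: if $x_1,x_2\in f^{-1}(y)$ and $d(x_1,x_2)<\epsilon_f$, then $[x_1,x_2]\in X^s(x_1)$ by the bracket axioms, while $f([x_1,x_2])=[f(x_1),f(x_2)]=[y,y]=y=f(x_1)$; since $f$ is injective on $X^s(x_1)$ this forces $[x_1,x_2]=x_1$, i.e. $x_2\in X^u(x_1,\epsilon)$. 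Thus sufficiently close preimages of the same point are unstably equivalent. Next I would show that $f$ maps $X^u(x)$ \emph{onto} $Y^u(f(x))$: locally $f(X^u(x,\epsilon))$ is a neighbourhood of $f(x)$ in $Y^u(f(x))$ (using that $f$ preserves the local product structure and is surjective), and one globalizes by applying $\psi^n$ and using $f\circ\phi^n=\psi^n\circ f$ together with \eqref{eq:incrun2}. Granting this, every class $X^u(x_\alpha)$ occurring in $f^{-1}(Y^u(y))$ contains a point of $f^{-1}(y)$, and conversely $X^u(x)\subseteq f^{-1}(Y^u(y))$ for every $x\in f^{-1}(y)$ because $f$ respects $\sim_u$. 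Hence $f^{-1}(Y^u(y))=\bigcup_{x\in f^{-1}(y)}X^u(x)$, and picking one representative per $\sim_u$-class among the points of $f^{-1}(y)$ gives $f^{-1}(Y^u(y))=\bigcup_{k=1}^n X^u(x_k)$ with $n\le\#f^{-1}(y)$. So everything comes down to a uniform bound $\#f^{-1}(y)\le N$.

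For that bound, the individual fibres are finite by the remark following Proposition~\ref{prop:finun}, our Smale spaces being non-wandering. Making the bound uniform is where I expect the main work to lie. The local product structure lets one write $f$ near a preimage $x_i$ of $y$ as $f^u\times f^s$ with $f^s$ \emph{injective} (as $f$ is $s$-resolving), so that the only multiplicity comes from the unstable direction; combined with Proposition~\ref{prop:finun} this shows that, for $\delta$ small, $f^{-1}$ of a full neighbourhood of $y$ lies inside a union of product charts around the $x_i$, on each of which the multiplicity over $y$ is that of $f^u$ on a local unstable set. One then bounds the latter uniformly: passing to an SFT cover of $Y$ (Bowen), a counting/entropy estimate bounds the fibres of an $s$-resolving map in terms of the combinatorial data (for graph maps, roughly the number of vertices), and this descends to $Y$; equivalently one argues over the dense set of periodic points of $Y$, where $f^{-1}(y)$ is invariant under a power of $\phi$ and admits a combinatorial count, and then propagates the bound by a semicontinuity argument. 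The delicate points, and the main obstacle, are precisely (a) this semicontinuity — which must exploit the uniform contraction on stable sets and the injectivity of $f^s$ to keep preimages from colliding in a limit — and (b) the uniformity, over an orbit $\{\psi^n y\}_n$, of the constant $\delta$ of Proposition~\ref{prop:finun}, controlled via the equivariance $f^{-1}(\psi^{-n}y)=\phi^{-n}(f^{-1}(y))$ and the contraction axioms governing the behaviour of local unstable sets under $\phi^{\pm n}$.
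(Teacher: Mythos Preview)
The paper does not contain a proof of this statement: Theorem~\ref{thm:oi} is quoted verbatim from Putnam's lecture notes \cite{put:notes}*{Theorem 5.2.4} and used as a black box (its only application here is to guarantee finite fibres in the proof of Proposition~\ref{prop:transversality}). So there is no ``paper's own proof'' to compare against.

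As for your proposal on its own merits: the reduction in the first paragraph is sound. The bracket trick is correct and is indeed the mechanism by which $s$-resolving is used, and the identity $f^{-1}(Y^u(y))=\bigcup_{x\in f^{-1}(y)}X^u(x)$ follows once you know $f(X^u(x))=Y^u(f(x))$. That surjectivity, however, deserves more than the parenthetical you give it: surjectivity of $f$ on $X$ does not by itself imply that $f(X^u(x,\epsilon))$ is open in $Y^u(f(x))$; one needs the bracket trick again, applied to a preimage of a nearby point $y'\in Y^u(f(x),\delta)$, to force that preimage into $X^u(x,\epsilon)$. This is Putnam's Theorem 2.5.8 in the memoir, and it is where the $s$-resolving hypothesis does real work.

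The genuine gap is the uniform bound $\#f^{-1}(y)\le N$. Your sketch offers several directions (SFT cover plus entropy, periodic points plus semicontinuity) but commits to none, and the obstacles you flag under (a) and (b) are exactly the content of the theorem. Note in particular that the bracket trick only shows that two $\epsilon_f$-close preimages of $y$ are \emph{unstably equivalent}, not equal; a finite-cover argument therefore bounds the number of unstable classes meeting $f^{-1}(y)$, which is the $n$ in the first assertion, but not $\#f^{-1}(y)$ itself. Putnam's argument for the cardinality bound uses a separate step controlling the multiplicity of $f$ along a single local unstable set, and this is not visible in your outline.
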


Let us list several additional facts about $s$-resolving maps, which can be found in \cite{put:notes}.
First, if each point in $Y$ is non-wandering, then $f$ is $s$-bijective.
Second, the induced maps $X^s(x) \to Y^s(f(x))$ and $X^u(x) \to Y^u(f(x))$ are both continuous and proper in the inductive limit topology of the presentation in \eqref{eq:incrun} and \eqref{eq:incrun2}.
If, moreover, $f$ is $s$-bijective, the map $X^s(x) \to Y^s(f(x))$ is a homeomorphism.
Assume that $X$ and $Y$ are irreducible, and $P$ is an at most countable subset of $X$ such that no two points of $P$ are stably equivalent after applying $f$.
Then
\[
f\times f \colon R^u(X,P) \to R^u(Y,f(P))
\]
is a homeomorphism onto an open subgroupoid of $R^u(Y,f(P))$. 

\subsection{Putnam's homology for Smale spaces}

For any shift of finite type $(\Sigma,\sigma)$, the $K_0$-group $K_0(C^*(R^u(\Sigma,\sigma)))$ can be described as Krieger's \emph{dimension group} $D^s(\Sigma, \sigma)$ \cite{krieger:inv}.
This group is generated by the elements $[E]$ for compact open sets $E$ in the stable orbits in $\Sigma$.
We can restrict to a collection of stable orbits which form a generalized transversal, and also assume that $E$ is contained in a \emph{local} stable orbit as well \cite{val:smaleb}*{Lemma 1.3}.

Let $(Y,\psi)$ be an irreducible Smale space with totally disconnected stable sets.
Then there is an irreducible shift of finite type $(\Sigma,\sigma)$ and an $s$-bijective factor map $f\colon (\Sigma,\sigma) \to (Y,\psi)$.
Then, we obtain a Smale space $\Sigma_n$ by taking the fiber product of $(n+1)$-copies of $\Sigma$ over $Y$ and the diagonal action $\sigma_n$ of $\sigma$ on $\Sigma_n$, so that we have $\Sigma_0=\Sigma$.

Then we get a simplicial structure on the groups $(D^s(\Sigma_n, \sigma_n))_{n=0}^\infty$, whose face maps are induced by the maps $\delta^s_k\colon\Sigma_{n}\to \Sigma_{n-1}$ which delete the $k$-th entry of a point in $\Sigma_{n}$.
This yields a well defined map between the corresponding dimension groups, via the assignment $[E]\mapsto [\delta^s_k(E)]$.
This way the groups $D^s(\Sigma_\bullet, \sigma_\bullet)$ form a simplicial object, and the associated homology $H^s_\bullet(Y,\psi)$, called \emph{stable homology} of $(Y, \psi)$, does not depend on the choice of $f$ \citelist{\cite{put:HoSmale}*{Section 5.5}\cite{val:smaleb}}.

\section{Fibered products of groupoids}
\label{sec:pullback-res-groupoids}

We start by defining an appropriate notion of fibered product between groupoids which will be used in the following proofs.
\begin{defi}
\label{def:mult-fib-prod}
Let $\alpha \colon H \to G$ be a homomorphism of groupoids, and $n \ge 2$.
We define the $n$-th fibered product of $H$ with respect to $\alpha$ as the groupoid $H^{\times_G n}$, as follows:
\begin{itemize}
\item the object space is the set
\[
(H^{\times_G n})^{(0)} = \{(y_1, g_1, y_2, \ldots, g_{n-1}, y_n) \mid y_k \in H^{(0)}, g_k \in G_{\alpha(y_{k+1})}^{\alpha(y_{k})} \}
\]
\item the arrows from $(y_1, g_1, y_2, \ldots, g_{n-1}, y_n)$ to $(y_1', g_1', y_2', \ldots, g_{n-1}', y_n')$ are given by the $n$-tuples $(h_1, \ldots, h_n) \in H_{y_1}^{y_1'} \times \cdots \times H_{y_n}^{y_n'}$ such that the squares in
\[
\begin{tikzcd}
 \alpha(y_1') &  \alpha(y_2') \arrow[l,"g_1'"'] & \cdots \arrow[l,"g_2'"'] & \alpha(y_n') \arrow[l,"g_{n-1}'"']\\
 \alpha(y_1) \arrow[u,"\alpha(h_1)"] &  \alpha(y_2) \arrow[l,"g_1"] \arrow[u,"\alpha(h_2)"] & \cdots \arrow[l,"g_2"] & \alpha(y_n) \arrow[l,"g_{n-1}"] \arrow[u,"\alpha(h_n)"'] 
\end{tikzcd}
\]
are all commutative.
\end{itemize}
\end{defi}

(Of course, we can put $H^{\times_G 1} = H$).
We say that an arrow in $H^{\times_G n}$ is represented by the tuple $(h_1, g_1', h_2, \ldots, g_{n-1}', h_n)$ in the above situation.
This way we can think of $H^{\times_G n}$ as a subset of $H \times G \times \cdots G \times H$, and in the setting of topological groupoids this gives a compatible topology on $H^{\times_G n}$ (for example, local compactness passes to $H^{\times_G n}$).

\begin{rema}
The above definition makes sense for $n$-tuples of different homomorphisms $\alpha_k\colon H_k \to G$, so that we can define $H_1 \times_G \cdots \times_G H_n$ as a groupoid.
The case of $n = 2$ appears in \cite{cramo:hom}.
\end{rema}

We will need a slight generalization of Definition \ref{def:mult-fib-prod} falling under this more general setting, where $H_j = H$ except for one value $j = k$, with $H_k = G$.

\begin{defi}
In the setting of Definition \ref{def:mult-fib-prod}, define a groupoid $G \times_G H^{\times_G n}$ as follows:
\begin{itemize}
\item the object space is the set
\begin{equation*}
(G \times_G H^{\times_G n})^{(0)} = \{(g_0, y_1, g_1, y_2, \ldots, g_{n-1}, y_n) \mid y_k \in H^{(0)},\\
g_0 \in G_{\alpha(y_1)}, g_k \in G_{\alpha(y_{k+1})}^{\alpha(y_{k})} (k \ge 1)\}
\end{equation*}
\item a morphisms from $(g_0, y_1, g_1, y_2, \ldots, g_{n-1}, y_n)$ to $(g_0', y_1', g_1', y_2', \ldots, g_{n-1}', y_n')$ is given by $k \in G_{r g_0}^{r g'_0}$ and an $n$-tuple $(h_1, \ldots, h_n) \in H_{y_1}^{y_1'} \times \cdots \times H_{y_n}^{y_n'}$ such that the squares in
\[
\begin{tikzcd}
r g'_0 & \alpha(y_1') \arrow[l,"g_0'"'] &  \alpha(y_2') \arrow[l,"g_1'"'] & \cdots \arrow[l,"g_2'"'] & \alpha(y_n') \arrow[l,"g_{n-1}'"']\\
r g_0 \arrow[u,"k"] & \alpha(y_1) \arrow[u,"\alpha(h_1)"] \arrow[l,"g_0"] &  \alpha(y_2) \arrow[l,"g_1"] \arrow[u,"\alpha(h_2)"] & \cdots \arrow[l,"g_2"] & \alpha(y_n) \arrow[l,"g_{n-1}"] \arrow[u,"\alpha(h_n)"'] 
\end{tikzcd}
\]
are all commutative.
\end{itemize}
\end{defi}

Again we say that an arrow of $G \times_G H^{\times_G n}$ is represented by $(k, g_0', h_1, \ldots, h_n)$ in the above situation.
As in the case of $H^{\times_G n}$, this induces a compatible topology in the setting of topological groupoids.

\begin{prop}
\label{prop:mor-equiv-for-fib-prod-with-G}
Let $\alpha \colon H \to G$ be a homomorphism of topological groupoids.
Then $H^{\times_G n}$ and $G \times_G H^{\times_G n}$ are Morita equivalent as topological groupoids.
\end{prop}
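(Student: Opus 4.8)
The plan is to realize $G \times_G H^{\times_G n}$ as the pullback groupoid of $H^{\times_G n}$ along a continuous open surjection of unit spaces, and then to invoke the standard fact that a pullback groupoid along such a map is Morita equivalent to the groupoid one starts from.

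First I would isolate the map $p \colon (G \times_G H^{\times_G n})^{(0)} \to (H^{\times_G n})^{(0)}$ that deletes the leading $G$-coordinate, $(g_0, y_1, g_1, \ldots, y_n) \mapsto (y_1, g_1, \ldots, y_n)$. Unwinding the definitions, $(G \times_G H^{\times_G n})^{(0)}$, with its declared subspace topology, is canonically homeomorphic to the fibered product of the source map $s \colon G \to G^{(0)}$ with the map $(y_1, g_1, \ldots, y_n) \mapsto \alpha(y_1)$ from $(H^{\times_G n})^{(0)}$ to $G^{(0)}$, in such a way that $p$ is identified with the base change of $s$. Since (by our standing conventions) $G$ carries a continuous Haar system, its source map $s$ is open; as $s$ is also surjective, it follows that $p$ is a continuous open surjection.

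Next I would identify $G \times_G H^{\times_G n}$, as a topological groupoid over $(G \times_G H^{\times_G n})^{(0)}$, with the pullback groupoid $p^{\ast}(H^{\times_G n})$, whose arrows from $z$ to $z'$ are by definition the arrows of $H^{\times_G n}$ from $p(z)$ to $p(z')$. The key point is that in an arrow of $G \times_G H^{\times_G n}$ represented by $(k, g_0', h_1, \ldots, h_n)$, say from $(g_0, y_1, \ldots)$ to $(g_0', y_1', \ldots)$, the entry $k$ carries no information: commutativity of the leftmost square in the diagram defining $G \times_G H^{\times_G n}$ forces $k = g_0'\,\alpha(h_1)\,g_0^{-1}$; conversely this expression always yields an admissible element of $G_{r g_0}^{r g_0'}$, and it depends continuously on the remaining data, by continuity of multiplication and inversion in $G$ and of $\alpha$; and the surviving commutativity conditions are precisely $\alpha(h_i) g_i = g_i' \alpha(h_{i+1})$ for $1 \le i \le n-1$, i.e.\ they say exactly that $(h_1, \ldots, h_n)$ is an arrow of $H^{\times_G n}$ from $p(z)$ to $p(z')$. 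One then checks that $(k, g_0', h_1, \ldots, h_n) \mapsto (h_1, \ldots, h_n)$ is a homeomorphism onto that arrow space which respects source, range, units and composition. I expect this to be the main obstacle: the content is not the bijection of arrow sets with prescribed source and range but its continuity and openness for the subspace topologies inherited from the ambient products of copies of $G$ and $H$ in the two definitions, and this --- together with the openness of $p$ used above --- is exactly where the topological-groupoid structure of $G$ intervenes.

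Finally I would apply the standard pullback Morita equivalence: for any continuous open surjection $q \colon Y \to \cK^{(0)}$, the pullback groupoid $q^{\ast}\cK$ is Morita equivalent to $\cK$, an equivalence being implemented by the bibundle $\{(y, \gamma) \in Y \times \cK : q(y) = r(\gamma)\}$ with its evident commuting left $q^{\ast}\cK$- and right $\cK$-actions (see \cite{murewi:morita}). Specializing to $q = p$ and $\cK = H^{\times_G n}$ yields the Morita equivalence asserted in the proposition.
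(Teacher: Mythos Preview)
Your argument is correct. The key observation --- that the extra $G$-leg $k$ in an arrow of $G \times_G H^{\times_G n}$ is uniquely determined by the rest via $k = g_0'\,\alpha(h_1)\,g_0^{-1}$, so that this groupoid is the pullback $p^*(H^{\times_G n})$ along the open surjection $p$ forgetting $g_0$ --- is exactly right, and the appeal to the standard pullback Morita equivalence finishes the job.

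The paper takes a more hands-on route: it writes down the bibundle
\[
Z = \{ (g_0, h_1, g_1, \ldots, g_{n-1}, h_n) \mid (g_0, \ldots, g_{n-1}) \in G^{(n)},\ \alpha(r h_k) = s g_{k-1} \}
\]
with explicit left and right actions, and checks the bibundle axioms directly by ``coordinate transform'' formulas. If you unwind your pullback bibundle $\{(z,\gamma) : p(z) = r\gamma\}$ and eliminate the redundant coordinates, you land on precisely this $Z$, so the Morita equivalence you produce is the same one. Your packaging is cleaner and makes the structural reason transparent; the paper's explicit description, on the other hand, pays off downstream, where the concrete form of $Z$ is reused to treat the variant with $G$ inserted in the middle (Proposition~\ref{prop:mor-equiv-for-fib-prod-with-G-in-the-middle}) and to identify the face maps of the resulting simplicial object (Proposition~\ref{prop:face}).
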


\begin{proof}
Consider the space
\[
Z = \{ (g_0, h_1, g_1, h_2, \ldots, g_{n-1}, h_n) \mid (g_0, \ldots, g_{n-1}) \in G^{(n)}, \alpha(r h_k) = s g_{k-1} \}.
\]
We define a left action of $G \times_G H^{\times_G n}$ as follows. The anchor map is
\[
Z \to (G \times_G H^{\times_G n})^{(0)}, (g_0, h_1, \ldots, h_n) \mapsto (g_0, r h_1, g_1, \ldots, r h_n),
\]
and an arrow of $G \times_G H^{\times_G n}$ with source $(g_0, r h_1, g_1, \ldots, r h_n)$ acts by
\[
(k, g_0', h_1', \ldots, h_n') . (g_0, h_1, \ldots, h_n) = (g_0', h_1' h_1, g_1', \ldots, h_n' h_n).
\]
On the other hand, there is a right action of $H^{\times_G n}$ defined as follows.
The anchor map is
\[
Z \to (H^{\times_G n})^{(0)}, \quad (g_0, h_1, \ldots, h_n) \mapsto (s h_1, g_1', \ldots, s h_n), \quad (g_k' = \alpha(h_k)^{-1} g_k \alpha(h_{k+1})).
\]
An arrow of $H^{\times_G n}$ with range $(s h_1, g_1', \ldots, s h_n)$ acts by
\[
(g_0, h_1, \ldots, h_n) . (h_1'', g_1, h_2'', \ldots, h_n'') = (g_0, h_1 h_1'', g_1, \ldots, h_n h_n'').
\]

We claim that $Z$ is a bibundle implementing the Morita equivalence (compatibility with topology will be obvious from the concrete ``coordinate transform'' formulas).

Comparing between $Z \times_{(G \times_G H^{\times_G n})^{(0)}} Z$ and $Z \times_{(H^{\times_G n})^{(0)}} H^{\times_G n}$ amounts to comparison of pairs $(h_k, h_k')$ with $r h_k = r h_k'$ on the one hand, and the composable pairs $(h_k, h_k'') \in H^{(2)}$ on the other.
There is a bijective correspondence between the two sides, given by the coordinate transform $h_k' = h_k h_k''$. Comparing $Z \times_{(H^{\times_G n})^{(0)}} Z$ with $G \times_G H^{\times_G n} \times_{(G \times_G H^{\times_G n})^{(0)}} Z$ amounts to comparing:
\begin{itemize}
\item on the side of $Z \times_{(H^{\times_G n})^{(0)}} Z$: $((g_0, h_1), (g_0', h_1'))$ with $(g_0, \alpha(h_1), (g_0', \alpha(h_1')) \in G^{(2)}$ and $s h_1 = s h_1'$, and $(h_k, h_k') \in H^{(2)}$ with $s h_k = s h_k'$ for $k \ge 2$;
\item on the side of $G \times_G H^{\times_G n} \times_{(G \times_G H^{\times_G n})^{(0)}} Z$: $(k, g_0'') \in G^{(2)}$, $(h_1, h_1'') \in H^{(2)}$ with $s h_1 = s g_0''$, and $(h_k, h_k'') \in H^{(2)}$ for $k \ge 2$.
\end{itemize}
Again we have a bijective correspondence by $h_k' = h_k''^{-1}$, $g_0 = g_0'' \alpha(h_1)^{-1}$, and $g_0' = k g_0'' \alpha(h_1)^{-1}$.
\end{proof}

A slight generalization is obtained by considering the groupoid $H^{\times_G a} \times_G G \times_G H^{\times_G b}$ for $a, b \ge 0$. This is defined as $H^{\times_G (a + b + 1)}$ in Definition \ref{def:mult-fib-prod}, with the difference that $h_{a+1}$ is not in $H^{y'_{a+1}}_{y_{a+1}}$, and instead in $G^{\alpha(y'_{a+1})}_{\alpha(y_{a+1})}$.

\begin{prop}
\label{prop:mor-equiv-for-fib-prod-with-G-in-the-middle}
The groupoid $H^{\times_G a} \times_G G \times_G H^{\times_G b}$ is Morita equivalent to $H^{\times_G (a + b)}$.
\end{prop}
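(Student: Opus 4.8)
The plan is to mimic the proof of Proposition~\ref{prop:mor-equiv-for-fib-prod-with-G}, the guiding principle being that $H^{\times_G a}\times_G G\times_G H^{\times_G b}$ differs from $H^{\times_G(a+b)}$ only by the insertion of one extra slot (the $(a+1)$-st, at which arrow-components are $G$-arrows) together with the two flanking $G$-arrows $g_a,g_{a+1}$; this extra data is ``inessential'' because it can be collapsed by forming the composite $g_a g_{a+1}$. Concretely, I would introduce the \emph{contraction functor} $\pi\colon K\to L$, where $K := H^{\times_G a}\times_G G\times_G H^{\times_G b}$ and $L := H^{\times_G(a+b)}$, which on objects deletes the $(a+1)$-st slot and replaces $g_a,g_{a+1}$ by $g_a g_{a+1}$, and on arrows deletes the middle component $h_{a+1}$ (which in $K$ is a $G$-arrow rather than an $H$-arrow). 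The Morita equivalence is then implemented by the associated graph bibundle $Z = \{(\eta,o)\in L\times K^{(0)}\mid s(\eta)=\pi^{(0)}(o)\}$, with left $L$-action by post-composition on $\eta$ and right $K$-action $(\eta,o)\cdot\xi = (\eta\,\pi(\xi),o')$ for $\xi\colon o'\to o$; up to this relabelling, $Z$ is exactly the bibundle written down in the proof of Proposition~\ref{prop:mor-equiv-for-fib-prod-with-G}, the only change being that at the collapsed slot the pair $g_a,g_{a+1}$ is merged and the component $h_{a+1}$ suppressed.

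The verification has two algebraic points and a topological one. (a) \emph{$\pi$ is a functor:} for an arrow of $K$, the two commuting squares of Definition~\ref{def:mult-fib-prod} flanking the $(a+1)$-st slot, namely $\alpha(h_a)\,g_a = g_a'\,h_{a+1}$ and $h_{a+1}\,g_{a+1} = g_{a+1}'\,\alpha(h_{a+2})$, multiply to the single square $\alpha(h_a)\,g_ag_{a+1} = g_a'g_{a+1}'\,\alpha(h_{a+2})$ demanded of the merged arrow in $L$; compatibility with composition and continuity are then immediate. (b) \emph{$\pi$ is fully faithful,} in the sense that $K\to L\times_{L^{(0)}\times L^{(0)}}(K^{(0)}\times K^{(0)})$ is a homeomorphism: the same two squares force $h_{a+1} = (g_a')^{-1}\,\alpha(h_a)\,g_a$, so the middle component is determined by the remaining data, and conversely, given an $L$-arrow $\eta\colon\pi(o)\to\pi(o')$, this value of $h_{a+1}$ lies in the correct $G$-morphism set and satisfies the first $K$-square by construction and the second precisely because $\eta$ satisfies the merged $L$-square; hence there is a unique $K$-arrow over $\eta$, depending continuously on $(\eta,o,o')$. (c) \emph{Topology:} $\pi^{(0)}$ is surjective with a continuous section (send a merged arrow $\tilde g$ to its trivial factorization $\tilde g = \tilde g\cdot\id$), and the anchor maps of $Z$ are open — this is checked exactly as in Proposition~\ref{prop:mor-equiv-for-fib-prod-with-G}, using only that the source and range maps of $G$, $H$ and of the fibered products are open. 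Points (a)--(c) say that $\pi$ is an essential equivalence of topological groupoids, and the graph bibundle of such a functor always implements a Morita equivalence; this gives the statement.

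I expect the only real difficulty to be bookkeeping: keeping the chain of $G$-arrows correctly indexed through the commuting squares, and handling the degenerate cases $a=0$ and $b=0$, where the $G$-slot sits at an end of the chain and only one of $g_a,g_{a+1}$ survives. These two cases, however, are nothing other than Proposition~\ref{prop:mor-equiv-for-fib-prod-with-G} (up to reversing the order of the factors), with $\pi$ degenerating to ``delete the dangling $G$-arrow and the component $h_{a+1}$''; so one may simply invoke that proposition in those cases and otherwise transcribe its proof verbatim with the modification above. One might also try to avoid all of this by inducting on $a$ and peeling off copies of $H$ via Proposition~\ref{prop:mor-equiv-for-fib-prod-with-G}, but that route needs the Morita equivalence there to be compatible with the structure maps to $G$ so that it survives fibered product over $G$, which is most transparently seen through the functor $\pi$ in any case.
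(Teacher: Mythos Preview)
Your proposal is correct and follows essentially the same idea as the paper. The paper writes down an explicit bibundle $\tilde Z$ (a variant of the $Z$ from Proposition~\ref{prop:mor-equiv-for-fib-prod-with-G}, with the extra $G$-arrow inserted between positions $a$ and $a+1$) and says the verification is ``similar'' to that earlier proof; your graph bibundle of the contraction functor $\pi$ is, up to relabelling and reversing the side convention, the same space with the same actions, and your conditions (a)--(c) spell out the principality check that the paper leaves implicit.
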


\begin{proof}
Recall the construction in the proof of Proposition \ref{prop:mor-equiv-for-fib-prod-with-G} for the Morita equivalence between $G \times_G H^{\times_G b}$ and $H^{\times_G b}$: we have the space
\[
Z = \{ (g_0, h_1, g_1, \ldots, h_b) \mid (g_0, \ldots, g_{b-1}) \in G^{(b)}, \alpha(r h_k) = r g_k \},
\]
which is a bimodule between these groupoids.
Based on this, put
\begin{multline*}
\tilde Z = \{ (h_1, g_1, h_2, \ldots, g_a, g_{a+1}, h_{a+1}, g_{i+2}, \ldots, h_{a+b}) \mid (g_1, \ldots, g_{a+b}) \in G^{(a+b)},\\
\alpha(r h_k) = r g_k  \: (k \le a), \alpha(r h_k) = s g_k \: (k > a)\}.
\end{multline*}
This has obvious ``composition'' actions of $H^{\times_G a} \times_G G \times_G H^{\times_G b}$ from the left and $H^{\times_G (a+b)}$ from the right. By a similar argument as before, we can see that $\tilde Z$ implements a Morita equivalence.
\end{proof}

\medskip
Next let us show the compatibility of fiber products and generalized transversals.

\begin{prop}
\label{prop:gen-transv-for-fib-prod}
Let $\alpha\colon H \to G$ be a homomorphism of topological groupoids, and $f\colon T \to H^{(0)}$ be a generalized transversal.
Consider the space
\[
\tilde T = \{(t_1, g_1, t_2, \dots, t_n)\mid t_k \in T, g_k \in G^{f(t_k)}_{f(t_{k+1})} \}
\]
with the induced topology from the natural embedding into $T^n \times G^{n-1}$. The map
\[
\tilde f \colon \tilde T \to (H^{\times_G n})^{(0)},\quad (t_1, g_1, t_2, \dots, t_n) \mapsto (f(t_1), g_1, f(t_2), \dots, f(t_n))
\]
is a generalized transversal for $H^{\times_G n}$.
\end{prop}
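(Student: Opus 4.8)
My plan is to verify directly the three requirements of Definition~\ref{def:gen-transv} for the pair $(\tilde T,\tilde f)$ and the groupoid $H^{\times_G n}$ (the case $n=1$ being trivial). That $\tilde f$ is injective and continuous is immediate, because in the defining coordinates it is $f$ on the $T$-slots and the identity on the $G$-slots. For the orbit condition, take an object $(y_1,g_1,\dots,y_n)$ of $H^{\times_G n}$; applying to each $y_k$ the hypothesis that $f(T)$ meets every orbit of $H$ produces $h_k\in H$ with $s(h_k)=y_k$, $r(h_k)=f(t_k)$, $t_k\in T$, and then $(h_1,\dots,h_n)$ represents an arrow of $H^{\times_G n}$ from $(y_1,g_1,\dots,y_n)$ to $\tilde f(t_1,g_1',\dots,t_n)$ with $g_k'=\alpha(h_k)\,g_k\,\alpha(h_{k+1})^{-1}$, so $\tilde f(\tilde T)$ meets that orbit.

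The substantive part is condition~(Ar), and I would set it up as follows. First note, from Definition~\ref{def:mult-fib-prod}, that an arrow of $H^{\times_G n}$ is faithfully encoded by a tuple $(h_1,g_1',h_2,\dots,g_{n-1}',h_n)$ consisting of its $H$-components $h_k$ and the $G$-components $g_k'$ of its \emph{range} object (subject only to $r(g_k')=\alpha(r h_k)$ and $s(g_k')=\alpha(r h_{k+1})$), that the $G$-components of its \emph{source} object are $g_k=\alpha(h_k)^{-1}g_k'\alpha(h_{k+1})$, and hence that two arrows with equal source object and equal $H$-components coincide. Fix $x=(h_1,g_1',\dots,h_n)$ with $r(x)\in\tilde f(\tilde T)$, so $r(h_k)=f(t_k')$ for some $t_k'\in T$, and put $\gamma_k=\alpha(h_k)^{-1}g_k'\alpha(h_{k+1})$. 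Shrinking, one may assume the given neighborhood $U_0$ of $x$ consists of the arrows with $H$-components in open sets $A_k\ni h_k$ and range-object $G$-components in open sets $B_k\ni g_k'$, and that the given neighborhood $V_0$ of $\tilde f^{-1}(rx)=(t_1',g_1',\dots,t_n')$ consists of the points of $\tilde T$ with $T$-components in open sets $C_k\ni t_k'$ and $G$-components in open sets $D_k\ni g_k'$. Using continuity of $\alpha$, of groupoid multiplication and of inversion, choose open $\tilde A_k\subseteq A_k$ about $h_k$ and open $\Gamma_k$ about $\gamma_k$ so small that $\alpha(a)\,c\,\alpha(a')^{-1}\in B_k\cap D_k$ for all $a\in\tilde A_k$, $a'\in\tilde A_{k+1}$, $c\in\Gamma_k$ for which this product is defined (at $(h_k,\gamma_k,h_{k+1})$ it equals $g_k'\in B_k\cap D_k$). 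Finally, apply condition~(Ar) for $f$ to each $h_k\in H^{f(T)}$ with input neighborhoods $\tilde A_k$ and $C_k$, obtaining open $U^{(k)}\subseteq\tilde A_k$ about $h_k$ and open $V^{(k)}\subseteq C_k$ about $t_k'$ such that every $k\in U^{(k)}$ has a unique ``selection'' $k'\in U^{(k)}$ with $s(k')=s(k)$ and $r(k')\in f(V^{(k)})$.

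Then I would set $U$ to be the arrows of $H^{\times_G n}$ with $H$-components in the $U^{(k)}$ and \emph{source-object} $G$-components in the $\Gamma_k$ (an open condition, as the source-object coordinates depend continuously on the representing tuple), and $V$ the points of $\tilde T$ with $T$-components in the $V^{(k)}$ and $G$-components in the $D_k$. These contain $x$ and $\tilde f^{-1}(rx)$, and are contained in $U_0$ and $V_0$ respectively (the former by the displayed property, the latter since $V^{(k)}\subseteq C_k$). For $y\in U$ with $H$-components $k_j$ and source-object $G$-components $g_j\in\Gamma_j$, let $z$ be the arrow with representing tuple $(k_1',\alpha(k_1')g_1\alpha(k_2')^{-1},k_2',\dots,k_n')$, where $k_j'$ is the selection of $k_j$ inside $U^{(j)}$; its source object equals that of $y$, its range-object $G$-components lie in $B_j\cap D_j$ by the displayed property, and its $H$-components lie in the $U^{(j)}$, so $z\in U$ and $r(z)\in\tilde f(V)$ (using $r(k_j')\in f(V^{(j)})$). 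Moreover $z$ is the only such arrow: a competitor has $H$-components in $U^{(j)}$ sharing the sources of the $k_j$ and with ranges in $f(V^{(j)})$, so the uniqueness clause of (Ar) for $f$ pins down its $H$-components to be the $k_j'$, and since it shares the source object of $y$ it equals $z$. The step I expect to require the most care — and the only genuinely new idea here — is this choice of $U$: if one instead constrained the ambient range-object $G$-components, the argument would break, because the selection $k_j\mapsto k_j'$ need not be a small perturbation of $k_j$, so $\alpha(k_j')g_j\alpha(k_{j+1}')^{-1}$ need not stay near $g_j'$; imposing the $G$-side constraint of $U$ on the source-object coordinates, which $y$ and $z$ share by construction, eliminates this circularity and leaves only the componentwise application of (Ar) for $f$ together with continuity of the structure maps.
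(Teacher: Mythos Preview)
Your argument is correct and follows the same overall plan as the paper: check the orbit condition by lifting each $y_k$ into $f(T)$ via an arrow of $H$, and verify (Ar) componentwise in the $H$-slots.  Where you diverge is precisely the point you flag.  The paper writes $U$ and $V$ using the range-object $G$-coordinates (those in the representing tuple), applies (Ar) for $f$ to each $h_k$, and then records that ``the elements $\tilde g_k'$ are determined by the $\tilde h_k'$'' without verifying that these determined values land in the prescribed neighborhoods $U_k''$ and $V_k''$.  Your observation that the selection $k_j\mapsto k_j'$ is not a small perturbation of $k_j$---only $s(k_j')=s(k_j)$ and $r(k_j')\in f(V^{(j)})$ are controlled---identifies exactly why that verification is not automatic.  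Your remedy, placing the $G$-constraint of $U$ on the \emph{source}-object coordinates (which $y$ and any candidate $z$ share by construction) and arranging in advance, via continuity of $\alpha$ and the groupoid operations, that $\tilde A_k,\Gamma_k$ force the range-object $G$-components into $B_k\cap D_k$, closes this gap cleanly and simultaneously yields $U\subseteq U_0$ and $r(z)\in\tilde f(V)$.  So the decomposition and the key idea (componentwise (Ar)) are the same as the paper's, but your version supplies a detail the paper's proof glosses over.
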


\begin{proof}
Let us check the conditions in Definition \ref{def:gen-transv}.
First, $\tilde T$ meets all orbits of $H^{\times_G n}$.
Indeed, if we take a point $(y_1, g_1, y_2, \ldots, g_{n-1}, y_n) \in (H^{\times_G n})^{(0)}$, we can find $t_k \in T$ and $h_k \in H^{f(t_k)}_{y_k}$ for $k = 1, \ldots, n$.
Then there are unique $g_k'$ such that $(h_1, \ldots, h_n)$ represents an arrow from $(y_1, g_1, y_2, \ldots, g_{n-1}, y_n)$ to $(f(t_1), g_1', \ldots, f(t_n))$.

Next, let us check the condition (Ar).
Thus, take an arrow $x$ represented by $(h_1, g_1, h_2, \ldots, g_{n-1}, h_n)$ with range $r x = (f(t_1), g_1, f(t_2), \ldots, g_{n-1}, f(t_n))$, open neighborhood $U_0$ of $x$, and another $V_0$ of $r x$.
We may assume that these neighborhoods are of the form
\begin{align*}
U_0 &= (U_1' \times U_1'' \times U_2' \times \cdots \times U_n') \cap H^{\times_G n}, && (U_k' \in \cO(h_k), U_k'' \in \cO(g_k))\\
V_0 &= (V_1' \times V_1'' \times V_2' \times \cdots \times V_n') \cap \tilde T, && (V_k' \in \cO(t_k), V_k'' \in \cO(g_k).
\end{align*}

Then, for each $k$ we can find $\tilde U_k \in \cO(h_k)$ with $\tilde U_k \subset U_k'$, $\tilde V_k \in \cO(t_k)$ with $\tilde V_k \subset V_k'$ realizing the condition (Ar).
We claim that
\[
U = (\tilde U_1 \times U_1'' \times \cdots \times \tilde U_n') \cap H^{\times_G n},\quad
V = (\tilde V_1 \times V_1'' \times \cdots \times \tilde V_n) \cap \tilde T
\]
do the job.
Indeed, if $y = (\tilde h_1, \tilde g_1, \cdots, \tilde h_n) \in U$, another element $z = (\tilde h_1', \tilde g_1', \cdots, \tilde h_n')$ as the same source as $y$ if and only if $s \tilde h_k = s \tilde h_k'$ and $f(\tilde h_k)^{-1} \tilde g_k f(\tilde h_{k+1}) = f(\tilde h_k')^{-1} \tilde g_k' f(\tilde h_{k+1}')$ hold for all $k$.
Moreover, $r z \in \tilde T$ if and only if $r h_k' \in f(T)$ for all $k$.
The elements $\tilde g_k'$ are determined by the $\tilde h_k'$, and we can find such $\tilde h_k'$ uniquely by condition (Ar) for $U_k'$ and $V_k'$.
\end{proof}

Suppose $f \colon T \to H^{(0)}$ is a generalized transversal for $H$ such that $\alpha f \colon T \to G^{(0)}$ is also a transversal for $G$.
Then $\alpha$ induces a homomorphism of étale groupoids from $H' = H|_{f(T)}$ to $G' = G|_{\alpha f (T)}$.

\begin{coro}
\label{cor:compat-betw-fib-prod-restr-and-gen-transv}
In the setting above, $H^{\times_G n}$ is Morita equivalent to $H'^{\times_{G'} n}$.
\end{coro}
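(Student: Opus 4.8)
The plan is to deduce the statement from Proposition~\ref{prop:gen-transv-for-fib-prod} together with the Morita equivalence attached to a generalized transversal. First, since $f\colon T\to H^{(0)}$ is a generalized transversal for $H$ (and $\alpha f\colon T\to G^{(0)}$ a transversal for $G$), Proposition~\ref{prop:gen-transv-for-fib-prod} tells us that $\tilde f\colon \tilde T\to (H^{\times_G n})^{(0)}$ is a generalized transversal for $H^{\times_G n}$, with $\tilde T$ the space described there. By \cite{put:spiel}*{Theorem 3.6}, the restricted groupoid $(H^{\times_G n})|_{\tilde f(\tilde T)}$ then carries a finer topology making it étale and Morita equivalent to $H^{\times_G n}$. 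So the proof reduces to exhibiting an isomorphism of topological groupoids
\[
(H^{\times_G n})|_{\tilde f(\tilde T)}\;\cong\; H'^{\times_{G'} n}.
\]

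Algebraically this is just a matter of unwinding Definition~\ref{def:mult-fib-prod}. Injectivity of $f$ identifies $\tilde f(\tilde T)$ with $\tilde T$, and $\tilde T$ is, as a set, precisely the object set of $H'^{\times_{G'} n}$: the arrows $g_k\in G_{\alpha f(t_{k+1})}^{\alpha f(t_k)}$ appearing in $\tilde T$ have both endpoints in $\alpha f(T)=G'^{(0)}$, hence lie in $G'$, and conversely. Likewise an arrow of $(H^{\times_G n})|_{\tilde f(\tilde T)}$ between two points of $\tilde f(\tilde T)$ is, by Definition~\ref{def:mult-fib-prod}, represented by a tuple $(h_1,g_1',h_2,\dots,g_{n-1}',h_n)$ with $h_k\in H_{f(t_k)}^{f(t_k')}$ — so $h_k\in H|_{f(T)}=H'$ — and $g_k'\in G'$, subject to the same commuting-square conditions; these are exactly the data defining an arrow of $H'^{\times_{G'} n}$ (recall $\alpha$ restricts to the homomorphism $\alpha'\colon H'\to G'$ of the hypothesis, which agrees with $\alpha$ on underlying sets). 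Hence $\tilde f$ induces a bijective groupoid homomorphism $(H^{\times_G n})|_{\tilde f(\tilde T)}\to H'^{\times_{G'} n}$.

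The only substantive point is that this bijection is a homeomorphism, where the source carries the finer topology of \cite{put:spiel}*{Theorem 3.6} and the target the topology it inherits as a fibered product of the étale groupoids $H'$ and $G'$. This is where the explicit description of $\tilde T$ and of open sets of $H^{\times_G n}$ recorded in Proposition~\ref{prop:gen-transv-for-fib-prod} is used: a basic open set for the finer topology on $(H^{\times_G n})|_{\tilde f(\tilde T)}$ consists of the elements of a product-type open set of $H^{\times_G n}$ whose source and range land in a product-type open set of $\tilde T$, and, matching coordinate by coordinate, each factor of this is of exactly the ``small open set of $H$ (resp.\ $G$) whose source and range lie in $f(V)$ (resp.\ $\alpha f(V)$), $V\subseteq T$ open'' shape that defines a basic open set of $H'$ (resp.\ $G'$). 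In essence the content is that passing to a generalized transversal commutes with forming fibered products, because both operations are carried out one coordinate at a time; I expect the main (and only) obstacle to be the bookkeeping with indices, there being no conceptual difficulty once Proposition~\ref{prop:gen-transv-for-fib-prod} is in place. Granting this isomorphism and composing with the Morita equivalence of \cite{put:spiel}*{Theorem 3.6} gives that $H^{\times_G n}$ is Morita equivalent to $H'^{\times_{G'} n}$, as desired.
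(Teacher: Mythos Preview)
Your proposal is correct and follows essentially the same approach as the paper: apply Proposition~\ref{prop:gen-transv-for-fib-prod} to obtain the generalized transversal $\tilde f\colon\tilde T\to (H^{\times_G n})^{(0)}$, then identify the resulting \'etale groupoid $(H^{\times_G n})|_{\tilde f(\tilde T)}$ with $H'^{\times_{G'} n}$. The paper's proof is in fact more terse than yours, asserting this identification in a single sentence; your unpacking of the algebraic bijection and the coordinatewise matching of topologies is a reasonable expansion of what the paper leaves implicit.
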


\begin{proof}
The construction of Proposition \ref{prop:gen-transv-for-fib-prod} gives a generalized transversal for $\tilde f \colon \tilde T \to (H^{\times_G n})^{(0)}$.
The étale groupoid obtained by this is isomorphic to $H'^{\times_{G'} n}$.
\end{proof}

\medskip
Now, let $(Y, \psi)$ be a non-wandering Smale space with totally disconnected unstable sets, and $f\colon (\Sigma, \sigma) \to (Y, \psi)$ be an $s$-resolving (hence $s$-bijective) factor map from a shift of finite type. 

Let $\Sigma_{n}$ denote the fibered product of $n + 1$ copies of $\Sigma$ with respect to $f$.
Then $\sigma_n = \sigma \times \cdots \times \sigma|_{\Sigma_{n}}$ defines a Smale space, which is again a shift of finite type.
If $a = (a^0, \ldots, a^n)$ and $b = (b^0, \ldots, b^n)$ are points of $\Sigma_n$, they are unstably (resp.~stably) equivalent if and only if $a^k$ is unstably (resp. stably) equivalent to $b^k$ for all $k$.

\begin{theo}
\label{thm:groupoid-fib-prod-compar-unstb-groupoid-Smale-sp-fib-prod}
In the setting above, set $G = R^u(Y, \psi)$, $H = R^u(\Sigma, \sigma)$, and $\alpha = f \times f\colon H \to G$ be the induced groupoid homomorphism.
Then $H^{\times_G n+1}$ is Morita equivalent to $R^u(\Sigma_{n}, \sigma_{n})$ as a locally compact groupoid.
\end{theo}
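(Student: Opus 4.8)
The plan is to identify $R^u(\Sigma_n,\sigma_n)$ with the restriction of $H^{\times_G n+1}$ to a suitable transversal, and then invoke the Morita equivalence furnished by the generalized transversal construction (Definition \ref{def:gen-transv} and the discussion following it). The key observation is that the honest fibered product $\Sigma_n$ sits inside $(H^{\times_G n+1})^{(0)}$ as the locus of tuples $(y_1,g_1,y_2,\dots,g_n,y_{n+1})$ in which every $g_k\in G=R^u(Y,\psi)$ is a unit: for a point $(a^0,\dots,a^n)\in\Sigma_n$ one has $f(a^0)=\dots=f(a^n)$, so the $g_k$ are forced to be the units at this common point. I would then prove (i) that the restriction $H^{\times_G n+1}|_{\Sigma_n}$ is isomorphic, as a topological groupoid, to $R^u(\Sigma_n,\sigma_n)$, and (ii) that $\Sigma_n$ is a generalized transversal for $H^{\times_G n+1}$.

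Part (i) is essentially formal. Since $R^u(Y,\psi)$ and $R^u(\Sigma,\sigma)$ are principal, the entries $g_k$ and $h_k$ of Definition \ref{def:mult-fib-prod} are determined by their source and range and the commutativity of the squares there is automatic; hence an arrow of $H^{\times_G n+1}|_{\Sigma_n}$ from $(a^0,\dots,a^n)$ to $(b^0,\dots,b^n)$ is precisely the datum that $a^k\sim_u b^k$ in $\Sigma$ for every $k$. By the componentwise description of unstable equivalence on fibered products recalled just before the theorem, this is exactly the groupoid $R^u(\Sigma_n,\sigma_n)$ at the level of underlying sets. For the topologies, the point is that along $\Sigma_n$ the coordinates $g_k$ take values in the unit space of $R^u(Y,\psi)$, on which the inductive-limit topology restricts to that of $Y$, while the inductive-limit presentation of $R^u(\Sigma_n,\sigma_n)$ is the componentwise one inside $R^u(\Sigma,\sigma)^{n+1}$; matching the two is routine, using that the inclusions $G^m_u\hookrightarrow G^{m+1}_u$ are open.

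Part (ii) has two components. Surjectivity onto orbits: given $(y_1,\dots,y_{n+1})$ with $f(y_i)\sim_u f(y_j)$ for all $i,j$, all the $f(y_k)$ lie in the single unstable set $Y^u(f(y_1))$, and since an $s$-bijective factor map restricts to a surjection $\Sigma^u(e)\to Y^u(f(e))$ for every $e$ (see \cite{put:notes}), we may choose $a^{k-1}\in\Sigma^u(y_k)$ with $f(a^{k-1})=f(y_1)$ for each $k$; then $(a^0,\dots,a^n)\in\Sigma_n$ lies in the same $H^{\times_G n+1}$-orbit as $(y_1,\dots,y_{n+1})$. The other component is the topological condition (Ar) of Definition \ref{def:gen-transv} for $\Sigma_n\subseteq(H^{\times_G n+1})^{(0)}$, which amounts to comparing neighbourhoods of a point taken in the groupoid-theoretic fibered product with neighbourhoods taken in the spatial fibered product $\Sigma_n$. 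This is exactly the content of the transversality result Proposition \ref{prop:transversality}, and it is the one place where the hyperbolic structure — the local product structure together with the finiteness and uniformity statements of Proposition \ref{prop:finun} and Theorem \ref{thm:oi} — enters in an \emph{essential} way.

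Accordingly, the step I expect to be the main obstacle is this verification of condition (Ar): showing that the spatial fibered product $\Sigma_n$ is a genuine transversal — not merely orbit-meeting — for the groupoid-theoretic fibered product $H^{\times_G n+1}$. Everything else is either purely formal or a direct appeal to the structure theory of $s$-bijective maps. Should a direct verification of (Ar) prove awkward, I would first reduce to étale groupoids: by Corollary \ref{cor:compat-betw-fib-prod-restr-and-gen-transv}, a stable-set transversal $T\subseteq\Sigma$ for $H$ with $f|_T$ a transversal for $G$ reduces the claim to one about the étale groupoids $H'=R^u(\Sigma,P)$ and $G'=R^u(Y,f(P))$, where $f\times f\colon H'\to G'$ is an open embedding by the last fact of Section \ref{subsec:smalemaps}; and one may also induct on $n$ via the canonical isomorphism $H^{\times_G n+1}\cong(H^{\times_G n})\times_G H$ together with the case $n+1=2$. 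In each of these reformulations the same transversality input recurs, so they organize rather than remove the difficulty.
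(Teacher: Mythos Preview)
Your overall plan is the paper's: embed $\Sigma_n$ in $(H^{\times_G n+1})^{(0)}$ as the locus where all $g_k$ are units, identify the restricted groupoid with $R^u(\Sigma_n,\sigma_n)$, show $\Sigma_n$ meets every orbit, and read off the Morita equivalence from the bibundle $(H^{\times_G n+1})^{\Sigma_n}$. Your part~(i) is correct and matches the paper's argument.

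However, you have swapped the roles of the two verifications. Proposition~\ref{prop:transversality} \emph{is} the orbit-meeting statement, not the (Ar) statement: a point $(y_1,g_1,\dots,y_{n+1})$ of $(H^{\times_G n+1})^{(0)}$ has $f(y_k)\sim_u f(y_{k+1})$ for all $k$, and the proposition produces $b_k\sim_u y_k$ with a common image $f(b_1)=\dots=f(b_{n+1})$, i.e., a point of $\Sigma_n$ in the same $H^{\times_G n+1}$-orbit. This is exactly how the paper uses it. Your own orbit-meeting argument asks for the strictly stronger fact that $f|_{\Sigma^u(e)}\colon\Sigma^u(e)\to Y^u(f(e))$ is surjective (so that one can hit the \emph{prescribed} value $f(y_1)$ rather than merely \emph{some} common value); even if this is available, it is not more elementary than Proposition~\ref{prop:transversality}, and it is not needed.

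Conversely, Proposition~\ref{prop:transversality} does not address condition~(Ar): (Ar) is a local topological statement about existence and \emph{uniqueness} of nearby arrows with range in the transversal, whereas the proposition is a global existence statement with no neighbourhood or uniqueness content. The paper never verifies (Ar); once orbit-meeting and part~(i) are in hand, it simply observes that $(H^{\times_G n+1})^{\Sigma_n}$ is a principal bibundle (freeness and the quotient identifications being formal for equivalence relations) and declares the compatibility of the inductive-limit topologies routine. Note also that the generalized-transversal machinery of Definition~\ref{def:gen-transv} is designed to output an \emph{étale} groupoid via a finer topology, whereas $R^u(\Sigma_n,\sigma_n)$ with its natural inductive-limit topology is not étale; so (Ar) is not quite the right framework here in any case, and your fallback of first passing to étale reductions via Corollary~\ref{cor:compat-betw-fib-prod-restr-and-gen-transv} would still leave you needing orbit-meeting (hence Proposition~\ref{prop:transversality}) rather than (Ar).
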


We will apply this to the $s$-bijective maps from Theorem \ref{thm:shiftfactor}.
A key step is the following proposition, which is our first technical result.

\begin{prop}\label{prop:transversality}
Let $f\colon (X,\phi)\to (Y, \psi)$ be an $s$-resolving map of Smale spaces.
Suppose $a_0,\ldots, a_n$ in $X$ are points such that $f(a_0)\sim_u f(a_k)$ for all $k$.
Then there are points $b_0, \ldots, b_n$ in $X$ satisfying
\begin{align*}
a_k &\sim_u b_k,&
f(b_0) &= f(b_k)
\end{align*}
for $k=0,\dots,n$.
\end{prop}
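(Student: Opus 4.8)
The plan is to use the dynamics to push the configuration into a ``local'' regime and then exploit that the fibre of $f$ over $f(a_0)$ is finite. Write $y_0=f(a_0)$. Since $f$ is $s$-resolving, Theorem~\ref{thm:oi} tells us that $f^{-1}(y_0)=\{x_1,\dots,x_N\}$ is a finite set; reorder so that $x_1=a_0$. The hypothesis $f(a_0)\sim_u f(a_k)$ says that each $f(a_k)$ lies in the global unstable set $Y^u(y_0)$, which by \eqref{eq:incrun2} is the increasing union of the sets $\psi^m\bigl(Y^u(\psi^{-m}(y_0),\epsilon)\bigr)$. Hence, for $\delta>0$ as small as we wish, there is $m\ge 0$ with $\psi^{-m}(f(a_k))\in Y^u(\psi^{-m}(y_0),\delta)$ for all $k$. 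Because both $\sim_u$ and $f$ are $\psi$-equivariant, replacing $a_k$ by $\phi^{-m}(a_k)$, $b_k$ by $\phi^{m}(b_k)$, and $y_0$ by $\psi^{-m}(y_0)=f(\phi^{-m}(a_0))$ reduces the statement to the case where, from the start, $f(a_k)\in Y^u(y_0,\delta)$ for every $k$.

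Now apply Proposition~\ref{prop:finun} at the point $y_0$, whose preimage is finite: for a suitably small $\epsilon>0$ there is $\delta>0$ with $f^{-1}(Y^u(y_0,\delta))\subseteq\bigcup_{i=1}^N X^u(x_i,\epsilon)$, and we take this $\delta$ in the reduction above. Since $a_k\in f^{-1}(f(a_k))\subseteq f^{-1}(Y^u(y_0,\delta))$, each $a_k$ lies in some $X^u(x_{i_k},\epsilon)$. By the contraction axiom, $[x_{i_k},a_k]=x_{i_k}$ forces $d(\phi^{-j}(x_{i_k}),\phi^{-j}(a_k))\le\lambda^{j}d(x_{i_k},a_k)\to 0$, so $a_k\sim_u x_{i_k}$. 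Setting $b_k=x_{i_k}$ for every $k$ we obtain $a_k\sim_u b_k$ and $f(b_k)=f(x_{i_k})=y_0$, so in particular $f(b_0)=f(b_k)$, which is the claim. (If $\epsilon$ is taken smaller than half the minimal pairwise distance of $x_1,\dots,x_N$ one even gets $i_0=1$, i.e.\ $b_0=a_0$, but this is not needed.)

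The one point requiring care is the compatibility of the two uses of a small constant: the reduction in the first paragraph moves the base point from $y_0$ to $\psi^{-m}(y_0)$, whereas the threshold $\delta$ furnished by Proposition~\ref{prop:finun} depends on the base point at which it is applied. This is resolved by iterating the backward dynamics once more: after the configuration lies in the local unstable set of $\psi^{-m_0}(y_0)$ with some fixed spread, applying $\phi^{-l}$ contracts the spread geometrically (contraction axiom) while preserving the bracket condition, and by the uniformity of the local product structure (the constant $\epsilon_X'$, hence a basepoint-uniform form of Proposition~\ref{prop:finun} along the orbit of $y_0$) the spread eventually falls below the relevant threshold. I expect this bookkeeping with the parameters $m$, $\delta$, $\epsilon$ — rather than any conceptual difficulty — to be the main obstacle; everything else is a direct combination of \eqref{eq:incrun2}, Theorem~\ref{thm:oi} and Proposition~\ref{prop:finun}.
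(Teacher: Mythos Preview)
Your overall strategy---push everything backward with $\phi^{-m}$ until the images $f(a_k)$ sit in a single local unstable set, then invoke Proposition~\ref{prop:finun} to land each $\phi^{-m}(a_k)$ in some $X^u(x_{i_k},\epsilon)$ and take $b_k$ from the fibre---is sound in spirit, and you have correctly identified the one real difficulty: the threshold $\delta=\delta(y_0,\epsilon)$ from Proposition~\ref{prop:finun} depends on the base point, while your reduction replaces $y_0$ by $\psi^{-m}(y_0)$, with $m$ itself chosen in terms of $\delta$. This circularity is a genuine gap, and your proposed resolution does not close it. The constant $\epsilon_X'$ governs only the uniform size of bracket neighbourhoods in $X$; it says nothing about the map $f$, and in particular it does not yield a ``basepoint-uniform form of Proposition~\ref{prop:finun} along the orbit of $y_0$''. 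The threshold $\delta(y,\epsilon)$ depends on the geometry of the fibre $f^{-1}(y)$, and there is no a priori reason for a positive lower bound as $y$ ranges over the infinite backward orbit of $y_0$. Your third paragraph asserts exactly the uniformity that needs proof.

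The paper breaks the circularity by a different, and very clean, device: rather than trying to apply Proposition~\ref{prop:finun} at a moving point $\psi^{-m}(y_0)$, it first uses compactness of $Y$ to extract a subsequence $j_n\to\infty$ with $\psi^{-j_n}(f(a_0))\to y$ for some fixed $y\in Y$. Proposition~\ref{prop:finun} is applied once, at $y$, yielding a fixed $\delta$; only then is $N$ chosen large enough that all $\psi^{-j_N}(f(a_k))$ lie in $Y^u(y,\delta)$. The fibre used is $f^{-1}(y)$, which need not meet the orbit of any $a_k$, so one cannot simply set $b_k'$ equal to a fibre point; instead one takes $b_k'=[x_k,\phi^{-j_N}(a_k)]$ and checks that $f(b_k')=[y,\psi^{-j_N}(f(a_k))]$ is independent of $k$ because all the $\psi^{-j_N}(f(a_k))$ lie on the same local unstable leaf. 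This accumulation-point trick is the missing idea in your argument; once you have it, no uniformity in Proposition~\ref{prop:finun} is needed.
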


\begin{proof}
By compactness, there is a sequence of nonnegative integers $(j_n)_n$, $j_n \to \infty$, such that $\psi^{-j_n}(f(a_0))$ converges to a point $y\in Y$.
Since $f(a_k)\sim_u f(a_0)$, we must have the convergence $\psi^{-j_n}(f(a_k))\to y$ for $k= 1, \dots,n$ as well.

By Theorem \ref{thm:oi}, $y$ has a finite preimage in $X$.
Let $\epsilon=1/2\cdot \mathrm{max}\{\epsilon_X,\epsilon_f\}$, and choose $\delta=\delta(y,\epsilon)$ given by Proposition \ref{prop:finun}, so that we have
\[
f^{-1}(Y^u(y, \delta)) \subseteq \bigcup_{x \in f^{-1}(y)} X^u(x,\epsilon).
\]
Choose a big enough $N$ such that $\psi^{-j_N}(f(a_k)$ and $\psi^{-j_N}(f(a_0)$ belong to the same local unstable set, and $d(\psi^{-j_N}(f(a_k)), y) < \delta$ holds for $k = 0, \dots, n$.
Since $\psi^{-j_N}(f(a_k)) = f(\phi^{-j_N}(a_k))$, we have $\phi^{-j_N}(a_k)\in X(x_k,\epsilon)$ where $x_k\in X$ is a point satisfying $f(x_k)=y$.

Because of the choice of $\epsilon$, we can construct $b'_k = [x_k, \phi^{-j_N}(a_k)]$, and $f$ commutes with the bracket of these points.
Then $z = f(b'_k)=[y,\psi^{-j_N}(f(a_k))]$ is independent of $k$ by our choice of $j_N$.
We also have $b'_k \sim_u \phi^{-j_N}(a_k)$ by construction.
Then $b_k=\phi^{j_N}(b'_k)$ is unstably equivalent to $a_k$, and $f(b_k)=\psi^{j_N}(z)$ does not depend on $k$.
\end{proof}

Let us note in passing that instead of the $s$-resolving condition one could assume something else that still implies that $f$ is finite-to-one for the above proof to work.

\begin{figure}[ht]
\begin{center}
\begin{tikzpicture}[dot-node/.style={circle,fill,inner sep=2pt,minimum size=2pt}]
\tikzstyle{axes}=[]
\begin{scope}[scale=1]
\node[dot-node,label=235:{$y$}] (y) at (0,0) {};
\node[dot-node,label=45:{$z$}] (z) at (0,1) {};
\node[dot-node,label=below:{$\psi^{-j_N}(f(a_0))$}] (psi-jN-f-a0) at (-1.2,1) {};
\node[dot-node,label=below:{$\psi^{-j_N}(f(a_k))$}] (psi-jN-f-ak) at (1.2,1) {};
\draw[<->] (0,-.5) -- (0,1.5);
\draw[<->] (-1.7,1) -- (1.7,1);
\end{scope}
\begin{scope}[scale=0.8,shift={(-5,2)}]
\node[dot-node,label=235:{$x_0$}] (x0) at (0,0) {};
\node[dot-node,label=45:{$b'_0$}] (bp0) at (0,1) {};
\node[dot-node,label=below:{$\phi^{-j_N}(a_0)$}] (phi-jN-a0) at (-1,1) {};
\draw[<->] (0,-.5) -- (0,1.5);
\draw[<->] (-1.5,1) -- (0.5,1);
\end{scope}
\begin{scope}[scale=0.8,shift={(5,2)}]
\node[dot-node,label=315:{$x_k$}] (xk) at (0,0) {};
\node[dot-node,label=135:{$b'_k$}] (bpk) at (0,1) {};
\node[dot-node,label=below:{$\phi^{-j_N}(a_k)$}] (phi-jN-ak) at (1,1) {};
\draw[<->] (0,-.5) -- (0,1.5);
\draw[<->] (1.5,1) -- (-0.5,1);
\end{scope}
\draw[->,dashed] (x0) edge[] (y);
\draw[->,dashed] (bp0) edge[] (z);
\draw[->,dashed] (phi-jN-a0) edge[] (psi-jN-f-a0);
\draw[->,dashed] (xk) edge[] (y);
\draw[->,dashed] (bpk) edge[] (z);
\draw[->,dashed] (phi-jN-ak) edge[] (psi-jN-f-ak);
\end{tikzpicture}
\caption{The configuration of points in the proof of Proposition \ref{prop:transversality}. The vertical direction represents the stable direction, while the horizontal direction represents the unstable direction.}
\label{fig:keyproof}
\end{center}
\end{figure}
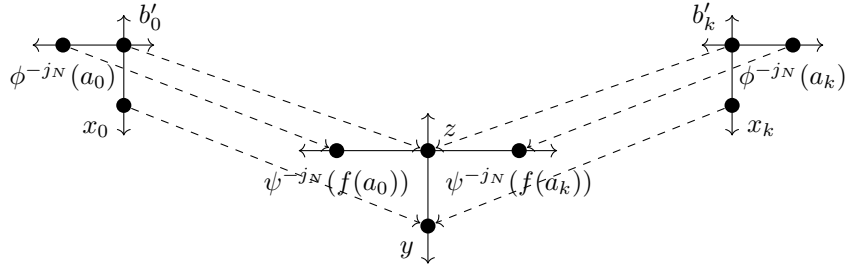

\begin{proof}[Proof of Theorem \ref{thm:groupoid-fib-prod-compar-unstb-groupoid-Smale-sp-fib-prod}]
We have an embedding of the groupoid $R^u(\Sigma_{n}, \sigma_{n})$ into $H^{\times_G n+1}$ by the correspondence
\[
(a^0, \ldots, a^n) \mapsto (a^0, \id_y, a^1, \ldots, \id_y, a^n) \quad (y = f(a^0) = \cdots = f(a^n))
\]
at the level of objects, and by
\[
((a^0, \ldots, a^n), (b^0, \ldots, b^n)) \mapsto ((a^0, b^0), \ldots, (a^n, b^n))
\]
at the level of arrows.
Proposition \ref{prop:transversality} implies that $\Sigma_{n} \subset (H^{\times_G n+1})^{(0)}$ meets all orbits of $H^{\times_G n+1}$.
Moreover, $a, b \in \Sigma_{n}$ are connected by an arrow in $(H^{\times_G n+1})^{(0)}$ if and only if they are connected in $R^u(\Sigma_{n}, \sigma_{n})$.
Thus, $R^u(\Sigma_{n}, \sigma_{n}) \curvearrowright (H^{\times_G n+1})^{\Sigma_{n}} \curvearrowleft H^{\times_G n+1}$ gives a Morita equivalence between the two groupoids.
It is a routine task to see that this is compatible with the topology on the two groupoids.
\end{proof}

\begin{rema}
Although we do not need it, we can replace $(\Sigma, \sigma)$ in Theorem \ref{thm:groupoid-fib-prod-compar-unstb-groupoid-Smale-sp-fib-prod} by another Smale space $(X, \phi)$ with totally disconnected stable sets.
The generalization of Proposition \ref{prop:transversality} with $X$ can be reduced to the above one as follows.
Fix a $s$-bijective factor map $f' \colon (\Sigma,\sigma) \to (X,\phi)$ from a shift of finite type.
Starting from $a^k \in X$ with unstably equivalent image in $Y$, taking inverse images $\bar a^k \in \Sigma$ of $a^k$, we can find points $\bar b^k \in \Sigma$ satisfying the assertion for the map $f \circ f'$.
Then the points $b^k = f'(\bar b^k)$ satisfy the assertion for $f$.
\end{rema}

Combining Proposition \ref{prop:mor-equiv-for-fib-prod-with-G}, Corollary \ref{cor:compat-betw-fib-prod-restr-and-gen-transv}, and Theorem \ref{thm:groupoid-fib-prod-compar-unstb-groupoid-Smale-sp-fib-prod}, we obtain the following.

\begin{theo}\label{thm:mormor}
In addition to $f\colon (\Sigma, \sigma) \to (Y, \psi)$ as above, let $f' \colon T \to \Sigma$ be a generalized transversal for the locally compact groupoid $R^u(\Sigma, \sigma)$ such that $f \circ f'\colon T \to Y$ defines a generalized transversal for $R^u(Y, \psi)$.
Denote the corresponding étale groupoids by 
\[
H = R^u(\Sigma, \sigma)|_{f'(T)},\qquad G = R^u(Y, \psi)|_{f \circ f'(T)}.
\]
The groupoid  $G \times_G H^{\times_G n+1}$ with respect to the natural inclusion $H \to G$ is Morita equivalent to $R^u(\Sigma_n, \sigma_n)$ as a topological groupoid.
\end{theo}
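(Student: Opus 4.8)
The plan is to obtain the statement purely by concatenating three Morita equivalences that have already been established, together with the transitivity of Morita equivalence for second countable locally compact Hausdorff groupoids (the fact that a composition of equivalence bibundles is again an equivalence bibundle).

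First I would introduce notation for the ambient locally compact groupoids, say $\bar G = R^u(Y,\psi)$ and $\bar H = R^u(\Sigma,\sigma)$, and take $\bar\alpha = f\times f\colon \bar H\to\bar G$. Applying Theorem \ref{thm:groupoid-fib-prod-compar-unstb-groupoid-Smale-sp-fib-prod} to $\bar\alpha$ gives that $\bar H^{\times_{\bar G} n+1}$ is Morita equivalent to $R^u(\Sigma_n,\sigma_n)$ as a locally compact groupoid; this is the step where the transversality result Proposition \ref{prop:transversality} does the real work, but it is already done.

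Next I would note that on unit spaces $\bar\alpha$ restricts to $f\colon \Sigma = \bar H^{(0)}\to Y = \bar G^{(0)}$, so that $\bar\alpha\circ f' = f\circ f'\colon T\to Y$. By hypothesis $f'$ is a generalized transversal for $\bar H$ and $f\circ f'$ is a generalized transversal for $\bar G$, so Corollary \ref{cor:compat-betw-fib-prod-restr-and-gen-transv}, applied with $n$ replaced by $n+1$, yields a Morita equivalence between $\bar H^{\times_{\bar G} n+1}$ and $H^{\times_G n+1}$, where $H = \bar H|_{f'(T)}$ and $G = \bar G|_{f\circ f'(T)}$ are the étale groupoids of the statement and $\alpha\colon H\to G$ is the induced homomorphism. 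Finally I would apply Proposition \ref{prop:mor-equiv-for-fib-prod-with-G} to $\alpha$ to get a Morita equivalence between $H^{\times_G n+1}$ and $G\times_G H^{\times_G n+1}$.

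Composing the three bibundles then gives the desired equivalence, and since each cited result records the compatibility of its bibundle with the relevant topologies, the conclusion is an equivalence of topological groupoids. I do not expect a substantive obstacle here: the only thing needing care is the bookkeeping of verifying that the hypotheses of each cited result match the present setting — in particular that "$f\circ f'$ is a generalized transversal for the locally compact groupoid $R^u(Y,\psi)$" is exactly the input required by Corollary \ref{cor:compat-betw-fib-prod-restr-and-gen-transv} — together with the standard fact that groupoid Morita equivalence is transitive.
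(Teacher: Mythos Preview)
Your proposal is correct and matches the paper's own argument: the theorem is stated immediately after the sentence ``Combining Proposition \ref{prop:mor-equiv-for-fib-prod-with-G}, Corollary \ref{cor:compat-betw-fib-prod-restr-and-gen-transv}, and Theorem \ref{thm:groupoid-fib-prod-compar-unstb-groupoid-Smale-sp-fib-prod}, we obtain the following,'' and your three-step concatenation of Morita equivalences is precisely that combination.
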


\subsection{Semidirect products and transversality}
\label{sec:semidir-prod-transv}

Let $\Gamma$ be a discrete group, and $G$ be a groupoid admitting a left action of $\Gamma$ by groupoid automorphisms $(\phi_\gamma)_{\gamma \in \Gamma}$.
We denote the corresponding semidirect product by $\Gamma \ltimes G$.
Thus, an arrow in $\Gamma \ltimes G$ is represented by pairs $(\gamma, g)$ for $\gamma \in \Gamma$ and $g \in G$, with range $\phi_\gamma(r(g))$ and source $s(g)$, and composition is given by
\[
(\gamma, g) (\gamma', g') = (\gamma \gamma', \phi_{\gamma'{-1}}(g) g') \quad (s(g) = \phi_{\gamma'}(r(g'))).
\]

Let $H$ be an another groupoid with an action of $\Gamma$ by automorphisms $(\psi_{\gamma})_{\gamma \in \Gamma}$, and let $\alpha\colon H \to G$ be a homomorphism commuting with $\phi$ and $\psi$.
Then we get a groupoid homomorphism $\Gamma \ltimes H \to \Gamma \ltimes G$.

\begin{prop}\label{prop:semidr-prod-transv-perm}
Suppose that the fiber product of base $H^{(0) \times_{G^{(0)}} n}$ meets all orbits in the groupoid pullback $H^{\times_G n}$.
Then $H^{(0) \times_{G^{(0)}} n}$ meets all orbits in $(\Gamma \ltimes H)^{\times_{\Gamma \ltimes G} n}$.
\end{prop}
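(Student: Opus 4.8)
The plan is to \emph{untwist} the $\Gamma$-coordinates of an object of $(\Gamma \ltimes H)^{\times_{\Gamma \ltimes G} n}$, moving it into the canonical copy of $H^{\times_G n}$ that sits inside it, and then to quote the hypothesis there.

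First I would set up the relevant identifications. Since the unit space of a semidirect product is the unit space of the underlying groupoid, $(\Gamma \ltimes H)^{(0)} = H^{(0)}$ and $(\Gamma \ltimes G)^{(0)} = G^{(0)}$, and the homomorphism $\Gamma \ltimes H \to \Gamma \ltimes G$ restricts to $\alpha$ on objects; hence $(\Gamma \ltimes H)^{(0)\times_{(\Gamma\ltimes G)^{(0)}} n}$ is literally $H^{(0)\times_{G^{(0)}} n}$. Unpacking Definition~\ref{def:mult-fib-prod}, an object of $(\Gamma \ltimes H)^{\times_{\Gamma \ltimes G} n}$ is a tuple $(y_1, (\gamma_1, g_1), y_2, \ldots, (\gamma_{n-1}, g_{n-1}), y_n)$ with $y_k \in H^{(0)}$ and $(\gamma_k, g_k)$ an arrow $\alpha(y_{k+1}) \to \alpha(y_k)$ in $\Gamma \ltimes G$. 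I would also observe that $h \mapsto (e, h)$ is a groupoid homomorphism $H \to \Gamma \ltimes H$ lying over $G \to \Gamma \ltimes G$, so it induces an embedding $H^{\times_G n} \hookrightarrow (\Gamma \ltimes H)^{\times_{\Gamma \ltimes G} n}$ compatible with the two incarnations of $H^{(0)\times_{G^{(0)}} n}$.

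Next, given an object $\mathbf{y}$ as above, I would set $\delta_1 = e$ and $\delta_{k+1} = \gamma_1 \cdots \gamma_k$, and consider the tuple $\big((\delta_1, \id_{y_1}), \ldots, (\delta_n, \id_{y_n})\big)$. Its $k$-th entry has source $y_k$, so by Definition~\ref{def:mult-fib-prod} it is an arrow of $(\Gamma \ltimes H)^{\times_{\Gamma \ltimes G} n}$ with source $\mathbf{y}$ and a uniquely determined range $\mathbf{y}'$. Working out $\mathbf{y}'$ with the multiplication law of the semidirect product, its $k$-th object coordinate comes out to be $\psi_{\delta_k}(y_k)$ and its $k$-th arrow coordinate to be $\big(\delta_k \gamma_k \delta_{k+1}^{-1}, \phi_{\delta_{k+1}}(g_k)\big) = \big(e, \phi_{\delta_{k+1}}(g_k)\big)$, which lies in the image of $G \to \Gamma \ltimes G$. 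Thus $\mathbf{y}'$ is the image, under the embedding of the previous paragraph, of the object of $H^{\times_G n}$ whose object coordinates are $\psi_{\delta_k}(y_k)$ and whose arrow coordinates are $\phi_{\delta_{k+1}}(g_k)$; in particular $\mathbf{y}$ and this object of $H^{\times_G n}$ lie in the same orbit of $(\Gamma \ltimes H)^{\times_{\Gamma \ltimes G} n}$.

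Finally, by hypothesis that object of $H^{\times_G n}$ is joined by an arrow of $H^{\times_G n}$ to some object $\mathbf{z} \in H^{(0)\times_{G^{(0)}} n}$; transporting that arrow along $H^{\times_G n} \hookrightarrow (\Gamma \ltimes H)^{\times_{\Gamma \ltimes G} n}$ and composing with the untwisting arrow produces an arrow $\mathbf{y} \to \mathbf{z}$, so $\mathbf{z}$ lies in the orbit of $\mathbf{y}$. Since $\mathbf{y}$ was arbitrary, $H^{(0)\times_{G^{(0)}} n}$ meets every orbit. The one place where care is needed is the coordinate computation of $\mathbf{y}'$: one must carry the $\phi_{\gamma^{-1}}$-twists in the semidirect-product multiplication through correctly, so as to confirm simultaneously that $\big((\delta_k, \id_{y_k})\big)_k$ is a bona fide arrow and that its range lands exactly on the image of $H^{\times_G n}$ rather than merely somewhere in the $\Gamma$-pullback. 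No topology enters, since meeting all orbits is a purely set-level condition.
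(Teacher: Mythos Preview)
Your argument is correct and follows essentially the same strategy as the paper: untwist the $\Gamma$-components of an object of $(\Gamma\ltimes H)^{\times_{\Gamma\ltimes G} n}$ by acting with arrows of the form $(\delta_k,\id)$ to land in the image of $H^{\times_G n}$, then invoke the hypothesis. The only cosmetic difference is the direction of the untwisting---you anchor the first coordinate (taking $\delta_1=e$, $\delta_{k+1}=\gamma_1\cdots\gamma_k$ and pushing forward by $\psi_{\delta_k}$), whereas the paper anchors the last coordinate (taking $\delta_k=\gamma_k\cdots\gamma_{n-1}$ and pulling back by $\psi_{\delta_k^{-1}}$); both choices work equally well.
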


\begin{proof}
Let $z = (y_1, (\gamma_1, g_1), y_2, \dots, (\gamma_{n-1}, g_{n-1}), y_n)$ be a point of $((\Gamma \ltimes H)^{\times_{\Gamma \ltimes G} n})^{(0)}$.
Note that we have
\[
\alpha(y_1) = \phi_{\gamma_1}(r(g_1)),\quad
\alpha(y_2) = s(g_1) = \phi_{\gamma_2}(r(g_2)), \quad \dots, \quad
\alpha(y_n) = s(g_{n-1}),
\]
thus the tuple
\[
z' = (\psi_{(\gamma_1 \dots \gamma_{n-1})^{-1}}(y_1), \phi_{(\gamma_2 \dots \gamma_{n-1})^{-1}}(g_1), \psi_{(\gamma_2 \dots \gamma_{n-1})^{-1}}(y_2), \dots, g_{n-1}, y_n)
\]
is a point in $(H^{\times_G n})^{(0)}$.
Moreover, the arrows
\[
(\gamma_1 \dots \gamma_{n-1}, \id_{\psi_{(\gamma_1 \dots \gamma_{n-1})^{-1}}(y_1)}) ,\quad \dots, \quad (\gamma_{n-1}, \id_{\psi_{\gamma_{n-1}^{-1}}(y_{n-1})}), \quad (e, \id_{y_n})
\]
in $\Gamma \ltimes H$ give an arrow from $z'$ to $z$ in $(\Gamma \ltimes H)^{\times_{\Gamma \ltimes G} n}$ up to the embedding $G \to \Gamma \ltimes G, g \mapsto (e, g)$, etc.
By assumption there is $z'' \in H^{(0) \times_{G^{(0)}} n}$ and an arrow from $z''$ to $z'$, hence $z$ is also in the orbit of $z''$.
\end{proof}

\section{Spectral sequence for Smale spaces}
\label{sec:approx-equivar-KK}

Let us start with the following generalization of Proposition \ref{prop:base-space-generation}.

\begin{theo}
\label{thm:KK-open-subgrpd-induction}
Suppose that $G$ is an étale groupoid with torsion-free stabilizers satisfying the conclusion of Theorem {\normalfont\ref{thm:tu}}, and that $H \subseteq G$ is an étale subgroupoid with the same base space $X$.
Any $G$-C$^*$-algebra in the category $\KKK^G$ belongs to the localizing subcategory generated by the image of $\Ind^G_H \colon \KKK^H \to \KKK^G$.
\end{theo}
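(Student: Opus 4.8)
The plan is to deduce the statement from Proposition~\ref{prop:base-space-generation} together with transitivity of induction. Write $\cL\subseteq\KKK^G$ for the localizing subcategory generated by the image of $\Ind^G_H$. Since $G$ satisfies exactly the hypotheses of Proposition~\ref{prop:base-space-generation}, every $G$-C$^*$-algebra $A$ in $\KKK^G$ lies in the localizing subcategory generated by the objects $\Ind^G_X B$ with $B$ a $C_0(X)$-algebra; as $\cL$ is itself a localizing subcategory, it will be enough to show $\Ind^G_X B\in\cL$ for every such $B$, for then $\cL$ contains the localizing subcategory generated by all the $\Ind^G_X B$, and hence contains $A$.

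The key step I would isolate is the ``induction in stages'' isomorphism $\Ind^G_H\circ\Ind^H_X\simeq\Ind^G_X$ of functors $\KKK^X\to\KKK^G$. Here $X=G^{(0)}$ is an open subgroupoid of $G$ and $X=H^{(0)}$ is an open subgroupoid of $H$, both groupoids being étale, so Theorem~\ref{thm:iradj} provides adjunctions $\Ind^G_H\dashv\Res^G_H$, $\Ind^H_X\dashv\Res^H_X$ and $\Ind^G_X\dashv\Res^G_X$. Restriction of the action is evidently transitive, $\Res^G_X=\Res^H_X\circ\Res^G_H$, so $\Ind^G_H\circ\Ind^H_X$ is also a left adjoint of $\Res^G_X$; by uniqueness of adjoints up to natural isomorphism this gives $\Ind^G_H\circ\Ind^H_X\simeq\Ind^G_X$. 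Alternatively one could verify this directly from the crossed-product description, using $C_0(G)\tens{s}{C_0(X)}\bigl(C_0(H)\tens{s}{C_0(X)}B\bigr)\simeq\bigl(C_0(G)\tens{s}{C_0(X)}C_0(H)\bigr)\tens{s}{C_0(X)}B$ together with associativity of crossed products, but then one has to track the $G$-equivariance by hand, so the adjoint argument is preferable.

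Granting this, for any $C_0(X)$-algebra $B$ the algebra $\Ind^H_X B=C_0(H)\tens{s}{C_0(X)}B$ is a separable $H$-C$^*$-algebra (separable because $H$ is second countable), so $\Ind^G_X B\simeq\Ind^G_H(\Ind^H_X B)$ lies in the image of $\Ind^G_H$, hence in $\cL$; combined with the first paragraph this proves the theorem. The only real content is the induction-in-stages isomorphism, and that is where I expect any care to be needed — the remaining steps are formal manipulations with localizing subcategories and are routine.
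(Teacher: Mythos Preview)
Your proof is correct and follows essentially the same route as the paper: reduce to Proposition~\ref{prop:base-space-generation} and then use the induction-in-stages isomorphism $\Ind^G_X\simeq\Ind^G_H\circ\Ind^H_X$ to place every $\Ind^G_X B$ in the image of $\Ind^G_H$. The only difference is that the paper establishes this isomorphism by writing down an explicit $G$-equivariant strong Morita equivalence between $C_0(G)\tens{s}{C_0(X)}A$ and $(C_0(G)\tensb{s}{C_0(X)}{r}C_0(H)\tens{s}{C_0(X)}A)\rtimes H$, whereas you obtain it more cleanly from uniqueness of adjoints and the evident identity $\Res^G_X=\Res^H_X\circ\Res^G_H$.
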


Above, $H$ is an open subgroupoid of $G$ because $H^{(0)} = X$ and $H$ is étale.

\begin{proof}[Proof of Theorem {\normalfont \ref{thm:KK-open-subgrpd-induction}}]
Consider the functors
\begin{align*}
\Res^G_H\colon \KKK^G &\to \KKK^H,&
\Ind^G_H\colon \KKK^H &\to \KKK^G
\end{align*}
as in Section \ref{sec:ind-res-adj}.
By Proposition \ref{prop:simplicial-res-from-adj-fs}, we have a complementary pair $(\PI, \cN_\cI)$ for $\cI = \ker \Res^G_H$, with $\PI$ being generated by the image of $\Ind^G_H$ as a localizing subcategory.

Moreover, we have a natural isomorphism of functors $\Ind^G_X \simeq \Ind^G_H \Ind^H_X$.
Concretely, if $A$ is a $C_0(X)$-algebra, $\Ind^G_X A = C_0(G) \tens{s}{C_0(X)} A$ and $\Ind^G_H \Ind^H_X A= (C_0(G) \tensb{s}{C_0(X)}{r} C_0(H) \tens{s}{C_0(X)} A) \rtimes H$ are $G$-equivariantly strongly Morita equivalent via a Hilbert C$^*$-bimodule completion of $C_c(G) \tensb{s}{C_0(X)}{r} C_c(H) \tens{s}{C_0(X)} A$.
Combined with Proposition \ref{prop:base-space-generation}, we obtain that $A$ belongs to $\PI$.
\end{proof}

\begin{coro}\label{cor:obv}
Let $G$, $H$, and $A$ be as in Theorem {\normalfont\ref{thm:KK-open-subgrpd-induction}}. Let $P_H(A)\in \langle \Ind^G_H\KKK^H \rangle$ be the algebra appearing in the exact triangle 
\[
P_H(A) \to A \to N \to \Sigma P_H(A)
\]
that we get by applying Proposition {\normalfont\ref{prop:simplicial-res-from-adj-fs}}. Then we have $P_H(A)\simeq A$ in $\KKK^G$.
\end{coro}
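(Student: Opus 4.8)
The plan is to prove the equivalent statement that the third vertex $N$ of the exact triangle vanishes, i.e. $N \simeq 0$ in $\KKK^G$; since $N$ is (up to isomorphism) the cone of the canonical morphism $P_H(A) \to A$, its vanishing is exactly the assertion that $P_H(A) \to A$ is an isomorphism.

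First I would recall the formal setup. Applying Proposition \ref{prop:simplicial-res-from-adj-fs} to the adjunction $\Ind^G_H \dashv \Res^G_H$ with $\cI = \ker\Res^G_H$ yields a complementary pair $(\PI, \cN_\cI)$, and — as noted in the proof of Theorem \ref{thm:KK-open-subgrpd-induction} — the subcategory $\PI$ coincides with the localizing subcategory $\langle \Ind^G_H\KKK^H\rangle$. Being part of a complementary pair, $\PI$ and $\cN_\cI$ are mutually orthogonal, so $\KKK^G(\PI,\cN_\cI)=0$; since a localizing subcategory is in particular thick, it follows that $\PI \cap \cN_\cI$ contains only zero objects, because any $X$ in the intersection has $\id_X \in \KKK^G(X,X) = 0$.

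Next I would place $N$ in both halves of the pair. By construction in Proposition \ref{prop:simplicial-res-from-adj-fs} we have $N \in \cN_\cI$. On the other hand, Theorem \ref{thm:KK-open-subgrpd-induction} tells us that every $G$-C$^*$-algebra, in particular $A$, lies in $\langle \Ind^G_H\KKK^H\rangle = \PI$; since also $P_H(A) \in \PI$ by hypothesis and $\PI$ is triangulated, the third vertex $N$ of the triangle $P_H(A) \to A \to N \to \Sigma P_H(A)$ lies in $\PI$ as well. Hence $N \in \PI \cap \cN_\cI = 0$, so $N \simeq 0$ and $P_H(A) \simeq A$ in $\KKK^G$. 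I do not expect any genuine obstacle: the statement is a purely formal consequence of Theorem \ref{thm:KK-open-subgrpd-induction} together with standard properties of complementary pairs of subcategories in a triangulated category, the only small points of care being the use of thickness to conclude $N \in \PI$ and the observation that orthogonality forces $\PI \cap \cN_\cI = 0$.
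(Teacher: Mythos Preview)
Your argument is correct and is exactly the intended one: the paper gives no proof for this corollary (note the label \texttt{cor:obv}), treating it as an immediate consequence of Theorem~\ref{thm:KK-open-subgrpd-induction} and the formal properties of complementary pairs, which is precisely what you spell out. One small terminological slip: the property you invoke to conclude $N \in \PI$ from $P_H(A), A \in \PI$ is closure of $\PI$ under completing triangles (i.e., that $\PI$ is a \emph{triangulated} subcategory), not thickness; likewise, the conclusion $\PI \cap \cN_\cI = 0$ follows directly from $\id_X = 0 \Rightarrow X \simeq 0$ in any additive category and needs no thickness hypothesis.
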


\begin{coro}\label{cor:specseq}
Let $G$, $H$, and $A$ be as in Theorem {\normalfont\ref{thm:KK-open-subgrpd-induction}}.
Then we have a convergent spectral sequence
\begin{equation}\label{eq:spseqH}
E^2_{pq}=H_p(K_q(G \ltimes L^{\bullet+1} A)) \Rightarrow K_{p+q}(G \ltimes A),
\end{equation}
where $L^n A= (\Ind_H^G\Res^G_H)^n(A)$.
\end{coro}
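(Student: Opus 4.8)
The plan is to instantiate the abstract spectral sequence of Theorem~\ref{thm:spseqtri} for the adjoint pair $(\Ind^G_H, \Res^G_H)$ and then replace the target by $K_{p+q}(G \ltimes A)$ using Corollary~\ref{cor:obv}. First I would put $\cS = \KKK^H$, $\cT = \KKK^G$, $E = \Ind^G_H$, $F = \Res^G_H$; the natural isomorphism $\cT(EB, A) \simeq \cS(B, FA)$ required by the formalism is exactly the adjunction of Theorem~\ref{thm:iradj}, while $E$ and $F$ are exact and commute with countable direct sums, so $L = EF = \Ind^G_H \Res^G_H$ is an endofunctor of $\KKK^G$ of the kind appearing in Proposition~\ref{prop:simplicial-res-from-adj-fs}. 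That proposition then produces, for the given $A \in \KKK^G$, the $\cI$-projective resolution $P_\bullet$ with $P_n = L^{n+1}A$ (where $\cI = \ker \Res^G_H$), together with the exact triangle $P_H(A) \to A \to N \to \Sigma P_H(A)$ of Corollary~\ref{cor:obv}.

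Next I would apply Theorem~\ref{thm:spseqtri} with the homological functor $K = K_\bullet(G \ltimes \mhyph)\colon \KKK^G \to \Ab$, obtained by composing the (reduced) descent functor $\KKK^G \to \KKK$ with $K$-theory; being such a composition, it is a homological functor compatible with countable direct sums, so Theorem~\ref{thm:spseqtri} applies and yields a convergent spectral sequence with $E^2_{pq} = H_p(K_q(P_\bullet)) = H_p(K_q(G \ltimes L^{\bullet+1}A))$ abutting to $K_{p+q}(G \ltimes P_H(A))$.

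Finally, Corollary~\ref{cor:obv} gives $P_H(A) \simeq A$ in $\KKK^G$; since $K_\bullet(G \ltimes \mhyph)$ factors through $\KKK^G$, it sends this isomorphism to an isomorphism $K_{p+q}(G \ltimes P_H(A)) \cong K_{p+q}(G \ltimes A)$, which turns the abutment into the one displayed in \eqref{eq:spseqH}.

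Most of this is bookkeeping: the genuine inputs — the induction–restriction adjunction (Theorem~\ref{thm:iradj}), the generation result underlying Corollary~\ref{cor:obv} (Theorem~\ref{thm:KK-open-subgrpd-induction}, where the torsion-freeness and Theorem~\ref{thm:tu} hypotheses enter), and the abstract construction of the spectral sequence (Theorem~\ref{thm:spseqtri}, via \cite{meyer:tri}) — are already in place. The main point deserving a line of justification is the verification that $K_\bullet(G \ltimes \mhyph)$ is a homological functor on $\KKK^G$ with the exactness and countable-additivity properties demanded by Theorem~\ref{thm:spseqtri}, together with matching the indexing of the Moore complex of $(G \ltimes L^{\bullet+1}A)_{n}$ to the conventions of \cite{valmako:part1}; I expect this to be the only, and quite mild, obstacle.
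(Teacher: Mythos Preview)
Your proposal is correct and follows essentially the same route as the paper: instantiate the abstract machinery of Proposition~\ref{prop:simplicial-res-from-adj-fs} and Theorem~\ref{thm:spseqtri} for the adjoint pair $(\Ind^G_H,\Res^G_H)$ with the homological functor $K_\bullet(G\ltimes\mhyph)$, then invoke Corollary~\ref{cor:obv} to replace $P_H(A)$ by $A$ in the abutment. The paper's proof is just a terser version of what you wrote, and it cites \cite{meyernest:tri}*{Examples 13 and 15} for precisely the point you flagged as needing a line of justification, namely that $K_0(G\ltimes\mhyph)$ is a homological functor (as the composite of the exact descent functor with $K_0$).
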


\begin{proof}
The reduced crossed product functor
\[
\KKK^G \to \KKK, \quad A \mapsto G \ltimes A
\]
is exact and compatible with direct sums, while
\[
\KKK \to \Ab, \quad B \mapsto K_0(B)
\]
is a homological functor. Thus, their composition
\[
K_0(G \ltimes \mhyph) \colon \KKK^G \to \Ab
\]
is a homological functor, cf.~\cite{meyernest:tri}*{Examples 13 and 15}.
Now we can apply Theorem \ref{thm:spseqtri} to get a spectral sequence
\[
H_p(K_q(G \ltimes P_\bullet)) \Rightarrow K_{p+q}(G \ltimes P_H(A)),
\]
where $P_\bullet$ is a $(\ker \Res^G_H)$-projective resolution of $A$. The $(\ker \Res^G_H)$-projective resolution from Proposition \ref{prop:simplicial-res-from-adj-fs} gives the left hand side of \eqref{eq:spseqH}. Now the claim follows from Corollary \ref{cor:obv}.
\end{proof}

\begin{rema}
It would be an interesting question to cast the above constructions to groupoid equivariant $E$-theory~\cite{MR2044224}, since we mostly use formal properties of $\KKK^G$.
However, since some parts of our constructions involve reduced crossed products, there are some details to be checked.
(Note that $H$ need not be a proper subgroupoid.)
\end{rema}

\subsection{Projective resolution from subgroupoid induction}

Put $P_n = L^{n+1} A$ for the functor $L = \Ind^G_H \Res^G_H \colon \KKK^G \to \KKK^G$.
This is a simplicial object in $\KKK^G$, and the associated complex structure on $P_\bullet$ is given by the differential
\begin{equation}\label{eq:chcom}
\delta_n=\sum_{i=0}^n (-1)^i d^n_i \colon P_n \to P_{n-1},
\end{equation}
together with the augmentation morphism $\delta_0=\epsilon \colon P_0 = L A \to A$.
This makes $P_\bullet$ a $(\ker \Res^G_H)$-projective resolution of $A$ \cite{valmako:part1}*{Proposition 2.1}.

Suppose $G$ is a second countable locally compact Hausdorff étale groupoid, and $H$ is an open subgroupoid with the same base space.
Let us analyze the chain complex in \eqref{eq:chcom} more concretely.
Let $s_n \colon G^{(n)} \to X$ be the map $(g_1, \ldots, g_n) \mapsto s g_n$.

\begin{lemm}
Let $A$ be an $H$-C$^*$-algebra. The $C_0(G^{(n)})$-algebra $s_n^* A$ is endowed with a continuous action of the groupoid $G \times_G H^{\times_G n}$.
\end{lemm}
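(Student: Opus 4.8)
The goal is to equip $s_n^* A$ with a continuous action of the groupoid $G \times_G H^{\times_G n}$, where $A$ is an $H$-$C^*$-algebra. Recall that an arrow of $G \times_G H^{\times_G n}$ is represented by a tuple $(k, g_0', h_1, \ldots, h_n)$ going from $(g_0, y_1, g_1, \ldots, y_n)$ to $(g_0', y_1', g_1', \ldots, y_n')$, where $k \in G^{r g_0'}_{r g_0}$, $h_j \in H^{y_j'}_{y_j}$, and the diagram of squares commutes (so the $g_j'$ are determined). The key observation is that the object space $(G \times_G H^{\times_G n})^{(0)}$ maps naturally to $G^{(n)}$: a point $(g_0, y_1, g_1, \ldots, g_{n-1}, y_n)$ is sent to $(g_0, g_1, \ldots, g_{n-1}) \in G^{(n)}$ (note the composability condition $s g_{j-1} = \alpha(y_j) = r g_j$ matches the definition of $G^{(n)}$ after rewriting $\alpha(y_j)$ appropriately — I should double-check the exact indexing convention, but morally the $g$-coordinates of an object of $G \times_G H^{\times_G n}$ form a string of composable arrows in $G$). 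Then $s_n^* A$, pulled back along this map, is a $C_0\big((G \times_G H^{\times_G n})^{(0)}\big)$-algebra, and what remains is to define the action of arrows.

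First I would pin down the map $\pi\colon (G \times_G H^{\times_G n})^{(0)} \to G^{(n)}$ and observe that since $A$ is an $H$-algebra with structure map $\rho\colon C_0(H^{(0)}) \to Z(\cM(A))$, the fibre of $s_n^* A$ over an object is (a completion of functions valued in) $A$ localized at $s g_n = \alpha(y_n) \in H^{(0)}$; call this fibre $A_{y_n}$. Second, given an arrow represented by $(k, g_0', h_1, \ldots, h_n)$ from the object with last $H$-coordinate $y_n$ to the one with last coordinate $y_n'$, the last component $h_n \in H^{y_n'}_{y_n}$ gives an isomorphism $A_{y_n} \to A_{y_n'}$ via the $H$-action on $A$. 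This is the map I would use to define the action of $G \times_G H^{\times_G n}$ on $s_n^* A$: the action "sees" only the $h_n$-coordinate of the arrow, acting via the $H$-action on $A$. Third, I would verify the axioms of a groupoid action: that it is compatible with the anchor map (the anchor being $\pi$ followed by $s_n$, which reads off $s g_n = \alpha(y_n)$, and indeed $h_n$ conjugates this to $\alpha(y_n')$), that it respects composition (because composition in $G \times_G H^{\times_G n}$ composes the $h_n$'s, and the $H$-action on $A$ is a genuine action), and that it is continuous (which follows from continuity of the $H$-action on $A$ together with continuity of the coordinate projection $(k, g_0', h_1, \ldots, h_n) \mapsto h_n$ in the topology on $G \times_G H^{\times_G n}$ described after Proposition~\ref{prop:mor-equiv-for-fib-prod-with-G}).

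There is one subtlety I would want to handle carefully: whether the action should depend only on $h_n$ or whether there is a twist involving the $g_j$-coordinates, in analogy with the construction of $\Ind_H^G$ in Section~\ref{sec:ind-res-adj}, where the $H$-action on $C_0(G) \otimes_{C_0(X)}^s B$ combines right translation on $C_0(G)$ with the $B$-action twisted by the inverse map. Here $s_n^* A$ is the analogue of $C_0(G^{(n)}) \otimes^{s_n}_{C_0(X)} A$, and the natural $G \times_G H^{\times_G n}$-action is the one where the groupoid acts on the $C_0(G^{(n)})$ factor by the obvious "translation" coming from how arrows of $G \times_G H^{\times_G n}$ move the $g$-coordinates (the $k$ and $g_0'$ data), while acting on $A$ through $h_n$. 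So more precisely, the action on $s_n^* A \subset C_0(G^{(n)}) \otimes^{s_n}_{C_0(X)} A$ is the tensor of the translation action on the base $G^{(n)}$ (transported through $\pi$) and the $H$-action on $A$ via $h_n$, and one checks these are compatible because $h_n$ intertwines the $C_0(X)$-structures as required.

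**Main obstacle.** I expect the genuine work to be bookkeeping: writing down $\pi$ precisely with the correct indices so that the composability condition in $G^{(n)}$ is matched, and then checking that the two pieces of the action (translation on $C_0(G^{(n)})$ and the $H$-action on $A$) are compatible over the fibred tensor product and assemble to a continuous groupoid action — i.e., that the cocycle/associativity identity $(\mathrm{arrow}_2 \cdot \mathrm{arrow}_1)$ acts as $\mathrm{arrow}_2 \cdot (\mathrm{arrow}_1 \cdot -)$. None of this should be conceptually hard given the explicit "coordinate transform" descriptions already set up, but getting the inverse-map twists and the direction conventions right is where errors creep in. I would also remark that this lemma is the groupoid-level shadow of the fact (used implicitly in \cite{valmako:part1}) that $L^n A = \Ind_H^G \Res_H^G \cdots \Ind_H^G \Res_H^G A$ is, as a $G$-algebra, built from $s_n^* A$ by the crossed-product-over-$G \times_G H^{\times_G n}$ construction, which is exactly what Proposition~\ref{prop:mor-equiv-for-fib-prod-with-G} is preparing for.
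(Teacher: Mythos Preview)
Your proposal is correct and follows essentially the same approach as the paper: the action is built by combining the ``translation'' on the $G^{(n)}$-coordinates with the $H$-action on $A$ through the last component $h_n$. The paper presents this in Le Gall's formalism, writing the structure isomorphism
\[
C_0(G\times_G H^{\times_G n}) \tens{s}{C_0(G^{(n)})} s_n^* A \;\longrightarrow\; C_0(G\times_G H^{\times_G n}) \tens{r}{C_0(G^{(n)})} s_n^* A
\]
as a chain of explicit identifications of restricted tensor products, with the only nontrivial step being the application of the $H$-action structure map $\alpha\colon C_0(H)\tens{s}{C_0(X)} A \to C_0(H)\tens{r}{C_0(X)} A$ on the last $H$-slot --- exactly your ``the action sees only the $h_n$-coordinate'' observation. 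Your fibrewise description and the paper's balanced-tensor-product bookkeeping are two packagings of the same construction; your indexing wobble ($s g_n$ versus $s g_{n-1}$) is harmless once conventions are fixed, and your remark that $\pi$ is a homeomorphism uses implicitly that $H^{(0)}=G^{(0)}$, which is assumed in this subsection.
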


\begin{proof}
We use $(C_0(G) \otimes_{\min} A)_{\Delta(X)}$ as a model of $C_0(G) \otimes_{C_0(X)} A$, and analogous models for other relative C$^*$-algebra tensor products as well.
Recall that the arrow set of $G \times_G H^{\times_G n}$ can be identified with the set of tuples $(g, g_1, h_1,\dots,g_n,h_n)$ where $(g, g_1,\dots,g_n) \in G^{(2)}$, $h_i \in H$, and $s(g_i) = s(h_i)$.
Then
\[
C_0(G \times_G H^{\times_G n}) \tensor[^s]{\otimes}{_{C_0(G^{(n)})}} (C_0(G^{(n)}) \tens{s}{C_0(X)} A) \simeq (C_0(G \times_G H^{\times_G n} \times G^{(n)}) \otimes A)_Y,
\]
where $Y$ is the space of tuples $(g, g_1,h_1 \dots, g_n,h_n,g_1\dots,g_n,x )$ with $(g, g_1, h_1,\dots, g_n,h_n)$ as above and $x = s(g_n)$.
Let us set $K=G^{(n-1)}$ and think of $G^{(n)}$ as $K\tensor[_s]{\times}{_r} G$. We have
\[
C_0(G\times_G H^{\times_G (n-1)}\times_{K} G^{(n)} \tensb{s}{C_0(X)}{s} C_0(H) \tensor[^s]{\otimes}{_{C_0(G)}} A \simeq (C_0(G\times _G H^{\times_G (n-1)}\times_{K} G^{(n)} \times H) \otimes A)_Z,
\]
where $Z$ is the space of tuples $(g, g_1, h_1,\dots g_{n-1}, h_{n-1},g_n,h_n, x)$ where the components are related as above.
Via the obvious homeomorphism between $Y$ and $Z$, we have the identification of these algebras.
The structure map $\alpha\colon C_0(H) \tensor[^s]{\otimes}{_{C_0(G)}} A \to C_0(H) \tensor[^r]{\otimes}{_{C_0(G)}} A$ of the $H$-C$^*$-algebra induces an isomorphism onto
\[
C_0(G\times_G H^{\times_G (n-1)}\times_{K} G^{(n)} \tensb{s}{C_0(X)}{s} C_0(H) \tensor[^r]{\otimes}{_{C_0(G)}} A \simeq (C_0(G\times _G H^{\times_G (n-1)}\times_{K} G^{(n)} \times H) \otimes A)_{Z^\prime},
\]
where $Z'$ is the space of tuples $(g, g_1, h_1,\dots g_{n-1}, h_{n-1},g_n,h_n, y)$ as above and $y = r(h_n)$.
Finally, we have
\[
C_0(G\times_G H^{\times_G n}) \tensor[^r]{\otimes}{_{C_0(G^{(n)})}} (C_0(G^{(n)}) \otimes_{C_0(X)} A) = (C_0(G\times_G H^{\times_G n}\times G^{(n)} \otimes A)_{Y'},
\]
where $Y'$ is the space of tuples $(g, g_1, h_1, \dots g_n, h_n,g_1',\dots,g_n',y)$ as above, where $g_1' = g g_1 h_1^{-1}$, $g_i'=h_{i-1}g_ih_i^{-1}$ for $i>1$, and $y = s(g_n') = r(h_n)$.
Again the obvious bijection between $Y'$ and $Z'$ induces an isomorphism of the last two algebras, and combining everything we obtain an isomorphism
\[
C_0(G\times_G H^{\times_G n}) \tensor[^s]{\otimes}{_{C_0(G^{(n)})}} s_n^* A \to C_0(G\times_G H^{\times_G n}) \tensor[^r]{\otimes}{_{C_0(G^{(n)})}} s_n^* A
\]
which is the desired structure morphism of $G \times_G H$-C$^*$-algebra.
\end{proof}

\begin{prop}
\label{prop:compar-iter-ind-and-groupoid-fib-prod}
In the setting above, we have
\[
G \ltimes L^n A \simeq (G \times_G H^{\times_G n}) \ltimes s_n^* A.
\]
\end{prop}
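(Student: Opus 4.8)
The plan is to argue by induction on $n$, the base case $n=0$ being the tautology $G\ltimes A\simeq G\ltimes A$ (reading $G\times_G H^{\times_G 0}=G$ and $s_0=\id_X$). The key input, which I would record separately and reuse at each step, is the special case: for \emph{any} $H$-$C^*$-algebra $B$ there is a natural isomorphism
\[
G\ltimes\Ind_H^G B\;\simeq\;(G\times_G H)\ltimes s_1^*B,
\]
where $s_1^*B=C_0(G)\tens{s}{C_0(X)}B$ carries the $G\times_G H$-action supplied by the preceding Lemma (the case $n=1$). Indeed $\Ind_H^G B=(C_0(G)\tens{s}{C_0(X)}B)\rtimes H$ is exactly the reduced crossed product $C^*_r(G\rtimes H,\,s_1^*B)$ of $s_1^*B$ by the transformation groupoid $G\rtimes H$ of the (right) translation action of $H$ on $G$; the residual $G$-action by left translation on $C_0(G)$ commutes with the $H$-action, and the two together form the action groupoid on the space $G$ of the pair of commuting translations, which is canonically $G\times_G H$ (the arrow $(k,h)$ acting by $g_0\mapsto kg_0h^{-1}$). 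The isomorphism then follows from crossed products in stages, $C^*_r(G\times_G H,\,s_1^*B)\simeq G\ltimes C^*_r(G\rtimes H,\,s_1^*B)$, and all the relevant topologies match because the identifications are given by explicit coordinate formulas.

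For the inductive step, write $L^nA=\Ind_H^G\Res^G_H\bigl(L^{n-1}A\bigr)$ and apply the displayed isomorphism with $B=\Res^G_H L^{n-1}A$, which gives
\[
G\ltimes L^nA\;\simeq\;(G\times_G H)\ltimes\bigl(C_0(G)\tens{s}{C_0(X)}L^{n-1}A\bigr).
\]
For $n\ge 2$ one unwinds $L^{n-1}A=(C_0(G)\tens{s}{C_0(X)}L^{n-2}A)\rtimes H$ one more step and inserts the inductive hypothesis $G\ltimes L^{n-1}A\simeq(G\times_G H^{\times_G(n-1)})\ltimes s_{n-1}^*A$. The content of the step is that the nested relative tensor products $C_0(G)\tens{s}{C_0(X)}\bigl(C_0(G)\tens{s}{C_0(X)}\cdots\bigr)$ reassemble --- through the coordinate transforms $g_1'=kg_1h_1^{-1}$ and $g_i'=h_{i-1}g_ih_i^{-1}$ already used in the proof of the preceding Lemma --- into the single algebra $s_n^*A=C_0(G^{(n)})\tens{s}{C_0(X)}A$, while the $n$ crossed products by $H$ together with the remaining crossed product by $G$ reassemble into one crossed product by $G\times_G H^{\times_G n}$, whose arrows $(k,g_0',h_1,\dots,h_n)$ record exactly these groupoid elements; the Lemma provides the module-structure identification at each stage.

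I expect the main obstacle to be precisely this bookkeeping: keeping track of the nested relative $C^*$-tensor products and, in particular, distinguishing at each layer the right-translation $H$-action by which $\Ind_H^G$ forms its crossed product from the left-translation $H$-action that survives $\Res^G_H$ and to which the next $\Ind_H^G$ responds. A secondary point worth noting is that the ``crossed products in stages'' identity is valid here even though $H$, and hence $G\rtimes H$, need not act properly; this is harmless because all the groupoids in sight are étale and Hausdorff and we use reduced crossed products throughout. Finally, in the specialization to the groupoids $R^u(Y,\psi)$ and $R^u(\Sigma,\sigma)$ of a Smale space, compatibility of all the isomorphisms with the inductive-limit topologies is automatic, being implemented by the explicit coordinate transforms.
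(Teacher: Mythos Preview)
Your proposal is essentially correct and follows the same idea as the paper: unwind the iterated functor $L^n=\bigl(\Ind_H^G\Res_H^G\bigr)^n$ so that the nested $C_0(G)\tens{s}{C_0(X)}(\cdots)$ factors reassemble into $s_n^*A$ and the $n$ crossed products by $H$ together with the outer one by $G$ reassemble into a single crossed product by $G\times_G H^{\times_G n}$. The paper's own proof is one line --- ``$L^nA=H^{\times_G n}\ltimes s_n^*A$ by expanding the definitions'' --- and you are supplying the bookkeeping behind that line, including the crossed-products-in-stages justification.

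One organizational point is worth tightening. Your inductive hypothesis is the statement about $G\ltimes L^{n-1}A$, but in the inductive step what you actually need (and what you in fact use when you ``unwind $L^{n-1}A$ one more step'') is a description of $L^{n-1}A$ \emph{itself} as a $G$-algebra, before any crossed product by $G$ is taken. The hypothesis $G\ltimes L^{n-1}A\simeq(G\times_G H^{\times_G(n-1)})\ltimes s_{n-1}^*A$ does not by itself let you substitute inside $(G\times_G H)\ltimes\bigl(C_0(G)\tens{s}{C_0(X)}L^{n-1}A\bigr)$. The clean fix --- and this is exactly what the paper asserts --- is to take as the inductive statement the $G$-equivariant identification
\[
L^nA\;\simeq\;H^{\times_G n}\ltimes s_n^*A,
\]
with $G$ acting by left translation on the first $C_0(G)$-leg. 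The inductive step is then your ``key input'' applied with $B=\Res_H^G L^{n-1}A$ at the level of $H$-algebras (not after crossing with $G$), and the Proposition follows at the very end from a single application of crossed products in stages, $G\ltimes\bigl(H^{\times_G n}\ltimes s_n^*A\bigr)\simeq(G\times_G H^{\times_G n})\ltimes s_n^*A$. With that adjustment your argument closes cleanly and matches the paper.
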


\begin{proof}
 We have $L^n A = H^{\times_G n} \ltimes s_n^* A$ by expanding the definitions.
\end{proof}

Using the Morita equivalence between $G \times_G H^{\times_G n}$ and $H^{\times_G n}$, we can replace the formula above with $H^{\times_G n} \ltimes s_{n-1}^* A$.
This enables us to transport the simplicial structure on $(G \ltimes L^{n+1} A)_{n=0}^\infty$ to $(H^{\times_G (n+1)} \ltimes s_n^* A)_{n=0}^\infty$.
The proof is again straightforward from definitions.

\begin{prop}\label{prop:face}
The induced simplicial structure on $(H^{\times_G (n+1)} \ltimes s_n^* A)_{n=0}^\infty$ has face maps $d^n_i$ represented by the composition of $\KKK$-morphisms
\[
C^*_r(H^{\times_G (n+1)}, s_{n}^*A) \to C^*_r(H^{\times_G i} \times_G G \times_G H^{\times_G (n-i)}, s_{n}^*A) \to C^*_r(H^{\times_G n}, s_{n-1}^*A),
\]
where the first morphism is induced by the inclusion $H^{\times_G (n+1)} \to H^{\times_G i} \times_G G \times_G H^{\times_G (n-i)}$ as an open subgroupoid, and the second morphism is given by the Morita equivalence of Proposition {\normalfont\ref{prop:mor-equiv-for-fib-prod-with-G-in-the-middle}}.
\end{prop}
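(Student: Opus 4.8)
The plan is to read off the face maps by unwinding the definition of the simplicial object $(G\ltimes L^{\bullet+1}A)_\bullet$ through the identifications already in place. Recall that $P_\bullet = L^{\bullet+1}A$ is the bar-type resolution attached to the adjunction $\Ind^G_H\dashv\Res^G_H$ of Theorem \ref{thm:iradj}, so that, with the standard convention, the $i$-th face map $d^n_i\colon L^{n+1}A\to L^n A$ is the insertion of the counit $\epsilon$ in the $i$-th slot, $d^n_i = L^i(\epsilon_{L^{n-i}A})$. Applying the exact functor $G\ltimes\mhyph$ and the isomorphisms $G\ltimes L^m B\simeq (G\times_G H^{\times_G m})\ltimes s_m^* B\simeq H^{\times_G m}\ltimes s_{m-1}^* B$ from Proposition \ref{prop:compar-iter-ind-and-groupoid-fib-prod} and the remark following it, the domain and codomain of $d^n_i$ become $C^*_r(H^{\times_G(n+1)}, s_n^*A)$ and $C^*_r(H^{\times_G n}, s_{n-1}^*A)$ respectively; it then remains to identify the morphism between them.

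The decisive input is the explicit shape of the counit from Theorem \ref{thm:iradj}: for an $H$-C$^*$-algebra $C$ one has $\epsilon_C = [\kappa]\otimes_{\Ind^G_G C}[E^G_C]$, where $\kappa\colon\Ind^G_H\Res^G_H C\to\Ind^G_G C$ is the inclusion induced by the open inclusion $H\subseteq G$, and $[E^G_C]$ is the equivariant Morita equivalence $\Ind^G_G C\simeq C$ of Proposition \ref{prop:Mor-equiv-A-IndGG-A}. I would trace each of these two pieces, with $C = L^{n-i}A$ placed in the appropriate slot, through the isomorphism of Proposition \ref{prop:compar-iter-ind-and-groupoid-fib-prod}. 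On the groupoid side the $n+1$ copies of $L = \Ind^G_H\Res^G_H$ correspond to the $n+1$ copies of $H$ in $H^{\times_G(n+1)}$, and inserting $\kappa$ in the relevant layer is, at the level of groupoids, exactly the open subgroupoid inclusion $H^{\times_G(n+1)}\hookrightarrow H^{\times_G i}\times_G G\times_G H^{\times_G(n-i)}$ that promotes one copy of $H$ to a copy of $G$, with the coefficient algebra $s_n^*A$ unchanged. Dually, the Morita equivalence $[E^G_C]$ that collapses the $\Ind^G_G$ corresponds to the Morita equivalence of Proposition \ref{prop:mor-equiv-for-fib-prod-with-G-in-the-middle} collapsing the inserted $G$-layer down to $H^{\times_G n}$, the coefficient being transported from $s_n^*A$ to $s_{n-1}^*A$ along the associated nerve face map of $G$. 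Naturality of the isomorphisms involved then glues these two steps into the asserted composition, and comparing the slot with the bar-resolution convention fixes the label $i$. (The degeneracies are handled in the same way using the unit $\eta$, but they play no role below.)

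The routine but genuinely tedious point — the one spot where care is needed — is to check that the bibundle implementing $[E^G_C]$, namely the completion of $C_c(G)\otimes_{C_0(X)}C$ from Theorem \ref{thm:iradj} and Proposition \ref{prop:Mor-equiv-A-IndGG-A}, matches, under the coordinate identifications used in the proof of Proposition \ref{prop:compar-iter-ind-and-groupoid-fib-prod}, the bibundle $\tilde Z$ constructed in the proof of Proposition \ref{prop:mor-equiv-for-fib-prod-with-G-in-the-middle}. This amounts to bookkeeping with the explicit coordinate-transform formulas for the arrows of the iterated fibered products, and the only real danger is an index or orientation slip; granting the compatibility, the proposition is a formal consequence of Theorem \ref{thm:iradj} together with Propositions \ref{prop:compar-iter-ind-and-groupoid-fib-prod} and \ref{prop:mor-equiv-for-fib-prod-with-G-in-the-middle}, just as the text indicates.
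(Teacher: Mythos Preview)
Your approach is correct and matches the paper's own treatment, which simply declares the result ``straightforward from definitions'' without further detail; you have supplied exactly the unwinding that this phrase gestures at. One minor slip: in the sentence introducing the counit you write ``for an $H$-C$^*$-algebra $C$'', but $\epsilon_C$ is defined for $G$-algebras (as in Theorem~\ref{thm:iradj}), which is indeed what $C = L^{n-i}A$ is.
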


For a suitable choice of $P\subseteq \Sigma$ as in Subsection \ref{subsec:smalemaps}, we have an open inclusion of étale groupoid $f\times f\colon R^u(\Sigma,P)\rightarrow R^u(Y,f(P))$. We set $G= R^u(Y,f(P))$ and $H=(f\times f)(R^u(\Sigma,P))$. Notice that $G$ is an ample groupoid and $H$ is approximately finite-dimensional (AF) \citelist{\cite{put:notes}\cite{thomsen:smale}}.

\begin{prop}\label{prop:homsmale}
There is an isomorphism of chain complexes 
\begin{align*}
(K_0(G \ltimes L^{\bullet + 1} C_0(X)),\delta_\bullet) &\simeq (D^s(\Sigma_\bullet, \sigma_\bullet),d^s(f)_\bullet),&
(K_1(G \ltimes L^{\bullet + 1} C_0(X)),\delta_\bullet) &\simeq 0
\end{align*}
\end{prop}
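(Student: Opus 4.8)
The plan is to pass from the iterated induction tower to honest groupoid $C^{*}$-algebras, identify each term with a Krieger dimension group, and then match the face maps.

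\emph{Identification of the terms.} Since $C_0(X)$ is the tensor unit, the coefficient algebra $s_{n+1}^{*}C_0(X)$ occurring in Proposition~\ref{prop:compar-iter-ind-and-groupoid-fib-prod} is just $C_0$ of the object space of $G\times_G H^{\times_G(n+1)}$ (note $\alpha\colon H\to G$ is the inclusion here, as $H$ and $G$ share the base space $X=X^s(f(P))$, so this object space is canonically $G^{(n+1)}$), whence
\[
G\ltimes L^{n+1}C_0(X)\;\simeq\;C^{*}\bigl(G\times_G H^{\times_G(n+1)}\bigr).
\]
By Proposition~\ref{prop:mor-equiv-for-fib-prod-with-G} this is Morita equivalent to $C^{*}(H^{\times_G(n+1)})$, and by Theorem~\ref{thm:mormor} (resting on Theorem~\ref{thm:groupoid-fib-prod-compar-unstb-groupoid-Smale-sp-fib-prod}) to $C^{*}(R^u(\Sigma_n,\sigma_n))$; all groupoids in sight are amenable, so full and reduced $C^{*}$-algebras coincide. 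As $(\Sigma_n,\sigma_n)$ is again a shift of finite type, Krieger's theorem as recalled in Section~\ref{sec:prelim} (see \cite{krieger:inv}, \cite{val:smaleb}) gives $K_0(C^{*}(R^u(\Sigma_n,\sigma_n)))\simeq D^s(\Sigma_n,\sigma_n)$, sending the class of a projection $1_E$ (for $E$ a compact open subset of a local stable orbit) to $[E]$, while $K_1(C^{*}(R^u(\Sigma_n,\sigma_n)))=0$ because the restriction of $R^u(\Sigma_n,\sigma_n)$ to a stable transversal is AF, by the same considerations as for $H$ above \cite{put:notes}. This yields the stated term-by-term isomorphisms and the vanishing of the whole $K_1$-complex.

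\emph{Matching the differentials.} By Proposition~\ref{prop:face}, the face map $d^n_i$ on $\bigl(C^{*}(H^{\times_G(\bullet+1)})\bigr)_\bullet$ is the open-subgroupoid inclusion $H^{\times_G(n+1)}\hookrightarrow H^{\times_G i}\times_G G\times_G H^{\times_G(n-i)}$ (the identity on objects, the inclusion $H\hookrightarrow G$ on the $(i+1)$-st arrow component) followed by the Morita equivalence of Proposition~\ref{prop:mor-equiv-for-fib-prod-with-G-in-the-middle} absorbing the middle $G$-factor. Inspecting this against the bibundle $(H^{\times_G(n+1)})^{\Sigma_n}$ of Theorem~\ref{thm:groupoid-fib-prod-compar-unstb-groupoid-Smale-sp-fib-prod} — under which $\Sigma_n$ sits in the object space by $(a^0,\dots,a^n)\mapsto(a^0,\id_y,\dots,\id_y,a^n)$ — together with the absorption bibundle of Proposition~\ref{prop:mor-equiv-for-fib-prod-with-G-in-the-middle}, one sees that the $G$-factor at the $(i+1)$-st slot is exactly the $i$-th coordinate copy of $\Sigma$, so that $d^n_i$ is transported to the morphism $R^u(\Sigma_n,\sigma_n)\to R^u(\Sigma_{n-1},\sigma_{n-1})$ induced by the coordinate-deletion $\delta^s_i\colon\Sigma_n\to\Sigma_{n-1}$. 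Since $\delta^s_i$ is $s$-bijective, $\delta^s_i\times\delta^s_i$ realizes $R^u(\Sigma_n,\sigma_n)$ (restricted to a transversal) as an open subgroupoid of $R^u(\Sigma_{n-1},\sigma_{n-1})$ with the same base as its target, and the induced map on $K_0=D^s$ carries $[E]=[1_E]$ to $[\delta^s_i(E)]=[1_{\delta^s_i(E)}]$, i.e.\ to $d^s(f)_i([E])$. Passing to alternating sums, the differential $\delta_\bullet=\sum_i(-1)^i d^\bullet_i$ is identified with $d^s(f)_\bullet$, and the two chain complexes are isomorphic.

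\emph{Main obstacle.} The delicate step is the last one: carrying the face maps through the full chain of Morita equivalences of Section~\ref{sec:pullback-res-groupoids}, done coherently over a \emph{single compatible system} of stable transversals for all the $\Sigma_n$ at once — so that Krieger's identifications are simultaneously valid and the relevant squares commute on the nose rather than only up to (non-canonical) isomorphism. All the ingredients (Propositions~\ref{prop:compar-iter-ind-and-groupoid-fib-prod}, \ref{prop:mor-equiv-for-fib-prod-with-G}, \ref{prop:mor-equiv-for-fib-prod-with-G-in-the-middle}, \ref{prop:face}, Theorem~\ref{thm:mormor}, Krieger's theorem) are in place; the real work is the bibundle bookkeeping gluing them, where the transversality statement of Proposition~\ref{prop:transversality} — via Theorem~\ref{thm:groupoid-fib-prod-compar-unstb-groupoid-Smale-sp-fib-prod} — is precisely what makes the geometric ``delete a coordinate'' picture agree with the algebraic face maps.
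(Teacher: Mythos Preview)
Your approach is essentially the same as the paper's: identify the terms via Proposition~\ref{prop:compar-iter-ind-and-groupoid-fib-prod} and Theorem~\ref{thm:mormor}, invoke Krieger's theorem and AF-ness for the $K$-groups, and then transport the face maps of Proposition~\ref{prop:face} through the Morita equivalences to match them with the deletion maps $\delta^s_i$. You have correctly isolated the one genuinely nontrivial point, namely the ``bibundle bookkeeping'' in your last paragraph.

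Where the paper goes further is precisely in making that bookkeeping explicit. Rather than saying ``inspecting this\ldots one sees that'', the paper packages each side as a \emph{groupoid correspondence} in the sense of \cite{MR1694789}: it fixes a transversal $T'$ for $R^u(\Sigma_n,\sigma_n)$, writes the $K$-theory map induced by $\delta^s_0$ as the correspondence ${}_{\tilde T_{n+1}}(\tilde H^{\times_{\tilde G} n+1})_{T'}\times_K Z_{\delta_0}$, writes the $0$-th face map $d_0$ as the correspondence $Z\times_{H^{\times_G n}} {}_{\tilde T_n}(\tilde H^{\times_{\tilde G} n})_{\delta_0(T')}$, and then exhibits a concrete bijection between the underlying spaces (called $W$ and $W'\times_K K'$ there). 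This is exactly the ``single compatible system of transversals'' you flag as the obstacle; the point is that composing correspondences is just fibering the bibundles, so the comparison reduces to an elementary set-theoretic identification. Your sketch would become a complete proof once you carry out this step in the same spirit.
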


Before going into the proof, let us recall the concept of \emph{correspondences} between groupoids.
In general, if $G$ and $H$ are topological groupoids, a correspondence from $G$ to $H$ is a topological space $Z$ together with commuting proper actions $G \curvearrowright Z \curvearrowleft H$, such that the anchor map $Z \to H^{(0)}$ is open (surjective) and induces a homeomorphism $G \backslash Z \simeq H^{(0)}$.
Of course, one source of such correspondence is Morita equivalence.
Another example is provided by continuous homomorphisms $f\colon G \to H$, where one puts $Z = \{[g, h] \mid f(s g) = r h \}$ with the relation $[g_1 g_2, h] = [g_1, f(g_2) h]$.

If $G$ and $H$ are (second countable) locally compact Hausdorff groupoids with Haar systems, a correspondence $Z$ induces a right Hilbert $C^*_r(H)$-module $C^*_r(Z)_{C^*_r(H)}$ with a left action of $C^*_r(G)$ \cite{MR1694789}.
If the action of $C^*_r(G)$ is in $\cK(C^*_r(Z)_{C^*_r(H)})$, we obtain a map $K_\bullet(C^*_r(G)) \to K_\bullet(C^*_r(H))$.
While finding a good characterization of this condition in terms of $Z$ seems to be somewhat tricky, in concrete examples as below it is not too difficult. On the other hand, composition of such Hilbert modules are easy to describe.
If $H'$ is another topological groupoid with Haar system, and $Z'$ is a correspondence from $H$ to $H'$, we have the identification
\[
C^*_r(Z)_{C^*_r(H)} \otimes_{C^*_r(H)} C^*_r(Z')_{C^*_r(H')} \simeq C^*_r(Z \times_H Z')_{C^*_r(H')}.
\]

\begin{proof}[Proof of Proposition \ref{prop:homsmale}]
As before, consider the functor $L = \Ind^G_H \Res^G_H \colon \KKK^G \to \KKK^G$.
Then we have
\begin{equation}\label{eq:crossed-prod-on-iter-comonad}
G \ltimes L^n A \simeq (G \times_G H^{\times_G n}) \ltimes s_n^* A,
\end{equation}
see \cite{valmako:part1}. By Theorem \ref{thm:mormor}, the C$^*$-algebra $G \ltimes L^{n+1} C_0(X)$ is strongly Morita equivalent to $C^*(R^u(\Sigma_n,\sigma_n))$. From this we have the identification of the underlying modules, and it remains to compare the corresponding simplicial structures. Let us give a concrete comparison of the maps $K_0(C^*_r(H^{\times_G n+1})) \to D^s(\Sigma_{n-1}, \sigma_{n-1})$ corresponding to the $0$-th face maps, as the general case is completely parallel.

Let us put $\tilde G = R^u(Y, \psi)$, $\tilde H = R^u(\Sigma, \sigma)$, and take a (generalized) transversal $T'$ for $R^u(\Sigma_n, \sigma_n)$, and put $K = R^u(\Sigma_n, \sigma_n)|_{T'}$, $K' = R^u(\Sigma_{n-1}, \sigma_{n-1})|_{\delta_0(T')}$ so that we have 
\begin{align*}
D^s(\Sigma_{n}, \sigma_{n}) &\simeq K_0(C^*_r(K)),&
D^s(\Sigma_{n-1}, \sigma_{n-1}) &\simeq K_0(C^*_r(K')).
\end{align*}
We denote the generalized transversal of $\tilde H^{\times_{\tilde G} n}$ induced by $P$, as in Proposition \ref{prop:gen-transv-for-fib-prod}, by $\tilde T_n$.

The map $\delta_0$ induces a groupoid homomorphism $K \to K'$, and hence a correspondence $Z_{\delta_0}$ from $K$ to $K'$. Composing this with the Morita equivalence bibundle ${}_{\tilde T_{n+1}} (\tilde H^{\times_{\tilde G} n+1})_{T'}$, we obtain a correspondence
\begin{equation}
\label{eq:corr-for-delta-0}
{}_{\tilde T_{n+1}} (\tilde H^{\times_{\tilde G} n+1})_{T'} \times_{K} Z_{\delta_0}
\end{equation}
from $H^{\times_G n+1}$ to $K'$ representing the effect of $\delta_0$ on the {$K$}-groups.

As for the $0$-th face map $d_0$ of $H^{\times_G \bullet+1}$, let $Z$ be the Morita equivalence bibundle between $G \times_G H^{\times_G n}$ and $H^{\times_G n}$ from Proposition \ref{prop:mor-equiv-for-fib-prod-with-G}.
Since $H^{\times_G n+1}$ is an open subgroupoid of $G \times_G H^{\times_G n}$, $Z$ becomes a correspondence from $H^{\times_G n+1}$ to $H^{\times_G n}$.
Composing this with the Morita equivalence ${}_{\tilde T_n} (\tilde H^{\times_{\tilde G} n})_{\delta_0(T')}$ between $H^{\times_G n}$ and $K'$, we obtain the correspondence
\begin{equation}
\label{eq:corr-for-d-0}
Z \times_{H^{\times_G n}} {}_{\tilde T_n} (\tilde H^{\times_{\tilde G} n})_{\delta_0(T')}
\end{equation}
from $H^{\times_G n+1}$ to $K'$ representing the effect of $d_0$.

It remains to check that the above correspondences are isomorphic, hence giving isomorphic Hilbert modules.
Expanding the ingredients of \eqref{eq:corr-for-d-0}, we obtain the space
\[
W = \{(g_0, h_1, g_1, h_2, \ldots, g_{n-1}, h_n) \mid (g_0, \ldots, g_{n-1}) \in G^{(n)}, h_k \in \tilde H^{s g_{k-1}}, (s h_1, \ldots, s h_n) \in \delta_0(T') \}.
\]
On the other hand, \eqref{eq:corr-for-delta-0} gives $W \times_K K'$ with
\begin{multline*}
W' = \{ (h_1, g_1, h_2, \ldots, g_n, h_{n+1}) \mid (g_1, \ldots, g_n) \in G^{(n)},\\
h_k \in \tilde H^{r g_k}, h_{n+1} \in \tilde H^{s g_n}, (s h_1, \ldots, s h_{n+1}) \in T' \}.
\end{multline*}
The operation $\mhyph \times_K K'$ ``kills'' the component $h_1$, and we obtain the identification with $W$.
\end{proof}

Thus, we obtain isomorphisms of homology groups
\begin{equation*}
H_p(K_q(G \ltimes L^{\bullet + 1} C_0(X)),\delta_\bullet)\simeq H^s_p(Y,\psi)\otimes K_q(\bC).
\end{equation*}
Combining this with Corollary \ref{cor:specseq} and Proposition~\ref{prop:homsmale}, we obtain the following main result of this paper.

\begin{theo}
\label{thm:putnam-homology-convergence}
Let $(Y,\psi)$ be an irreducible Smale space with totally disconnected stable sets. Then there is a convergent spectral sequence
\begin{equation}\label{eq:spec-seq-for-Putnam-homology}
E^r_{p q} \Rightarrow K_{p+q}(C^*(R^u(Y,\psi))),
\end{equation}
with $E^2_{pq} = E^3_{pq} = H_p^s(Y,\psi)\otimes K_q(\bC)$.
\end{theo}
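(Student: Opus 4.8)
The plan is to specialize the general spectral sequence of Corollary~\ref{cor:specseq} to an étale model of $R^u(Y,\psi)$, identify its $E^2$-page with Putnam's complex via Proposition~\ref{prop:homsmale}, observe that it already degenerates at $E^3$, and finally transport the abutment along a Morita equivalence.

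First I would fix, using Theorem~\ref{thm:shiftfactor}, an irreducible shift of finite type $(\Sigma,\sigma)$ and an $s$-bijective factor map $f\colon(\Sigma,\sigma)\to(Y,\psi)$, together with a suitable at most countable subset $P\subseteq\Sigma$ as in Subsection~\ref{subsec:smalemaps}, chosen so that $f\times f$ realizes $R^u(\Sigma,P)$ as an open étale subgroupoid of $R^u(Y,f(P))$; this is exactly the setup of Proposition~\ref{prop:homsmale}. Put $G=R^u(Y,f(P))$, $H=(f\times f)(R^u(\Sigma,P))$, and $X=G^{(0)}$, so that $G$ is the reduction of $R^u(Y,\psi)$ to the transversal $Y^s(f(P))$. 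I would then check that $G$ satisfies the hypotheses of Corollary~\ref{cor:specseq}: it is étale and in fact ample; its stabilizers are trivial, hence torsion free, since it is a reduction of the principal groupoid $R^u(Y,\psi)$; and it is amenable by Theorem~\ref{rem:ame}, hence has the Haagerup property, so the conclusion of Theorem~\ref{thm:tu} holds for $G$. Moreover $H$ is an open étale subgroupoid of $G$ with the same base space $X$.

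Next I would apply Corollary~\ref{cor:specseq} with $A=C_0(X)$ to obtain a convergent spectral sequence
\[
E^2_{pq}=H_p\bigl(K_q(G\ltimes L^{\bullet+1}C_0(X)),\delta_\bullet\bigr)\Rightarrow K_{p+q}(G\ltimes C_0(X))=K_{p+q}(C^*(G)),
\]
where I use amenability of $G$ to identify the reduced and full crossed products. By Proposition~\ref{prop:homsmale} together with Bott periodicity, the chain complex $(K_q(G\ltimes L^{\bullet+1}C_0(X)),\delta_\bullet)$ is isomorphic to Putnam's complex $(D^s(\Sigma_\bullet,\sigma_\bullet),d^s(f)_\bullet)$ when $q$ is even and to the zero complex when $q$ is odd; passing to homology and recalling the definition of stable homology then gives $E^2_{pq}\cong H^s_p(Y,\psi)\otimes K_q(\bC)$. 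Since $E^2_{pq}$ vanishes for odd $q$, each differential $d^2\colon E^2_{pq}\to E^2_{p-2,q+1}$ has either a vanishing source (when $q$ is odd) or a vanishing target (when $q$ is even, so that $q+1$ is odd); hence $d^2=0$ and $E^3_{pq}=E^2_{pq}$.

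Finally, since $G$ is the reduction of $R^u(Y,\psi)$ to the generalized transversal $Y^s(f(P))$, the groupoids $G$ and $R^u(Y,\psi)$ are Morita equivalent (cf.\ \cite{put:spiel}*{Theorem 3.6} and the discussion following Definition~\ref{def:gen-transv}), so $C^*(G)$ and $C^*(R^u(Y,\psi))$ are Morita equivalent and $K_{p+q}(C^*(G))\cong K_{p+q}(C^*(R^u(Y,\psi)))$; this identifies the abutment and completes the argument. I expect the main obstacle here to be organizational rather than conceptual: one must choose $P$ so that $Y^s(f(P))$ is a genuine generalized transversal for $R^u(Y,\psi)$ while at the same time keeping available all the Morita equivalences behind Theorem~\ref{thm:mormor} and Proposition~\ref{prop:homsmale}. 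The substantive work, namely the transversality result of Proposition~\ref{prop:transversality}, the fibered-product comparisons culminating in Theorem~\ref{thm:mormor}, the triangulated-categorical input of Corollary~\ref{cor:specseq}, and the chain-level identification in Proposition~\ref{prop:homsmale}, has already been carried out, so no further delicate estimate should be required at this stage.
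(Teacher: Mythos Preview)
Your argument is correct and follows essentially the same route as the paper: the result is obtained by combining Corollary~\ref{cor:specseq} with Proposition~\ref{prop:homsmale}, exactly as in the sentence preceding the theorem. You have in fact been more explicit than the paper in verifying the hypotheses of Corollary~\ref{cor:specseq}, in justifying $E^2=E^3$ via the vanishing in odd $q$, and in spelling out the Morita equivalence identifying $K_\bullet(C^*(G))$ with $K_\bullet(C^*(R^u(Y,\psi)))$.
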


\begin{coro}
The {$K$}-groups $K_i(C^*(R^u(Y,\psi)))$ have finite rank.
\end{coro}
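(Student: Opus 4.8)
The plan is to read the conclusion straight off the convergent spectral sequence of Theorem \ref{thm:putnam-homology-convergence}, the only inputs being that each $E^2$-term has finite rank and that, in each total degree, only finitely many are non-zero. First I would observe that $H^s_p(Y,\psi)$, being by construction the homology in degree $p$ of the complex $(D^s(\Sigma_\bullet,\sigma_\bullet),d^s(f)_\bullet)$, is a subquotient of the Krieger dimension group $D^s(\Sigma_p,\sigma_p)$. The latter is a stationary inductive limit of finitely generated free abelian groups (along the transpose of an adjacency matrix of a graph presenting the shift of finite type $\Sigma_p$), so it has finite rank; hence so does $H^s_p(Y,\psi)$, and therefore so does $E^2_{pq}=H^s_p(Y,\psi)\otimes K_q(\bC)$ for every $(p,q)$.

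The second ingredient is that, in each fixed total degree $n$, only finitely many $E^2$-terms $E^2_{pq}$ with $p+q=n$ are non-zero. The complex $D^s(\Sigma_\bullet,\sigma_\bullet)$ is concentrated in non-negative degrees, so $H^s_p(Y,\psi)=0$ for $p<0$; and by the finiteness properties of Putnam's homology \cite{put:HoSmale}, $H^s_p(Y,\psi)=0$ also for all sufficiently large $p$. Combined with the $2$-periodicity of $K_\bullet(\bC)$, this leaves only finitely many non-zero contributions along each anti-diagonal $p+q=n$.

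Finally I would invoke convergence of the spectral sequence: $K_n(C^*(R^u(Y,\psi)))$ carries a filtration whose successive subquotients are the groups $E^\infty_{pq}$ with $p+q=n$, and each $E^\infty_{pq}$ is a subquotient of $E^2_{pq}$. By the first two steps this filtration is finite and all of its subquotients have finite rank; since the class of finite-rank abelian groups is closed under passage to subquotients (flatness of $\Q$ over $\Z$) and under finite extensions, it follows that $K_n(C^*(R^u(Y,\psi)))$ has finite rank for every $n$. I do not expect any serious obstacle here; the one point that genuinely needs care is checking that the filtration on $K_n$ is \emph{finite}, rather than merely exhaustive and Hausdorff, and this is exactly where the vanishing of $H^s_p(Y,\psi)$ in high degrees is used — without it the spectral sequence would only be conditionally convergent and the argument would have to be strengthened accordingly.
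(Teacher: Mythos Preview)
Your proposal is correct and follows essentially the same approach as the paper: read off a rank bound for $K_i(C^*(R^u(Y,\psi)))$ from the finite filtration provided by the spectral sequence of Theorem~\ref{thm:putnam-homology-convergence}, using that $\bigoplus_k H^s_{i+2k}(Y,\psi)$ has finite rank. The paper simply cites \cite{put:HoSmale}*{Theorem 5.1.12} for this last fact, whereas you unpack it by arguing directly that each $D^s(\Sigma_p,\sigma_p)$, hence each $H^s_p(Y,\psi)$, has finite rank and then invoke Putnam's vanishing in high degrees; this is a harmless elaboration of the same argument.
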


\begin{proof}
By the above theorem, for $i = 0, 1$, the rank of $K_i(C^*(R^u(Y,\psi)))$ is bounded by that of $\bigoplus_k H^s_{i+2k}(Y,\psi)$.
The latter is of finite rank by \cite{put:HoSmale}*{Theorem 5.1.12}.
\end{proof}

\begin{rema}
By the Pimsner--Voiculescu exact sequence, the same can be said for the unstable Ruelle algebra $C^*(R^u(Y, \psi)) \ltimes_\psi \Z$.
If the stable relation $R^s(Y, \psi)$ also has finite rank $K$-groups, the Ruelle algebras will have finitely generated $K$-groups by \cite{MR3692021}.
\end{rema}

\section{Comparison of homologies}
\label{sec:compar-homology}

In fact, Putnam's homology is isomorphic to groupoid homology, as follows.

\begin{theo}\label{thm:isohom}
\label{thm:putnam-homology-is-groupoid-homology}
We have $H^s_\bullet(Y, \psi) \simeq H_\bullet(G,\Z)$.
\end{theo}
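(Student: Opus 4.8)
The plan is to compute $H_\bullet(G,\Z)$ by means of a \emph{relative bar resolution} attached to the open subgroupoid $H\subseteq G$, playing on the homology side the role that the $\ker\Res^G_H$-projective resolution $P_n=L^{n+1}A$ played on the $\KKK$-side. Here $G=R^u(Y,f(P))$ and $H=(f\times f)(R^u(\Sigma,P))$ are as in Proposition~\ref{prop:homsmale}, with $G$ ample and $H$ an AF subgroupoid. Recall that for an ample groupoid $G$ the homology $H_\bullet(G,\Z)$ is the left derived functor of the coinvariants functor on the abelian category of $G$-modules (equivariant sheaves of abelian groups on $G^{(0)}$), that it commutes with inductive limits, and that it is invariant under Morita equivalence of groupoids \cite{cramo:hom}. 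Regarded as a functor between the corresponding module categories, $\Res^G_H$ is exact and conservative; hence --- in contrast with the $\KKK$-setting, where the analogous statement rested on Tu's Theorem~\ref{thm:tu} --- the bar construction $(L^{\bullet+1}\Z)$ of the comonad $L=\Ind^G_H\Res^G_H$ (the module-category analogue of the functor $L$ of Corollary~\ref{cor:specseq}) is automatically a resolution of the constant module $\Z$, since it acquires an extra degeneracy after applying $\Res^G_H$. Moreover $C_c(G,\Z)$ is free, in particular flat, as a left $C_c(H,\Z)$-module because $G$ decomposes as a disjoint union of $H$-bisection translates; thus $\Ind^G_H$ is exact and Shapiro's lemma is available.

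Applying $H_\bullet(G,-)$ to this resolution yields a first-quadrant hyperhomology spectral sequence
\[
E^1_{pq}=H_q\!\bigl(G,L^{p+1}\Z\bigr)\Longrightarrow H_{p+q}(G,\Z),
\]
with $d^1$ the alternating sum of the simplicial face maps of $(L^{\bullet+1}\Z)$. Exactly as in Proposition~\ref{prop:compar-iter-ind-and-groupoid-fib-prod}, a Shapiro/base-change identification together with the Morita equivalence $G\times_G H^{\times_G n}\sim H^{\times_G n}$ of Proposition~\ref{prop:mor-equiv-for-fib-prod-with-G} gives
\[
H_q\!\bigl(G,L^{p+1}\Z\bigr)\simeq H_q\!\bigl(H^{\times_G(p+1)},\Z\bigr),
\]
and by Theorem~\ref{thm:groupoid-fib-prod-compar-unstb-groupoid-Smale-sp-fib-prod} the right-hand side is the groupoid homology of $R^u(\Sigma_p,\sigma_p)$, computed through any transversal. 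Since that groupoid reduced to a transversal is AF, a standard computation gives $H_q=0$ for $q\ge 1$ and $H_0=K_0\!\bigl(C^*(R^u(\Sigma_p,\sigma_p))\bigr)=D^s(\Sigma_p,\sigma_p)$ (the last equality by Krieger). Hence $E^1$ is concentrated in the row $q=0$, the spectral sequence degenerates at $E^2$, and $H_n(G,\Z)\simeq E^2_{n0}=H_n\!\bigl(E^1_{\bullet 0},d^1\bigr)$.

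It remains to recognize $(E^1_{\bullet 0},d^1)$ as Putnam's complex $(D^s(\Sigma_\bullet,\sigma_\bullet),d^s(f)_\bullet)$. Under the identifications above the face maps of $(L^{\bullet+1}\Z)$ become the morphisms induced by the open inclusions $H^{\times_G(p+1)}\hookrightarrow H^{\times_G i}\times_G G\times_G H^{\times_G(p-i)}$ followed by the Morita equivalences of Proposition~\ref{prop:mor-equiv-for-fib-prod-with-G-in-the-middle}; these are precisely the maps analyzed in Proposition~\ref{prop:face}, and running them through the correspondence computation of Proposition~\ref{prop:homsmale} shows that on $H_0$ they coincide with the assignments $[E]\mapsto[\delta^s_k(E)]$ defining Putnam's face maps. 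Consequently $E^2_{n0}=H_n\!\bigl(D^s(\Sigma_\bullet,\sigma_\bullet),d^s(f)_\bullet\bigr)=H^s_n(Y,\psi)$, which proves the theorem for $G=R^u(Y,f(P))$; the case of an arbitrary étale groupoid Morita equivalent to $R^u(Y,\psi)$ follows from Morita invariance of groupoid homology \cite{cramo:hom}.

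The main obstacle is the Shapiro/base-change isomorphism $H_q(G,L^{p+1}\Z)\simeq H_q(H^{\times_G(p+1)},\Z)$ that feeds the iterated fibered products of Section~\ref{sec:pullback-res-groupoids} into the spectral sequence: one must unwind $\Res^G_H\Ind^G_H$ into a fibered-product groupoid (a groupoid Mackey formula) and iterate, which is the homological counterpart of the $C^*$-algebraic computation behind Proposition~\ref{prop:compar-iter-ind-and-groupoid-fib-prod}. The remaining ingredients are comparatively routine: the resolution property of $(L^{\bullet+1}\Z)$ is formal once $\Res^G_H$ is known to be exact and conservative, the vanishing $H_{\ge 1}(R^u(\Sigma_p,\sigma_p),\Z)=0$ and the identification $H_0=D^s(\Sigma_p,\sigma_p)$ are standard facts about AF groupoids and Krieger's dimension groups, the degeneration of the spectral sequence is automatic once $E^1$ occupies a single row, and the identification of $d^1$ with $d^s(f)$ runs parallel to the $\KKK$-theoretic computation of Proposition~\ref{prop:homsmale}.
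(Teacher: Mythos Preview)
Your approach is essentially the same as the paper's: both build a resolution of $\underline{\Z}$ by $G$-sheaves obtained from iterated $H$-induction, compute hyperhomology, use that the fiber-product groupoids $H^{\times_G(p+1)}$ are Morita equivalent to AF groupoids to collapse the spectral sequence onto the row $q=0$, and identify that row with Putnam's complex. The only substantive difference is in the Shapiro step: the paper constructs the sheaves $F_n$ explicitly as those associated to $H_0(H^{\times_G(n+1)},C_c(G^{(n+1)},\Z))$, checks exactness via a stalk-level contracting homotopy, and proves $H_k(G,F_n)\simeq H_k(H,\Z)$ by a direct double-complex argument rather than by invoking flatness of $C_c(G,\Z)$ over $C_c(H,\Z)$ --- your justification of the latter via a decomposition of $G$ into $H$-bisection translates is not obviously correct as stated, so the paper's route is the safer one here.
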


The homology groups on the right-hand side of Theorem \ref{thm:isohom} are a special case of groupoid homology with coefficients in equivariant sheaves \cite{cramo:hom}. 

When $G$ is ample (as is the case in the theorem), such $G$-sheaves are represented by \emph{unitary $C_c(G, \Z)$-modules} \cite{MR3270778}. Here, we consider the convolution product on $C_c(G, \Z)$, and a module $M$ over $C_c(G, \Z)$ is said to be unitary if it has the factorization property $C_c(G, \Z) M = M$. The correspondence is given by $\Gamma_c(U, F) = C_c(U, \Z) M$ for compact open sets $U \subset X$ if $F$ is the sheaf corresponding to such a module~$M$.

The \emph{homology of $G$ with coefficient in $F$}, denoted $H_\bullet(G, F)$, is the homology of the chain complex $(C_c(G^{(n)},\Z) \otimes_{C_c(X, \Z)} M)_{n=0}^\infty$ with differentials coming from the simplicial structure as above. Concretely, the differential is given by
\begin{align*}
\partial_n&\colon C_c(G^{(n)},\Z) \otimes_{C_c(X, \Z)} M \to C_c(G^{(n-1)},\Z) \otimes_{C_c(X, \Z)} M\\
\partial_n(f \otimes m) &= \sum_{i = 0}^{n-1} (-1)^i (d^n_i)_* f \otimes m + (-1)^n \alpha_n(f \otimes m),
\end{align*}
where $\alpha_n$ is the concatenation of the last leg of $C_c(G^{(n)},\Z)$ with $M$ induced by the module structure map $C_c(G, \Z) \otimes M \to M$.
This definition agrees with the one given in \cite{cramo:hom} as there is no need to take c-soft resolutions of equivariant sheaves (see \cite{valmako:part1}*{Proposition 1.8}).

More generally, if $F_\bullet$ is a chain complex of $G$-sheaves modeled by a chain complex of unitary $C_c(G,\Z)$-modules $M_\bullet$, we define $\bH_\bullet(G, F_\bullet)$, the \emph{hyperhomology} with coefficient $F_\bullet$, as the homology of the double complex $(C_c(G^{(p)},\Z) \otimes_{C_c(X, \Z)} M_q)_{p,q}$. As usual, a chain map of complexes of $G$-sheaves $f\colon F_\bullet \to F'_\bullet$ is a \emph{quasi-isomorphism} if it induces quasi-isomorphisms on the stalks.
When $F_\bullet$ and $F'_\bullet$ are bounded below, such maps induce an isomorphism of the hyperhomology \cite{cramo:hom}*{Lemma 3.2}.

\begin{proof}
Let us consider $G^{(n+1)}$ as an $H^{\times_G (n+1)}$-space by the anchor map
\[
(g_0, \ldots, g_n) \mapsto (g_1, \ldots, g_n) \in G^{(n)} = (H^{\times_G (n+1)})^{(0)}
\]
and the action map
\[
(h_1, g_1, h_2, \ldots, h_n) (g_0, \ldots, g_n) = (g_0 h_1^{-1}, g_1', \ldots, g_n')
\]
in the notation of Definition \ref{def:mult-fib-prod}.
Then $H_0(H^{\times_G (n+1)}, C_c(G^{(n+1)}, \Z))$ is a unitary $C_c(X, \Z)$-module by the action from the left, and the associated sheaf $F_n$ on $X$ is a $G$-sheaf by the left translation action of $G$.
At $x \in X$, the stalk can be presented as
\begin{equation}
\label{eq:stalk-Fn}
(F_n)_x = H_0(H^{\times_G (n+1)}, C_c((G^{(n+1)})^x, \Z)) = C_c((G^{(n+1)})^x, \Z)_{H^{\times_G (n+1)}}.
\end{equation}
Indeed, the sheaf corresponding to the $C_c(X, \Z)$-module $C_c(G^{(n+1)}, \Z)$ has the stalk $C^c((G^{(n+1)})^x, \Z)$ at $x$, and taking coinvariants by $H^{\times_G (n+1)}$ commutes with taking stalks.

We then have
\[
H_0(G, F_n) \simeq H_0(G \times_G H^{\times_G (n+1)},\Z) \simeq H_0(H^{\times_G (n+1)},\Z).
\]
The simplicial structure on $(G \times_G H^{\times_G (n+1)})_n$ leads to the complex of $G$-sheaves
\begin{equation}\label{eq:defputcp}
\dots \to F_2 \to F_1 \to F_0,
\end{equation}
and $H^s_\bullet(Y, \psi)$ is the homology of the complex obtained by applying the functor $H_0(G, \mhyph)$ to \eqref{eq:defputcp}.

We first claim that the augmented complex
\begin{equation}
\label{eq:sheaf-res-of-Z}
\dots \to F_2 \to F_1 \to F_0 \to \underline{\Z} \to 0
\end{equation}
is exact.
It is enough to check the exactness at the level of stalks.
In terms of the presentation \eqref{eq:stalk-Fn}, we have the chain complex
\[
\dots \to C_c((G^{(2)})^x, \Z)_{H^{\times_G 2}} \to C_c(G^x,\Z)_H \to \Z
\]
with differential
\begin{multline*}
d((g_1, \ldots, g_{n+1})) = (g_1 g_2, g_3, \ldots, g_{n+1}) - (g_1, g_2 g_3, \ldots, g_{n+1}) + \cdots \\
 + (-1)^{n-1} (g_1, \ldots, g_n g_{n+1}) + (g_1, \ldots, g_n),
\end{multline*}
where $(g_1, \ldots, g_n) \in (G^{(n)})^x$ represents the image of $\delta_{(g_1, \ldots, g_n)} \in C_c((G^{(n)})^x, \Z)$ in the coinvariant space, and the augmentation is given by $d(g) = 1$ at $n = 0$.
This has a contracting homotopy given by $\Z \to C_c(G^x,\Z)_H$, $a \to a (\id_x)$ and
\[
C_c((G^{(n)})^x, \Z)_{H^{\times_G n}} \to C_c((G^{(n+1)})^x, \Z)_{H^{\times_G n + 1}}, \quad
(g_1, \ldots, g_n) \to (\id_x, g_1, \ldots, g_n),
\]
hence \eqref{eq:sheaf-res-of-Z} is indeed exact.

We next claim that $H_k(G, F_n) = 0$ for $k > 0$.
Let $H_{n+1}$ be a subgroupoid of $G$ which is Morita equivalent to $H^{\times_G (n+1)}$ (this exists by choosing a transversal for $(\Sigma_{n}, \sigma_n)$ that maps bijectively to a transversal of $\Sigma$).
Then the module $H_0(H^{\times_G (n+1)}, C_c(G^{(n+1)}, \Z))$ representing $F_n$ is isomorphic to $H_0(H_{n+1}, C_c(G, \Z))$.
Thus, it is enough to check the claim when $n = 0$.

Let us write $M = H_0(H, C_c(G, \Z))$, and consider the double complex of modules $C_c(G^{(p + 1)} \times_X H^{(q)}, \Z)$ for $p, q \ge 0$, with differentials coming both from the simplicial structures on $(G^{(p)})_{p=0}^\infty$ and $(H^{(q)})_{q=0}^\infty$, cf. \cite{cramo:hom}*{Theorem 4.4}.
For fixed $p$, this is a resolution of $C_c(G^{(p)},\Z) \otimes_{C_c(X,\Z)} M$, hence the double complex computes $H_\bullet(G, F)$.
For fixed $q$, this is a resolution of $H_0(G, C_c(G \times_X H^{(q)}, \Z)) \simeq C_c(H^{(q)},\Z)$, and this double complex also computes $H_\bullet(H,\Z)$.
Since $H$ is Morita equivalent to an AF groupoid, $H_k(H, \Z) = 0$ by \cite{matui:hk}*{Theorem 4.11}.
We thus obtain $H_k(G, F_n) = 0$.

Finally, consider the hyperhomology $\bH_\bullet(G, F_\bullet)$.
On the one hand, by the above vanishing of $H_k(G, F_n)$, this is isomorphic to the homology of the complex $(H_0(G, F_n))_n$, i.e., $H^s_\bullet(Y, \psi)$.
On the other hand, since $F_\bullet$ is quasi-isomorphic to $\underline{\Z}$ concentrated in degree $0$, we also have $\bH_\bullet(G, F_\bullet) \simeq H_\bullet(G, \Z)$.
\end{proof}

We then have the following Künneth formula from the corresponding result for groupoid homology \cite{MR3552533}*{Theorem 2.4}.

\begin{coro}
Let $(Y_1, \psi_1)$ and $(Y_2, \psi_2)$ be Smale spaces with totally disconnected stable sets.
Then we have a split exact sequence
\[
0 \to \smashoperator[r]{\bigoplus_{a + b = k}} H^s_a(Y_1, \psi_1) \otimes H^s_b(Y_2, \psi_2) \to H^s_k(Y_1 \times Y_2, \psi_1 \times \psi_2) \to \smashoperator[r]{\bigoplus_{a + b = k-1}} \Tor(H^s_a(Y_1, \psi_1), H^s_b(Y_2, \psi_2)) \to 0.
\]
\end{coro}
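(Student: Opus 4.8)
The plan is to bootstrap from the identification of Putnam's homology with groupoid homology in Theorem~\ref{thm:putnam-homology-is-groupoid-homology} and then quote the Künneth short exact sequence for the homology of a product of étale groupoids, \cite{MR3552533}*{Theorem 2.4}. The only real content is to check that the passage from a Smale space to its associated étale groupoid is compatible with cartesian products.

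First I would record the product compatibility. If $(Y_1,\psi_1)$ and $(Y_2,\psi_2)$ are Smale spaces, then so is $(Y_1\times Y_2,\psi_1\times\psi_2)$, with bracket $[(x_1,x_2),(y_1,y_2)]=([x_1,y_1],[x_2,y_2])$; and since $(x_1,x_2)\sim_u(y_1,y_2)$ holds if and only if $x_i\sim_u y_i$ for $i=1,2$, the unstable equivalence relation factors as
\[
R^u(Y_1\times Y_2,\psi_1\times\psi_2)\;\simeq\;R^u(Y_1,\psi_1)\times R^u(Y_2,\psi_2)
\]
as locally compact groupoids with their inductive-limit topologies (matching up the presentations $G_u^n$ of \eqref{eq:incrun2} after harmonising the local constants). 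If the local stable sets $Y_i^s(x_i,\epsilon)$ are totally disconnected, so are those of the product; choosing generalised transversals $T_i=Y_i^s(P_i)$ as in Subsection~\ref{subsec:smalemaps}, the product $T_1\times T_2$ is a generalised transversal for the product relation, and writing $G_i=R^u(Y_i,\psi_i)|_{T_i}$ and $G$ for the étale groupoid of the product, one gets an isomorphism $G\simeq G_1\times G_2$ of étale groupoids. (If the factors are irreducible, the product is still only non-wandering, so one decomposes it into its finitely many irreducible clopen components; both $H^s_\bullet$ and the groupoid homology of the associated disjoint groupoid split as the corresponding finite direct sums, so the identifications below are unaffected.)

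Next I would apply Theorem~\ref{thm:putnam-homology-is-groupoid-homology}, together with the Morita invariance of groupoid homology, to rewrite $H^s_\bullet(Y_i,\psi_i)\simeq H_\bullet(G_i,\Z)$ and $H^s_\bullet(Y_1\times Y_2,\psi_1\times\psi_2)\simeq H_\bullet(G_1\times G_2,\Z)$. Feeding this into \cite{MR3552533}*{Theorem 2.4} yields the split short exact sequence
\[
0\to\bigoplus_{a+b=k}H_a(G_1,\Z)\otimes H_b(G_2,\Z)\to H_k(G_1\times G_2,\Z)\to\bigoplus_{a+b=k-1}\Tor\bigl(H_a(G_1,\Z),H_b(G_2,\Z)\bigr)\to 0,
\]
and translating back through the above isomorphisms gives the asserted sequence, with its splitting.

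The main obstacle is the bookkeeping in the second paragraph: one must verify carefully that the finer (inductive-limit) topology on $R^u$, the choice of transversal, and the reduction to the étale model all commute with forming the cartesian product, so that $R^u(Y_1,\psi_1)|_{T_1}\times R^u(Y_2,\psi_2)|_{T_2}$ is literally homeomorphic, as a topological groupoid, to $R^u(Y_1\times Y_2,\psi_1\times\psi_2)|_{T_1\times T_2}$; this is what licenses the use of Theorem~\ref{thm:putnam-homology-is-groupoid-homology} for the product. Once this compatibility is established the remainder is a direct citation.
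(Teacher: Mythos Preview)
Your proposal is correct and follows exactly the route the paper takes: the paper's entire proof is the single sentence ``We then have the following K\"unneth formula from the corresponding result for groupoid homology \cite{MR3552533}*{Theorem 2.4},'' invoking Theorem~\ref{thm:putnam-homology-is-groupoid-homology} implicitly. You have simply filled in the product-compatibility bookkeeping (and the irreducible-versus-non-wandering caveat) that the paper leaves to the reader.
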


\begin{rema}
As usual, the splitting is not canonical.
This generalizes \cite{MR3576278}*{Theorem 6.5}, in which one of the factors is assumed to be a shift of finite type.
Indeed, if $(Y_1, \psi_1)$ is a shift of finite type, the first direct sum reduces to $D^s(Y_1, \psi_1) \otimes H^s_k(Y_2, \psi_2)$, while the second direct sum of torsion groups vanishes as the dimension group $D^s(Y_1, \psi_1)$, being torsion-free, is flat.
\end{rema}

\begin{rema}
Theorem \ref{thm:putnam-homology-is-groupoid-homology} holds in general without the assumption of total disconnectedness on stable sets.
We plan to expand on this direction in a forthcoming work.
\end{rema}

\begin{rema}\label{rem:substitution-tiling}
Consider the Smale space $(\Omega, \omega)$ of a substitution tiling system.
As we observed in \cite{valmako:part1}*{Section 4}, if $G$ is an étale groupoid Morita equivalent to $R^u(\Omega, \omega)$, its groupoid homology $H_k(G, \Z)$ is isomorphic to the Čech cohomology $\check{H}^{d-k}(\Omega)$ for the constant sheaf $\underline{\Z}$ on $\Omega$.
Combined with Theorem \ref{thm:isohom}, we obtain
\[
H^s_k(\Omega,\omega)\simeq  \check{H}^{d-k}(\Omega),
\]
giving a positive answer to \cite{put:HoSmale}*{Question 8.3.2} in the case of tiling spaces.
\end{rema}

\section{K-theory of Ruelle algebra}
\label{sec:k-th-ruelle-alg}

\subsection{Semidirect product and homology}

Recall the following result for semidirect products.

\begin{prop}[\cite{valmako:part1}*{Proposition 3.8}]\label{prop:tfs}
Suppose that $\Gamma$ is torsion-free and satisfies the strong Baum--Connes conjecture, and that $G$ is an ample groupoid with torsion-free stabilizers satisfying the strong Baum--Connes conjecture.
Then any separable $(\Gamma \ltimes G)$-C$^*$-algebra $A$ belongs to the localizing subcategory generated by the image of $\Ind^{\Gamma \ltimes G}_X \colon \KKK^X \to \KKK^{\Gamma \ltimes G}$.
\end{prop}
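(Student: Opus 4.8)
The plan is to deduce the statement from a single application of Proposition~\ref{prop:base-space-generation}, applied not to $G$ but to the semidirect product groupoid $\Gamma\ltimes G$ itself. Indeed, the conclusion of that proposition for the groupoid $\Gamma\ltimes G$ reads exactly: every separable $(\Gamma\ltimes G)$-C$^*$-algebra lies in the localizing subcategory generated by the image of $\Ind^{\Gamma\ltimes G}_X\colon\KKK^X\to\KKK^{\Gamma\ltimes G}$, which is precisely the assertion to be proved. Thus the whole proof reduces to checking that $\Gamma\ltimes G$ is an étale (indeed ample) groupoid with torsion-free stabilizers satisfying the conclusion of Theorem~\ref{thm:tu}. (One could also argue in two steps, applying Theorem~\ref{thm:KK-open-subgrpd-induction} to the open inclusion $G\subseteq\Gamma\ltimes G$ and then Proposition~\ref{prop:base-space-generation} to $G$ via $\Ind^{\Gamma\ltimes G}_X\simeq\Ind^{\Gamma\ltimes G}_G\circ\Ind^G_X$, but this requires the same verification for $\Gamma\ltimes G$ and is not shorter.)

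The first two points are routine. On each clopen slice $\{\gamma\}\times G$ of the arrow space of $\Gamma\ltimes G$, the source map is $g\mapsto s(g)$ and the range map is $g\mapsto\phi_\gamma(r(g))$, both local homeomorphisms since $G$ is étale and $\phi_\gamma$ is a homeomorphism; the unit space is $X$, which is totally disconnected. Hence $\Gamma\ltimes G$ is ample, with second countability, local compactness, Hausdorffness, and the (counting) Haar system all inherited. For torsion-freeness of the stabilizers, fix $x\in X$ and consider the homomorphism $(\Gamma\ltimes G)^x_x\to\Gamma$, $(\gamma,g)\mapsto\gamma$. An arrow $(\gamma,g)$ lies in $(\Gamma\ltimes G)^x_x$ precisely when $g$ is a $G$-arrow from $x$ to $\phi_{\gamma^{-1}}(x)$, so the image of this homomorphism is the subgroup $\Gamma_{[x]}=\{\gamma\in\Gamma\mid x\sim_G\phi_{\gamma^{-1}}(x)\}\leq\Gamma$, while its kernel is $G^x_x$. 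This yields a short exact sequence
\[
1\longrightarrow G^x_x\longrightarrow(\Gamma\ltimes G)^x_x\longrightarrow\Gamma_{[x]}\longrightarrow 1,
\]
in which both $G^x_x$ (by the hypothesis on $G$) and $\Gamma_{[x]}$ (as a subgroup of $\Gamma$) are torsion-free; therefore the middle term is torsion-free too.

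The main obstacle is the remaining property: that $\Gamma\ltimes G$ satisfies the conclusion of Theorem~\ref{thm:tu}, i.e.\ that there is a proper $(\Gamma\ltimes G)$-space with an open surjective anchor map to $X$ carrying a continuous field of nuclear C$^*$-algebras that is isomorphic to $C_0(X)$ in $\KKK^{\Gamma\ltimes G}$. This amounts exactly to permanence of the strong form of the Baum--Connes conjecture under the semidirect product $\Gamma\ltimes(\mhyph)$, and it is here that both hypotheses, strong Baum--Connes for $\Gamma$ and for $G$, are used. The strategy is to build the required data for $\Gamma\ltimes G$ out of the proper $G$-space $Z_G$ with its field $P_G\simeq C_0(X)$ in $\KKK^G$ (from Theorem~\ref{thm:tu} for $G$) and the proper $\Gamma$-space $\E\Gamma$ with its field $P_\Gamma\simeq\bC$ in $\KKK^\Gamma$ (strong Baum--Connes for $\Gamma$). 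The delicate point is that $P_G$ need not be $\Gamma$-equivariant, so an external tensor product of the two does not work; instead one argues by descent, using $(\Gamma\ltimes G)\ltimes A\simeq\Gamma\ltimes(G\ltimes A)$ together with the induced $\Gamma$-action on $G\ltimes A$ to separate the $\Gamma$- and $G$-parts of the problem, in the spirit of the permanence results of Chabert--Echterhoff for group extensions and of Tu for groupoids; alternatively one may simply invoke such a permanence theorem. Granting this, Proposition~\ref{prop:base-space-generation} applied to $\Gamma\ltimes G$ completes the proof.
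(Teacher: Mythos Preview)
The paper does not itself prove this proposition; it is quoted from the companion paper \cite{valmako:part1}*{Proposition 3.8}, so there is no in-paper argument to compare your attempt against directly.

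On its own merits, your proposal has a real gap at the point you yourself flag as the ``main obstacle.'' You propose to apply Proposition~\ref{prop:base-space-generation} to $\Gamma\ltimes G$, which requires that $\Gamma\ltimes G$ satisfy the conclusion of Theorem~\ref{thm:tu} (a proper space carrying a Dirac--dual-Dirac type $\KKK$-equivalence). But the hypotheses only say that $\Gamma$ and $G$ satisfy the \emph{strong Baum--Connes conjecture}---in the Meyer--Nest sense this is the purely categorical statement that every object lies in $\langle\cP_\cI\rangle$, which is strictly weaker than the geometric output of Tu's theorem. So even granting a permanence theorem for the Haagerup property under extensions, the given hypotheses do not feed into it. Your parenthetical two-step route via Theorem~\ref{thm:KK-open-subgrpd-induction} has the same defect, as you correctly note: that theorem also demands Tu's conclusion for the ambient groupoid $\Gamma\ltimes G$.

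The argument that works (and is what Part~I does) stays at the categorical level and never invokes Tu's theorem for the semidirect product. One uses the two hypotheses separately via the factorization $\Ind^{\Gamma\ltimes G}_X\simeq\Ind^{\Gamma\ltimes G}_G\circ\Ind^G_X$. The key identification $\Ind^{\Gamma\ltimes G}_G\Res^{\Gamma\ltimes G}_G A\simeq C_0(\Gamma)\otimes A$ (with $\Gamma$ acting by translation; this appears in the proof of Proposition~\ref{prop:hyperhomology-spec-seq}) lets one transport the strong BC statement for $\Gamma$ from $\KKK^\Gamma$ to $\KKK^{\Gamma\ltimes G}$, yielding $\KKK^{\Gamma\ltimes G}=\langle\Ind^{\Gamma\ltimes G}_G\KKK^G\rangle$. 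Strong BC for $G$ then gives $\KKK^G=\langle\Ind^G_X\KKK^X\rangle$, and composing the two inductions finishes. The substantive step you are missing is exactly this lifting of $\Gamma$'s strong BC to the $(\Gamma\ltimes G)$-equivariant category; it is not a consequence of Haagerup-type permanence and requires its own argument.
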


Let $G$ and $\Gamma$ be as above, and let $A$ be a separable $(\Gamma \ltimes G)$-C$^*$-algebra.
We can choose an $(\ker \Res^G_X)$-projective resolution $P_\bullet$ of $A$ such that each $P_n$ is a $(\Gamma \ltimes G)$-C$^*$-algebra and the structure morphisms of $P_\bullet \to A$ are restriction of morphisms in $\KKK^{\Gamma \ltimes G}$.
For example, the standard choice
\[
P_n = (\Ind^G_X \Res^G_X)^{n+1} A = C_0(G^{(n+1)}) \otimes_{C_0(X)} A
\]
satisfies this assumption.

Then we have a complex of $\Gamma$-modules $K_q(G \ltimes P_\bullet)$.
We claim that the hyperhomology of $\Gamma$ with this coefficient is the $E^2$-sheet of our spectral sequence.

\begin{prop}
\label{prop:hyperhomology-spec-seq}
In the above setting, we have a spectral sequence
\[
E^r_{p q} \Rightarrow K_{p+q}(\Gamma \ltimes G \ltimes A)
\]
with $E^2_{p q} = \bH_p(\Gamma, K_q(G \ltimes P_\bullet))$.
\end{prop}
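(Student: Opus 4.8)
The plan is to build a double-complex spectral sequence argument that factors the crossed product by $\Gamma \ltimes G$ through the intermediate crossed product by $G$, much as in the proof of Proposition~\ref{prop:homsmale} but now at the level of the full triangulated machinery of \cite{valmako:part1}. First I would apply Proposition~\ref{prop:tfs} to $\Gamma \ltimes G$: since $\Gamma$ is torsion-free and satisfies the strong Baum--Connes conjecture and $G$ is ample with torsion-free stabilizers satisfying strong Baum--Connes, the semidirect product $\Gamma \ltimes G$ satisfies the hypotheses needed for the $E = \Ind^{\Gamma\ltimes G}_X$, $F = \Res^{\Gamma\ltimes G}_X$ adjunction to produce an $\cI$-projective resolution $P_\bullet$ with $P_\bullet \simeq A$ in $\KKK^{\Gamma\ltimes G}$, via Proposition~\ref{prop:simplicial-res-from-adj-fs}. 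The standard choice $P_n = C_0(G^{(n+1)}) \otimes_{C_0(X)} A$ is simultaneously a resolution of $A$ as a $G$-$C^*$-algebra (forgetting the $\Gamma$-action) and as a $\Gamma\ltimes G$-$C^*$-algebra; this is the key compatibility that lets us run the argument in two stages.

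The main step is to compute $K_{p+q}((\Gamma \ltimes G) \ltimes A) = K_{p+q}(\Gamma \ltimes (G \ltimes A))$ by first taking $G \ltimes (-)$ and then $\Gamma \ltimes (-)$. Since $P_\bullet \simeq A$ in $\KKK^{\Gamma \ltimes G}$, applying the exact functor $G \ltimes (-)$ gives $G \ltimes A \simeq G \ltimes P_\bullet$ as an object of the derived category, compatibly with the residual $\Gamma$-action. Hence $\Gamma \ltimes (G \ltimes A)$ is computed by the $\Gamma$-equivariant $K$-theory of the simplicial $\Gamma$-$C^*$-algebra $(G \ltimes P_n)_n$. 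Concretely, one forms the double complex obtained by taking, for each $n$, a bar-type $\Gamma$-resolution of $G \ltimes P_n$ (or equivalently uses the Meyer--Nest spectral sequence of \cite{meyer:tri}*{Theorems 4.3 and 5.1} applied to the homological functor $K_\bullet(\Gamma \ltimes G \ltimes -)$ on $\KKK^{\Gamma \ltimes G}$ together with the $(\ker \Res^G_X)$-projective resolution $P_\bullet$). Filtering this double complex by the simplicial ($P_\bullet$) degree yields a spectral sequence converging to $K_{p+q}(\Gamma \ltimes G \ltimes A)$ whose $E^1$-page is the complex $n \mapsto K_q$ of the $\Gamma$-equivariant crossed product of the bar resolution of $G \ltimes P_n$, and whose $E^2$-page is, by definition of group hyperhomology as the homology of the total complex of (bar resolution of $\Gamma$) $\otimes$ (the complex $K_q(G\ltimes P_\bullet)$), exactly $\bH_p(\Gamma, K_q(G \ltimes P_\bullet))$. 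One should check that $K_q(\Gamma \ltimes (-))$ applied to an $\Ind^\Gamma_{\{e\}}$-type object recovers the group homology coefficient correctly; this is the Green--Julg / Baum--Connes identification $K_q(\Gamma \ltimes \Ind^\Gamma_{\{e\}} B) \simeq \bigoplus \cdots$, packaged in \cite{valmako:part1}.

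The hard part will be assembling the two simplicial directions into a single honest double complex of abelian groups (or, better, a bicomplex in a derived-categorical sense) and justifying convergence when neither direction is bounded: the $P_\bullet$ direction is unbounded, and the $\Gamma$-resolution direction is a priori unbounded as well unless $\Gamma$ has finite homological dimension. The standard remedy, following \cite{valmako:part1} and \cite{meyer:tri}, is to work with the complementary pair $(\langle E\cS\rangle, \cN_\cI)$ and realize the hyperhomology as arising from a single exact triangle $P \to A \to N \to \Sigma P$ in $\KKK^{\Gamma\ltimes G}$ with $P \in \langle \Ind^{\Gamma\ltimes G}_X \KKK^X \rangle$, then invoke Theorem~\ref{thm:spseqtri} directly for the homological functor $K_\bullet(\Gamma\ltimes G\ltimes -)$; the identification of the resulting $E^2$-term with $\bH_p(\Gamma, K_q(G\ltimes P_\bullet))$ is then a matter of recognizing the $E^2$-sheet $H_p(K_q(\Sigma^{-\bullet} P_\bullet))$ as the hyperhomology of $\Gamma$ with the indicated coefficient complex, using that each $G \ltimes P_n$, as a $\Gamma$-$C^*$-algebra, is $\KKK^\Gamma$-equivalent to a crossed-product-of-induced object whose $\Gamma$-equivariant $K$-theory is computed by the bar complex of $\Gamma$. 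I expect the topological bookkeeping (making all the Morita equivalences and the residual $\Gamma$-actions compatible, as in the proof of Proposition~\ref{prop:homsmale}) to be routine but somewhat lengthy, and the genuine content to be the double-filtration convergence argument, which should follow the pattern already established in \cite{valmako:part1}*{Section 3}.
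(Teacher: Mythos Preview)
Your general idea---combine a $G$-direction resolution $P_\bullet$ with a $\Gamma$-direction bar resolution into a bicomplex and read off hyperhomology---is exactly the paper's strategy, but your execution has a real gap and an unnecessary worry.

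The gap is the status of $P_\bullet$ in $\KKK^{\Gamma\ltimes G}$. You write that the standard $P_n = C_0(G^{(n+1)})\otimes_{C_0(X)}A$ is ``simultaneously a resolution \ldots\ as a $\Gamma\ltimes G$-C$^*$-algebra'', and later that ``$P_\bullet \simeq A$ in $\KKK^{\Gamma\ltimes G}$''. This is not right: $P_n$ is $(\ker\Res^G_X)$-projective, not $(\ker\Res^{\Gamma\ltimes G}_X)$-projective, because the $\Gamma$-action on $P_n$ is not induced from the trivial subgroup. So $P_\bullet$ alone does not feed into Theorem~\ref{thm:spseqtri} for $\Gamma\ltimes G$, and you cannot directly pass from $P_\bullet$ to a statement about $K_\bullet(\Gamma\ltimes G\ltimes A)$. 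Correspondingly, your proposed ``filter by the $P_\bullet$ degree and then take a bar resolution of each $G\ltimes P_n$'' lacks the step that ties the total object back to $A$ in $\KKK^{\Gamma\ltimes G}$.

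The paper fixes this cleanly by working one level up: set $L_\Gamma = \Ind^{\Gamma\ltimes G}_G\Res^{\Gamma\ltimes G}_G$, modelled by $B\mapsto C_0(\Gamma)\otimes B$, and form the bicomplex $Q_{a,b}=L_\Gamma^{a+1}P_b$ directly in $\KKK^{\Gamma\ltimes G}$. Two short checks then replace all of your convergence and compatibility concerns. First, each $Q_{a,b}$ is $(\ker\Res^{\Gamma\ltimes G}_X)$-projective, since $Q_{a,b}\simeq \Ind^{\Gamma\ltimes G}_X\Res^{\Gamma\ltimes G}_X Q_{a-1,b-1}$. Second, $\Tot Q_{\bullet,\bullet}\to A$ is $(\ker\Res^{\Gamma\ltimes G}_X)$-exact: after applying $\Res^{\Gamma\ltimes G}_G$, each row $Q_{\bullet,b}=(C_0(\Gamma^{a+1})\otimes P_b)_a$ has the standard contracting homotopy, so $\Res^{\Gamma\ltimes G}_G\Tot Q$ is chain homotopic to $P_\bullet$, which is already $(\ker\Res^G_X)$-exact over $A$. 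Now Theorem~\ref{thm:spseqtri} applies to the single resolution $\Tot Q_{\bullet,\bullet}$, and your unboundedness worry vanishes---convergence is built into that theorem. Finally, the $E^2$-identification is a one-line computation rather than a ``hard part'': since $G\ltimes Q_{a,b}\simeq C_0(\Gamma^{a+1})\otimes(G\ltimes P_b)$ with $\Gamma$ acting by translation on the first $\Gamma$-factor, one has $K_q((\Gamma\ltimes G)\ltimes Q_{a,b})\simeq \Z[\Gamma^a]\otimes K_q(G\ltimes P_b)$, and the total-complex differential is exactly the bar differential for $\Gamma$ tensored with the $P_\bullet$-differential, i.e.\ the complex computing $\bH_p(\Gamma,K_q(G\ltimes P_\bullet))$.
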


\begin{proof}
Put $L_G = \Ind^G_X \Res^G_X$ and $L_\Gamma B = C_0(\Gamma) \otimes B$.
Then $L_\Gamma$ is a model of the endofunctor $\Ind^{\Gamma \ltimes G}_G \Res^{\Gamma \ltimes G}_G$ on $\KKK^{\Gamma \ltimes G}$.
We then put $Q_{a,b} = L_\Gamma^{a+1} P_b$, and consider the natural bicomplex structure on $(Q_{a,b})_{a,b=0}^\infty$.

Let us show that the total complex $\Tot Q_{\bullet,\bullet}$ is a $(\ker \Res^{\Gamma \ltimes G}_X)$-resolution of $A$.
We have $Q_{a, b} \simeq \Ind^{\Gamma \ltimes G}_X \Res^{\Gamma \ltimes G}_X Q_{a-1,b-1}$, hence $Q_{a, b}$ is $(\ker \Res^{\Gamma \ltimes G}_X)$-projective.
Moreover, we claim that the augmented complex $\Tot Q_{\bullet,\bullet} \to A$ is $(\ker \Res^{\Gamma \ltimes G}_X)$-exact.
Indeed, after applying $\Res^{\Gamma \ltimes G}_G$ to the bicomplex $Q_{\bullet,\bullet}$, each row gives the complex $Q_{\bullet, b}$ consisting of the terms $C_0(\Gamma^{a+1}) \otimes P_b$ for $a \ge 0$.
Then the standard argument shows that $Q_{\bullet, b}$ is chain homotopic to $P_b$ concentrated at degree $0$, hence $\Res^{\Gamma \ltimes G}_G \Tot Q_{\bullet, \bullet}$ is chain homotopic to $P_\bullet$.

We now know that there is a spectral sequence converging to $K_{p+q}(\Gamma \ltimes G \ltimes A)$ with $E^2_{p q} = H_p(K_q(\Gamma \ltimes G \ltimes \Tot(Q_{\bullet,\bullet})))$.
Observe that $G \ltimes Q_{a,b} \simeq C_0(\Gamma^{a+1}) \otimes G \ltimes P_b$, where $\Gamma$ is acting by translation on the first coordinate of $\Gamma^{a+1}$.
Then we have
\[
K_q(\Gamma \ltimes G \ltimes Q_{a,b}) = \Z[\Gamma^a] \otimes K_q(G \ltimes P_b).
\]
Unpacking the differential, we see that $K_q(\Gamma \ltimes G \ltimes \Tot(Q_{\bullet,\bullet}))$ is the complex computing the hyperhomology as in the claim.
\end{proof}

\subsection{Application to Ruelle algebra}

Let $(Y, \psi)$ be a Smale space with totally disconnected stable sets.
Then the groupoid $\Z \ltimes_\psi R^u(Y, \psi)$ behind the \emph{unstable Ruelle algebra}
\[
\mathcal{R}_u(Y, \psi) = C^* (R^u(Y, \psi))\rtimes_\psi \Z
\]
fits into the setting we considered for semidirect products of étale groupoids (note the notational difference: $\mathcal{R}_u$ indicates the Ruelle algebra, while $R^u$ indicates the unstable equivalence relation). Indeed, as a generalized transversal of $R^u(Y, \psi)$ take $T = Y^s(P)$ for some set $P$ of periodic points of $\psi$, so $\psi$ induces an automorphism of the étale groupoid $G = R^u(Y, \psi)|_T$ as in \eqref{eq:R-s-and-R-u-def}. Then $\Z \ltimes_\psi G$ is Morita equivalent to $\Z \ltimes_\psi R^u(Y, \psi)$. 

Let $\pi\colon (\Sigma_\cG, \sigma) \to (Y, \psi)$ be a (regular) $s$-bijective map of Smale spaces.
Then we have the graph $\cG_N(\pi)$ modeling the $(N+1)$-fold fiber product of $\Sigma_\cG$ over $X$, so $\Sigma_{\cG_N(\pi)} = (\Sigma_\cG)_N$.
Combined with functoriality, we get simplicial groups
\[
(\BF(\gamma^s_{\cG_N(\pi)}))_{N=0}^\infty, \quad (\ker(1-\gamma^s_{\cG_N(\pi)}))_{N=0}^\infty.
\]

\begin{prop}
There is a spectral sequence $E^r_{pq} \Rightarrow K_{p+q}(\mathcal{R}_u(Y, \psi))$ with
\begin{align*}
E^1_{N, 2k} &= \BF(\gamma^s_{\cG_N(\pi)}),&
E^1_{N, 2k+1} &= \ker(1-\gamma^s_{\cG_N(\pi)}),
\end{align*}
with the $E^1$-differential $E^1_{p, q} \to E^1_{p-1,q}$ given by the simplicial structure.
\end{prop}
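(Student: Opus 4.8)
The plan is to specialize the semidirect-product machinery of Proposition~\ref{prop:hyperhomology-spec-seq} to $\Gamma = \Z$ acting by $\psi$ on the ample groupoid $G = R^u(Y,\psi)|_T$, with $T = Y^s(P)$ for a $\psi$-invariant set $P$ of periodic points, exactly as set up before the statement; recall that $C^*(\Z \ltimes_\psi G) \simeq C^*(G) \rtimes_\psi \Z$ is then Morita equivalent to $C^*(R^u(Y,\psi)) \rtimes_\psi \Z = \mathcal{R}_u(Y,\psi)$. First I would check the hypotheses of Propositions~\ref{prop:tfs} and~\ref{prop:hyperhomology-spec-seq}: $G$ is amenable by Theorem~\ref{rem:ame}, hence satisfies the strong Baum--Connes conjecture, and being principal it has torsion-free (in fact trivial) stabilizers; while $\Z$ is torsion-free and satisfies the strong Baum--Connes conjecture.

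For the coefficient algebra I would take $A = C_0(X)$ together with the $(\ker \Res^G_H)$-projective resolution $P_n = (\Ind^G_H \Res^G_H)^{n+1} C_0(X)$ of Section~\ref{sec:approx-equivar-KK}, where $H$ is the AF subgroupoid coming from $\pi$ restricted to a transversal. One must choose the transversals in $\Sigma_\cG$ and in $Y$ to be $\sigma$- resp.\ $\psi$-invariant (again using periodic points), so that $\psi$ acts on the whole simplicial object $P_\bullet$ and the face and augmentation maps are restrictions of morphisms in $\KKK^{\Z \ltimes_\psi G}$; then this resolution satisfies the hypothesis of Proposition~\ref{prop:hyperhomology-spec-seq}. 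Applying that proposition produces a first-quadrant bicomplex $Q_{\bullet,\bullet}$ with
\[
K_q(\Z \ltimes_\psi G \ltimes Q_{a,b}) \simeq \Z[\Z^a] \otimes K_q(G \ltimes P_b),
\]
whose total complex computes $K_*(\mathcal{R}_u(Y,\psi))$. The spectral sequence I would use is the one of this bicomplex obtained by filtering by the simplicial index $b$ --- that is, by first taking homology in the group-homology direction $a$ --- which converges to $K_{p+q}(\mathcal{R}_u(Y,\psi))$.

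To read off the $E^1$-page I would invoke Proposition~\ref{prop:homsmale}: through Theorem~\ref{thm:mormor} we have $K_0(G \ltimes P_N) \simeq D^s(\Sigma_N, \sigma_N) = D^s(\Sigma_{\cG_N(\pi)})$ and $K_1(G \ltimes P_N) = 0$, and --- this is the crucial point --- the identification is $\Z$-equivariant, the $\psi$-action corresponding to the automorphism induced by $\sigma_N$ (the two coincide because $\pi$ intertwines $\sigma$ and $\psi$), i.e.\ to the automorphism $\gamma^s_{\cG_N(\pi)}$ underlying the Bowen--Franks group. Since $\Z$ has homological dimension one, the only nonzero group-homology groups are
\[
H_0(\Z, D^s(\Sigma_N,\sigma_N)) = \coker(1 - \gamma^s_{\cG_N(\pi)}) = \BF(\gamma^s_{\cG_N(\pi)}), \qquad H_1(\Z, D^s(\Sigma_N,\sigma_N)) = \ker(1 - \gamma^s_{\cG_N(\pi)}),
\]
and unfolding the $2$-periodicity of $K$-theory these occupy the even resp.\ odd rows, giving $E^1_{N, 2k} = \BF(\gamma^s_{\cG_N(\pi)})$ and $E^1_{N, 2k+1} = \ker(1 - \gamma^s_{\cG_N(\pi)})$. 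The $E^1$-differential $E^1_{p,q} \to E^1_{p-1,q}$ is then induced by the remaining differential of the bicomplex, namely the alternating sum of the simplicial face maps $\Sigma_N \to \Sigma_{N-1}$ --- i.e.\ the simplicial structure, as asserted.

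The main obstacle is the $\Z$-equivariance used above: one must carry the $\psi$-action through the chain of Morita equivalences of Theorem~\ref{thm:mormor} and Proposition~\ref{prop:homsmale} --- unwinding the concrete ``coordinate transforms'' of those proofs --- and verify that on each dimension group $D^s(\Sigma_N, \sigma_N)$ it becomes the shift automorphism $\gamma^s_{\cG_N(\pi)}$; this is also what makes the $\psi$-invariant choice of transversals essential, both here and in the step turning $P_\bullet$ into a complex over $\KKK^{\Z \ltimes_\psi G}$. The remaining ingredients --- computing the group homology of $\Z$ and the bookkeeping reconciling the bicomplex spectral sequence with the abutment of Proposition~\ref{prop:hyperhomology-spec-seq} --- are routine.
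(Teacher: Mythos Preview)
Your computational core is sound, but routing through Proposition~\ref{prop:hyperhomology-spec-seq} introduces a genuine convergence gap. The bicomplex $(K_q((\Z\ltimes G)\ltimes Q_{a,b}))_{a,b}$ is, for each fixed $q$, the $E^1$-complex of the Meyer--Nest spectral sequence attached to the resolution $\Tot Q_{\bullet,\bullet}$; filtering it by the simplicial index $b$ therefore yields a spectral sequence whose abutment is the homology of that complex, i.e.\ the $E^2$-page $\bH_p(\Z,K_q(G\ltimes P_\bullet))$ of Proposition~\ref{prop:hyperhomology-spec-seq}, \emph{not} $K_{p+q}(\mathcal{R}_u(Y,\psi))$. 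You are one spectral sequence too deep, and the attempted merge of the group-homology index $a$ with the Bott periodicity index $q$ into a single row index has no justification. In fact the machinery you invoke is exactly what proves the \emph{next} proposition in the paper, whose $E^2$ is described via a long exact sequence rather than a clean $E^1$.

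The paper's argument is different and more direct: it applies Corollary~\ref{cor:specseq} to the inclusion $\Z\ltimes H\subset\Z\ltimes G$ (justified via Proposition~\ref{prop:tfs}), obtaining
\[
E^1_{N,q}=K_q\bigl((\Z\ltimes G)\ltimes(\Ind^{\Z\ltimes G}_{\Z\ltimes H}\Res^{\Z\ltimes G}_{\Z\ltimes H})^{N+1}C_0(X)\bigr)\Rightarrow K_{p+q}(C^*(\Z\ltimes G)).
\]
Proposition~\ref{prop:semidr-prod-transv-perm} then lifts the transversality of Theorem~\ref{thm:groupoid-fib-prod-compar-unstb-groupoid-Smale-sp-fib-prod} to the semidirect product, so that $E^1_{N,q}\simeq K_q(\mathcal{R}_u(\Sigma_N,\sigma_N))$; the identification with $\BF(\gamma^s_{\cG_N(\pi)})$ and $\ker(1-\gamma^s_{\cG_N(\pi)})$ is then the Cuntz--Krieger computation of Appendix~\ref{sec:BF-grps}. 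Your Pimsner--Voiculescu step on the dimension groups is exactly Lemma~\ref{lem:BF-from-alpha}, and your argument becomes correct if you bypass the bicomplex and instead note that the $\Z$-equivariant resolution $P_\bullet$ \emph{is} the $(\Ind^{\Z\ltimes G}_{\Z\ltimes H}\Res^{\Z\ltimes G}_{\Z\ltimes H})^{\bullet+1}$-resolution, so Corollary~\ref{cor:specseq} applies directly with $E^1_{N,q}=K_q(\Z\ltimes(G\ltimes P_N))$.
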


\begin{proof}
When $f\colon (\Sigma, \sigma) \to (Y, \psi)$ is an $s$-bijective map and $X' \subset \Sigma$ is a $\sigma$-invariant transversal such that $f(X') = X$, we get a subgroupoid $\Z \ltimes H \subseteq \Z \ltimes G$ from the reduction of $\Z \ltimes R^u(\Sigma, \sigma)$ to $X'$.
By Proposition \ref{prop:tfs}, $C_0(X)$ is in the triangulated subcategory generated by the image of $\Ind^{\Z \ltimes G}_{\Z \ltimes H}$.
We thus have a convergent spectral sequence
\[
E^1_{p q} = K_q((\Z \ltimes G) \ltimes (\Ind^{\Z \ltimes G}_{\Z \ltimes H} \Res^{\Z \ltimes G}_{\Z \ltimes H})^{p+1} C_0(X)) \Rightarrow K_{p+q}(C^*(\Z \ltimes G)).
\]
Moreover, by Proposition \ref{prop:semidr-prod-transv-perm}, we have
\[
E^1_{p q} \simeq K_q( C^* (R^u(\Sigma_p, \sigma_p))\rtimes \Z) \simeq K_q(\mathcal{R}_u(\Sigma_p, \sigma_p)).
\]
When $(\Sigma, \sigma)$ is presented by a graph $\cG$, these can be interpreted as $K_q(\mathcal{R}_u(\Sigma_{\cG_p(\pi)},\sigma))$, and the associated complex is the $E^1$-sheet of the above spectral sequence.

Finally, for any graph $\cG$ we have
\begin{align*}
K_0(\mathcal{R}_u(\Sigma, \sigma)) &\simeq \BF(\gamma^s_\cG),&
K_1(\mathcal{R}_u(\Sigma, \sigma)) &\simeq \ker(1-\gamma^s_\cG)
\end{align*}
for the connecting map $\gamma^s_\cG \colon \Z \cG^0 \to \Z \cG^0$, see Appendix \ref{sec:BF-grps}.
\end{proof}

There is another spectral sequence which plays nicer for low dimensional examples.
Consider the subsets $B_n \subset \cG_n(\pi)$ for normalization, which is a complete system of representatives of the free $S_{n+1}$-orbits.
We have an induced map $\gamma_{B_n}$ on $\Z B_n$, and the inductive limit groups $D_n = \varinjlim_{\gamma_{B_n}} \Z B_n$ form a chain complex that is quasi-isomorphic to $D^s(\cG_\bullet(\pi))$ \cite{put:HoSmale}*{Section 4.2}.
Then we have complexes
\begin{align*}
C_\bullet &= (\BF(\gamma_{B_n}))_{k=0}^\infty,&
C'_\bullet &= (\ker(1-\gamma_{B_n}))_{k=0}^\infty,
\end{align*}
endowed with induced differentials.

\begin{prop}
There is a spectral sequence $E^r_{p q} \Rightarrow K_{p+q}(\mathcal{R}_u(Y, \psi))$ such that $E^2_{p q} = 0$ for odd $q$, and the $E^2_{p q} = E^2_{p 0}$ for even $q$, sitting in a long exact sequence
\begin{equation}\label{eq:long-ex-seq-BF-homology}
\dots \to H_{p-1}(C'_\bullet) \to E^2_{p 0} \to H_p(C_\bullet) \to H_{p-2}(C'_\bullet) \to E^2_{p-1,0} \to \dots
\end{equation}
\end{prop}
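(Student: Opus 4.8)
The plan is to apply the hyperhomology spectral sequence of Proposition~\ref{prop:hyperhomology-spec-seq} to the semidirect product behind $\mathcal{R}_u(Y,\psi)$, identify its coefficient complex with Putnam's normalized complex, and then read off the $E^2$-page by switching the filtration on the resulting length-one double complex. Concretely, I would fix the $s$-bijective map $\pi\colon(\Sigma_\cG,\sigma)\to(Y,\psi)$ and a set $P$ of $\psi$-periodic points, so that $T=Y^s(P)$ is $\psi$-invariant and $G=R^u(Y,\psi)|_T$ is an ample, principal (hence torsion-free-stabilized) groupoid, amenable by Theorem~\ref{rem:ame}, so that the conclusions of Theorem~\ref{thm:tu} hold. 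Since $\Z$ is torsion-free and satisfies the strong Baum--Connes conjecture, Proposition~\ref{prop:hyperhomology-spec-seq} applied to $\Z\ltimes_\psi G$ with $A=C_0(T)$ and $P_\bullet=(\Ind^G_X\Res^G_X)^{\bullet+1}C_0(T)$ produces a convergent spectral sequence
\[
E^r_{pq}\Rightarrow K_{p+q}\bigl(\Z\ltimes_\psi G\ltimes C_0(T)\bigr)\simeq K_{p+q}\bigl(\mathcal{R}_u(Y,\psi)\bigr),\qquad E^2_{pq}=\bH_p\bigl(\Z,K_q(G\ltimes P_\bullet)\bigr),
\]
the displayed isomorphism being the $\psi$-equivariant Morita equivalence $\Z\ltimes_\psi G\sim\Z\ltimes_\psi R^u(Y,\psi)$.

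Next I would identify the coefficient complex. By \cite{valmako:part1} the complex $K_0(G\ltimes P_\bullet)$ of $\Z$-modules is the bar complex $C_c(G^{(\bullet)},\Z)$ computing $H_\bullet(G,\Z)$, and $K_1(G\ltimes P_\bullet)=0$; by Theorem~\ref{thm:putnam-homology-is-groupoid-homology} together with Proposition~\ref{prop:homsmale}, this complex is $\psi$-equivariantly quasi-isomorphic to the Putnam complex $D^s(\Sigma_\bullet,\sigma_\bullet)$, and by \cite{put:HoSmale}*{Section~4.2} the latter is quasi-isomorphic to the normalized complex $D_\bullet=(\varinjlim_{\gamma_{B_n}}\Z B_n)_n$, regarded as a complex of $\Z[\Z]$-modules with $t$ acting through the automorphism $\bar\gamma_\bullet$ induced by $\gamma_{B_\bullet}$ on the inductive limits. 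Since $\bH_\bullet(\Z,-)=\Tor_\bullet^{\Z[\Z]}(\Z,-)$ is computed by tensoring with the bounded complex $\Z[\Z]\xrightarrow{1-t}\Z[\Z]$, it is invariant under quasi-isomorphism; hence, using Bott periodicity in $q$, $E^2_{pq}=0$ for odd $q$ and $E^2_{pq}=E^2_{p0}=\bH_p(\Z,D_\bullet)$ for even $q$.

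It then remains to compute $\bH_\bullet(\Z,D_\bullet)$, which is the homology of the mapping cone of $1-\bar\gamma_\bullet\colon D_\bullet\to D_\bullet$. Filtering this double complex by the $D_\bullet$-degree, i.e.\ taking homology in the two-term direction first, produces row by row the groups $\coker(1-\bar\gamma_n\mid D_n)=\BF(\gamma_{B_n})$ and $\ker(1-\bar\gamma_n\mid D_n)=\ker(1-\gamma_{B_n})$, the passage from the inductive limit to $\Z B_n$ being the Bowen--Franks lemma of Appendix~\ref{sec:BF-grps} and commuting with the simplicial maps. Thus the associated second spectral sequence has exactly the two rows $C_\bullet$ and $C'_\bullet$: its $E^2$-terms are $H_p(C_\bullet)$ and $H_p(C'_\bullet)$, the only possibly non-zero higher differential is $d^2\colon H_p(C_\bullet)\to H_{p-2}(C'_\bullet)$, and it degenerates at $E^3$. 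The resulting two-step filtration presents $\bH_k(\Z,D_\bullet)$ as an extension of $\ker\bigl(d^2\colon H_k(C_\bullet)\to H_{k-2}(C'_\bullet)\bigr)$ by $\coker\bigl(d^2\colon H_{k+1}(C_\bullet)\to H_{k-1}(C'_\bullet)\bigr)$, and splicing these extensions over all $k$ yields precisely the long exact sequence \eqref{eq:long-ex-seq-BF-homology} with $E^2_{p0}=\bH_p(\Z,D_\bullet)$.

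The step I expect to be the main obstacle is not the homological algebra — which is the formal analysis of the length-one double complex above — but the verification that each identification feeding into it (Proposition~\ref{prop:homsmale}, Putnam's normalization $D^s(\Sigma_\bullet,\sigma_\bullet)\simeq D_\bullet$, and the Bowen--Franks reduction) is genuinely $\psi$-equivariant and realized by honest chain maps of $\Z[\Z]$-complexes, so that it descends to an isomorphism on $\Tor_\bullet^{\Z[\Z]}(\Z,-)$. This is the same kind of bookkeeping already needed for Propositions~\ref{prop:semidr-prod-transv-perm} and~\ref{prop:homsmale}, now carried out one dimension higher; choosing $P$ inside the periodic points, so that $\psi$ fixes on the nose all the transversals and subgroupoids that enter, is what makes it go through.
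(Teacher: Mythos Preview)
Your proposal is correct and follows the same architecture as the paper's proof: apply Proposition~\ref{prop:hyperhomology-spec-seq} to $\Z\ltimes_\psi G$ with the bar resolution $P_n=C_0(G^{(n+1)})$, identify the coefficient complex $K_0(G\ltimes P_\bullet)$ (with $K_1=0$) as a complex of $\Z[\Z]$-modules quasi-isomorphic to the normalized Putnam complex $D_\bullet$, and then analyse the resulting two-row double complex $D_\bullet\xrightarrow{1-\bar\gamma_\bullet}D_\bullet$.

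There are two presentational differences worth noting. First, the paper passes directly from $K_0(G\ltimes P_\bullet)$ to $D^s(\cG_\bullet(\pi))$ by ``transversality'' and then invokes only the $\psi$-equivariance of Putnam's normalization $D^s(\cG_\bullet(\pi))\to D_\bullet$; you instead route through the bar complex and appeal to Theorem~\ref{thm:putnam-homology-is-groupoid-homology} and Proposition~\ref{prop:homsmale}. Your route is honest but requires one more equivariance check, namely that the zig-zag of quasi-isomorphisms implicit in the proof of Theorem~\ref{thm:putnam-homology-is-groupoid-homology} can be made $\psi$-equivariant --- precisely the obstacle you flag in your last paragraph. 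Second, for the long exact sequence the paper uses the short exact sequence of complexes obtained by taking the subcomplex $(\ker(1-\gamma_{B_n}))_n$ sitting in the top row (with quotient quasi-isomorphic to $(\BF(\gamma_{B_n}))_n$), whereas you run the other spectral sequence of the same double complex and splice the resulting two-step filtrations. These two derivations are formally equivalent: your $d^2$ is the connecting homomorphism of the paper's short exact sequence.
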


\begin{proof}
We keep the notation $G$ and $X$ from previous proposition, but this time we use the spectral sequence given by Proposition \ref{prop:hyperhomology-spec-seq} for the resolution $P_n = C_0(G^{(n+1)})$ of $C_0(X)$.

As before, by transversality we have
\begin{align*}
K_0(G \ltimes P_n) &\simeq D^s(\cG_n(\pi)),&
K_1(G \ltimes P_n) &\simeq 0.
\end{align*}
As above, take the complex $D_\bullet$ with $D_n = \varinjlim_{\gamma_{B_n}} \Z B_n$.
The standard quasi-isomorphism from $D^s(\cG_\bullet(\pi))$ to $D_\bullet$ as given in \cite{put:HoSmale}*{Section 4.2} intertwines the actions of $\gamma_{\cG_n}$ and $\gamma_{B_n}$ by construction.
Generally, when $D'_\bullet \to D_\bullet$ is a quasi-isomorphism of complexes of $\Gamma$-modules, it induces an isomorphism of hyperhomology $\bH_p(\Gamma, D_\bullet) \simeq \bH_p(\Gamma, D'_\bullet)$.
We thus have
\[
E^2_{p q} = \bH_p(\Z, D^s(\cG_\bullet(\pi))) \simeq \bH_p(\Z, D_\bullet).
\]

It remains to put $\bH_p(\Z, D_\bullet)$ in a long exact sequence of the form \eqref{eq:long-ex-seq-BF-homology}.
Recall that the trivial $\Z[\Z]$-module $\Z$ has a free resolution of length $1$, given by
\[
\begin{tikzcd}
0 \ar[r] & \Z[\Z] \ar[r,"1-\tau_1"] & \Z[\Z] \ar[r] & \Z,
\end{tikzcd}
\]
where $\tau_1$ is the translation by $1$.
Thus, the hyperhomology $\bH_\bullet(\Z, D_\bullet)$ can be computed from the double complex with two rows of the form
\[
\begin{tikzcd}
 \dots \ar[r] & D_n \ar[d, "1-\gamma_{B_n}"] \ar[r] & D_{n-1} \ar[d, "1-\gamma_{B_{n-1}}"] \ar[r] & \dots \ar[r] & D_0 \ar[d, "1-\gamma_{B_0}"] \\
 \dots \ar[r] & D_n \ar[r] & D_{n-1} \ar[r] & \dots \ar[r] & D_0
\end{tikzcd}
\]
It contains a subcomplex given by
\[
\begin{tikzcd}
 \dots \ar[r] & \ker(1-\gamma_{B_n}) \ar[d] \ar[r] & \ker(1-\gamma_{B_{n-1}}) \ar[d] \ar[r] & \dots \ar[r] & \ker(1-\gamma_{B_0}) \ar[d] \\
 \dots \ar[r] & 0 \ar[r] & 0 \ar[r] & \dots \ar[r] & 0
\end{tikzcd},
\]
and the quotient is quasi-isomorphic to the complex of $(\BF(\gamma_{B_n}))_n$.
From this we indeed obtain a long exact sequence of the form \eqref{eq:long-ex-seq-BF-homology}.
\end{proof}

\begin{rema}
Recall that $Y$ is of the form $\varprojlim Y_0$ for a projective system of some compact metric space $Y_0$ and a suitable self-map $g\colon Y_0 \to Y_0$ as the connecting map at each step \cite{wie:inv}*{Theorem B}, analogous to the standard presentation of the $m^\infty$-solenoid in the next section.
Suppose further that $g$ is open and the groupoid C$^*$-algebra of the stable relation $R^s(Y, \psi)$ has finite rank $K$-groups.
Then, combining \cite{MR3692021}*{Theorem 1.1} and \cite{MR3868019}*{Corollary 5.5}, we see that $K_\bullet(\mathcal{R}_u(Y, \psi))$ fits in an exact sequence
\[
\begin{tikzcd}
K^{* + 1}(C(Y_0)) \arrow[r,"1-{[E_g]}"] & K^{* + 1}(C(Y_0)) \arrow[r] & K_\bullet(\mathcal{R}_u(Y, \psi)) \arrow[r] & K^\bullet(C(Y_0)) \arrow[r,"1-{[E_g]}"] & K^\bullet(C(Y_0)), 
\end{tikzcd}
\]
where $E_g$ is the $C(Y_0)$-bimodule associated with $g$.
It would be an interesting problem to compare the two ways to compute $K_\bullet(\mathcal{R}_u(Y, \psi))$.
\end{rema}

\section{Examples}
\label{sec:examples}

\subsection{Solenoid}
\label{sec:solonoid}

One class of examples is that of \emph{one-dimensional solenoids} \citelist{\cite{vd:sol}\cite{will:sol}}.
Let us first explain the easiest example, the $m^\infty$-solenoid.
Consider the space
\[
Y = \{ (z_0, z_1, \ldots) \mid z_k \in S^1, z_k = z_{k+1}^m \},
\]
which is the projective limit of
\begin{equation}\label{eq:dysolinv}
\begin{tikzcd}
S^1 & S^1 \arrow[l,"z^m \mapsfrom z"] & S^1 \arrow[l,"z^m \mapsfrom z"] & \,\cdots \arrow[l,"z^m \mapsfrom z"]
\end{tikzcd}.
\end{equation}
A compatible metric is given by
\[
d((z_k)_k, (z'_k)_k) = \sum_k m^{-k} d_0(z_k, z_k'),
\]
where $d_0$ is any metric on $S^1$ compatible with its topology; for example, one may take the arc-length metric $d_0(e^{i s}, e^{i t}) = \left|s-t\right|$ when $\left|s-t\right| \le \pi$.

There is a natural ``shift'' self-homeomorphism
\[
\phi \colon Y \to Y, \quad (z_0,z_1,\dots) \mapsto (z_0^m,z_1^m = z_0, z_2^m = z_1, \dots),
\]
with inverse given by $\phi^{-1}((z_0,z_1,\dots)) = (z_1, z_2, \ldots)$.
Then $(Y, \phi)$ is a Smale space \citelist{\cite{will:sol}\cite{put:HoSmale}}.

Denote by $\pi$ the canonical projection $Y \to S^1$ on the first factor.
As each step of \eqref{eq:dysolinv} is an $m$-to-1 map, $\pi^{-1}(z_0)$ can be identified with the Cantor set $\Sigma = \prod_{n= 1}^{\infty}\{0,1,\ldots,m-1\}$ for any $z_0 \in S^1$.
This allows us to write local stable and unstable sets around $z = (z_k)_k$, as
\begin{align}\label{eq:solrep}
Y^s(z,\epsilon)&= \pi^{-1}(z_0) \cong \Sigma,&
Y^u(z,\epsilon)&=\{ (e^{i t m^{-k}} z_k)_{k=0}^\infty \mid \left | t \right | < \delta_\epsilon\}
\end{align}
for small enough $\epsilon > 0$, with $\delta_\epsilon > 0$ depending on $\epsilon$.
Note that $\pi$ defines a fiber bundle with fiber $\Sigma$, and $Y^u(z,\epsilon) \times \Sigma \to Y$ corresponding to the bracket map gives local trivializations.

Now, the groupoid $R^u(Y, \phi)$ is the transformation groupoid $\R \ltimes_\alpha Y$ for the flow
\[
\alpha_t(z_0, z_1, \ldots) = (e^{i t} z_0, e^{i t m^{-1}} z_1, \ldots, e^{i t m^{-k}} z_k, \ldots) \quad (t \in \R).
\]
Restricted to the transversal $\pi^{-1}(1)$, we obtain the ``odometer'' transformation groupoid $\Z \ltimes_\beta \Sigma$, where $\Sigma$ is identified with $\varprojlim_k \Z_{m^k}$, and the generator $1 \in \Z$ acts by the $+1$ map on $\Z_{m^k}$.

There is a well-known factor map from the two-sided full shift on $m$ letters onto $(Y, \phi)$.
Namely, writing
\[
\Sigma' = \{0,1, \ldots, m-1\}^\Z = \{ (a_n)_{n=-\infty}^\infty \mid 0 \le a_n < m \},
\]
we have a continuous map $f \colon \Sigma' \to Y$ by
\[
f((a_n)_n) = (z_k)_{k=0}^\infty, \quad z_k = \exp\biggl( 2 \pi i \sum_{j=0}^\infty m^{-j-1} a_{j - k} \biggr).
\]
Then we have $f \sigma = \phi f$ for $\sigma \colon \Sigma' \to \Sigma'$ defined by $\sigma((a_n)_n) = (a_{n+1})_n$.

This allows us to compute all relevant invariants separately.
As for the $K$-groups, \cite{MR2050130} gives
\begin{align*}
K_0(C^* (R^u(Y, \phi))) &\simeq \Z\left[\frac1m\right],&
K_1(C^* (R^u(Y, \phi))) &\simeq \Z.
\end{align*}
(These can be also obtained through Connes's Thom isomorphism $K_\bullet(C^* (R^u(Y, \phi))) \simeq K^{\bullet + 1}(Y)$.)
As for groupoid homology, we have
\[
H_\bullet(\Z \ltimes_\beta \Sigma, \Z) \simeq H_\bullet(\Z, C(\Sigma, \Z))
\]
where right hand side is the groupoid homology of $\Z$ with coefficient $C(\Sigma, \Z)$ endowed with the $\Z$-module structure induced by $\beta$.
This leads to
\begin{align*}
H_0(\Z \ltimes_\beta \Sigma, \Z) &\simeq C(\Sigma, \Z)_{\beta} \simeq \Z\left[\frac1m\right],&
H_1(\Z \ltimes_\beta \Sigma, \Z) &\simeq C(\Sigma, \Z)^{\beta} \simeq \Z,
\end{align*}
with coinvariants and invariants of $\beta$, while $H_n(\Z \ltimes_\beta \Sigma, \Z) = 0$ for $n > 1$.
The computation for $H^s_\bullet(Y, \phi)$ will be more involved, but one finds \cite{put:HoSmale}*{Section 7.3} that
\begin{align*}
H^s_0(Y, \phi) &\simeq D^s(\Sigma', \sigma) \simeq \Z\left[\frac1m\right],&
H^s_1(Y, \phi) &\simeq \Z,
\end{align*}
and $H^s_n(Y, \phi) = 0$ for $n > 1$.
Thus the spectral sequence of Theorem \ref{thm:putnam-homology-convergence} collapse at the $E^2$-sheet, and there is no extension problem.

Now let us look at the unstable Ruelle algebra.
From the Pimsner--Voiculescu exact sequence we get
\begin{align*}
K_0(C^* (R^u(Y, \phi))\rtimes\Z) &\simeq \Z \oplus \Z/(1-m)\Z, &
K_1(C^* (R^u(Y, \phi))\rtimes\Z ) &\simeq \Z.
\end{align*}
Let us relate our spectral sequence to these groups.

As in \cite{put:HoSmale}*{Section 7.3}, there is an $s$-bijective map $\pi\colon \Sigma_\cG \to Y$ for the graph $\cG$ with one vertex and $m$ loops around it.
Since $\Sigma_{\cG^2} \to Y$ is regular, we use this to compute fiber products.
Now $\cG^2$ is the complete graph with $m$ vertices.
We thus have
\begin{align*}
\BF(\gamma_{B_0^2}) &\simeq \Z/(m-1)\Z,&
\ker(1 - \gamma_{B_0^2}) &\simeq 0,
\end{align*}

At the next step, $\cG^2_1(\pi)$ is a graph with $m+2$ vertices.
\begin{figure}[h]
\begin{tikzcd}
 \circ \arrow[loop left] & \circ \arrow[l] \arrow[d] \arrow[dl]\\
 \circ \arrow[u] \arrow[r] \arrow[ur] & \circ \arrow[loop right]
\end{tikzcd}
\caption{$\cG^2_1(\pi)$ for $m = 2$}
\label{fig:graph-G-2-1}
\end{figure}
There is a single free $S_2$-orbit, as the nontrivial element $g \in S_2$ acts by flipping across the $45^\circ$ in Figure \ref{fig:graph-G-2-1}.
Thus the subset $B_1^2$ is a singleton (given by a choice of vertex with a loop), and the induced map $\gamma_{B_1^2}$ on $\Z B_1^2$ is the identity map.
We thus have
\begin{align*}
\BF(\gamma_{B_1^2}) &\simeq \Z,&
\ker(1 - \gamma_{B_1^2}) &\simeq \Z.
\end{align*}

For $n > 1$, we have $B^2_n = \empty$ thus $\BF(\gamma_{B_n^2}) = 0 = \ker(1 - \gamma_{B_n^2})$ for trivial reasons.
which are the torsion-free parts of $K_0(\Z \ltimes C^*(R^u(Y, \phi)))$.

Again from the computation \cite{put:HoSmale}*{Section 7.3} the boundary map $\BF(\gamma_{B_1^2}) \to \BF(\gamma_{B_0^2})$ is zero.
Then Proposition \ref{prop:hyperhomology-spec-seq} gives a spectral sequence with
\begin{align*}
E^2_{0 q} &\simeq \Z/(m-1)\Z,&
E^2_{1 q} &\simeq \Z,&
E^2_{2 q} &\simeq \Z,&
E^2_{p q} &\simeq 0 \quad (p > 2)
\end{align*}
for even $q$, and $E^2_{p q} = 0$ for odd $q$.
From this we get an exact sequence
\[
0 \to \Z/(m-1)\Z \to K_0(C^*(R^u(Y, \phi))) \to \Z \to 0
\]
and isomorphism $K_1(C^*(R^u(Y, \phi))) \simeq \Z$, as expected.

\subsection{Self-similar group action}
\label{sec:selfsim}

Let us explain an application to Smale spaces of self-similar group actions.
First let us recall the setting.
We refer to \citelist{\cite{MR2162164}\cite{nekra:crelle}} for unexplained terminology.
Let $X$ be a finite set of symbols, with $d = |X|$, and let $X^*$ denote the set of words with letters in $X$ with structure of a rooted tree.
A faithful action of a group $\Gamma$ on $X^*$ (by tree automorphisms) is \emph{self-similar} if it satisfies the condition
\[
\forall x \in X, \gamma \in \Gamma \quad \exists \gamma' \in \Gamma \ \forall w \in X^*\colon \gamma(x w) = \gamma(x) \gamma'(w).
\]
Writing $\gamma|_x = \gamma'$ in the above setting, we get a cocycle $(\gamma, x) \mapsto \gamma|_x$ for the action of $\Gamma$ on $X$.
This extends to a cocycle $\gamma|_u$ for $u \in X^*$.

Consider the full shift space $(\Sigma, \hat\sigma)$ where $\Sigma = X^\Z$ and $\hat\sigma(x)_k = x_{k-1}$ (note the opposite convention).
Thus, $x, y \in \Sigma$ are stably equivalent if and only if there is some $N$ such that $x_k = y_k$ holds for $k < N$, while they are unstably equivalent if and only if this holds for $k > N$.

Moreover, $x, y \in \Sigma$ are said to be \emph{asymptotically equivalent} if there is a finite set $F \subset \Gamma$ and a sequence $(\gamma_n)_{n \ge 0}$ in $F$, such that $\gamma_n(x_{-n} x_{-n+1} \dots) = y_{-n} y_{-n+1} \dots$ for $n \ge 0$.
The quotient space $S_{\Gamma, X}$ by this relation is called the \emph{limit solenoid}, and it is equipped with the homeomorphism induced by the shift map, denoted $\hat\sigma$. When $(\Gamma, X)$ is \emph{contracting}, \emph{recurrent} (or \emph{self-replicating}), and \emph{regular}, then $(S_{\Gamma, X},\hat\sigma)$ is a Smale space with totally disconnected stable sets \cite{nekra:crelle}*{Proposition 6.10} (alternatively, one can use \cite{nekra:crelle}*{Proposition 2.6} in conjunction with the main result of \cite{wie:inv}).

In the rest of the section we assume that $(\Gamma, X)$ satisfies the above assumptions.
Let us give a more detailed description of the ingredients of the spectral sequence.
Let us fix $x \in \Sigma$, and let $\Sigma^s(x)$ denote its stable equivalence class.
We denote by $T_0$ the set of points $y \in \Sigma^s(x)$ such that $y_n = x_n$ for $n < 0$.

\begin{prop}\label{prop:str-Sigma-s-x}
There is no point $y \neq x$ in $\Sigma^s(x)$ which is asymptotically equivalent to $x$.
\end{prop}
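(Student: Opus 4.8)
The plan is to argue by contradiction: suppose $y\in\Sigma^s(x)$ is asymptotically equivalent to $x$ but $y\neq x$, and derive an impossibility. Both stable equivalence and asymptotic equivalence are invariant under $\hat\sigma^{\pm1}$ (for asymptotic equivalence one checks that $\{\gamma|_w\mid\gamma\in F,\ w\in X\}$ is again finite, so a witnessing finite set $F$ survives under shifting). Since $x$ and $y$ agree on all coordinates below some index, there is a least index at which they differ; replacing $(x,y)$ by $(\hat\sigma^{-k}x,\hat\sigma^{-k}y)$ for the appropriate $k$ (which preserves all hypotheses and the desired conclusion) I may therefore assume $x_j=y_j$ for all $j<0$ while $x_0\neq y_0$.

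\textbf{Extracting group elements.} Fix a finite $F\subset\Gamma$ and a sequence $(\gamma_n)_{n\ge0}$ in $F$ with $\gamma_n(x_{-n}x_{-n+1}\cdots)=y_{-n}y_{-n+1}\cdots$ in $X^\omega$. For $n\ge1$ the length-$n$ words $u_n:=x_{-n}\cdots x_{-1}$ and $y_{-n}\cdots y_{-1}$ coincide, and since a tree automorphism preserves prefix lengths, comparing the length-$n$ prefixes of the two sides of the displayed identity forces $\gamma_n(u_n)=u_n$. Hence $g_n:=\gamma_n|_{u_n}$ is defined and satisfies $g_n(x_0x_1\cdots)=y_0y_1\cdots$; in particular $g_n(x_0)=y_0\neq x_0$, so $g_n\neq e$. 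Writing $u_n^{(j)}$ for the length-$j$ prefix of $u_n$ and $h_j:=\gamma_n|_{u_n^{(j)}}$, one gets a chain $\gamma_n=h_0,h_1,\dots,h_n=g_n$ in which each $h_j$ fixes the letter $x_{-n+j}$ and $h_{j+1}=h_j|_{x_{-n+j}}$.

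\textbf{Contracting plus regular gives the contradiction.} By the contracting hypothesis there is $n_0$, depending only on the finite set $F$, with $\gamma|_w\in\cN$ (the nucleus) whenever $\gamma\in F$ and $|w|\ge n_0$; thus $h_{n_0},\dots,h_n\in\cN$. Choosing $n$ large (the witness ranges over all $n\ge0$), the pigeonhole principle produces $n_0\le j_1<j_2\le n$ with $h_{j_1}=h_{j_2}=:h\in\cN$. Tracing the chain shows $h$ fixes the \emph{nonempty} word $v:=x_{-n+j_1}\cdots x_{-n+j_2-1}$ with $h|_v=h$, and that $h$ restricts along $x_{-n+j_2}\cdots x_{-1}$ to $g_n$; since $g_n\neq e$ we get $h\neq e$. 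Iterating $h(v)=v$ and $h|_v=h$ shows $h$ fixes the periodic ray $v^\infty\in X^\omega$ with $h|_{v^k}=h$ for all $k$. But by the regularity hypothesis an element of $\Gamma$ fixing a point of $X^\omega$ acts trivially on a neighborhood of it, so $h|_{v^k}=e$ for $k$ large, i.e.\ $h=e$, a contradiction. Hence $y=x$.

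\textbf{Main obstacle.} The crux is the final step: showing that the self-reproducing nucleus element $h$ produced by the pigeonhole argument must be trivial. Contracting alone does not suffice — it only confines the restrictions to the finite set $\cN$ and allows, a priori, a nontrivial element stabilizing a periodic ray with constant restriction — so this is exactly where regularity of $(\Gamma,X)$ is indispensable, applied at the periodic point $v^\infty$ (the hypothesis ``recurrent'' is not needed for this particular statement). A secondary point that needs care is the index bookkeeping when passing between the two-sided shift $X^{\Z}$, the rooted tree $X^*$, and its boundary $X^\omega$, notably the verification that asymptotic equivalence is $\hat\sigma^{\pm1}$-invariant, which underpins the normalization in the first paragraph.
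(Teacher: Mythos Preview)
Your proof is correct. Both your argument and the paper's reduce to showing that a nucleus element fixing a sufficiently long initial word must restrict to the identity, but the two reach this conclusion differently. The paper quotes a characterization of regularity from \cite{nekra:crelle}*{Proposition 6.2}: there is a uniform depth $N_1$ such that any two nucleus elements agreeing on a word of length $N_1$ have equal restriction there; applying this with $\gamma' \in \cN$ and $e$ immediately gives $\gamma_M|_{uv}=e$, and the proof ends in one line. You instead run a pigeonhole argument along the chain $h_0,\dots,h_n$ of successive restrictions to produce a nontrivial $h\in\cN$ with $h(v)=v$ and $h|_v=h$, and then invoke the \emph{definition} of regularity at the periodic ray $v^\infty$ (together with faithfulness) to force $h=e$. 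Your route is more self-contained---it does not need the cited equivalent formulation of regularity---and makes the role of the hypothesis transparent at a single point of $X^\omega$; the paper's route is shorter once that formulation is available. Your normalization via shift-invariance of asymptotic equivalence is a clean way to organize the indices, and your side remark that recurrence is not needed here is correct and matches the paper's proof.
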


\begin{proof}
Suppose that $y \in \Sigma^s(x)$ is asymptotically equivalent to $y$, and take a finite subset $F \subset \Gamma$ satisfying the defining condition of asymptotic equivalence between $x$ and $y$ above.

By contractibility assumption there is a finite subset $\cN \subset \Gamma$ containing $e$, and $N_0 \in \N$ such that $\absv{u} \ge N_0$ implies $\gamma|_u \in \cN$ for $\gamma \in F$.
Moreover, by regularity and contractibility, there is $N_1 \in \N$ such that, for any $\gamma, \gamma' \in \cN$ and $u \in X^{N_1}$, one has either $\gamma(u) \neq \gamma'(u)$ or $\gamma|_u = \gamma'|_u$ \cite{nekra:crelle}*{Proposition 6.2}.
By assumption $x \sim_s y$, there is $N \in \N$ such that $x_k = y_k$ for $k \le -N$.
Set $M = N + N_0 + N_1$, and $u = x_{-M} x_{-M+1} \dots x_{-(N+N_1)} \in X^{N_0}$.
Then on one hand $\gamma' = \gamma_M |_u$ is in $\cN$, on the other $\gamma'$ should fix $v = x_{-(N+N_1)+1} \dots x_{-N} \in X^{N_1}$.
Thus we get $\gamma'|_v = e|_v$, hence $\gamma' = e$.
This implies that $x_k = y_k$ for $k > -N$, and consequently $x = y$.
\end{proof}

\begin{coro}
The projection map $(\Sigma, \hat\sigma) \to (S_{\Gamma, X}, \hat\sigma)$ is $s$-bijective.
\end{coro}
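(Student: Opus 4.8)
The plan is to verify that the projection $\pi\colon(\Sigma,\hat\sigma)\to(S_{\Gamma,X},\hat\sigma)$ is a factor map of Smale spaces and that it is $s$-resolving; the passage from $s$-resolving to $s$-bijective is then automatic. First I would record that $\pi$, being the quotient by asymptotic equivalence, is continuous (quotient topology) and surjective, and that it intertwines $\hat\sigma$: asymptotic equivalence is invariant under $\hat\sigma$ and $\hat\sigma^{-1}$, as one sees by reindexing the witnessing finite set $F\subset\Gamma$ and the sequence $(\gamma_n)_n$. Together with \cite{nekra:crelle}*{Proposition 6.10}, which guarantees that $(S_{\Gamma,X},\hat\sigma)$ is a Smale space with totally disconnected stable sets, this makes $\pi$ a factor map of Smale spaces; in particular $\pi$ respects the local product structure and so sends $\Sigma^s(x)$ into $S^s(\pi(x))$ for every $x$.

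Next I would prove that $\pi$ is $s$-resolving, which is where Proposition \ref{prop:str-Sigma-s-x} enters and which is the heart of the argument. The fibers of $\pi$ are exactly the asymptotic equivalence classes, so if $y,y'\in\Sigma^s(x)$ satisfy $\pi(y)=\pi(y')$, then, since $\Sigma^s(x)$ is a single stable class, $y'\in\Sigma^s(y)$ and $y,y'$ are asymptotically equivalent; Proposition \ref{prop:str-Sigma-s-x}, applied with $y$ in place of $x$, forces $y'=y$. Hence the induced map $\Sigma^s(x)\to S^s(\pi(x))$ is injective for every $x$, that is, $\pi$ is $s$-resolving.

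Finally, $(S_{\Gamma,X},\hat\sigma)$ is non-wandering, either by our blanket assumption on Smale spaces or directly because it is a continuous factor of the mixing (hence non-wandering) full shift $(\Sigma,\hat\sigma)$. By the facts recorded after Theorem \ref{thm:oi}, an $s$-resolving factor map onto a non-wandering Smale space is automatically $s$-bijective, and this completes the proof. The only step carrying genuine content is Proposition \ref{prop:str-Sigma-s-x}, which is already in hand; granting it, together with the cited Smale space structure on $S_{\Gamma,X}$, the remainder is bookkeeping, the mildest subtlety being the shift-invariance of asymptotic equivalence needed to see that $\pi$ intertwines the dynamics.
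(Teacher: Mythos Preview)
Your proposal is correct and matches the paper's implicit reasoning: the paper states the corollary without proof, treating it as immediate from Proposition~\ref{prop:str-Sigma-s-x}, and your argument spells out exactly that deduction—fibers of the quotient are asymptotic equivalence classes, so the proposition yields $s$-resolving, and the non-wandering hypothesis on the target upgrades this to $s$-bijective via the remark after Theorem~\ref{thm:oi}.
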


We can thus compute the homology groups $H^s_*(S_{\Gamma, X}, \hat\sigma)$ using the dimension groups of iterated fiber products of $\Sigma$ over $S_{\Gamma, X}$.

\begin{prop}\label{prop:hs0sgx-z1d}
We have $H^s_0(S_{\Gamma, X}, \hat\sigma) \simeq \Z[1/d]$.
\end{prop}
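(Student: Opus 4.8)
The plan is to read off $H^s_0(S_{\Gamma,X},\hat\sigma)$ from the dimension--group complex $D^s((\Sigma)_\bullet,\sigma_\bullet)$ attached to the $s$-bijective map $\pi\colon(\Sigma,\hat\sigma)\to(S_{\Gamma,X},\hat\sigma)$ of the preceding corollary. By definition $H^s_0(S_{\Gamma,X},\hat\sigma)$ is the cokernel of $\partial_1=(\delta^s_0)_*-(\delta^s_1)_*\colon D^s(\Sigma_1,\sigma_1)\to D^s(\Sigma,\hat\sigma)$, where $\Sigma_1=\Sigma\times_{S_{\Gamma,X}}\Sigma$. First I would record that $D^s(\Sigma,\hat\sigma)\simeq\Z[1/d]$: since $(\Sigma,\hat\sigma)=(X^\Z,\hat\sigma)$ is the full shift on $d=\absv{X}$ symbols, its stable dimension group is $\varinjlim(\Z\xrightarrow{\times d}\Z\xrightarrow{\times d}\cdots)\simeq\Z[1/d]$, generated by the classes $[C_n]$ of the ``left cylinders'' $C_n=\{y:y_j=c_j\ (j\le n)\}$ (which exhaust the compact open subsets of the stable orbits) subject only to the relations $[C_n]=d\,[C_{n+1}]$, with $[C_n]$ independent of the tail $c$ and of the orbit. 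Equivalently, the trace $[C_n]\mapsto d^{-n}$ embeds $D^s(\Sigma,\hat\sigma)$ into $\R$, so two compact open subsets of the stable orbits have the same dimension-group class if and only if they carry the same Bowen measure.

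The heart of the argument is to show that $\partial_1=0$. Since $\delta^s_1=\delta^s_0\circ\tau$ for the flip $\tau\colon\Sigma_1\to\Sigma_1$, $(a,b)\mapsto(b,a)$, it suffices to prove $(\delta^s_0)_*=(\delta^s_1)_*$. Because $\pi$ is $s$-bijective, the transversality analysis behind Theorem \ref{thm:groupoid-fib-prod-compar-unstb-groupoid-Smale-sp-fib-prod} (using Proposition \ref{prop:transversality}) shows that for every $(a,b)\in\Sigma_1$ the maps $\delta^s_0$ and $\delta^s_1$ restrict to homeomorphisms of the stable orbit of $(a,b)$ onto $\Sigma^s(a)$ and $\Sigma^s(b)$ respectively, and that $\delta^s_1=\theta_{a,b}\circ\delta^s_0$ there, where $\theta_{a,b}=(\pi|_{\Sigma^s(b)})\inv\circ\pi|_{\Sigma^s(a)}$ is the holonomy of $\pi$ across its (finite) fibre. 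Now $\pi$ intertwines $\hat\sigma$ on both sides and restricts to a homeomorphism on each stable set, hence transports the Bowen measure of $\Sigma^s(a)$ to that of $S^s(\pi a)$, and likewise for $b$, with the two normalisations differing only by a leaf-independent constant that cancels; thus $\theta_{a,b}$ is measure preserving. By the injectivity of the trace this yields $[\theta_{a,b}(E)]=[E]$ in $D^s(\Sigma,\hat\sigma)$ for every compact open $E$ in a stable orbit, hence $(\delta^s_0)_*=(\delta^s_1)_*$, so $\partial_1=0$ and $H^s_0(S_{\Gamma,X},\hat\sigma)=\coker\partial_1=D^s(\Sigma,\hat\sigma)\simeq\Z[1/d]$.

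The step I expect to be the main obstacle is exactly the measure-preservation of $\theta_{a,b}$ --- that is, excluding $[\theta_{a,b}(C_n)]=d^{\,c}[C_n]$ with $c\neq0$, which would give $H^s_0\simeq\Z/(d^{c}-1)$ instead. This is where the contracting, recurrent and regular hypotheses that entered Proposition \ref{prop:str-Sigma-s-x} are really used: they force asymptotic equivalence on $X^\Z$ to be ``length preserving'' and uniformly controlled by the nucleus, so that $\pi$ neither collapses nor shears stable sets and the relevant normalising constants coincide. As a cross-check, and an alternative avoiding $\partial_1$ altogether, one can instead apply Theorem \ref{thm:putnam-homology-is-groupoid-homology}: $H^s_0(S_{\Gamma,X},\hat\sigma)\simeq H_0(G,\Z)=C_c(G^{(0)},\Z)_G$ for any étale groupoid $G$ Morita equivalent to $R^u(S_{\Gamma,X},\hat\sigma)$, and taking $G$ to be the reduction to the Cantor transversal $T_0\simeq X^{\N}$ introduced above, the resulting groupoid is generated by tail equivalence of $X^{\N}$ together with germs of the length-preserving $\Gamma$-action; its coinvariants then identify cylinders of equal length only, and hence equal $\varinjlim(\Z\xrightarrow{\times d}\Z\to\cdots)\simeq\Z[1/d]$.
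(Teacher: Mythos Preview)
Your overall strategy matches the paper's exactly: identify $D^s(\Sigma,\hat\sigma)\simeq\Z[1/d]$ via the product measure $\mu$ on stable leaves, and show $\partial_1=0$ by proving that the holonomy $\theta_{a,b}$ (equivalently, asymptotic equivalence) preserves $\mu$. The difference is entirely in how that last step is executed, and your version leaves it as a gap.

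Your justification --- ``$\pi$ transports the Bowen measure of $\Sigma^s(a)$ to that of $S^s(\pi a)$, and likewise for $b$, with normalisations differing by a leaf-independent constant that cancels'' --- is not substantiated, and it is not how the paper proceeds. There is no general principle that an $s$-bijective factor map pushes the leafwise Bowen measure on $\Sigma^s(a)$ to a constant multiple of any canonical measure on the stable leaf downstairs, and the paper never invokes measures on $S_{\Gamma,X}$ at all. Instead it argues directly from the defining formula of asymptotic equivalence: given $(y,z)\in\Sigma_1$ with $\gamma_n(y_{-n}y_{-n+1}\dots)=z_{-n}z_{-n+1}\dots$, the cylinder $E=\{y':y'_k=y_k\ (k<N)\}$ has $\mu(E)=d^{-N}$, the graph $\tilde E=\{(y',z'):y'\in E,\ \gamma_n(y'_{-n}\dots)=z'_{-n}\dots\ \forall n\}$ is a compact open neighbourhood of $(y,z)$ in $\Sigma_1^s((y,z))$ projecting bijectively to $E$, and its second projection is exactly $E'=\{z':z'_k=z_k\ (k<N)\}$, which has the \emph{same} measure $d^{-N}$. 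This is the ``length-preserving'' property you allude to in your cross-check, and it is immediate from the fact that the $\gamma_n$ act on right-infinite words without any shift of index; no further use of the contracting or regular hypotheses is needed here. Replacing your Bowen-measure paragraph with this concrete cylinder computation closes the gap.
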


\begin{proof}
Since $D^s(\Sigma, \hat\sigma) \simeq \Z[1/d]$, it is enough to show that the boundary map $D^s(\Sigma_1, \hat\sigma_1) \to D^s(\Sigma, \hat\sigma)$ is trivial.
Note that $\Sigma_1$ is the graph of the asymptotic equivalence relation.

Recall that $\Sigma^s(x)$ admits a unique regular Borel measure $\mu$ which is invariant under the unstable equivalence relation and normalized as $\mu(T_0) = 1$.
Namely, with the identification $T_0 \simeq X^\N$, $\mu$ is the infinite product of uniform measure on $X$.
Then the isomorphism $D^s(\Sigma, \hat\sigma) \to \Z[1/d]$ is given by $[E] \mapsto \mu(E)$ for compact open sets $E \subset \Sigma^s(x)$.

Then it is enough to show that the asymptotic equivalence relation on $\Sigma^s(x)$ preserves $\mu$.
Let $y, z \in \Sigma^s(x)$ be two asymptotically equivalent points.
Thus, there is a sequence $(\gamma_n)_{n=0}^\infty$ in $\Gamma$ such that $\gamma_n(y_{-n} y_{-n+1} \dots) = z_{-n} z_{-n+1} \dots$ holds for all $n$.
For some fixed $N$, consider the  compact open neighborhood $E = \{y' \mid y'_k = y_k \: (k < N)\}$ of $y$ in $\Sigma^s(x)$, which has measure $\mu(E) = d^{-N}$.
Then
\[
\tilde{E} = \{(y', z') \mid y' \in E, \forall n \colon \gamma_n(y'_{-n} y'_{-n+1} \dots) = z'_{-n} z'_{-n+1} \dots \}
\]
is a compact open neighborhood of $(y, z)$ in $\Sigma_1^s((y, z))$, and the projection to first factor is a bijection $\tilde E \to E$.
Moreover, its projection to second factor, $E' = \{z' \mid \exists y' \colon (y', z') \in \tilde E \}$, can be written as $E' = \{z' \mid z'_k = z_k \: (k < N)\}$.
This has the same measure as $E$, hence the asymptotic equivalence relation indeed preserves $\mu$.
\end{proof}

\medskip
Let us next look at an étale groupoid model.
Let $T$ be the image of $T_0$ in $S_{\Gamma, X}$.
We denote the étale groupoid $R^u(S_{\Gamma, X}, \hat\sigma)|_T$ by $M_{\Gamma, X}$, so that $C^*(R^u(S_{\Gamma, X}, \hat\sigma))$ is strongly Morita equivalent to $C^*(M_{\Gamma, X})$.
As for the Ruelle algebra, the groupoid $D_{\Gamma, X} = \Z \ltimes M_{\Gamma, X}$ is the groupoid of germs of partially defined homeomorphisms on $X^\N$, of the form $x_1 \dots x_m z \mapsto x'_1 \dots x'_n \gamma z$ for $z \in X^\N$, with $x_i, x'_i \in X$ and $\gamma \in \Gamma$ (notice we range from $1$ to $m$ in the domain, and from $1$ to $n$ in the codomain).
The associated C$^*$-algebra $\cO_{\Gamma,X} = C^*(D_{\Gamma,X}) = \mathcal{R}_u(S_{\Gamma, X}, \hat \sigma)$ has a presentation similar to Cuntz--Pimsner algebras.
The structure of these algebras, including their $K$-groups and equilibrium states, have been previously studied in \cites{ExeHR:dilation,lrrw:eqstates}.

\begin{prop}[\cite{nekra:crelle}*{Proposition 6.8}]\label{prop:nekr-prop-6-8}
The image of $y \in T_0$ in $S_{\Gamma, X}$ is unstably equivalent to the image of $x$ if and only if $\gamma(x_k x_{k+1} \dots) = y_k y_{k+1} \dots$ for some $\gamma \in \Gamma$ and $k \ge 0$.
\end{prop}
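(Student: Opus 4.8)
The plan is to pull everything back to the full two-sided shift $\Sigma=X^{\mathbb{Z}}$ along the $s$-bijective factor map $\pi\colon(\Sigma,\hat\sigma)\to(S_{\Gamma,X},\hat\sigma)$ of the preceding Corollary, using the presentation of $S_{\Gamma,X}$ as a limit solenoid. I will take two facts about contracting self-similar groups from \cite{nekra:crelle} as black boxes. First, the fibres of $\pi$ are the asymptotic equivalence classes, and these are controlled by the nucleus $\mathcal{N}$: $\pi(a)=\pi(b)$ iff for every $p<q$ there is $\gamma\in\mathcal{N}$ with $\gamma(a_p\cdots a_{q-1})=b_p\cdots b_{q-1}$; moreover every $\gamma\in\Gamma$ has $\gamma|_w\in\mathcal{N}$ once $\left|w\right|$ exceeds some $N_0=N_0(\gamma)$. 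Second, $S_{\Gamma,X}=\varprojlim\bigl(\mathcal{J}\xleftarrow{\sigma}\mathcal{J}\xleftarrow{\sigma}\cdots\bigr)$ for the limit space $\mathcal{J}=\mathcal{J}_{\Gamma,X}$, with $\bar x$ identified with the sequence $(\xi_i)_{i\ge0}$ where $\xi_i\in\mathcal{J}$ is the class of the left-infinite word $(x_j)_{j<i}$; under this identification $\hat\sigma^{-n}\bar x=(\xi_n,\xi_{n+1},\dots)$, so that (since unstable equivalence is metric-independent on a compact space) $\bar y\sim_u\bar x$ is equivalent to $d_{\mathcal{J}}(\xi_n,\eta_n)\to0$ as $n\to\infty$, the $\eta_i$ being built from $y$ the same way. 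Finally, the limit-space metric satisfies: $d_{\mathcal{J}}(\bar a,\bar b)\le C\lambda^{m}$ (with $\lambda<1$ fixed) whenever $\bar a,\bar b$ have representative left-infinite words $a',b'$ and some $\gamma\in\mathcal{N}$ with $\gamma(a'_{-m}\cdots a'_{-1})=b'_{-m}\cdots b'_{-1}$.

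For $(\Leftarrow)$, assume $\gamma(x_kx_{k+1}\cdots)=y_ky_{k+1}\cdots$. Pick $N_0$ for this $\gamma$ and put $\delta=\gamma|_{x_k\cdots x_{k+N_0-1}}\in\mathcal{N}$, so $\delta(x_{k+N_0}\cdots x_{n-1})=y_{k+N_0}\cdots y_{n-1}$ for all $n>k+N_0$. Since $\xi_n$ and $\eta_n$ are represented by the words $(x_j)_{j<n}$ and $(y_j)_{j<n}$, whose last $n-k-N_0$ letters are matched by $\delta\in\mathcal{N}$, the last displayed fact gives $d_{\mathcal{J}}(\xi_n,\eta_n)\le C\lambda^{\,n-k-N_0}\to0$. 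Hence $\bar y\sim_u\bar x$.

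For $(\Rightarrow)$, assume $\bar y\sim_u\bar x$. As $\pi$ is an $s$-bijective factor map, by the standard theory of such maps (see \cite{put:notes}) its restriction to $\Sigma^{u}(x)$ is a continuous proper surjection onto the unstable set of $\bar x$, and $\pi$ carries $\sim_u$ to $\sim_u$. So there is $z\in\Sigma^{u}(x)$ with $\pi(z)=\pi(y)$; explicitly, there is $k\ge0$ with $z_j=x_j$ for all $j\ge k$, and $z$ is asymptotically equivalent to $y$, say via a sequence $(\gamma_n)_{n\ge0}$ taking finitely many values with $\gamma_n(y_{-n}y_{-n+1}\cdots)=z_{-n}z_{-n+1}\cdots$ for all $n$. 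Taking $n=k$ and restricting by the word $y_{-k}\cdots y_{k-1}$, the element $\delta:=\gamma_k|_{y_{-k}\cdots y_{k-1}}$ satisfies $\delta(y_ky_{k+1}\cdots)=z_kz_{k+1}\cdots=x_kx_{k+1}\cdots$, so $\gamma:=\delta^{-1}$ obeys $\gamma(x_kx_{k+1}\cdots)=y_ky_{k+1}\cdots$, as wanted.

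The subtle ingredient is the finiteness clause in asymptotic equivalence. In $(\Rightarrow)$ it is absorbed into the quoted surjectivity of $\pi$ on unstable sets; in $(\Leftarrow)$ it is what obliges one to descend to the nucleus and to invoke the tile metric of $\mathcal{J}$, all of which relies on $(\Gamma,X)$ being contracting, while the existence of the Smale-space structure and the $s$-bijectivity of $\pi$ also use that $(\Gamma,X)$ is recurrent and regular. A more elementary treatment of $(\Leftarrow)$ would build $z\in\Sigma^{u}(x)$ by hand together with a bi-infinite path in the Moore diagram of $\mathcal{N}$ having input $z$ and output $y$; producing such a path — equivalently, keeping the asymptotic-equivalence witness inside the finite set $\mathcal{N}$ — needs the nucleus automaton to admit, at each state, incoming edges with arbitrary prescribed output, which is precisely the structural consequence of recurrence and regularity used in \cite{nekra:crelle}*{Proposition 6.8}. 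This last point is where I expect the real work to lie.
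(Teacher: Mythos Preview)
The paper does not give its own proof of this proposition; it is simply quoted from \cite{nekra:crelle}*{Proposition 6.8}. So there is nothing in the present paper to compare your argument against, and what follows is an assessment of your proof on its own terms.

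Your $(\Leftarrow)$ direction is fine: descending to the nucleus and invoking the contraction of the limit-space metric is exactly the right mechanism, and the black boxes you cite are legitimate.

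The $(\Rightarrow)$ direction has a gap. You assert that ``by the standard theory of such maps (see \cite{put:notes}) its restriction to $\Sigma^{u}(x)$ is a continuous proper surjection onto the unstable set of $\bar x$''. Standard theory of $s$-bijective maps gives continuity and properness on global unstable sets, but \emph{not} surjectivity: what one gets (as in the paper's Theorem~\ref{thm:oi}) is that $\pi^{-1}(S^u(\bar x))$ is a finite union $\bigcup_k \Sigma^u(a_k)$, with no guarantee that any single $\Sigma^u(a_k)$, let alone $\Sigma^u(x)$, already covers $S^u(\bar x)$. The paper's transversality result (Proposition~\ref{prop:transversality}) also does not give you this: applied to $a_0=x$, $a_1=y$ it produces $b_0\sim_u x$, $b_1\sim_u y$ with $\pi(b_0)=\pi(b_1)$, so $\pi(b_0)\sim_u \bar y$, not $\pi(b_0)=\bar y$. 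In fact, the surjectivity you want is essentially the content of the proposition itself in this direction, so invoking it is circular. Your closing paragraph almost concedes this: you locate the ``real work'' in the structural property of the nucleus automaton established in \cite{nekra:crelle}*{Proposition 6.8}, which is precisely the statement under consideration. To make $(\Rightarrow)$ honest you would need to argue directly---either via the Moore diagram of $\mathcal N$ as you sketch, or by a direct limit-solenoid argument showing that closeness of $\xi_n$ and $\eta_n$ in $\mathcal J$ for large $n$ forces a single nucleus element to match long right tails of $x$ and $y$---rather than appealing to an unstated surjectivity.
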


This allows further recursive description of the groupoid homology, as follows.
Let $M_{\Gamma, X}^n$ the groupoid of germs of partially defined transformations on $X^\N$ of the form $x_1 \dots x_n z \mapsto x'_1 \dots x'_n \gamma z$ for $z \in X^\N$, with $x_i, x'_i \in X$ and $\gamma \in \Gamma$.
These form an increasing sequence of subgroupoids of $M_{\Gamma, X}$ such that $\bigcup_n M_{\Gamma, X}^n = M_{\Gamma, X}$.
By the self-similarity assumption, $M_{\Gamma, X}^n$ is isomorphic to the product groupoid $R_n \times M_{\Gamma, X}^0$, where $R_n$ is the complete equivalence relation on $X^n$.

\begin{prop}
The natural homomorphism from $H_0(M_{\Gamma, X}, \Z) \simeq \Z[1/d]$ to $K_0(C^*(M_{\Gamma, X}))$ is injective.
\end{prop}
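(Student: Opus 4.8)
The plan is to detect the homomorphism in question by pairing with a trace coming from the canonical invariant measure on the unit space. First I would recall that for any ample groupoid $G$ with compact unit space there is a canonical homomorphism $\mu_G \colon H_0(G, \Z) \to K_0(C^*(G))$: at the level of $C_c(G^{(0)}, \Z)$ it sends the indicator $1_U$ of a compact open $U \subseteq G^{(0)}$ to the $K_0$-class of the projection $1_U \in C_c(G)$, and this descends to $H_0(G, \Z) = C_c(G^{(0)}, \Z)/\langle 1_{s(B)} - 1_{r(B)} : B \text{ a compact open bisection}\rangle$ because $1_B \in C_c(G)$ is a partial isometry witnessing $1_{s(B)} \sim 1_{r(B)}$ in $C^*(G)$. (Equivalently this is the low-degree edge homomorphism of the spectral sequence of Theorem~\ref{thm:putnam-homology-convergence}, transported along the relevant Morita equivalence.) In our situation $G = M_{\Gamma, X}$ is ample with compact unit space $T \cong T_0 \cong X^{\N}$, so $C^*(M_{\Gamma, X})$ is unital.

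Next, recall from the proof of Proposition~\ref{prop:hs0sgx-z1d} the measure $\mu$ on $\Sigma^s(x)$, invariant under the unstable equivalence relation and normalized by $\mu(T_0) = 1$; on $T_0 \cong X^{\N}$ it is the Bernoulli measure with uniform marginals. Being invariant under the unstable equivalence relation, $\mu$ is invariant under its reduction $M_{\Gamma, X}$, so integration against $\mu$ yields a bounded positive trace $\tau$ on $C^*(M_{\Gamma, X})$ and hence a homomorphism $\tau_* \colon K_0(C^*(M_{\Gamma, X})) \to \R$ with $\tau_*([1_U]) = \mu(U)$ for compact open $U$. Therefore the composite $\tau_* \circ \mu_G \colon H_0(M_{\Gamma, X}, \Z) \to \R$ carries the class of $1_U$ to $\mu(U)$; in particular it is nonzero, since it sends the class of $1_{X^{\N}}$ to $\mu(X^{\N}) = 1$.

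Finally I would conclude using $H_0(M_{\Gamma, X}, \Z) \simeq H^s_0(S_{\Gamma, X}, \hat\sigma) \simeq \Z[1/d]$ (Theorem~\ref{thm:putnam-homology-is-groupoid-homology} together with Proposition~\ref{prop:hs0sgx-z1d}): the group $\Z[1/d]$ is torsion-free of rank one, so since rank is additive a nonzero homomorphism $\Z[1/d] \to \R$ has image of rank one, hence rank-zero, i.e.\ torsion, i.e.\ trivial, kernel. Thus $\tau_* \circ \mu_G$ is injective, and a fortiori $\mu_G$ is injective, which is the assertion. The one point demanding care is the implication ``$\mu$ invariant under unstable equivalence $\Rightarrow$ $\mu$ invariant under $M_{\Gamma, X}$'' and, upstream of it, that $\mu$ really is defined on $S_{\Gamma, X}$ rather than only on $\Sigma^s(x)$; the latter is exactly what Proposition~\ref{prop:hs0sgx-z1d} establishes, where $\mu$ is shown to be preserved by asymptotic equivalence. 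A more pedestrian alternative would be to chase the isomorphisms of Theorem~\ref{thm:putnam-homology-is-groupoid-homology} and identify $\mu_G$ with $[U]\mapsto\mu(U)$ on the nose, but the rank argument makes that superfluous.
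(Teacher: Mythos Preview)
Your proof is correct and follows essentially the same approach as the paper: both use the trace $\tau$ induced by the invariant Bernoulli measure $\mu$ on $X^{\N}$ to detect the map $H_0 \to K_0$. The only difference is cosmetic: the paper identifies the image of $H_0$ in $K_0$ with the image of $K_0(M_{d^\infty}(\bC))$ for the UHF subalgebra $M_{d^\infty}(\bC) \subset \cO_d \subset \cO_{\Gamma,X}$ and then observes that $\tau$ is the standard trace there, whereas you bypass this identification with the clean rank-one argument that any nonzero homomorphism $\Z[1/d] \to \R$ is injective.
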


\begin{proof}
First we have $H_0(M_{\Gamma, X}, \Z) \simeq \Z[1/d]$ by Proposition \ref{prop:hs0sgx-z1d} and Theorem \ref{thm:putnam-homology-is-groupoid-homology}.
By the above discussion, the image of this group in $K_0(C^*(M_{\Gamma, X}))$ is equal to the image of $K_0(M_{d^\infty}(\bC))$, where $M_{d^\infty}(\bC)$ is the UHF algebra that appears as the gauge invariant part of $\cO_d \subset \cO_{\Gamma, X}$.
The measure $\mu$ in the proof of Proposition \ref{prop:hs0sgx-z1d} defines a tracial state $\tau$ on $C^*(M_{\Gamma, X})$.
Thus, on the image of $H_0(M_{\Gamma, X}, \Z)$, $\tau$ induces isomorphism to $\Z[1/d]$.
\end{proof}

Turning to higher groupoid homology, we have
\[
H_k(M_{\Gamma, X}, \Z) \simeq \varinjlim_n H_k(M_{\Gamma, X}^n, \Z)
\]
from the compatibility of homology and increasing sequence of complexes.
Moreover, by the invariance of groupoid homology under Morita equivalence, we have
\[
H_k(M_{\Gamma, X}^n, \Z) \simeq H_k(M_{\Gamma, X}^0, \Z).
\]
Up to this isomorphism, the connecting maps $H_k(M_{\Gamma, X}^n, \Z) \to H_k(M_{\Gamma,X}^{n+1}, \Z)$ are all the same.

If moreover the action of $\Gamma$ on $X^\N$ is topologically free\footnote{This assumption is equivalent to injectivity of the associated virtual endomorphisms of $\Gamma$, and forces the group $\Gamma$ to be of polynomial growth \cite{MR2162164}*{Section 6.1}.}, then the groupoid $M_{\Gamma, X}^0$ (which is defined by the germs of the $G$-action on $X^\N$) can be identified with the transformation groupoid $\Gamma \ltimes X^\N$.
Note that $\Gamma \ltimes X^\N$ can be regarded as the projective limit of the groupoids $\Gamma \ltimes X^n$ for increasing $n$.
This again gives a presentation
\[
H_k(\Gamma \ltimes X^\N, \Z) \simeq \varinjlim_n H_k(\Gamma \ltimes X^n, \Z).
\]
By the recurrence assumption the groupoid $\Gamma \ltimes X^n$ is isomorphic to $R_n \times \Gamma$, and we have
\[
M_{\Gamma, X}^0 = M_{\Gamma, X}^k = M_{\Gamma, X}
\]
by \cite{nekra:crelle}*{Proposition 5.2}.
In particular, $H_k(M_{\Gamma, X}, \Z)$ becomes the inductive limit of the ordinary group homology $H_k(\Gamma, \Z)$, with respect to the map induced by the \emph{matrix recursion}
\[
\phi\colon \Z \Gamma \to \End_{\Z \Gamma}(\Z (X \cdot \Gamma)) \simeq \Z (R_1 \times \Gamma)
\]
for the \emph{permutation bimodule} $X \cdot \Gamma$, and the identification $H_k(\Gamma, \Z) \simeq H_k(R_n \times \Gamma, \Z)$.
Thus, under the above additional assumptions our spectral sequence gives a computation of the $K$-groups for the C$^*$-algebra $C^*(R^u(S_{\Gamma, X}, \hat\sigma))$ in terms of direct limits of sequences with terms $H_k(\Gamma, \Z)$.

Analogously, for $K$-theory we have
\begin{equation}\label{eq:self-similar-action-K-theory-ind-lim}
K_i(C^*(M_{\Gamma, X})) \simeq \varinjlim_n K_i(C^* (\Gamma \ltimes X^n)),
\end{equation}
cf.~\cite{nekra:crelle}*{Proposition 3.8}.

\begin{exem}
As a concrete example, let us look at the binary adding machine from \cite{MR2162164}*{Section 1.7.1}.
This one is given by $\Gamma = \Z$, $X = \{0, 1\}$, and the action of $1 \in \Z$ on $X^*$ is presented as
\begin{align*}
a(0 w) &= 1 w,&
a(1 w) &= 0 a(w)
\end{align*}
for $w \in X^*$.
To define the transversal we choose $x = (x_k)_k \in X^\Z$ with $x_k = 0$ for all $k$.
Then the induced equivalence relation on $T_0$ by Proposition \ref{prop:nekr-prop-6-8} agrees with the orbit equivalence relation of $\Z \ltimes \varprojlim \Z_{2^k}$ from the previous section (up to a change of convention of the shift map).
Turning to homology, on $H_0(\Z, \Z) \simeq \Z \simeq H_1(\Z, \Z)$, the matrix recursion induces multiplication by $2$, while it acts as the identity on $H_1(\Z, \Z)$.
This gives a direct computation of $H_0(M_{\Z, X}, \Z) \simeq \Z[1/2]$ and $H_1(M_{\Z, X}, \Z) \simeq \Z$, while $H_k(M_{\Z, X}, \Z) = 0$ for $k > 1$.
\end{exem}

Next let us look at higher dimensional odometer actions in more detail.
Let us take $B \in M_N(\Z) \cap \GL_N(\Q)$ with its eigenvalues all bigger than $1$.
Put $\Gamma = \Z^N$, and take a set $X$ of representatives of the $B \Gamma$-cosets in $\Gamma$.
Then we have a contracting, regular, and recurrent action of $\Gamma$ on $X^*$ such that the natural action of $\Gamma$ on $\varprojlim_n \Gamma / B^n \Gamma$ can be interpreted as the induced action on $X^\N$, see  \cite{MR2162164} for details. Thus, the groupoid homology $H_k(M_{\Gamma, X}, \Z)$ can be identified with
\[
H_k\Bigl(\Gamma, C\Bigl(\varprojlim_n \Gamma / B^n \Gamma, \Z\Bigr)\Bigr) \simeq \varinjlim_n H_k(\Gamma, C(\Gamma / B^n \Gamma, \Z)).
\]
The first step connecting map is $H_k(\Gamma, \Z) \to H_k(\Gamma, C(\Gamma / B \Gamma, \Z))$, induced by the $\Gamma$-equivariant embedding of $\Z$ into $C(\Gamma / B \Gamma, \Z)$ as constant functions.

Now, recall that $H_{N-k}(\Gamma, \Z)$ is isomorphic to $\medwedge^{k} \Gamma'$, where $\Gamma' = \Hom(\Gamma, \Z)$ is the group of homomorphisms from $\Gamma$ to $\Z$.
Let us describe an invariant formula for this.

Suppose that $(v_i)_{i=1}^N$ is a basis of $\Gamma$, and let $(v^i)_i$ be its dual basis in $\Gamma'$.
Then we get a graded commutative $\Z[\Gamma]$-algebra
\[
\Omega_{v_*}^\bullet = \Z[\Gamma] \otimes \medwedge^\bullet \{a(v^1), \dots, a(v^N)\},
\]
where $a(v^i)$ are formal symbols subject to the rule $a(v^i) \wedge a(v^j) = - a(v^j) \wedge a(v^i)$.
In particular, the degree $k$ part $\medwedge^k \{a(v^1), \dots, a(v^N)\}$ is a free commutative group with basis $a(v^{i_1}) \wedge \dots \wedge a(v^{i_k})$ for $1 \le i_1 < i_2 < \dots < i_k \le N$.
Then multiplication by the homogeneous element of degree $1$,
\[
D_{v_*} = \sum_{i=1}^N (1 - \lambda_{v_i}) \otimes a(v^i),
\]
defines a cochain complex structure on $\Omega_{v_*}^\bullet$.

The degree shifted chain complex $P_\bullet^{v_*}$ with $P_k^{v_*} = \Omega_{v_*}^{N-k}$ is a free resolution of $\Z$ as a $\Z[\Gamma]$-module.
Then $H_\bullet(\Gamma, \Z)$ can be computed as the homology of $\Z \otimes_\Gamma P_\bullet^{v_*}$, which is just the degree shift of $\medwedge^\bullet \{a(v^1), \dots, a(v^N)\}$ with trivial differential, hence is equal to $\medwedge^\bullet \{a(v^1), \dots, a(v^N)\}$ itself.

Let $f_{v_*} \colon \medwedge^\bullet \{a(v^1), \dots, a(v^N)\} \to \medwedge^\bullet \Gamma'$ the isomorphism of graded commutative rings  characterized by $a(v^i) \mapsto v^i$.
Then, if $(w_i)_i$ is another choice of basis in $\Gamma$, the map of resolutions $P_\bullet^{v_*} \to P_\bullet^{w_*}$ is compatible with $f_{v_*}$ and $f_{w_*}$ (up to chain-homotopy, this map is unique).

Now we are ready to identify the connecting map $\phi_k\colon H_k(\Gamma, \Z) \to H_k(\Gamma, C(\Gamma / B \Gamma, \Z)) \simeq H_k(\Gamma, \Z)$.

\begin{prop}\label{prop:higher-odom-groupoid-homolog}
With respect to the above identification $H_{N-k}(\Gamma, \Z) \simeq \medwedge^{k} \Gamma'$, the map $\phi_{N-k}$ is equal to $\medwedge^{k} B^t$, where $B^t\colon \Gamma' \to \Gamma'$ is the adjoint of $B$.
\end{prop}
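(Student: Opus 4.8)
The plan is to recognize $\phi$ as a transfer homomorphism and then compute it by Poincaré duality on the torus.

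First I would identify the coefficient module: as a $\Z[\Gamma]$-module, $C(\Gamma/B\Gamma,\Z)$ is $\Z[\Gamma/B\Gamma]=\Z[\Gamma]\otimes_{\Z[B\Gamma]}\Z$, the module induced from the trivial $\Z[B\Gamma]$-module, and the inclusion of constant functions $\Z\hookrightarrow C(\Gamma/B\Gamma,\Z)$ carries $1$ to $\sum_{c\in\Gamma/B\Gamma}c$. Shapiro's lemma gives $H_\bullet(\Gamma,C(\Gamma/B\Gamma,\Z))\simeq H_\bullet(B\Gamma,\Z)$, and I would check — working on the bar complex, or via the standard dictionary between induced coefficients and transfers (for instance, the composite of the inclusion of constants with the augmentation $\Z[\Gamma/B\Gamma]\to\Z$ is multiplication by $[\Gamma:B\Gamma]$) — that under this isomorphism $\phi_\bullet$ becomes the transfer map $\mathrm{tr}\colon H_\bullet(\Gamma,\Z)\to H_\bullet(B\Gamma,\Z)$ of the finite-index inclusion $B\Gamma\subseteq\Gamma$; on $H_0$ it is multiplication by $d=[\Gamma:B\Gamma]=|\det B|=|X|$, consistent with the known first connecting map. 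The identification $H_\bullet(B\Gamma,\Z)\simeq H_\bullet(\Gamma,\Z)$ implicit in the statement is then the one induced by $\beta\colon\Gamma\xrightarrow{\ \sim\ }B\Gamma$, $\gamma\mapsto B\gamma$.

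Next I would compute the transfer geometrically. Writing $\Gamma=\pi_1(T)$ for the torus $T=\R^N/\Gamma$ and $B\Gamma=\pi_1(T')$ for $T'=\R^N/B\Gamma$, the inclusion $B\Gamma\subseteq\Gamma$ is induced by the $d$-sheeted covering $p\colon T'\to T$, and $\mathrm{tr}$ is the Umkehr (transfer) map $H_\bullet(T,\Z)\to H_\bullet(T',\Z)$ of $p$. By Poincaré duality on the closed oriented $N$-manifolds $T,T'$, this transfer on $H_{N-k}$ is Poincaré-dual to the pullback $p^{*}$ on $H^{k}$. On $H^1$ the pullback is the restriction $\Gamma'\to(B\Gamma)'$, $f\mapsto f|_{B\Gamma}$; composing with $\beta^{*}\colon(B\Gamma)'\xrightarrow{\ \sim\ }\Gamma'$ turns this into precomposition with $B$, i.e.\ into $B^t$. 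Hence on $H^k\simeq\medwedge^k\Gamma'$, under the canonical ring isomorphism $H^\bullet(T,\Z)\simeq\medwedge^\bullet\Gamma'$, the pullback is $\medwedge^k B^t$, and dualizing gives $\phi_{N-k}=\medwedge^k B^t$ on $H_{N-k}$. A purely algebraic alternative is to use the Koszul resolution $P_\bullet^{v_*}$ directly: after tensoring up it is still a $\Z[B\Gamma]$-free resolution of $\Z[\Gamma/B\Gamma]$, so $\mathrm{id}_{P_\bullet^{v_*}}\otimes\iota$ realizes $\phi_\bullet$; choosing $v_*$ in Smith normal form for $B\Gamma\subseteq\Gamma$ reduces matters to $B=\mathrm{diag}(d_1,\dots,d_N)$, where the Koszul complex splits as a tensor product over the coordinates and $\medwedge^k\mathrm{diag}(d_i)=\medwedge^k B^t$ can be read off factorwise; naturality in $v_*$ then removes the normal-form assumption.

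The hard part will be the bookkeeping that matches the identification $H_{N-k}(\Gamma,\Z)\simeq\medwedge^k\Gamma'$ of the statement — defined through the shifted Koszul resolution $P_\bullet^{v_*}$ and the ring isomorphism $f_{v_*}$ — with the Poincaré-duality identification $H_{N-k}(T,\Z)\simeq H^k(T,\Z)\simeq\medwedge^k\Gamma'$ used above. These coincide once $T$ is oriented by $v_1\wedge\dots\wedge v_N$: the resolution $P_\bullet^{v_*}$ is the $\Z[\Gamma]$-module structure on the cellular chains of $\R^N$ for the lattice $\Gamma$, and its degree shift implements the cap product with the fundamental class, so changing the basis $v_*$ only negates the identification and the fundamental class simultaneously — a global sign that cancels between source and target and so does not affect the formula. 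I would also need to be careful that it is the transfer, not the corestriction, that appears (but $\phi$ runs $H_\bullet(\Gamma,\Z)\to H_\bullet(B\Gamma,\Z)$, so there is in fact no choice), and that the $\beta$ used to identify the target of $\phi$ is the same $\beta$ used to turn $p^{*}$ into $B^t$. Once these conventions are pinned down, the proposition follows.
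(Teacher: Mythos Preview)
Your proposal is correct, and your ``algebraic alternative'' at the end is essentially the paper's own argument. The paper works directly with the Koszul resolutions: it chooses a basis $(v_i)$ of $\Gamma$ in Smith normal form for $B\Gamma\subseteq\Gamma$ so that $(m_i v_i)$ is a basis of $B\Gamma$, sets $w_i=B^{-1}(m_i v_i)$, and writes down an explicit $\Z[\Gamma]$-chain map $P^{v_*}_\bullet\to\Ind^\Gamma_{B\Gamma}P^{w_*}_\bullet$ lifting the inclusion of constants, namely
\[
a(v^{i_1})\wedge\dots\wedge a(v^{i_k})\ \longmapsto\ \sum_{0\le c_{i_j}<m_{i_j}}\lambda_{c_{i_1}v_{i_1}+\dots+c_{i_k}v_{i_k}}\otimes a(w^{i_1})\wedge\dots\wedge a(w^{i_k}).
\]
After applying $\Z\otimes_\Gamma(-)$ this becomes $v^{i_1}\wedge\dots\wedge v^{i_k}\mapsto m_{i_1}\cdots m_{i_k}\,w^{i_1}\wedge\dots\wedge w^{i_k}$, and since $B^t v^i=m_i w^i$ this is exactly $\medwedge^k B^t$.

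Your primary route --- identify $\phi$ with the transfer for the finite-index inclusion $B\Gamma\subseteq\Gamma$ via Shapiro, then compute the transfer as the Poincar\'e dual of the covering pullback on the torus --- is a genuinely different and more conceptual argument. It explains \emph{why} the adjoint $B^t$ appears (it is $p^*$ on $H^1$ composed with $\beta^*$) and lets the ring structure of $H^\bullet(T,\Z)\simeq\medwedge^\bullet\Gamma'$ carry the formula to higher degrees, whereas the paper verifies it coordinatewise after the normal-form reduction. The cost is exactly the bookkeeping you flag: matching the Koszul identification $H_{N-k}(\Gamma,\Z)\simeq\medwedge^k\Gamma'$ with cap product by the fundamental class, and checking that the identification $H_\bullet(B\Gamma,\Z)\simeq H_\bullet(\Gamma,\Z)$ used in the paper is the one induced by $\beta$. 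The paper avoids both issues by never leaving the explicit resolutions, at the price of a less transparent formula. Either approach is complete.
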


\begin{proof}
Note that $C(\Gamma/ B \Gamma, \Z)$ can be identified with $\Ind^{\Gamma}_{B \Gamma} \Z$. Let us use the latter presentation to compute homology.
By the Smith normal form, there is a basis $(v_i)_i$ of $\Gamma$ and positive integers $(m_i)_i$ such that $(m_i v_i)_i$ is a basis of $B \Gamma$. Let us put $w_i = B^{-1} m_i v_i$, so that $(w_i)_i$ is another basis of $\Gamma$.
Then the cochain complex $\Ind^{\Gamma}_{B \Gamma} \Omega_{w_*}^\bullet$ is given by the same $\Z[\Gamma]$-module $\Omega_{w_*}^\bullet$ but the differential is the multiplication by
\[
D' = \sum_i (1 - \lambda_{m_i v_i}) \otimes a(w^i).
\]
Its degree shift $\Ind^{\Gamma}_{B \Gamma} P_\bullet^{w_*}$ gives a free resolution of $\Ind^{\Gamma}_{B \Gamma} \Z$.

Thus $H_k(\Gamma, \Ind^{\Gamma}_{B \Gamma} \Z)$ is computed from the complex $\Z \otimes_\Gamma \Ind^{\Gamma}_{B \Gamma} P_\bullet^{w_*}$, which is again the degree shift of $\medwedge^\bullet \{a(v^1), \dots, a(v^N)\}$ with trivial differential.
The identification
\[
H_k(\Gamma, \Ind^{\Gamma}_{B \Gamma} \Z) \simeq \medwedge^{N-k} \{a(v^1), \dots, a(v^N)\} \simeq H_k(\Gamma, \Z)
\]
is the canonical isomorphism $H_k(\Gamma, \Ind^{\Gamma}_{B \Gamma} \Z) \simeq H_k(\Gamma, \Z)$.
(One can see this by interpreting the $k$-th homology group $H_k(\Gamma, \Ind^{\Gamma}_{B \Gamma} \Z)$ as the $k$-th derived functor of $M \mapsto \Z \otimes_\Gamma \Ind^{\Gamma}_{B \Gamma} M$, and noting that $\Ind^{\Gamma}_{B \Gamma}$ is an exact functor sending projective modules to projective ones.)

We can concretely lift the embedding $\Z \to \Ind^{\Gamma}_{B \Gamma} \Z$ as a map of chain complexes $P^{v_*}_\bullet \to \Ind^{\Gamma}_{B \Gamma} P_\bullet^{w_*}$ as the $\Z[\Gamma]$-linear extension of
\[
a(v^{i_1}) \wedge \dots \wedge a(v^{i_k}) \mapsto \sum_{0 \le c_{i_j} < m_{i_j}} \lambda_{c_{i_1} v_{i_1} + \dots + c_{i_k} v_{i_k}} \otimes a(w^{i_1}) \wedge \dots \wedge a(w^{i_k}).
\]
Then the induced map $H_{N-k}(\Gamma, \Z) \to H_{N-k}(\Gamma, \Ind^\Gamma_{B \Gamma} \Z)$ becomes a transformation on $\medwedge^k \Gamma'$ characterized by
\[
v^{i_1} \wedge \dots \wedge v^{i_k} \mapsto m_{i_1} \dots m_{i_k} w^{i_1} \wedge \dots \wedge w^{i_k},
\]
which is exactly $\medwedge^k B^t$.
\end{proof}

\begin{coro}
In the above setting, we have
\[
H_{N-k}(M_{\Gamma, X}) \simeq \varinjlim_n \medwedge^k \Gamma'
\]
where the connecting map $\medwedge^k \Gamma' \to \medwedge^k \Gamma'$ at each step is given by $\medwedge^k B^t$.
\end{coro}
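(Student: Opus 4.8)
The plan is to combine the inductive-limit presentation of $H_\bullet(M_{\Gamma,X},\Z)$ recorded above with Proposition~\ref{prop:higher-odom-groupoid-homolog}, the point being to show that \emph{every} connecting map in the system, and not merely the first one, is of the form $\medwedge^k B^t$. Recall that
\[
H_{N-k}(M_{\Gamma, X}) \simeq \varinjlim_n H_{N-k}\bigl(\Gamma, C(\Gamma/B^n\Gamma, \Z)\bigr),
\]
the connecting maps being induced by pull-back along the quotient maps $\Gamma/B^{n+1}\Gamma \to \Gamma/B^n\Gamma$. I would first rewrite $C(\Gamma/B^n\Gamma,\Z) \simeq \Ind^\Gamma_{B^n\Gamma}\Z$ and use transitivity of induction, $\Ind^\Gamma_{B^{n+1}\Gamma}\Z \simeq \Ind^\Gamma_{B^n\Gamma}\bigl(\Ind^{B^n\Gamma}_{B^{n+1}\Gamma}\Z\bigr)$, under which the $n$-th connecting map becomes $\Ind^\Gamma_{B^n\Gamma}$ applied to the inclusion of constant functions $\Z \hookrightarrow \Ind^{B^n\Gamma}_{B^{n+1}\Gamma}\Z$.

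Next I would invoke Shapiro's lemma in homology: since $\Z[\Gamma]$ is free as a right $\Z[B^n\Gamma]$-module, $\Ind^\Gamma_{B^n\Gamma}$ is exact and sends projectives to projectives, so $H_\bullet(\Gamma, \Ind^\Gamma_{B^n\Gamma} M) \simeq H_\bullet(B^n\Gamma, M)$ naturally in $M$. Hence the $n$-th connecting map is identified with the map $H_{N-k}(B^n\Gamma, \Z) \to H_{N-k}\bigl(B^n\Gamma, \Ind^{B^n\Gamma}_{B^{n+1}\Gamma}\Z\bigr) \simeq H_{N-k}(B^{n+1}\Gamma, \Z)$ induced by the inclusion of constants. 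Transporting everything along the group isomorphism $B^{-n}\colon B^n\Gamma \xrightarrow{\ \sim\ } \Gamma$, which carries the subgroup $B^{n+1}\Gamma$ onto $B\Gamma$ and the trivial module $\Z$ to itself, reduces the $n$-th connecting map to the first-step map $H_{N-k}(\Gamma,\Z) \to H_{N-k}(\Gamma, \Ind^\Gamma_{B\Gamma}\Z) \simeq H_{N-k}(\Gamma,\Z)$, which Proposition~\ref{prop:higher-odom-groupoid-homolog} identifies with $\medwedge^k B^t$ under $H_{N-k}(\Gamma,\Z) \simeq \medwedge^k\Gamma'$.

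Composing these identifications produces isomorphisms $\theta_n\colon H_{N-k}(\Gamma, C(\Gamma/B^n\Gamma,\Z)) \xrightarrow{\ \sim\ } \medwedge^k\Gamma'$ intertwining the $n$-th connecting map with $\medwedge^k B^t$; passing to the colimit then gives $H_{N-k}(M_{\Gamma,X}) \simeq \varinjlim_n(\medwedge^k\Gamma', \medwedge^k B^t)$, as asserted. I expect the one step requiring genuine care to be the last reduction: one must check that transporting the pair $(B^n\Gamma \subset B^{n+1}\Gamma)$ to $(\Gamma \subset B\Gamma)$ via $B^{-n}$ is compatible with the bases and free resolutions used in the proof of Proposition~\ref{prop:higher-odom-groupoid-homolog}, so that the connecting map comes out as $\medwedge^k B^t$ on the nose rather than a conjugate of it; this is precisely where the explicit chain-level lift of the inclusion of constants from that proof is needed. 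The remaining ingredients are the naturality of Shapiro's lemma and of the exterior-algebra identification $H_\bullet(\Gamma,\Z)\simeq\medwedge^\bullet\Gamma'$, both standard.
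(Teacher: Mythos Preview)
Your argument is correct, and it supplies considerably more detail than the paper does; in the paper the corollary is stated without proof, being regarded as immediate from Proposition~\ref{prop:higher-odom-groupoid-homolog} together with the earlier general observation (valid for any self-similar action) that ``up to this isomorphism, the connecting maps $H_k(M_{\Gamma,X}^n,\Z)\to H_k(M_{\Gamma,X}^{n+1},\Z)$ are all the same'', which in turn comes from the isomorphism $M_{\Gamma,X}^n\simeq R_n\times M_{\Gamma,X}^0$ and the self-similar structure of the filtration.

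The difference is therefore one of packaging: the paper invokes the self-similarity of the groupoid filtration to say once and for all that every connecting map equals the first, and then uses the Proposition to identify that first map; you instead stay entirely on the group-homology side, using Shapiro's lemma and transport along $B^{-n}\colon B^n\Gamma\xrightarrow{\sim}\Gamma$ to reduce the $n$-th connecting map to the first one. Your route is more self-contained and makes the compatibility of the identifications $\theta_n$ explicit, at the price of having to track the naturality of Shapiro and of the exterior-algebra isomorphism. The paper's route is shorter but relies on the reader accepting the ``all connecting maps are the same'' statement from the groupoid picture. Regarding your caveat about getting $\medwedge^k B^t$ on the nose rather than a conjugate: note that the identification $H_{N-k}(\Gamma,\Z)\simeq\medwedge^k\Gamma'$ used in the Proposition is canonical (basis-independent, as the paper checks via the maps $f_{v_*}$), and the transport along $B^{-n}$ intertwines the dual lattices via $(B^t)^{-n}$; chasing this through shows the square commutes exactly, so no conjugation ambiguity arises.
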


\begin{rema}
More generally, when $B_1, B_2, \dots$ are endomorphisms of $\Gamma$ of rank $N$, we get
\[
H_{N-k}\Bigl(\Gamma \ltimes \varprojlim_n \Gamma / B_1 \dots B_n \Gamma, \Z \Bigr) \simeq \varinjlim_n \medwedge^k \Gamma',
\]
where the right hand is with respect to the inductive system with connecting maps $\medwedge^k(B_1^t), \medwedge^k(B_2^t), \dots$ on $\medwedge^k \Gamma'$.
\end{rema}

Back to the higher dimensional odometer action of $\Gamma = \Z^N$, we have
\[
K_i(C^*( \Gamma \ltimes X^n)) \simeq \Z^{2^{N-1}} \simeq \bigoplus_k H_{i + 2k}(\Gamma, C(\Gamma / B^n \Gamma, \Z)),
\]
which implies that the spectral sequence for the $\Gamma$-action on the coefficient algebra $C(X^n)$ collapses at the $E^2$-sheet.
From Proposition \ref{prop:higher-odom-groupoid-homolog}, combined with the naturality of these spectral sequences, we obtain that the maps $K_i(C^*( \Gamma \ltimes X^n)) \to K_i(C^*( \Gamma \ltimes X^{n+1}))$ are injective.
Consequently, $K_i(C^*(M_{\Gamma, X}))$ has rank $2^{N-1}$ for $i = 0, 1$.
Since $\bigoplus_k H_{i + 2 k}(M_{\Gamma, X}, \Z)$ has the same rank and is torsion-free, we conclude that the spectral sequence for $M_{\Gamma, X}$ also collapses at the $E^2$-sheet.

\appendix
\section{Induction functor for subgroupoids}
\label{sec:app-ind-ftr}

Suppose that a groupoid $G$ acts freely and properly from the right on a second countable, locally compact, Hausdorff space $Y$.
Then the transformation groupoid $Y \rtimes G$ is Morita equivalent to the quotient space $Y / G$ as a groupoid, with $Y$ being the bibundle implementing the equivalence.
This induces the strong Morita equivalence between $G \ltimes C_0(Y) \simeq C^*(Y \rtimes G)$ and $C_0(Y / G)$.
In particular, for the case $Y = G$ and action given by right translation, we get an isomorphism between $G \ltimes C_0(G)$ and $\cK(L^2_r(G))$, where $L^2_r(G)$ is the right Hilbert $C_0(X)$-module completion of $C_c(G)$ with $C_0(X)$-module structure from $r^*\colon C_0(X) \to C_b(G)$ and inner product from the Haar system.

\begin{proof}[Proof of Proposition {\normalfont \ref{prop:Mor-equiv-A-IndGG-A}}]
As in the assertion, let $A$ be a $G$-C$^*$-algebra.
We have two actions of $G$: on the one hand, it acts on $s^* A$ by the combination of right translation on $G$ and the original action on $A$, while on the other hand it acts on $r^* A$ by the right translation on $G$ and trivially on $A$.
Then, the structure morphism $\alpha\colon s^*A \to r^*A$ of the action intertwines these two actions.
Morally $s^* A$ can be thought of as a space of sections $f(g) \in A_{s g}$ for $g \in G$, with the action of $G$ given by $f^{g'}(g) = g'^{-1} f(g g')$ for $(g, g') \in G^{(2)}$, while $r^* A$ as a space of sections $f(g) \in A_{r g}$ with $G$ acting by $f^{g'}(g) = f(g g')$ for $(g, g') \in G^{(2)}$.
We have $(\alpha f)(g) = g f(g)$ for the sections of the first kind, and these formulas give $(\alpha f^{g'})(g) = g f(g g') = (\alpha f)^{g'}(g)$.

Now, $\Ind_G^G\Res_G^G(A)$ is the crossed product of $s^* A$ by $G$, while $\cK(L^2_r(G)) \otimes_{C_0(X)} A$ is the crossed product of $r^* A$ by $G$.
Consequently we get an isomorphism between these algebras.
The extra action of $G$ on $\Ind_G^G\Res_G^G(A)$ comes from the action of $G$ on $s^* A$ given by the combination of the left translation on $G$ and the trivial action on $A$.
Under the above isomorphism, this corresponds to the action on $r^* A$ given by the combination of left translation on $G$ and the original action on $A$.
Thus, it corresponds to the diagonal action of $G$ on $\cK(L^2_r(G)) \otimes_{C_0(X)} A$.
\end{proof}

More generally, the same argument gives an isomorphism
\[
\phi \colon \Ind_H^G\Res^G_H A \simeq (C_0(G)\rtimes H) \tens{r}{C_0(X)} A,
\]
where $G$ acts diagonally on the algebra on the right. 

The functor $\Ind^G_H\colon \HCalg \to \GCalg$ preserves split extensions, respects equivariant Morita equivalence, and is compatible with homotopy.
Then, by the universal property of $\KKK^G$ [CITE] it extends to a functor $\Ind^G_H\colon \KKK^H \to \KKK^G$.
Let us give a more concrete description at the level of Kasparov cycles.

Consider an $H$-equivariant right Hilbert module $\cE$ over $B$.
By using an approximate unit in $B$, we can equip $\cE$ with a compatible $C_0(X)$-action. We can form the Hilbert module $C_0(G) \otimes \cE$ over $G_0(G) \otimes B$, and restrict on the diagonal to get $s^*\cE = (C_0(G) \otimes \cE)_{\Delta(X)}$ over $s^* B \simeq (C_0(G) \otimes B)_{\Delta(X)}$.
This still has an action of $H$, analogous to the right action of $H$ on $s^* B$.

Assume moreover $(\pi,\cE,T)$ is an equivariant Kasparov module between $H$-C$^*$-algebras. So $\cE$ is a graded right Hilbert module over $B$, $T$ is an odd adjointable (or self adjoint) endomorphism, and $\pi\colon C \to \cL(\cE)$ is a $*$-representation, with commutation relations as in \cite{gall:kk}.
Then $s^* \cE$ as a right Hilbert module over $s^* B$, with a left module structure over $s^* C$.
Moreover we can extend $T$ to $s^* T$ on $s^* \cE$ as the restriction of $1_{C_0(G)} \otimes T$, with the right commutation properties (they hold before restriction to $\Delta(X)$).
Finally, we take the crossed product by the right action of $H$,
\[
\Ind_H^G({\pi},\cE,T)= j_H(s^*\pi,s^*\cE,s^*T).
\]
This way, we obtain a map $\Ind_H^G\colon \KKK^H(C, B) \to \KKK^G(\Ind^G_H C, \Ind^G_H B)$, realizing the extension of $\Ind^G_H$ to $\KKK^H$.

\section{Bowen--Franks groups}
\label{sec:BF-grps}

Given $A \in \End(\Z^n)$, we call
\[
\BF(A) = \cok(1-A) = \Z^n/(1-A)\Z^n
\]
the \emph{Bowen--Franks group} of $A$.
For a square matrix $A \in M_n(\Z)$, we define $\BF(A)$ by interpreting $A$ as an element of $\End(\Z^n)$ by representing $v \in \Z^n$ by column vectors and computing $A v$ in the usual way.

Given $B \in \End(\Z^m)$ and $f \colon \Z^n \to \Z^m$ satisfying $B f = f A$, we have the induced maps $\BF(A) \to \BF(B)$, $\ker(1 - A) \to \ker(1 - B)$.
This correspondence is covariant.
Taking the transpose of $f$, we also have contravariant maps $\BF(B^t) \to \BF(A^t)$, etc.

Given a square matrix $A \in M_n(\{0, 1\})$, the associated shift of finite type $(\Sigma_A, \sigma)$ is given by
\begin{align*}
\Sigma_A &= \left\{ (x_k)_k \in \{1, \dots, n\}^\Z \mid A(x_k, x_{k+1}) = 1 \right\},&
\sigma(x)_k &= x_{k+1} \quad (x = (x_k)_k \in \Sigma_A),
\end{align*}
where we write $A_{i,j} = A(i,j)$.
Then we have
\begin{align*}
\Z \ltimes C^*(R^s(\Sigma_A, \sigma)) &\simeq \cO_A \otimes \cK,&
\Z \ltimes C^*(R^u(\Sigma_A, \sigma)) &\simeq \cO_{A^t} \otimes \cK
\end{align*}
for the Cuntz--Krieger algebra $\cO_A$, and we also have
\begin{align*}
K_0(\cO_{A^t}) &\simeq \BF(A) \simeq K^1(\cO_A),&
K_1(\cO_{A^t}) &\simeq \ker(I_n-A) \simeq K^0(\cO_A),
\end{align*}
see \citelist{\cite{MR561974}\cite{MR608527}\cite{MR1468312}}.

Given $A \in \End(\Z^n)$, let us look at the inductive system
\[
\begin{tikzcd}
\Z^n \arrow[r, "A"] & \Z^n \arrow[r, "A"] & \dots,
\end{tikzcd}
\]
and its inductive limit $\varinjlim_A \Z^n$.
Following \cite{put:HoSmale}, we represent the elements of $\varinjlim_A \Z^n$ by $[v, i]$ for $v \in \Z^n$ and $i \in \N$ (``$v$ at the $i$-th copy of $\Z^n$''), subject to the rules $[v, i] = [A v, i+1]$ and $[v, i] + [w, i] = [v + w, i]$.
We have $[v, i] = 0$ if and only if $A^j v = 0$ holds for some $j$.

We then have the automorphism $\alpha$ on $\varinjlim_A \Z^n$ defined by
\[
\alpha([v, i]) = [v, i+1].
\]

\begin{lemm}\label{lem:BF-from-alpha}
Under the above setting, we have
\begin{align*}
\cok(1 - \alpha) &\simeq \BF(A),&
\ker(1 - \alpha) &\simeq \ker(1 - A).
\end{align*}
\end{lemm}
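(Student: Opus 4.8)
The plan is to reduce everything to the one-line congruence $A\equiv 1\pmod{(1-A)\Z^n}$, after first trading $\alpha$ for the more transparent ``multiplication by $A$'' map. Write $L=\varinjlim_A\Z^n$ and let $\tilde A\colon L\to L$ be induced by $A$ on each copy, $\tilde A[v,i]=[Av,i]$; this is well defined because $A$ commutes with the connecting maps. The defining relation $[v,i]=[Av,i+1]$ says precisely that $\tilde A\alpha=\alpha\tilde A=\id_L$, so $\tilde A=\alpha^{-1}$ (reconfirming that $\alpha$ is an automorphism, even though $A$ need not be invertible on $\Z^n$). From $1-\alpha=\tilde A^{-1}(\tilde A-1)$ and the fact that $\tilde A^{-1}$ is an automorphism of $L$, I get $\ker(1-\alpha)\simeq\ker(1-\tilde A)$ and $\cok(1-\alpha)\simeq\cok(1-\tilde A)$, so it suffices to handle $1-\tilde A$.

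Next I would observe that $1-\tilde A$ is the colimit, over the directed system $\Z^n\xrightarrow{A}\Z^n\xrightarrow{A}\cdots$, of the endomorphism $1-A$, which is a self-map of the whole system because it commutes with $A$. Since directed colimits of abelian groups are exact, they commute with kernels and cokernels, giving
\[
\ker(1-\tilde A)\simeq\varinjlim\ker(1-A),\qquad \cok(1-\tilde A)\simeq\varinjlim\cok(1-A),
\]
where in both cases the connecting maps are those induced by the connecting map $A$ of the original system.

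Finally I would identify those induced maps. On $\ker(1-A)=\{v\in\Z^n\mid Av=v\}$ the map induced by $A$ is the identity; on $\cok(1-A)=\Z^n/(1-A)\Z^n=\BF(A)$ the map induced by $A$ is also the identity, since $A\equiv 1\pmod{(1-A)\Z^n}$. A directed colimit of a constant system of identity maps is the group itself, so $\varinjlim\ker(1-A)\simeq\ker(1-A)$ and $\varinjlim\cok(1-A)\simeq\BF(A)$, which is exactly the assertion.

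I do not expect a genuine obstacle here: the only place needing a little care is the bookkeeping establishing $\tilde A=\alpha^{-1}$ and the well-definedness of $\tilde A$ on the inductive limit; everything else is either formal (exactness of filtered colimits) or the trivial congruence above.
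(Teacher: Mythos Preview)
Your argument is correct and follows essentially the same route as the paper: exactness of the inductive limit functor reduces both computations to $\varinjlim_A \ker(1-A)$ and $\varinjlim_A \BF(A)$, and then one observes that $A$ acts as the identity on each of these. Your intermediate step of introducing $\tilde A=\alpha^{-1}$ to rewrite $1-\alpha$ as an automorphism times $1-\tilde A$ makes explicit a bookkeeping point the paper leaves implicit, but the substance is the same.
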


\begin{proof}
The inductive limit functor is exact, hence we have
\begin{align*}
\ker(1 - \alpha) &\simeq \varinjlim_A \ker(1-A),&
\cok(1 - \alpha) &\simeq \varinjlim_A \BF(A).
\end{align*}
Thus it is enough to check that $A$ induces automorphisms on $\ker(1-A)$ and $\BF(A)$ (we will get the identity maps).

For $\ker(1-A)$, we have $v \in \ker(1-A)$ if and only if $v = A v$, hence $A\colon \Z^n \mapsto \Z^n$ restricts to bijective map (identity map) on $\ker(1-A)$.
For $\BF(A)$, again $[A v] = [v]$ holds in $\BF(A)$ hence $A$ induces the trivial isomorphism.
\end{proof}

Combined with the Pimsner--Voiculescu sequence, we get a direct computation of
\begin{align*}
K_0(\Z \ltimes C^*(R^s(\Sigma_A, \sigma))) &\simeq \BF(A),&
K_1(\Z \ltimes C^*(R^s(\Sigma_A, \sigma))) &\simeq \ker(1 - A).
\end{align*}

\medskip
Let $G$ be an oriented graph, again following the convention of \cite{put:HoSmale}.
Thus the associated shift of finite type is given by
\begin{align*}
\Sigma_G &= \{e = (e^k)_{k \in \Z} \mid e^k \in G^1, t(e^k) = i(e^{k+1}) \},&
\sigma(e)^k &= e^{k+1}.
\end{align*}
If we take the graph $G^2$, we have $\Sigma_{G^2} = \Sigma_{A}$ for the matrix $A \in M_{G^1}(\{0, 1\})$ defined by
\[
A_{e', e} = 
\begin{cases}
 1 & (t(e') = i(e) \Leftrightarrow (e', e) \in G^2)\\
 0 & \text{otherwise}.
\end{cases}
\]
Denoting the corresponding endomorphism on $\Z G^1$ by $\gamma^s_{G^2}$, we have
\[
D^s(G^2) = \varinjlim_{\gamma^s_{G^2}} \Z G^1.
\]
We have $D^s(G) \simeq D^s(G^2) \simeq D^s(G^3) \simeq \dots$ for the higher block presentations.

\begin{lemm}\label{lem:alpha-intertw}
Let $\alpha^s_{G^k}$ be the map $\alpha$ as above, induced by $\gamma^s_{G^k}$ on the inductive limit $D^s(G^k) = \varinjlim_{\gamma^s_{G^k}} \Z G^{k-1}$.
Then the isomorphism $D^s(G) \simeq D^s(G^k)$ intertwines $\alpha^s_G$ to $\alpha^s_{G^k}$.
\end{lemm}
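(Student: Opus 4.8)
The plan is to follow the explicit form of the isomorphism $\Phi\colon D^s(G)\xrightarrow{\ \sim\ } D^s(G^k)$ and to observe that, at the level of the defining inductive systems, it is assembled from group homomorphisms that commute with the connecting endomorphisms $\gamma^s$; any such induced map automatically intertwines the ``index shift'' automorphisms $\alpha$, which is exactly what is asserted. It is convenient to reduce to the case $k=2$ (replacing $G$ by $G^j$ and iterating), since the isomorphism $D^s(G)\simeq D^s(G^k)$ is the composite of the one-step isomorphisms $D^s(G^j)\simeq D^s(G^{j+1})$.

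First I would record the elementary fact underlying everything. Suppose $(M,\gamma)$ and $(M',\gamma')$ are inductive systems of abelian groups with constant connecting map, and $\phi\colon M\to M'$ is a homomorphism with $\gamma'\circ\phi=\phi\circ\gamma$. Then $\phi$ induces $\bar\phi\colon\varinjlim_\gamma M\to\varinjlim_{\gamma'}M'$, $[v,i]\mapsto[\phi v,i]$, and since $\alpha([v,i])=[v,i+1]$ on either limit, $\bar\phi(\alpha([v,i]))=[\phi v,i+1]=\alpha(\bar\phi([v,i]))$, i.e.\ $\bar\phi\circ\alpha=\alpha\circ\bar\phi$. The same holds for any map built from such $\bar\phi$'s together with powers of the $\alpha$'s (which commute among themselves), hence also for inverses of such maps. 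Then I would recall from Putnam's higher-block construction that the isomorphism $D^s(G)\simeq D^s(G^2)$ arises in precisely this way: there are homomorphisms between $\Z G^0$ and $\Z G^1$ — one direction sending a vertex to the sum of the edges issuing from (or arriving at) it, the other sending an edge to an appropriate endpoint — intertwining $\gamma^s_G$ with $\gamma^s_{G^2}$, whose composites in either order are $\gamma^s_G$, respectively $\gamma^s_{G^2}$. On inductive limits these become mutually inverse isomorphisms up to composing with a power of the relevant $\alpha$ (invertible on the limit), and their composite is the standard identification. Applying the fact above, and composing over the steps $j=1,\dots,k-1$, yields $\Phi\circ\alpha^s_G=\alpha^s_{G^k}\circ\Phi$.

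The only genuinely delicate point is bookkeeping: pinning down the precise intertwining homomorphisms and matching the $s$/$u$ and row/column conventions for $\gamma^s$ used in this appendix with those in the higher-block construction, so that $\gamma^s_{G^k}\circ p=p\circ\gamma^s_G$ holds on the nose rather than up to transpose. Once this is fixed there is no substantial obstacle: the argument reduces to the one-line naturality computation above, with no analytic or homological input required.
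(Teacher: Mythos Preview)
Your approach is correct and rests on the same observation as the paper's: the isomorphism $D^s(G)\simeq D^s(G^k)$ is induced by a homomorphism at the level of the finite groups that intertwines the connecting endomorphisms, hence is of the form $[v,i]\mapsto[\phi(v),i]$ and therefore commutes with the index shift $\alpha$. The paper's proof is more direct, however: rather than reducing to $k=2$, iterating, and reconstructing the isomorphism from a pair of mutually quasi-inverse maps, it simply quotes the explicit one-step formula $[v,i]\mapsto[(t^k)^*(v),i]$ from \cite{put:HoSmale}*{Theorem 3.2.3}, for which the intertwining with $\alpha$ is immediate. Your detour through the two directions and the ``up to a power of $\alpha$'' adjustment is not wrong, but it is unnecessary once one has this closed-form description; it also sidesteps the bookkeeping concern you raise, since the reference fixes the conventions.
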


\begin{proof}
This follows from the concrete form of isomorphism
\[
D^s(G) \simeq D^s(G^k), \quad [v, i] \mapsto [(t^k)^*(v), i],
\]
see \cite{put:HoSmale}*{Theorem 3.2.3}.
\end{proof}

Combining this with Lemma \ref{lem:BF-from-alpha}, we obtain $\BF(\gamma^s_G) \simeq \BF(\gamma^s_{G^k})$ and $\ker(1-\gamma^s_G) \simeq \ker(1 - \gamma^s_{G^k})$.

Finally, let $\pi\colon H \to G$ be a graph homomorphism that induces an $s$-bijective map $\Sigma_H \to \Sigma_G$.
Then there is an integer $K$ such that, for any $k$, the linear extension of
\[
\pi^{s,K}\colon H^{k-1} \to \Z G^{k+K-1}, \quad q \mapsto \sum_{q' \in (t^K)^{-1}(q)} \pi(q')
\]
implements the induced map $D^s(H) \to D^s(G)$ up to the identification $D^s(H) \simeq D^s(H^k)$ and $D^s(G) \simeq D^s(G^{k+K})$, see \cite{put:HoSmale}*{Section 3.4}.
Moreover, $\pi^{s,K}$ commutes with $\gamma^s_{H^k}$ and $\gamma^s_{G^{k+K}}$.
Combined with Lemmas \ref{lem:BF-from-alpha} and \ref{lem:alpha-intertw}, we see that $\pi^{s,K}$ also induces maps
\begin{align*}
\BF(\gamma^s_H) &\to \BF(\gamma^s_G),&
\ker(1-\gamma^s_H) &\to \ker(1-\gamma^s_G).
\end{align*}
These maps are again functorial for graph homomorphisms that induce $s$-bijective maps.

% \bib, bibdiv, biblist are defined by the amsrefs package.
\raggedright
\begin{bibdiv}
\begin{biblist}
\bib{renroch:amgrp}{book}{
      author={Anantharaman-Delaroche, C.},
      author={Renault, J.},
       title={Amenable groupoids},
      series={Monographies de L'Enseignement Math\'ematique},
   publisher={L'Enseignement Math\'ematique, Geneva},
        date={2000},
      volume={36},
        ISBN={2-940264-01-5},
        note={With a foreword by Georges Skandalis and Appendix B by E.
  Germain},
      review={\MR{1799683}},
}

\bib{putand:til}{article}{
      author={Anderson, Jared~E.},
      author={Putnam, Ian~F.},
       title={Topological invariants for substitution tilings and their
  associated {C$^*$}-algebras},
        date={1998},
        ISSN={0143-3857},
     journal={Ergodic Theory Dynam. Systems},
      volume={18},
      number={3},
       pages={509\ndash 537},
         url={https://doi.org/10.1017/S0143385798100457},
         doi={10.1017/S0143385798100457},
      review={\MR{1631708}},
}

%\bib{bowen:ano}{book}{
%      author={Bowen, Rufus},
%       title={Equilibrium states and the ergodic theory of {A}nosov
%  diffeomorphisms},
%     edition={revised},
%      series={Lecture Notes in Mathematics},
%   publisher={Springer-Verlag, Berlin},
%        date={2008},
%      volume={470},
%        ISBN={978-3-540-77605-5},
%        note={With a preface by David Ruelle, Edited by Jean-Ren\'e Chazottes},
%      review={\MR{2423393}},
%}

\bib{cramo:hom}{article}{
      author={Crainic, Marius},
      author={Moerdijk, Ieke},
       title={A homology theory for {\'e}tale groupoids},
        date={2000},
        ISSN={0075-4102},
     journal={J. Reine Angew. Math.},
      volume={521},
       pages={25\ndash 46},
  eprint={\href{http://arxiv.org/abs/math/9905011}{\texttt{arXiv:math/9905011
  [math.KT]}}},
         url={http://dx.doi.org/10.1515/crll.2000.029},
         doi={10.1515/crll.2000.029},
      review={\MR{1752294 (2001f:58039)}},
}

\bib{MR608527}{article}{
      author={Cuntz, J.},
       title={A class of {$C^{\ast} $}-algebras and topological {M}arkov
  chains. {II}. {R}educible chains and the {E}xt-functor for {$C^{\ast}
  $}-algebras},
        date={1981},
        ISSN={0020-9910},
     journal={Invent. Math.},
      volume={63},
      number={1},
       pages={25\ndash 40},
         url={https://doi-org.ezproxy.uio.no/10.1007/BF01389192},
      review={\MR{608527}},
}

\bib{MR561974}{article}{
      author={Cuntz, Joachim},
      author={Krieger, Wolfgang},
       title={A class of {$C^{\ast} $}-algebras and topological {M}arkov
  chains},
        date={1980},
        ISSN={0020-9910},
     journal={Invent. Math.},
      volume={56},
      number={3},
       pages={251\ndash 268},
         url={https://doi-org.ezproxy.uio.no/10.1007/BF01390048},
      review={\MR{561974}},
}

\bib{MR3868019}{article}{
      author={Deeley, Robin~J.},
      author={Goffeng, Magnus},
      author={Mesland, Bram},
      author={Whittaker, Michael~F.},
       title={Wieler solenoids, {C}untz-{P}imsner algebras and {$K$}-theory},
        date={2018},
        ISSN={0143-3857},
     journal={Ergodic Theory Dynam. Systems},
      volume={38},
      number={8},
       pages={2942\ndash 2988},
      eprint={\href{http://arxiv.org/abs/1606.05449}{\texttt{arXiv:1606.05449
  [math.DS]}}},
         url={https://doi.org/10.1017/etds.2017.10},
         doi={10.1017/etds.2017.10},
      review={\MR{3868019}},
}

\bib{MR3576278}{article}{
      author={Deeley, Robin~J.},
      author={Killough, D.~Brady},
      author={Whittaker, Michael~F.},
       title={Dynamical correspondences for {S}male spaces},
        date={2016},
        ISSN={1076-9803},
     journal={New York J. Math.},
      volume={22},
       pages={943\ndash 988},
      eprint={\href{http://arxiv.org/abs/1505.05558}{\texttt{arXiv:1505.05558
  [math.DS]}}},
         url={http://nyjm.albany.edu:8000/j/2016/22_943.html},
      review={\MR{3576278}},
}

\bib{ExeHR:dilation}{article}{
 author = {Exel, R.},
 author = {an Huef, A.},
 author = {Raeburn, I.},
 Title = {{Purely infinite simple \(C^*\)-algebras associated to integer dilation matrices}},
 FJournal = {{Indiana University Mathematics Journal}},
 Journal = {{Indiana Univ. Math. J.}},
 ISSN = {0022-2518},
 Volume = {60},
 Number = {3},
 Pages = {1033--1058},
 Year = {2011},
 Publisher = {Indiana University, Department of Mathematics, Bloomington, IN},
 Language = {English},
 DOI = {10.1512/iumj.2011.60.4331},
}

\bib{MR1468312}{article}{
      author={Kaminker, Jerome},
      author={Putnam, Ian},
       title={{$K$}-theoretic duality of shifts of finite type},
        date={1997},
        ISSN={0010-3616},
     journal={Comm. Math. Phys.},
      volume={187},
      number={3},
       pages={509\ndash 522},
         url={https://doi.org/10.1007/s002200050147},
      review={\MR{1468312}},
}

\bib{MR3692021}{article}{
      author={Kaminker, Jerome},
      author={Putnam, Ian~F.},
      author={Whittaker, Michael~F.},
       title={K-theoretic duality for hyperbolic dynamical systems},
        date={2017},
        ISSN={0075-4102},
     journal={J. Reine Angew. Math.},
      volume={730},
       pages={263\ndash 299},
      eprint={\href{http://arxiv.org/abs/1009.4999}{\texttt{arXiv:1009.4999
  [math.KT]}}},
         url={https://doi.org/10.1515/crelle-2014-0126},
      review={\MR{3692021}},
}

%\bib{killough:thesis}{thesis}{
%      author={Killough, D.~Brady},
%       title={Ring Structures on the {K}-Theory of {C$^*$}-Algebras Associated to {S}male Spaces},
%   publisher={ProQuest LLC, Ann Arbor, MI},
%        date={2009},
%        ISBN={978-0494-74119-1},
%        organization={University of Victoria},
%      review={\MR{2890168}},
%}
%
\bib{krieger:inv}{article}{
      author={Krieger, Wolfgang},
       title={On dimension functions and topological {M}arkov chains},
        date={1980},
        ISSN={0020-9910},
     journal={Invent. Math.},
      volume={56},
      number={3},
       pages={239\ndash 250},
         url={https://doi.org/10.1007/BF01390047},
      review={\MR{561973}},
}

\bib{lrrw:eqstates}{article}{
 author = {Laca, M},
 author = {Raeburn, I.},
 author = {Ramagge, J.},
 author = { Whittaker, M. F.},
 Title = {{Equilibrium states on the Cuntz-Pimsner algebras of self-similar actions}},
 FJournal = {{Journal of Functional Analysis}},
 Journal = {{J. Funct. Anal.}},
 ISSN = {0022-1236},
 Volume = {266},
 Number = {11},
 Pages = {6619--6661},
 Year = {2014},
 Publisher = {Elsevier, Amsterdam},
 Language = {English},
 DOI = {10.1016/j.jfa.2014.03.003},
}

\bib{gall:kk}{article}{
      author={Le~Gall, Pierre-Yves},
       title={Th\'eorie de {K}asparov \'equivariante et groupo\"\i des. {I}},
        date={1999},
        ISSN={0920-3036},
     journal={$K$-Theory},
      volume={16},
      number={4},
       pages={361\ndash 390},
         url={https://doi.org/10.1023/A:1007707525423},
         doi={10.1023/A:1007707525423},
      review={\MR{1686846}},
}

\bib{MR1694789}{article}{
      author={Macho~Stadler, Marta},
      author={O'uchi, Moto},
       title={Correspondence of groupoid {$C^\ast$}-algebras},
        date={1999},
        ISSN={0379-4024},
     journal={J. Operator Theory},
      volume={42},
      number={1},
       pages={103\ndash 119},
      review={\MR{1694789}},
}

\bib{mats:ruellemarkov}{article}{
      author={Matsumoto, Kengo},
       title={Topological conjugacy of topological {M}arkov shifts and {R}uelle
  algebras},
        date={2019},
        ISSN={0379-4024},
     journal={J. Operator Theory},
      volume={82},
      number={2},
       pages={253\ndash 284},
      eprint={\href{http://arxiv.org/abs/1706.07155}{\texttt{arXiv:1706.07155
  [math.OA]}}},
%         doi={10.7900/jot.2018apr08.2235}, error as of 2020-06-14
      review={\MR{4015953}},
}

\bib{matui:hk}{article}{
      author={Matui, Hiroki},
       title={Homology and topological full groups of \'etale groupoids on
  totally disconnected spaces},
        date={2012},
        ISSN={0024-6115},
     journal={Proc. Lond. Math. Soc. (3)},
      volume={104},
      number={1},
       pages={27\ndash 56},
      eprint={\href{http://arxiv.org/abs/0909.1624}{\texttt{arXiv:0909.1624
  [math.OA]}}},
         url={https://doi.org/10.1112/plms/pdr029},
      review={\MR{2876963}},
}

\bib{MR3552533}{article}{
      author={Matui, Hiroki},
       title={\'{E}tale groupoids arising from products of shifts of finite
  type},
        date={2016},
        ISSN={0001-8708},
     journal={Adv. Math.},
      volume={303},
       pages={502\ndash 548},
      eprint={\href{http://arxiv.org/abs/1512.01724}{\texttt{arXiv:1512.01724
  [math.OA]}}},
         url={https://doi.org/10.1016/j.aim.2016.08.023},
         doi={10.1016/j.aim.2016.08.023},
      review={\MR{3552533}},
}

\bib{meyer:tri}{article}{
      author={Meyer, Ralf},
       title={Homological algebra in bivariant {$K$}-theory and other
  triangulated categories. {II}},
        date={2008},
        ISSN={1875-158X},
     journal={Tbil. Math. J.},
      volume={1},
       pages={165\ndash 210},
      eprint={\href{http://arxiv.org/abs/0801.1344}{\texttt{arXiv:0801.1344
  [math.KT]}}},
      review={\MR{2563811}},
}

\bib{nestmeyer:loc}{article}{
      author={Meyer, Ralf},
      author={Nest, Ryszard},
       title={The {B}aum--{C}onnes conjecture via localisation of categories},
        date={2006},
        ISSN={0040-9383},
     journal={Topology},
      volume={45},
      number={2},
       pages={209\ndash 259},
  eprint={\href{http://arxiv.org/abs/math/0312292}{\texttt{arXiv:math/0312292
  [math.KT]}}},
         url={http://dx.doi.org/10.1016/j.top.2005.07.001},
         doi={10.1016/j.top.2005.07.001},
      review={\MR{2193334 (2006k:19013)}},
}

\bib{meyernest:tri}{incollection}{
      author={Meyer, Ralf},
      author={Nest, Ryszard},
       title={Homological algebra in bivariant {$K$}-theory and other
  triangulated categories. {I}},
        date={2010},
   booktitle={Triangulated categories},
      series={London Math. Soc. Lecture Note Ser.},
      volume={375},
   publisher={Cambridge Univ. Press, Cambridge},
       pages={236\ndash 289},
  eprint={\href{http://arxiv.org/abs/math/0702146}{\texttt{arXiv:math/0702146
  [math.KT]}}},
      review={\MR{2681710}},
}

\bib{murewi:morita}{article}{
      author={Muhly, Paul~S.},
      author={Renault, Jean~N.},
      author={Williams, Dana~P.},
       title={Equivalence and isomorphism for groupoid {$C^\ast$}-algebras},
        date={1987},
        ISSN={0379-4024},
     journal={J. Operator Theory},
      volume={17},
      number={1},
       pages={3\ndash 22},
      review={\MR{873460}},
}

\bib{MR2162164}{book}{
      author={Nekrashevych, Volodymyr},
       title={Self-similar groups},
      series={Mathematical Surveys and Monographs},
   publisher={American Mathematical Society, Providence, RI},
        date={2005},
      volume={117},
        ISBN={0-8218-3831-8},
         url={https://doi.org/10.1090/surv/117},
      review={\MR{2162164}},
}

\bib{nekra:crelle}{article}{
      author={Nekrashevych, Volodymyr},
       title={{$C^*$}-algebras and self-similar groups},
        date={2009},
        ISSN={0075-4102},
     journal={J. Reine Angew. Math.},
      volume={630},
       pages={59\ndash 123},
         url={https://mathscinet.ams.org/mathscinet-getitem?mr=2526786},
         doi={10.1515/CRELLE.2009.035},
      review={\MR{2526786}},
}

\bib{MR2044224}{article}{
      author={Popescu, Radu},
       title={Equivariant {$E$}-theory for groupoids acting on
  {$C^*$}-algebras},
        date={2004},
        ISSN={0022-1236},
     journal={J. Funct. Anal.},
      volume={209},
      number={2},
       pages={247\ndash 292},
         url={https://doi.org/10.1016/j.jfa.2003.04.001},
         doi={10.1016/j.jfa.2003.04.001},
      review={\MR{2044224}},
}

\bib{val:smaleb}{article}{
      author={Proietti, Valerio},
       title={A note on homology for Smale spaces},
        date={2020},
     journal={Groups, Geometry, and Dynamics},
      volume={14},
      number={3},
       pages={813\ndash 836},
         doi={10.4171/GGD/564},
}

\bib{valmako:part1}{misc}{
      author={Proietti, Valerio},
      author={Yamashita, Makoto},
      title={Homology and $K$-theory of dynamical systems, I. Torsion-free ample groupoids},
%      how={preprint},
      note={Ergodic Theory Dynam. Systems, in press},
      date={2020},
       doi={10.1017/etds.2021.50},
%      eprint={},
}

\bib{put:algSmale}{article}{
      author={Putnam, Ian~F.},
       title={{C$^*$}-algebras from {S}male spaces},
        date={1996},
        ISSN={0008-414X},
     journal={Canad. J. Math.},
      volume={48},
      number={1},
       pages={175\ndash 195},
         url={https://doi.org/10.4153/CJM-1996-008-2},
         doi={10.4153/CJM-1996-008-2},
      review={\MR{1382481}},
}

\bib{put:funct}{article}{
      author={Putnam, Ian~F.},
       title={Functoriality of the {C$^*$}-algebras associated with hyperbolic
  dynamical systems},
        date={2000},
        ISSN={0024-6107},
     journal={J. London Math. Soc. (2)},
      volume={62},
      number={3},
       pages={873\ndash 884},
         url={https://doi.org/10.1112/S002461070000140X},
         doi={10.1112/S002461070000140X},
      review={\MR{1794291}},
}

\bib{put:lift}{article}{
      author={Putnam, Ian~F.},
       title={Lifting factor maps to resolving maps},
        date={2005},
        ISSN={0021-2172},
     journal={Israel J. Math.},
      volume={146},
       pages={253\ndash 280},
         url={https://doi.org/10.1007/BF02773536},
         doi={10.1007/BF02773536},
      review={\MR{2151603}},
}

\bib{put:HoSmale}{article}{
      author={Putnam, Ian~F.},
       title={A homology theory for {S}male spaces},
        date={2014},
        ISSN={0065-9266},
     journal={Mem. Amer. Math. Soc.},
      volume={232},
      number={1094},
       pages={viii+122},
         url={https://doi.org/10.1090/memo/1094},
      review={\MR{3243636}},
}

\bib{put:notes}{misc}{
      author={Putnam, Ian~F.},
       title={Lecture notes on smale spaces},
        date={2015},
        note={Available on \href{http://www.math.uvic.ca/faculty/putnam/ln/Smale_spaces.pdf}{the author's personal website}},
}

\bib{put:spiel}{article}{
      author={Putnam, Ian~F.},
      author={Spielberg, Jack},
       title={The structure of {$C^\ast$}-algebras associated with hyperbolic
  dynamical systems},
        date={1999},
        ISSN={0022-1236},
     journal={J. Funct. Anal.},
      volume={163},
      number={2},
       pages={279\ndash 299},
         url={https://doi.org/10.1006/jfan.1998.3379},
         doi={10.1006/jfan.1998.3379},
      review={\MR{1680475}},
}

\bib{ren:group}{book}{
      author={Renault, Jean},
       title={A groupoid approach to {$C^{\ast} $}-algebras},
      series={Lecture Notes in Mathematics},
   publisher={Springer, Berlin},
        date={1980},
      volume={793},
        ISBN={3-540-09977-8},
      review={\MR{584266}},
}

\bib{ruelle:thermo}{book}{
      author={Ruelle, David},
       title={Thermodynamic formalism},
     edition={Second},
      series={Cambridge Mathematical Library},
   publisher={Cambridge University Press, Cambridge},
        date={2004},
        ISBN={0-521-54649-4},
         url={https://doi.org/10.1017/CBO9780511617546},
         doi={10.1017/CBO9780511617546},
        note={The mathematical structures of equilibrium statistical
  mechanics},
      review={\MR{2129258}},
}

\bib{smale:A}{article}{
      author={Smale, S.},
       title={Differentiable dynamical systems},
        date={1967},
        ISSN={0002-9904},
     journal={Bull. Amer. Math. Soc.},
      volume={73},
       pages={747\ndash 817},
         url={https://doi.org/10.1090/S0002-9904-1967-11798-1},
         doi={10.1090/S0002-9904-1967-11798-1},
      review={\MR{0228014}},
}

\bib{MR3270778}{article}{
      author={Steinberg, Benjamin},
       title={Modules over \'{e}tale groupoid algebras as sheaves},
        date={2014},
        ISSN={1446-7887},
     journal={J. Aust. Math. Soc.},
      volume={97},
      number={3},
       pages={418\ndash 429},
      eprint={\href{http://arxiv.org/abs/1406.0088}{\texttt{arXiv:1406.0088
  [math.RA]}}},
         url={https://doi-org.ezproxy.uio.no/10.1017/S1446788714000342},
         doi={10.1017/S1446788714000342},
      review={\MR{3270778}},
}

\bib{thomsen:smale}{article}{
      author={Thomsen, Klaus},
       title={{C$^*$}-algebras of homoclinic and heteroclinic structure in
  expansive dynamics},
        date={2010},
        ISSN={0065-9266},
     journal={Mem. Amer. Math. Soc.},
      volume={206},
      number={970},
       pages={x+122},
         url={https://doi.org/10.1090/S0065-9266-10-00581-8},
         doi={10.1090/S0065-9266-10-00581-8},
      review={\MR{2667385}},
}

\bib{thom:sol}{article}{
      author={Thomsen, Klaus},
       title={The homoclinic and heteroclinic {$C^*$}-algebras of a generalized
  one-dimensional solenoid},
        date={2010},
        ISSN={0143-3857},
     journal={Ergodic Theory Dynam. Systems},
      volume={30},
      number={1},
       pages={263\ndash 308},
      eprint={\href{http://arxiv.org/abs/0809.1995}{\texttt{arXiv:0809.1995
  [math.OA]}}},
         url={https://doi.org/10.1017/S0143385709000042},
         doi={10.1017/S0143385709000042},
      review={\MR{2586354}},
}

\bib{tu:moy}{article}{
      author={Tu, Jean-Louis},
       title={La conjecture de {B}aum--{C}onnes pour les feuilletages
  moyennables},
        date={1999},
        ISSN={0920-3036},
     journal={$K$-Theory},
      volume={17},
      number={3},
       pages={215\ndash 264},
         url={https://doi.org/10.1023/A:1007744304422},
         doi={10.1023/A:1007744304422},
      review={\MR{1703305}},
}

\bib{vd:sol}{article}{
      author={van Dantzig, D.},
       title={Ueber topologisch homogene kontinua},
    language={ger},
        date={1930},
     journal={Fundamenta Mathematicae},
      volume={15},
      number={1},
       pages={102\ndash 125},
         url={http://eudml.org/doc/212336},
}

\bib{wie:inv}{article}{
      author={Wieler, Susana},
       title={Smale spaces via inverse limits},
        date={2014},
        ISSN={0143-3857},
     journal={Ergodic Theory Dynam. Systems},
      volume={34},
      number={6},
       pages={2066\ndash 2092},
      eprint={\href{http://arxiv.org/abs/1206.0802}{\texttt{arXiv:1206.0802
  [math.DS]}}},
         url={https://doi.org/10.1017/etds.2013.19},
         doi={10.1017/etds.2013.19},
      review={\MR{3272784}},
}

\bib{will:sol}{article}{
      author={Williams, R.~F.},
       title={One-dimensional non-wandering sets},
        date={1967},
        ISSN={0040-9383},
     journal={Topology},
      volume={6},
       pages={473\ndash 487},
         url={https://doi.org/10.1016/0040-9383(67)90005-5},
         doi={10.1016/0040-9383(67)90005-5},
      review={\MR{0217808}},
}

\bib{will:exp}{article}{
      author={Williams, R.~F.},
       title={Expanding attractors},
        date={1974},
        ISSN={0073-8301},
     journal={Inst. Hautes \'{E}tudes Sci. Publ. Math.},
      number={43},
       pages={169\ndash 203},
         url={http://www.numdam.org/item?id=PMIHES_1974__43__169_0},
      review={\MR{348794}},
}

\bib{MR2050130}{article}{
      author={Yi, Inhyeop},
       title={{$K$}-theory of {$C^\ast$}-algebras from one-dimensional
  generalized solenoids},
        date={2003},
        ISSN={0379-4024},
     journal={J. Operator Theory},
      volume={50},
      number={2},
       pages={283\ndash 295},
      review={\MR{2050130}},
}

\end{biblist}
\end{bibdiv}
\end{document}